\DeclareRobustCommand{\greektext}{%
  \fontencoding{LGR}\selectfont\def\encodingdefault{LGR}}
\DeclareRobustCommand{\textgreek}[1]{\leavevmode{\greektext #1}}
\numberwithin{equation}{section}
\numberwithin{figure}{section}
  \theoremstyle{plain}
  \newtheorem*{thm*}{\protect\theoremname}
\theoremstyle{plain}
\newtheorem{thm}{\protect\theoremname}[section]
  \theoremstyle{plain}
  \newtheorem{conjecture}[thm]{\protect\conjecturename}
  \theoremstyle{remark}
  \newtheorem*{rem*}{\protect\remarkname}
  \theoremstyle{plain}
  \newtheorem{cor}[thm]{\protect\corollaryname}
  \theoremstyle{remark}
  \newtheorem*{acknowledgement*}{\protect\acknowledgementname}
  \theoremstyle{plain}
  \newtheorem{prop}[thm]{\protect\propositionname}
 \theoremstyle{definition}
 \newtheorem*{defn*}{\protect\definitionname}
  \theoremstyle{plain}
  \newtheorem{lem}[thm]{\protect\lemmaname}
  \theoremstyle{definition}
  \newtheorem*{example*}{\protect\examplename}
  \theoremstyle{plain}
  \newtheorem*{fact*}{\protect\factname}
  \providecommand{\acknowledgementname}{Acknowledgement}
  \providecommand{\conjecturename}{Conjecture}
  \providecommand{\corollaryname}{Corollary}
  \providecommand{\definitionname}{Definition}
  \providecommand{\examplename}{Example}
  \providecommand{\factname}{Fact}
  \providecommand{\lemmaname}{Lemma}
  \providecommand{\propositionname}{Proposition}
  \providecommand{\remarkname}{Remark}
  \providecommand{\theoremname}{Theorem}
\providecommand{\theoremname}{Theorem}
\begin{document}

\title{Representations of reductive groups distinguished by symmetric subgroups}

\author{Itay Glazer}

\address{Faculty of Mathematics and Computer Science, Weizmann Institute of
Science, 234 Herzl Street, Rehovot 76100, Israel.}

\email{itay.glazer@weizmann.ac.il}
\begin{abstract}
\begin{singlespace}
\noindent Let $G$ be a complex connected reductive group, $G^{\theta}$
be its fixed point subgroup under a Galois involution $\theta$ and
$H$ be an open subgroup of $G^{\theta}$. We show that any $H$-distinguished
representation $\pi$ satisfies: 
\end{singlespace}

\noindent 1) $\pi^{\theta}\simeq\tilde{\pi}$, where $\tilde{\pi}$
is the contragredient representation and $\pi^{\theta}$ is the twist
of $\pi$ under $\theta$.

\noindent 2) $\mathrm{dim}_{\mathbb{C}}\left(\pi^{*}\right)^{H}\leq\left|B\backslash G/H\right|$,
where $B$ is a Borel subgroup of $G$.

By proving the first statement, we give a partial answer to a conjecture
by Prasad and Lapid. 
\end{abstract}

\maketitle
\begin{small}\tableofcontents{}

\end{small}

\section{\label{sec:Introduction}Introduction}

Let $\underline{G}$ be a connected reductive algebraic group defined
over $\mathbb{R}$, $\underline{G}^{\theta}$ be its fixed point subgroup
under some involution $\theta$ defined over $\mathbb{R}$, $G=\underline{G}(\mathbb{R})$
and $G^{\theta}=\underline{G}^{\theta}(\mathbb{R})$. We call $(G,H)$
a real symmetric pair if $H$ is an open subgroup of $G^{\theta}$.
Denote by $\mathcal{SAF}(G)$ the category of all finitely generated,
smooth, admissible, moderate growth, Fréchet representations of $G$
with continuous linear $G$-maps as morphisms. Denote by $\mathcal{SAF}_{\mathrm{Irr}}(G)$
the set of irreducible objects in $\mathcal{SAF}(G)$. A fundamental
task in the representation theory of the pair $(G,H)$ is to explore
representations $\pi\in\mathcal{SAF}_{\mathrm{Irr}}(G)$ embedded
in $C^{\infty}(G/H)$. By Frobenius reciprocity, the irreducible representations
embedded in $C^{\infty}(G/H)=\mathrm{Ind}_{H}^{G}(\mathbb{C})$ are
exactly the irreducible representations $\pi$ with a non trivial
$H$-invariant continuous functional, i.e $\pi$ with $\left(\pi^{*}\right)^{H}\neq0$.
Such representations are called $\emph{H-distinguished}$. Hence,
when studying the symmetric pair $(G,H)$, we would like to answer
the following questions: 
\begin{enumerate}
\item Which representations $\pi\in\mathcal{SAF}_{\mathrm{Irr}}(G)$ are
$H$-distinguished? 
\item What can we say about the multiplicity of such representation $\pi$,
i.e, the number $dim_{\mathbb{C}}\left(\pi^{*}\right)^{H}$?
\end{enumerate}
The first step towards answering the first question is to find a necessary
condition for a representation $\pi$ to be $H$-distinguished. The
following theorem is a version of a classical theorem of Gelfand and
Kazhdan (see \cite{GK75}) and it suggests a certain candidate for
such a condition.
\begin{thm*}
(Special case of Gelfand-Kazhdan criterion) Let $(G,H)$ be a real
symmetric pair, where $H=\underline{G}^{\theta}(\mathbb{R})$, and
let $\sigma$ be the anti-involution $\sigma(g)=\theta(g)^{-1}$.
Assume that $\sigma(\xi)=\xi$ for all bi $H$-invariant Schwartz
distributions $\xi$ on $G$. Then any $H$-distinguished representation
$\pi\in\mathcal{SAF}_{\mathrm{Irr}}(G)$ satisfies $\widetilde{\pi}\simeq\pi^{\theta}$,
where $\widetilde{\pi}$ denotes the contragredient representation
and $\pi^{\theta}$ is the twist of $\pi$ by $\theta$.
\end{thm*}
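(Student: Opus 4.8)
The plan is to deduce the isomorphism from the existence of a single nonzero intertwiner. Since $\pi^{\theta}$ and $\widetilde{\pi}$ are both irreducible objects of $\mathcal{SAF}(G)$, any nonzero continuous $G$-morphism $\pi^{\theta}\to\widetilde{\pi}$ is automatically an isomorphism, and such a morphism is the same thing as a nonzero separately continuous bilinear form $B$ on $\pi\times\pi$ with $B(\pi(\theta(g))v,\pi(g)w)=B(v,w)$; here one uses automatic continuity together with the fact that a $G$-map carries smooth vectors to smooth vectors to see that the associated operator actually lands in $\widetilde{\pi}$ and not merely in $\widetilde{\pi}^{-\infty}$. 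So the whole problem reduces to manufacturing one such $B$ out of the $H$-distinguishedness of $\pi$. A useful preliminary remark is that $\theta$ fixes $H=\underline{G}^{\theta}(\mathbb{R})$ pointwise, so the twist $\pi^{\theta}$ is again $H$-distinguished through the very same functional $\ell\in(\pi^{*})^{H}$; it is precisely this symmetry between $\pi$ and $\pi^{\theta}$, transported by $\sigma$, that will be exploited.

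The construction of $B$ is the classical Gelfand--Kazhdan argument. First I would fix $0\neq\ell\in(\pi^{*})^{H}$ and form the relative matrix coefficient map $v\mapsto c_{v}$, $c_{v}(g)=\ell(\pi(g)v)$, which by Frobenius reciprocity is a nonzero $G$-embedding $\pi\hookrightarrow C^{\infty}(G/H)$; the same holds for $\pi^{\theta}$, and dually $\widetilde{\pi}$ and $\widetilde{\pi^{\theta}}$ appear as quotients of the Schwartz space $\mathcal{S}(G/H)$. Then I would pass to the two-sided picture on $G$: the bi-$H$-invariant Schwartz distributions form a convolution algebra $\mathcal{H}$ on which $\sigma_{*}$ is an anti-automorphism, so the hypothesis $\sigma_{*}=\mathrm{id}_{\mathcal{H}}$ forces $\mathcal{H}$ to be commutative; more importantly, a direct computation with $\sigma(g)=\theta(g)^{-1}$ shows that $\sigma_{*}$ carries the matrix coefficients of $\pi$ (equivalently the Bessel, or relative-character, distributions attached to $\pi$) to those attached to $\widetilde{\pi^{\theta}}$. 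Thus $\sigma$ literally conjugates the $\pi$-part of the spectral/distributional picture of $C^{\infty}(G/H)$ onto the $\widetilde{\pi^{\theta}}$-part, while by hypothesis fixing every bi-$H$-invariant distribution. Combining these two facts forces the datum attached to $\pi$ (the action of $\mathcal{H}$ on its $H$-invariant functional, together with its infinitesimal character) to coincide with the datum attached to $\widetilde{\pi^{\theta}}$, so that the realization of $\pi$ inside $C^{\infty}(G/H)$ and the $\sigma$-transported realization of $\widetilde{\pi^{\theta}}$ occupy the same isotypic component; this produces the nonzero intertwiner, whence $\widetilde{\pi^{\theta}}\simeq\pi$, i.e. $\widetilde{\pi}\simeq\pi^{\theta}$, and unwinding the identification yields $B$.

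I expect the main obstacle to be analytic rather than formal. The combinatorial role of $\sigma$ and the Schur-type reduction are routine; the real work lies in setting things up in the Casselman--Wallach category: constructing and controlling Schwartz functions and tempered distributions on the homogeneous space $G/H$, the convolution structure on $\mathcal{H}$ and its actions on invariant functionals, and then invoking automatic continuity and Casselman--Wallach finiteness (finite length, the globalization functors, Schur's lemma for irreducible $\mathcal{SAF}$ objects) to upgrade ``a nonzero morphism exists'' to ``the representations are isomorphic.'' The single most delicate step is the last one just described: one is given only that $\pi$ is $H$-distinguished, so one must extract from the $\sigma$-invariance of bi-$H$-invariant distributions alone --- with no prior information on $\widetilde{\pi}$ --- both that $\widetilde{\pi^{\theta}}$ is $H$-distinguished and that it is isomorphic to $\pi$. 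That is where I expect the bulk of the argument to go.
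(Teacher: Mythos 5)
A point of reference first: the paper does not prove this statement. It is presented as ``a version of a classical theorem of Gelfand and Kazhdan (see \cite{GK75})'' and serves only as motivation for Conjecture \ref{Main conj}; there is no in-paper argument to compare yours against, so I am judging the sketch on its own merits.

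Your framework is the standard Gelfand--Kazhdan one, and the preliminary observations are correct: reduce to exhibiting a single nonzero $G$-intertwiner $\pi^{\theta}\to\widetilde{\pi}$ (irreducibility plus Casselman--Wallach automatic continuity upgrade this to an isomorphism); note that $\theta|_{H}=\mathrm{id}$, so the same $\ell\in(\pi^{*})^{H}$ distinguishes $\pi^{\theta}$; embed $\pi$ into $C^{\infty}(G/H)$ via $v\mapsto\ell(\pi(\cdot)v)$; and plan to feed the $\sigma$-invariance hypothesis into the picture. But the step you defer to the end --- extracting, from $\sigma$-invariance of bi-$H$-invariant distributions alone, both that $\widetilde{\pi^{\theta}}$ is $H$-distinguished and that it is isomorphic to $\pi$ --- is the whole theorem, and the sketch does not carry it out. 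Concretely, the object to which the hypothesis applies, a \emph{bi}-$H$-invariant distribution on $G$, never actually enters the argument: the relative matrix coefficients $g\mapsto\ell(\pi(g)v)$ are $H$-invariant on one side only, so the hypothesis is silent about the submodule they generate, while the Bessel / relative-character distribution $f\mapsto\ell_{2}\bigl(\widetilde{\pi}(f)\ell_{1}\bigr)$ you invoke (which \emph{is} bi-$H$-invariant) presupposes a second functional $\ell_{2}\in(\widetilde{\pi}^{*})^{H}$, whose existence is itself part of what must be proved --- it follows from the conclusion, not from the hypotheses. Closing this circularity --- manufacturing a bi-$H$-invariant distribution out of $\ell$ alone, applying $\sigma_{*}=\mathrm{id}$ to it, and deducing the intertwiner --- is the actual content of the Gelfand--Kazhdan argument; you flag it as ``the single most delicate step,'' but the proposal stops short of supplying it.
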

We see that the symmetric pairs $(G,H)$ that satisfy the above G-K
criterion have the property:
\[
\left(\pi^{*}\right)^{H}\neq0\,\Longrightarrow\,\widetilde{\pi}\simeq\pi^{\theta},
\]
for any $\pi\in\mathcal{SAF}_{\mathrm{Irr}}(G)$. As many symmetric
pairs satisfy the above G-K criterion, it is therefore natural to
ask for a symmetric pair $(G,H)$ if the condition $\widetilde{\pi}\simeq\pi^{\theta}$
is necessary and sufficient for $\pi$ to be $H$-distinguished. Using
(e.g. \cite[Theorem 8.2.1]{AG09}), one can deduce that this condition
is not sufficient. Indeed, any representation $\pi\in\mathcal{SAF}_{\mathrm{Irr}}(\mathrm{GL_{2n}}(\mathbb{C}))$
satisfies $\widetilde{\pi}\simeq\pi^{\theta}$, where $\theta(g):=\Omega^{-1}\left(g^{t}\right)^{-1}\Omega$,
$\Omega=\left(\begin{array}{cc}
0 & I_{n}\\
-I_{n} & 0
\end{array}\right)$. In particular, representations that are not $\mathrm{Sp_{2n}}(\mathbb{C})$-distinguished
satisfy $\widetilde{\pi}\simeq\pi^{\theta}$ (see \cite{GSS15} for
existence of such representations).

Although the condition $\widetilde{\pi}\simeq\pi^{\theta}$ is not
sufficient, it is conjectured that it is necessary when $G$ is a
complex reductive group. 
\begin{conjecture}
\label{Main conj}Let $(G,H)$ be a real symmetric pair, where $G$
is a complex connected reductive group, and let $\pi\in\mathcal{SAF}_{\mathrm{Irr}}(G)$
be $H$-distinguished. Then $\tilde{\pi}\simeq\pi^{\theta}$.
\end{conjecture}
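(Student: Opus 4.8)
The plan is to deduce the conjecture from the special case of the Gelfand--Kazhdan criterion stated above, applied with the anti-involution $\sigma(g)=\theta(g)^{-1}$. First observe that $\sigma$ normalizes every subgroup of $G^{\theta}$: if $h\in H$ then $\sigma(h)=\theta(h)^{-1}=h^{-1}$, so $\sigma(H)=H^{-1}=H$. The criterion as quoted is phrased for $H=\underline{G}^{\theta}(\mathbb{R})$, but its proof only uses that $\sigma(H)=H$ together with the finiteness of the relevant multiplicity spaces, so it extends verbatim to an arbitrary open $H\le G^{\theta}$. Hence everything reduces to the following assertion: \emph{for $G$ a complex connected reductive group and $\theta$ an involution, every bi-$H$-invariant Schwartz distribution $\xi$ on $G$ satisfies $\sigma(\xi)=\xi$}; equivalently, since $\sigma^{2}=\mathrm{id}$ stabilizes the space of such distributions, there is no nonzero bi-$H$-invariant $\xi$ with $\sigma(\xi)=-\xi$.

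To establish this I would run the Aizenbud--Gourevitch strategy for symmetric pairs. Transport the question to the symmetric variety via the symmetrization map $s\colon G\to G$, $s(g)=g\theta(g)^{-1}$, which is $H\times H$-equivariant (it collapses the right translations and turns the left translations into twisted conjugation); under it the bi-$H$-invariant distributions on $G$ correspond to $H$-invariant distributions on $s(G)$ and $\sigma$ becomes an inversion-type anti-involution --- or, equivalently, work on $G$ itself with the action of $H\times H\rtimes\langle\sigma\rangle$. Stratify the space by orbits and apply Bernstein's localization principle: to prove the vanishing of the space of $\sigma$-anti-invariant, $H\times H$-invariant distributions it suffices to prove it after restricting to each orbit $O$. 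By Frobenius descent the contribution of $O$ is governed by the \emph{slice representation}: at a point $x\in O$ one gets a symmetric pair $(G_{x},H_{x})$ together with a linear action of $H_{x}\rtimes\langle\sigma_{x}\rangle$ on the normal space $N_{x}:=\mathfrak{g}_{x}/\mathfrak{h}_{x}$, and the obstruction at $O$ is the space of $\sigma_{x}$-anti-invariant, $H_{x}$-invariant distributions on $N_{x}$. Running Noetherian induction on the stratification (the base cases, where $G_{x}$ is a torus, being handled directly), the slices attached to all non-open orbits involve strictly smaller symmetric pairs and are disposed of by the inductive hypothesis, leaving exactly two things to check: (i) the orbit-preservation condition $\sigma(O)=O$ for every orbit $O$, i.e. $\theta(g)^{-1}\in HgH$ for all $g\in G$ --- this is in any case necessary for the conclusion to hold; and (ii) the vanishing of $\sigma_{x}$-anti-invariant, $H_{x}$-invariant distributions on the base-case slice representations, which by the homogeneity/Archimedean arguments of Aizenbud--Gourevitch reduces to a purely linear-algebraic question about how $\sigma_{x}$ acts on $\mathrm{Sym}^{\bullet}(N_{x}^{*})$.

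The main obstacle, and the point where the hypothesis that $G$ is \emph{complex} is genuinely used, is precisely (i) and (ii). Over $\mathbb{R}$ there are double cosets that $\sigma$ fails to preserve --- this is exactly why the conjecture is false for real $G$ --- whereas for complex $G$ I would prove $\theta(g)^{-1}\in HgH$ from the structure theory of complex symmetric spaces: via a Cartan-type decomposition of the symmetric variety into $H$-orbits meeting a $\theta$-split torus $A$, reduce the claim to the action of the little Weyl group $W_{G/H}$ together with the sign changes available inside $N_{H}(A)$, the extra symmetry of the complex ground field being what makes the required element realizable over $\mathbb{C}$. For (ii), the slice space $N_{x}$ is a \emph{complex} vector space on which $\sigma_{x}$ acts $\mathbb{C}$-linearly (or $\mathbb{C}$-antilinearly, in the Galois case), so the determinant and eigenvalue bookkeeping for the induced action on $\mathrm{Sym}^{\bullet}(N_{x}^{*})$ is rigid enough to exclude an $H_{x}$-invariant $(-1)$-eigenvector, forcing the obstruction to vanish. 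Verifying (i) and (ii) uniformly across the classification of complex involutions is where the real work lies; a cleaner alternative I would also pursue is to bypass the general slice analysis by reducing the distribution statement to the case $G=\mathrm{GL}_{n}(\mathbb{C})$, or to rank-one complex symmetric pairs, where the needed identities are classical. Once (i) and (ii) are in place, the induction gives $\sigma(\xi)=\xi$ for every bi-$H$-invariant $\xi$, and the Gelfand--Kazhdan criterion then yields $\widetilde{\pi}\simeq\pi^{\theta}$ for every $H$-distinguished $\pi$.
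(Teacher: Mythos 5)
Your route is genuinely different from the paper's, and unfortunately it has a real gap that your own text acknowledges. The paper does \emph{not} attempt to verify the Gelfand--Kazhdan hypothesis $\sigma(\xi)=\xi$ for all bi-$H$-invariant Schwartz distributions $\xi$. Instead, it uses the Zhelobenko--Naimark/Langlands classification to realize each $\pi\in\mathcal{SAF}_{\mathrm{Irr}}(G)$ as the unique quotient of $\mathrm{Ind}_B^G(\chi)$ with $\chi$ \emph{dominant}, and thereby converts $H$-distinction of $\pi$ into the nonvanishing of $\mathcal{S}^*(G/H)^{B,\chi\delta_B^{-1/2}}$ (Corollary \ref{Corollary 3.7}). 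It then stratifies $G/H$ by $B$-orbits (Springer/Helminck--Wang), applies Frobenius descent and the normal-bundle filtration, and at the surviving orbit $B(n)$ shows that the normal space $N_{B(n),n}^{Q'}$ is supported on negative restricted roots. The punch line (Theorem \ref{thm6.2}) is that a $(T^{\theta_n},\chi)$-equivariant vector in $\mathrm{Sym}(N_{B(n),n}^{Q'})\otimes\mathbb{C}$ forces $\alpha^{\varphi}\cdot w\theta(\alpha^{\varphi})=\chi\cdot w\theta(\chi)$ on $T$, and then dominance of $\chi$ together with negativity of every $\alpha_j$ and $w\theta(\alpha_j)$ forces $\alpha^\varphi=1$ and hence $\chi^{-1}=w\circ\theta(\chi)$, which by Corollary \ref{cor 3.6} gives $\widetilde{\pi}\simeq\pi^{\theta}$. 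The dominance of the inducing character is the engine of the whole argument, and your GK setup has no analogue of it.

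Concretely, the gap in your proposal is in items (i) and (ii), which you flag as ``where the real work lies'' but do not prove. Item (i), $\sigma(HgH)=HgH$ for all $g$, is immediate on the dense cell $HAH$ of a $\theta$-split torus $A$, but it is \emph{not} clear for the non-open double cosets of a general complex symmetric pair, and the paper supplies no substitute for this: the paper's orbit geometry is for $B$-orbits on $G/H$, not for $H\times H$-orbits on $G$, and the two are not interchangeable here. Item (ii), vanishing of $\sigma_x$-anti-invariant $H_x$-invariant distributions on the slice representation, is stated as ``rigid enough... to exclude a $(-1)$-eigenvector'' but with no actual mechanism identified; in the paper's argument the mechanism is precisely the clash between dominance of $\chi$ and negativity of the normal-bundle weights, which is invisible once you have already descended to the $H\times H$ side. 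Finally, note that the GK hypothesis is strictly stronger than the conjecture: it is a statement about \emph{all} bi-$H$-invariant distributions, not just those cut out by a dominant inducing datum, and it is not asserted (and is not obviously true) for every complex symmetric pair. Indeed the paper itself only establishes the conjecture under the condition $(\star)$ on $\theta$, which in particular covers Galois pairs (Theorem \ref{thmC.1:Any-Galois-involution}); so even the paper does not claim the full conjecture, and your plan would have to prove something stronger still. As a plan it is a reasonable line of attack for specific pairs (and has been carried out in the literature for $(\mathrm{GL}_n(\mathbb{C}),\mathrm{U}(p,q))$ and $(\mathrm{GL}_n(\mathbb{C}),\mathrm{GL}_n(\mathbb{R}))$), but as written it does not constitute a proof.
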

This conjecture can be generalized to the following conjecture by
Lapid:
\begin{conjecture}
(Lapid) Let $\underline{G}$ be a connected algebraic group defined
over a local field $F$, $\theta:\underline{G}\longrightarrow\underline{G}$
an involution defined over $F$, $\underline{H}=\underline{G}^{\theta}$
and write $G=\underline{G}(F)$, $H=\underline{H}(F)$. Let $\pi$
be either in $\mathcal{SAF}_{\mathrm{Irr}}(G)$ in the Archimedean
case, or irreducible and smooth in the non-Archimedean case. Assume
that $\pi$ is $H$-distinguished, then the $L$-packet of $\pi$
is invariant with respect to the functor $\pi\longmapsto\widetilde{\pi}\circ\theta$.
\end{conjecture}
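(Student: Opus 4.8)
We cannot expect the conjecture in its full generality with the tools at hand; the plan is to establish Conjecture \ref{Main conj} in full, equivalently the case where $G$ is complex, so that $L$-packets are singletons and the assertion reduces to $\widetilde{\pi}\simeq\pi^\theta$ for every $H$-distinguished $\pi\in\mathcal{SAF}_{\mathrm{Irr}}(G)$, and along the way the multiplicity bound of the abstract. The structural input that makes complex groups tractable is that a Borel subgroup $B=TN$ is already a minimal parabolic, so by Casselman's subrepresentation theorem every $\pi\in\mathcal{SAF}_{\mathrm{Irr}}(G)$ embeds in a normalised principal series $I(\chi)=\mathrm{Ind}_B^G(\chi)$ for a character $\chi$ of $T$. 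By exactness of $(\,\cdot\,)^*$ and Frobenius reciprocity, $(\pi^*)^H$ then injects into $\mathrm{Hom}_H\!\big(I(\chi)|_H,\mathbb{C}\big)$, so the whole problem reduces to controlling $H$-invariant functionals on a principal series.

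For the multiplicity bound I would use the orbit geometry of the flag variety. The real form $G^\theta$, hence its open subgroup $H$, acts on $G/B$ with finitely many orbits, so $B\backslash G/H$ is finite; fix representatives $x_1,\dots,x_k$. Stratifying $G/B$ by these orbits and pulling back, one obtains a finite filtration of $I(\chi)|_H$ whose successive quotients are, up to a modulus twist, representations of $H$ induced from the characters $\chi^{x_i}$ restricted to the stabilisers $H\cap x_i^{-1}Bx_i$. Applying $\mathrm{Hom}_H(\,\cdot\,,\mathbb{C})$ and Frobenius reciprocity, each stratum contributes a space of dimension $\le 1$ — nonzero exactly when $\chi^{x_i}$ agrees with the pertinent modulus character on $H\cap x_i^{-1}Bx_i$ — and hence $\dim_{\mathbb{C}}\mathrm{Hom}_H(I(\chi),\mathbb{C})\le|B\backslash G/H|$, whence $\dim_{\mathbb{C}}(\pi^*)^H\le|B\backslash G/H|$. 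The points needing care are the continuity of this Bruhat--Mackey exact sequence in the smooth Fréchet (Schwartz induction) category and the computation of the modulus characters of the $H$-orbit stabilisers.

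For the duality, $H$-distinction of $\pi$ forces at least one stratum to contribute, i.e. $\chi^{x_i}$ to satisfy the character identity above, and I would extract from it a symmetry relating $\chi$, $\chi^{-1}$ and $\chi\circ\theta$. The mechanism is that the anti-holomorphic involution $\sigma(g)=\theta(g)^{-1}$ permutes $H$-double cosets and intertwines the stratification attached to $(B,H)$ with the one attached to $(\theta(B),H)$, while $\theta(B)$ is $G$-conjugate to $B$; pushing the character identity through this symmetry and through the Zhelobenko/Langlands parametrisation of $\mathcal{SAF}_{\mathrm{Irr}}(G)$ for complex $G$ — under which $\widetilde{\pi}$ and $\pi^\theta$ act on the parameter of $\pi$ by explicit, and a priori different, operations — should force these two operations to coincide on $\pi$, first at the level of infinitesimal characters and then as an actual isomorphism. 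This is the heart of the argument and the step I expect to be hardest: converting the somewhat opaque condition "$\chi^{x_i}$ equals the modulus on a real-form stabiliser" into a clean symmetry of $\chi$, and then upgrading an equality of infinitesimal characters to $\widetilde{\pi}\simeq\pi^\theta$ when the ambient principal series is reducible, will require a careful description of the $H$-orbits on $G/B$ (their stabilisers and closures) alongside the constituent structure of $I(\chi)$.

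Finally, a cleaner but less powerful route to the duality alone bypasses the representation theory entirely: verify directly that $\sigma(\xi)=\xi$ for every bi-$H$-invariant Schwartz distribution $\xi$ on $G$ and invoke the Gelfand--Kazhdan criterion quoted above. For a Galois pair this should follow from a Harish-Chandra descent to the symmetric space and a Luna-slice reduction to $\sigma$-invariant nilpotent distributions, finished by a support/character argument; its drawback is that it yields $\widetilde{\pi}\simeq\pi^\theta$ but not the bound $\dim_{\mathbb{C}}(\pi^*)^H\le|B\backslash G/H|$, so the orbit-theoretic analysis is still needed for the second assertion.
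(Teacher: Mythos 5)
The statement you are addressing is an open conjecture in the paper, not a theorem: the paper states Lapid's conjecture without proof and only establishes the much weaker Theorem~\ref{thm 1.3}, namely Conjecture~\ref{Main conj} for Galois pairs (more generally, for involutions satisfying the condition~$(\star)$). Your opening reduction, ``establish Conjecture~\ref{Main conj} in full,'' therefore overshoots what the paper actually achieves and what your own outline can deliver. The hypothesis~$(\star)$ on modular characters, which the paper needs precisely to make the Frobenius-descent step land on the torus rather than on the whole Borel stabiliser (see the end of the proof of Theorem~\ref{Theorem 5.2}), does not appear in your sketch; without it, your claim that the whole complex case follows is unsupported, and indeed the paper does not know how to remove it outside the Galois setting.

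The broad strategy you propose --- reduce via Langlands/Casselman to a principal series, stratify by $B\times H$ double cosets, bound each stratum, then convert the surviving character condition into the symmetry $\widetilde\pi\simeq\pi^\theta$ --- is indeed the paper's. But the crucial step, ``each stratum contributes a space of dimension $\le1$ --- nonzero exactly when $\chi^{x_i}$ agrees with the modulus on the stabiliser,'' is wrong as stated for non-open orbits. In the smooth/Fr\'echet category the Bruhat filtration of $I(\chi)^*$ along a closed stratum produces not a single induced space but the full transversal jet, i.e.\ all symmetric powers $\mathrm{Sym}^k$ of the conormal bundle of the orbit; a priori each $k$ can contribute. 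The whole point of Section~6 is the Lie-algebra computation identifying $N_{B(n),n}^{Q'}\otimes\mathbb C$ with a sum of negative-root eigenspaces, followed by the inner-product argument with dominant $\chi$ that kills every $k\ge1$ and simultaneously extracts $\chi^{-1}=w\circ\theta(\chi)$. Your proposal names this as ``the step I expect to be hardest'' but gives no mechanism; in particular your route through ``$\sigma$ permutes double cosets and intertwines the two stratifications'' is not how the paper closes the argument, and it is not clear it would suffice to rule out the higher jets. The alternative route you mention via Gelfand--Kazhdan and a descent argument is also not carried out in the paper and would at best recover part~(a) of Theorem~\ref{thm 1.3}, not the multiplicity bound, as you note.

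In short: there is no proof of Lapid's conjecture in the paper to compare against; the paper's partial result (Galois pairs only, via property~$(\star)$) follows your general outline, but your outline overclaims the scope, omits the condition~$(\star)$, and reduces the key transversal-jet/dominance argument to an incorrect one-line Frobenius-reciprocity assertion.
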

These conjectures have been proven for the pairs $(\mathrm{GL_{n}}(\mathbb{C}),\mathrm{U}(p,q))$
and their non-Archimedean analogue in \cite{ALOF12}, and for the
pair $(\mathrm{GL_{n}}(\mathbb{C}),\mathrm{GL_{n}}(\mathbb{R}))$
in \cite{Kem15}. 

In \cite{Pra}, Prasad formulates several conjectures about $\underline{G}(F)$-distinguished
representations of $\underline{G}(E)$ in terms of the Langlands parameters
of the representations of $\underline{G}(E)$, where $E/F$ is a quadratic
extension of local fields. He introduces a character $\omega_{G}:\underline{G}(F)\longrightarrow\mathbb{Z}/2\mathbb{Z}$
(see\cite[Section 8]{Pra}) that appears in many questions about distinction.
Hence, it is sometimes more interesting to consider representations
of $\underline{G}(E)$ that are $(\omega_{G},\underline{G}(F))$-distinguished.
Conjecture 3 of \cite{Pra} is part of the motivation to consider
Galois pairs  $(\underline{G}(\mathbb{C}),\underline{G}(\mathbb{R}))$
and in particular to discuss representations $\pi\in\mathcal{SAF}_{\mathrm{Irr}}(\underline{G}(\mathbb{C}))$
that are $(\omega,\underline{G}(\mathbb{R}))$-distinguished for any
character $\omega$ that is trivial on the connected component $\underline{G}(\mathbb{R})^{0}$
of $\underline{G}(\mathbb{R})$. In this work we present a partial
answer to Conjecture \ref{Main conj} and the first part of Conjecture
3 in \cite{Pra}.

A vast study has been done regarding the multiplicity of $H$-distinguished
representations, which is the second question we are dealing in this
paper. In \cite{Ban87} it has been shown that the multiplicity $dim_{\mathbb{C}}\left(\pi^{*}\right)^{H}$
is finite for real symmetric spaces. In \cite{KO13,KS16,AGM16}, several
bounds on the multiplicities have been established for real spherical
pairs. In the non-Archimedean case, finiteness of $dim_{\mathbb{C}}\left(\pi^{*}\right)^{H}$
has been shown for symmetric pairs in \cite{Del10} and certain spherical
pairs in \cite{SV}.

\subsection{\label{sub:Main-results-and}Main results}

We call $(G,H)$ a $Galois\,symmetric\,pair$ if $G$ is obtained
by restriction of scalars of the complexification of $G^{\theta}$
(e.g $G^{\theta}$ is a real form of $G$), and $H$ is an open subgroup
of $G^{\theta}$. In this paper we prove Conjecture \ref{Main conj}
and present a bound on $dim_{\mathbb{C}}\left(\pi^{*}\right)^{H}$,
for the case of Galois pairs $(G,H)$. We also deduce the first part
of Conjecture 3 of \cite{Pra}, namely that for any $\pi\in\mathcal{SAF}_{\mathrm{Irr}}(\underline{G}(\mathbb{C}))$
that is $(\omega_{G},\underline{G}(\mathbb{R}))$-distinguished it
holds that $\widetilde{\pi}\simeq\pi^{\theta}$, where $\tilde{\pi}$
is the contragredient representation, $\theta$ is the Galois involution
of $\underline{G}(\mathbb{C})$ fixing $\underline{G}(\mathbb{R})$
and $\pi^{\theta}$ is the twist of $\pi$ by $\theta$. 

Although we are mostly interested in Galois pairs, we work in a slightly
more general setting, where $G$ is a complex reductive group and
$\theta$ is any real involution; let $\underline{G}$ be a connected
reductive algebraic group defined over $\mathbb{R}$ and $\underline{G}_{\mathbb{C}/\mathbb{R}}$
be the restriction of scalars of the complexification of $\underline{G}$
(see Appendix \ref{Append:Restriction-of-scalars} for more details)
such that $G=\underline{G}(\mathbb{C})=\underline{G}_{\mathbb{C}/\mathbb{R}}(\mathbb{R})$.
Let $\theta$ be an $\mathbb{R}$-involution of $\underline{G}_{\mathbb{C}/\mathbb{R}}$
and write $G^{\theta}=\underline{G}_{\mathbb{C}/\mathbb{R}}^{\theta}(\mathbb{R})$. 
\begin{rem*}
A motivating example for the above construction is the pair $(\mathrm{GL_{n}}(\mathbb{C}),\mathrm{U}(n))$,
as the involution $\theta(g)=(g^{*})^{-1}$ is defined over $\mathbb{R}$
only after we apply restriction of scalars to $\mathrm{GL_{n}}$. 
\end{rem*}
We fix a Borel subgroup $B$, a $\theta$-stable maximal torus $T$,
and a $\theta$-stable maximal $\mathbb{R}$-split torus $A$ in $G$,
such that $A\subseteq T\subseteq B$ (see Lemma \ref{lemma 5.1}).
Let $N_{G}(T)$ be the normalizer of $T$ in $G$ and $\delta_{B}$
be the modular character of $B$. We further assume that $\theta$
is an involution that satisfies the following condition $(\star)$
\begin{equation}
\forall n\in N_{G}(T)\text{ such that }\theta(n)=n^{-1}\text{ we have that }\delta_{B^{\theta_{n}}}=\delta_{B}^{1/2}|_{B^{\theta_{n}}},\tag{\ensuremath{\star}}\label{eq:star}
\end{equation}
where $\theta_{n}$ is the involution of $G$ defined by $\theta_{n}(g)=n\theta(g)n^{-1}$
and $B^{\theta_{n}}$ is the fixed points subgroup of $B$ under $\theta_{n}$.
We prove the following theorem:
\begin{thm}
\label{thm 1.3} Let $G$ be a complex reductive group, $\theta$
be any involution of $G$ that satisfies property $(\star)$, $H$
be an open subgroup of $G^{\theta}$ and $\pi\in\mathcal{SAF}_{\mathrm{Irr}}(G)$
be $H$-distinguished. Then:

a) $\tilde{\pi}\simeq\pi^{\theta}$.

b) $\mathrm{dim}_{\mathbb{C}}\left(\pi^{*}\right)^{H}\leq\left|B\backslash G/H\right|$. 

c) In particular, a) and b) holds for Galois pairs.
\end{thm}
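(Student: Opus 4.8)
\emph{Proof strategy.} The plan is to derive part (a) from the Gelfand-Kazhdan criterion recalled above, part (b) from an analysis of $H$-invariant functionals on principal series via the Bruhat stratification of the flag variety, and part (c) by checking that Galois pairs lie within the scope of (a) and (b). For part (a) the task is to verify the hypothesis of the criterion, namely that $\sigma(\xi)=\xi$ for every bi-$H$-invariant Schwartz distribution $\xi$ on $G$, where $\sigma(g)=\theta(g)^{-1}$. Since $(G^{\theta})^{0}\subseteq H$ and $\sigma(H)=H^{-1}=H$, this reduces to a statement about distributions on $G$ invariant under two-sided translation by $H$, which I would attack through a stratification of $G$ under the $H\times H$-action refining the Bruhat decomposition $G=\bigsqcup_{w}BwB$. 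Using the $\theta$-stable torus $T$ of Lemma~\ref{lemma 5.1}, each stratum is controlled by an element $n\in N_{G}(T)$ with $\theta(n)=n^{-1}$: one shows that $\sigma$ stabilizes the stratum and that the pertinent $H$-stabilizer is commensurable with the fixed-point subgroup of the twisted involution $\theta_{n}(g)=n\theta(g)n^{-1}$. By Bernstein's localization principle it then suffices to prove, stratum by stratum, that every $H\times H$-invariant distribution supported there, together with all of its transversal derivatives, is $\sigma$-fixed. A transversal derivative of order $k$ can carry a nonzero invariant distribution only when a character identity of the shape $\delta_{B}^{1/2}\cdot(\text{conormal character})^{k}=\delta_{B^{\theta_{n}}}$ holds on $B^{\theta_{n}}$; hypothesis $(\star)$, which asserts $\delta_{B^{\theta_{n}}}=\delta_{B}^{1/2}|_{B^{\theta_{n}}}$, forces the conormal factor to be trivial, hence $k=0$, leaving a one-dimensional contribution that is automatically $\sigma$-invariant. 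This establishes the criterion, hence $\widetilde{\pi}\simeq\pi^{\theta}$.

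For part (b), applying the subrepresentation theorem of Casselman (in its Casselman-Wallach form) to $\widetilde{\pi}$ and passing to contragredients realizes $\pi$ as a quotient of a minimal principal series $\mathrm{Ind}_{B}^{G}(\chi)$, and part (a) imposes a unitarity constraint on $\chi|_{A}$. Thus $(\pi^{*})^{H}$ embeds into $\mathrm{Hom}_{H}(\mathrm{Ind}_{B}^{G}(\chi),\mathbb{C})$, the space of $H$-invariant $\chi$-twisted distributions on the flag variety $B\backslash G$. Filtering $B\backslash G$ by the closures of its finitely many $H$-orbits and running the resulting long exact sequences equips this Hom-space with a filtration whose graded pieces are invariant distributions supported on a single orbit; as in (a), $(\star)$ together with the constraint on $\chi|_{A}$ singles out the order-zero term on each orbit, so each graded piece is at most one-dimensional, and summing over orbits yields $\mathrm{dim}_{\mathbb{C}}(\pi^{*})^{H}\leq|B\backslash G/H|$.

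For part (c) it remains to see that Galois pairs satisfy $(\star)$ and have $|B\backslash G/H|<\infty$. For $(\star)$, when $\theta$ is the Galois involution the twisted involution $\theta_{n}$ is again conjugate-linear, $B^{\theta_{n}}$ is the group of real points of the complex subgroup $B\cap\theta_{n}(B)$, and two elementary facts apply: the modular character of a real form of a complex group is the square root of the complex modular character restricted, and the positive root spaces lying in $B$ but not in $B\cap\theta_{n}(B)$ contribute trivially to the modulus on $B^{\theta_{n}}$ because the relevant Weyl element is an involution; combining these gives $\delta_{B^{\theta_{n}}}=\delta_{B}^{1/2}|_{B^{\theta_{n}}}$. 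Finiteness of $B\backslash\underline{G}(\mathbb{C})/\underline{G}(\mathbb{R})$ is the classical finiteness of orbits of a real form on its flag variety, and passing to an open subgroup $H\subseteq G^{\theta}$ keeps the number of $B$-orbits on $G/H$ finite; hence (a) and (b) apply.

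I expect the main obstacle to be the stratum-by-stratum analysis in part (a): building the correct $H\times H$-orbit decomposition of $G$, showing that $\sigma$ preserves each stratum, and---most delicately---controlling all transversal derivatives, which is exactly the point where the modular-character hypothesis $(\star)$ is indispensable. A secondary technical nuisance, present throughout, is that $H$ is only assumed open in $G^{\theta}$, so that $G^{\theta}/H$ is a nontrivial finite group and the distributional arguments must be checked to be insensitive to it.
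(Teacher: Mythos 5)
Your proposal for part (a) diverges sharply from the paper and has a genuine gap. You set out to verify the Gelfand--Kazhdan hypothesis: that \emph{every} bi-$H$-invariant Schwartz distribution on $G$ is $\sigma$-fixed. This is a Gelfand-pair-type statement, strictly stronger than anything the paper establishes or needs; if it held, it would in particular give a multiplicity-one result, which is \emph{not} what part (b) claims (the bound is $|B\backslash G/H|$, which is typically $>1$). The paper instead uses the Langlands classification (Theorem \ref{thm3.1}) to write $\pi$ as the unique quotient of $\mathrm{Ind}_{B}^{G}(\chi)$ with $\chi$ \emph{dominant}, and then reduces (Corollary \ref{Corollary 3.7}) to the nonvanishing of $\mathcal{S}^{*}(G/H)^{B,\chi\delta_{B}^{-1/2}}$ — a space of $(B,\chi)$-equivariant, not merely $H$-biinvariant, distributions. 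The crux of the argument in Section 6 is that the conormal weights at a twisted $B$-orbit through $n\in N_{G}(T)\cap Q'$ are sums of \emph{negative} restricted roots preserved (up to sign) by $w\circ\theta$, and pairing them with the \emph{dominant} $d\chi+d(w\circ\theta(\chi))$ forces all transversal degrees to vanish. Your sketch has no $\chi$ and hence no dominance input; the inference ``$\delta_{B}^{1/2}\cdot(\text{conormal})^{k}=\delta_{B^{\theta_{n}}}$ and $(\star)$ give $(\text{conormal})^{k}=1$, hence $k=0$'' is not valid — a nontrivial conormal character can perfectly well have a trivial power, or be trivial on the relevant subtorus, without forcing $k=0$. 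Thus the stratum-by-stratum $\sigma$-invariance you need does not follow, and you never actually reach the identity $\chi^{-1}=w\circ\theta(\chi)$ that the paper extracts.

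There are two further issues. First, the Gelfand--Kazhdan criterion as quoted in the introduction assumes $H=\underline{G}^{\theta}(\mathbb{R})$, whereas the theorem allows any open $H\subseteq G^{\theta}$; you flag this as a ``nuisance'' but do not resolve it, and it is in fact a real obstruction to using the criterion verbatim (the paper instead proves Corollary \ref{cor 4.8} to handle the covering $G/H\to G/G^{\theta}$). Second, your stratification is of $G$ under $H\times H$ refining the Bruhat decomposition; the paper works with $B$-orbits on $G/H$ (equivalently $B\times H$-orbits on $G$), for which Helminck--Wang give finiteness and explicit representatives $n\in N_{G}(T)\cap Q'$. It is not clear that $H\times H$ has finitely many orbits on $G$, nor that the $N_{G}(T)$-representatives you invoke parametrize those orbits.

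For part (b), you are closer to the paper's route, but the statement ``part (a) imposes a unitarity constraint on $\chi|_{A}$'' is not correct: part (a) gives $\chi^{-1}=w\circ\theta(\chi)$ (via Corollary \ref{cor 3.6}), not unitarity. Moreover the mechanism that makes each graded piece at most one-dimensional is the dominance/negativity pairing described above, not $(\star)$ alone; as written your account omits where that inequality comes from. Your outline of part (c) is roughly right in spirit — one does check $(\star)$ for Galois involutions via a root-space bookkeeping as in Theorem \ref{thmC.1:Any-Galois-involution}, together with finiteness of $B$-orbits — but that step only matters once (a) and (b) are actually established.
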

Part c) follows from a) and b) using the fact that any Galois involution
satisfies $(\star)$ (see Theorem \ref{thmC.1:Any-Galois-involution}). 

Replacing $\underline{G}(\mathbb{R})$ with $\mathrm{ker}(\omega_{G})$,
the following corollary now easily follows:
\begin{cor}
(Conjecture 3 of \cite{Pra}) Let $\underline{G}$ be a connected
reductive algebraic group defined over $\mathbb{R}$ and $\omega_{G}:\underline{G}(\mathbb{R})\longrightarrow\mathbb{Z}/2\mathbb{Z}$
be the character defined in \cite[Section 8]{Pra}. Then for any $\pi\in\mathcal{SAF}_{\mathrm{Irr}}(\underline{G}(\mathbb{C}))$
that is $(\omega_{G},\underline{G}(\mathbb{R}))$-distinguished we
have $\widetilde{\pi}\simeq\pi^{\theta}$.
\end{cor}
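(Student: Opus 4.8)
The plan is to reduce the corollary directly to Theorem~\ref{thm 1.3}(c) by recognizing $\ker(\omega_{G})$ as an open subgroup of the fixed point group of a Galois involution. Concretely, let $\theta$ denote the complex conjugation on $\underline{G}(\mathbb{C})$ with respect to the real form $\underline{G}$, so that $(\underline{G}(\mathbb{C}),\underline{G}(\mathbb{R}))$ is precisely the Galois symmetric pair attached to $\underline{G}$; after restriction of scalars this $\theta$ is an $\mathbb{R}$-involution of $\underline{G}_{\mathbb{C}/\mathbb{R}}$ in the sense of the preceding setup, and $\underline{G}(\mathbb{C})^{\theta}=\underline{G}(\mathbb{R})$.

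First I would check that $\ker(\omega_{G})$ is an open subgroup of $\underline{G}(\mathbb{R})$. Since $\omega_{G}:\underline{G}(\mathbb{R})\longrightarrow\mathbb{Z}/2\mathbb{Z}$ is a character into a discrete group, its kernel is an open (and closed) subgroup of finite index in $\underline{G}(\mathbb{R})$; in particular it contains the identity component $\underline{G}(\mathbb{R})^{0}$. Hence $H:=\ker(\omega_{G})$ is an open subgroup of $G^{\theta}=\underline{G}(\mathbb{R})$, so $(\underline{G}(\mathbb{C}),H)$ is a Galois symmetric pair in the sense of Subsection~\ref{sub:Main-results-and}.

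Next I would unwind the definition of $(\omega_{G},\underline{G}(\mathbb{R}))$-distinction: a representation $\pi\in\mathcal{SAF}_{\mathrm{Irr}}(\underline{G}(\mathbb{C}))$ is $(\omega_{G},\underline{G}(\mathbb{R}))$-distinguished precisely when $\pi\otimes\omega_{G}^{-1}$ admits a nonzero $\underline{G}(\mathbb{R})$-invariant continuous functional, equivalently when $\pi$ admits a nonzero continuous functional $\ell$ transforming under $\underline{G}(\mathbb{R})$ by the character $\omega_{G}$. Since $\omega_{G}$ is trivial on $H=\ker(\omega_{G})$, such an $\ell$ is in particular $H$-invariant, so $(\pi^{*})^{H}\neq 0$, i.e.\ $\pi$ is $H$-distinguished. (Here one uses that $\omega_{G}$ takes values in $\{\pm 1\}\subset\mathbb{C}^{\times}$, so the twist is by a genuine character.)

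Finally I would apply Theorem~\ref{thm 1.3}(c) to the Galois pair $(\underline{G}(\mathbb{C}),H)$ and the $H$-distinguished representation $\pi$, which yields $\widetilde{\pi}\simeq\pi^{\theta}$, where $\theta$ is the Galois involution fixing $\underline{G}(\mathbb{R})$, as desired. I do not expect a genuine obstacle here; the only point requiring a little care is confirming that the functional-theoretic notion of $(\omega_{G},\underline{G}(\mathbb{R}))$-distinction used in \cite{Pra} matches the reformulation above, so that the passage to the open subgroup $\ker(\omega_{G})$ is legitimate — once that bookkeeping is in place, the corollary is immediate from part (c) of the theorem.
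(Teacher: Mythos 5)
Your proof is correct and follows exactly the route the paper intends: replace $\underline{G}(\mathbb{R})$ by the open subgroup $H=\ker(\omega_G)$, observe that an $(\omega_G,\underline{G}(\mathbb{R}))$-equivariant functional is in particular $H$-invariant, and invoke Theorem~\ref{thm 1.3}(c) for the resulting Galois pair. The paper gives this same argument in a single sentence; your version merely supplies the straightforward bookkeeping (openness of $\ker(\omega_G)$, matching the notion of distinction) that the paper leaves implicit.
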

In particular, we see that the symmetry condition $\tilde{\pi}\simeq\pi^{\theta}$
is not sensitive to distinction up to characters with finite image,
such as $\omega_{G}$, although the property of distinction of representations
is itself sensitive to such characters (e.g, the Steinberg representation
of $\mathrm{PGL}_{2}(\mathbb{C})$ is not distinguished but only $\omega_{G}$-distinguished).

\subsection{Structure of the paper}

We prove Theorem \ref{thm 1.3} in several steps. In the first step
we translate the problem from the language of representations to the
language of invariant distributions. We use the complex version of
the Langlands' classification to present a given representation $\pi$
as the unique quotient of $\mathrm{Ind}_{B}^{G}(\chi)$, where $\chi$
is some dominant character (see Theorem \ref{thm3.1}), and to deduce
the following:
\begin{thm*}
(Step 1-Corollary \ref{Corollary 3.7}) Let $\pi\in\mathcal{SAF}_{\mathrm{Irr}}(G)$.
Then $\mathrm{dim}_{\mathbb{C}}\left(\pi^{*}\right)^{H}\leq\mathrm{dim}_{\mathbb{C}}\mathcal{S}^{*}(G/H)^{B,\chi\cdot\delta_{B}^{-1/2}}$,
where $\chi$ is the Langlands parameter of $\pi$ (see Theorem \ref{thm3.1}),
and $\mathcal{S}^{*}(X)$ denotes the space of Schwartz distributions
on a manifold $X$ (see Section \ref{sub:Some tools from...}).
\end{thm*}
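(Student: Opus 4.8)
The plan is to translate $\left(\pi^{*}\right)^{H}$ into a space of $B$-equivariant Schwartz distributions on $G/H$ in three stages: embed into $C^{\infty}(G/H)$ by matrix coefficients, use the complex Langlands classification to replace $\pi$ by a principal series, and then apply Frobenius reciprocity. First, recall that $\left(\pi^{*}\right)^{H}=\mathrm{Hom}_{G}(\pi,C^{\infty}(G/H))$ by Frobenius reciprocity, the map attached to $\lambda\in\left(\pi^{*}\right)^{H}$ being $\Phi_{\lambda}(v)(gH)=\lambda(\pi(g^{-1})v)$, which is well defined by the $H$-invariance of $\lambda$. Since $\pi$ has moderate growth, each $\Phi_{\lambda}(v)$ is a smooth function of moderate growth on $G/H$, hence pairs continuously with Schwartz functions, so $\Phi_{\lambda}$ takes values in $\mathcal{S}^{*}(G/H)$ (a moderate-growth function being viewed as the Schwartz distribution it defines). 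As $\lambda\mapsto\Phi_{\lambda}$ is visibly injective, this embeds $\left(\pi^{*}\right)^{H}$ into $\mathrm{Hom}_{G}(\pi,\mathcal{S}^{*}(G/H))$, where $G$ acts on $\mathcal{S}^{*}(G/H)$ by left translation.

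Second, Theorem \ref{thm3.1} presents $\pi$ as the unique irreducible quotient of the normalized principal series $I(\chi):=\mathrm{Ind}_{B}^{G}(\chi)$ for a dominant character $\chi$; since the projection $I(\chi)\twoheadrightarrow\pi$ is surjective, precomposition with it embeds $\mathrm{Hom}_{G}(\pi,\mathcal{S}^{*}(G/H))$ into $\mathrm{Hom}_{G}(I(\chi),\mathcal{S}^{*}(G/H))$. Third, it remains to identify $\mathrm{Hom}_{G}(I(\chi),\mathcal{S}^{*}(G/H))$ with $\mathcal{S}^{*}(G/H)^{B,\chi\cdot\delta_{B}^{-1/2}}$. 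Here one dualizes and uses reflexivity to get $\mathrm{Hom}_{G}(I(\chi),\mathcal{S}^{*}(G/H))\cong\mathrm{Hom}_{G}(\mathcal{S}(G/H),I(\chi)^{*})$; since $B\backslash G$ is the compact flag variety, $I(\chi)^{*}$ is the distributional dual principal series, whose space of smooth vectors is $\mathrm{Ind}_{B}^{G}(\chi^{-1})$, and a continuous $G$-map out of the smooth representation $\mathcal{S}(G/H)$ automatically lands in smooth vectors, so this is $\mathrm{Hom}_{G}(\mathcal{S}(G/H),\mathrm{Ind}_{B}^{G}(\chi^{-1}))$. Frobenius reciprocity for $\mathrm{Ind}_{B}^{G}$ then identifies this with the space of $B$-equivariant Schwartz distributions on $G/H$ transforming by $\chi\cdot\delta_{B}^{-1/2}$, the half-power of $\delta_{B}$ coming from the normalization of the induction, the density twist on $B\backslash G$, and the left-translation convention. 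Composing the three stages yields $\left(\pi^{*}\right)^{H}\hookrightarrow\mathcal{S}^{*}(G/H)^{B,\chi\cdot\delta_{B}^{-1/2}}$, which is the asserted inequality.

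I expect the third stage to be the genuine obstacle. It bundles together several facts that each need care in the topological (Fr\'echet and Schwartz) setting: the duality between $I(\chi)$ and the distributional dual principal series, which uses compactness of $B\backslash G$ to avoid any growth issue; the fact that a continuous intertwiner into a space of distributions factors through the smooth vectors; a version of Frobenius reciprocity valid for smooth Fr\'echet representations; and -- most error-prone of all -- the exact bookkeeping of modular characters, so that precisely $\chi\cdot\delta_{B}^{-1/2}$ appears rather than $\chi$, $\chi^{-1}$, or $\chi\cdot\delta_{B}^{\pm1}$. By comparison the first stage is the standard dictionary between distinction and embedding into $C^{\infty}(G/H)$, upgraded by the moderate-growth estimate on matrix coefficients, and the second stage is just the complex Langlands classification of Theorem \ref{thm3.1}.
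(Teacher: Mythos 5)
Your proof is correct and reaches the same inequality, but it organizes the argument in a genuinely different order from the paper. The paper dualizes immediately: the surjection $\mathrm{Ind}_{B}^{G}(\chi)\twoheadrightarrow\pi$ gives a formal injection $(\pi^{*})^{H}\hookrightarrow(\sigma^{*})^{H}$ with $\sigma=\mathrm{Ind}_{B}^{G}(\chi)$; then, since $B\backslash G$ is compact, it identifies $\sigma^{*}\simeq\mathcal{S}^{*}(G)^{B,\chi\delta_{B}^{-1/2}}$ in one stroke (citing \cite[Lemma 2.9]{GSS15}), and finally descends $\mathcal{S}^{*}(G)^{H}\simeq\mathcal{S}^{*}(G/H)$ via Proposition \ref{Prop2.3}. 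You instead first realize $(\pi^{*})^{H}$ inside $\mathrm{Hom}_{G}(\pi,\mathcal{S}^{*}(G/H))$ by viewing matrix coefficients as moderate-growth functions and hence as tempered distributions, then pull back along the quotient map, and only then dualize (reflexivity, smooth vectors in $I(\chi)^{*}=I(\chi^{-1})$, Frobenius for smooth induction). The two routes deploy the same three ingredients (Langlands quotient, compactness of $B\backslash G$, Schwartz duality), but the paper's ordering avoids two things you have to pay for: the moderate-growth estimate needed to land in $\mathcal{S}^{*}(G/H)$ rather than merely $C^{\infty}(G/H)$, and the extra functional-analytic bookkeeping (nuclearity/reflexivity of $\mathcal{S}(G/H)$ and $I(\chi)$, the automatic factorization through smooth vectors) that you correctly flag as the delicate part of your Stage~3 -- all of which the paper compresses into a single citation. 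Your version is more self-contained and arguably more transparent conceptually (it makes the matrix-coefficient picture explicit), at the cost of being longer and requiring more care with topologies; the character bookkeeping you were worried about does come out to $\chi\cdot\delta_{B}^{-1/2}$ under the paper's conventions, so that part is fine.
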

The next step is to use some geometric results about $B$-orbits on
$G/H$ (see Section \ref{sec:On the geometry of}, \cite{Sp85} and
\cite{HW93}), together with some tools from the theory of distributions
(see Section \ref{sub:Some tools from...}), to reduce the problem
to a question about a single $B$-orbit. More precisely, we prove
the following theorem:
\begin{thm*}
(Step 2- Theorem \ref{Theorem 5.2}) If $\mathcal{S}^{*}(G/H)^{B,\chi\cdot\delta_{B}^{-1/2}}\neq0$
then there exists $n\in N_{G}(T)$ such that $\theta(n)=n^{-1}$ and:
\[
\mathrm{dim}_{\mathbb{C}}\left(\mathrm{Sym}(N_{B(n),n}^{Q'})\otimes_{\mathbb{R}}\mathbb{C})^{T^{\theta_{n}},\chi}\right)>0,
\]
where $Q'=\{g\in G|\theta(g)=g^{\text{\textminus}1}\}$, $B(n)$ is
the $B$ orbit of $n\in Q'\cap N_{G}(T)$, and $T^{\theta_{n}}$ is
the fixed points subgroup of $T$ under $\theta_{n}$.
\end{thm*}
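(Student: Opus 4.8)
The plan is to reduce the nonvanishing statement $\mathcal{S}^{*}(G/H)^{B,\chi\cdot\delta_{B}^{-1/2}}\neq 0$ to a local statement at a single $B$-orbit in $G/H$, and then to identify the contribution of that orbit with a symmetric algebra of a normal bundle. First I would recall from Section \ref{sec:On the geometry of} (cf. \cite{Sp85}, \cite{HW93}) the description of $B$-orbits on $G/H$: via the map $gH\mapsto g\theta(g)^{-1}$ one relates $G/H$ (for $H$ open in $G^{\theta}$) to the symmetric variety $Q=\{g\theta(g)^{-1}\}\subseteq Q'=\{g\in G:\theta(g)=g^{-1}\}$, and each $B$-orbit meets $N_G(T)$; choosing representatives $n\in Q'\cap N_G(T)$ with $\theta(n)=n^{-1}$ gives a finite parametrization of the (finitely many) $B$-orbits $B(n)$. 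Since there are finitely many orbits and $G/H$ is a smooth real-analytic manifold, I can stratify $G/H$ by $B$-orbit dimension and filter $\mathcal{S}^{*}(G/H)$ accordingly: a standard argument (the tools of Section \ref{sub:Some tools from...}, i.e. Bernstein–Gelfand localization/Frobenius reciprocity for distributions supported on submanifolds) shows that if the equivariant distribution space is nonzero, then it is nonzero on some open $B$-invariant subset in which a single orbit $B(n)$ is closed, hence nonzero already on the stratum $B(n)$.

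Next I would compute the equivariant distributions supported on a single orbit. Distributions on $G/H$ supported on the closed orbit $B(n)\cong B/B_{n}$ (where $B_n$ is the stabilizer) are, by the transverse-jet description, sections of $\mathrm{Sym}(N^{\vee})$ twisted by the density bundle, where $N = N_{B(n),n}^{Q'}$ is the normal space to the orbit inside the relevant ambient variety $Q'$ at the point $n$. Applying Frobenius reciprocity, the $(B,\chi\cdot\delta_B^{-1/2})$-equivariant such distributions become $\mathrm{Sym}(N^\vee\otimes_{\mathbb{R}}\mathbb{C})$-valued vectors invariant under the stabilizer, twisted by the modular characters; the key simplification is that the unipotent radical acts trivially on the relevant quotient (orbits meet $N_G(T)$ and the transverse geometry is controlled by the torus), so only the torus part $T^{\theta_n}$ of the stabilizer survives, and the modular-character twist is absorbed precisely by hypothesis $(\star)$, which forces $\delta_{B^{\theta_n}}=\delta_B^{1/2}|_{B^{\theta_n}}$ and hence cancels the $\delta_B^{-1/2}$ shift. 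This yields the displayed inequality
\[
\mathrm{dim}_{\mathbb{C}}\bigl(\mathrm{Sym}(N_{B(n),n}^{Q'})\otimes_{\mathbb{R}}\mathbb{C})^{T^{\theta_{n}},\chi}\bigr)>0.
\]

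The main obstacle I expect is the bookkeeping of modular characters and density bundles in the transverse-jet/Frobenius-reciprocity computation: one must carefully track how $\delta_B$, the density bundle along the orbit $B(n)$, and the modular character of the stabilizer $B_n$ interact, and verify that after all twists the surviving object is exactly $\mathrm{Sym}$ of the complexified normal bundle with the $T^{\theta_n}$-action and the character $\chi$ (rather than $\chi$ shifted by some residual modular factor). This is where condition $(\star)$ is used in an essential way — it is precisely the identity needed to make the $\delta_B^{1/2}$-twist disappear at orbits corresponding to $n$ with $\theta(n)=n^{-1}$. A secondary technical point is justifying the reduction to a closed orbit: I would order the finitely many orbits by a suitable partial order (closure relations) and use the long exact sequence in distributions associated to an open-closed decomposition, peeling off strata until the equivariant functional is forced to live on (the jets along) one orbit; continuity and the Casselman–Wallach formalism of Section \ref{sub:Some tools from...} ensure this is legitimate in the $\mathcal{SAF}$ setting.
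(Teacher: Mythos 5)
Your plan matches the paper's argument: stratify $G/H$ by $B$-orbits (Corollary \ref{cor 4.8}), apply the stratification bound plus Frobenius descent (Corollary \ref{cor 2.6}) to force a nonzero contribution from $\mathrm{Sym}^k$ of a normal space at a single orbit representative, transport normal spaces from $G/H$ to $Q'$ via the cover $G/H\to G/G^\theta$ followed by $\widetilde\tau$, identify the stabilizer $B_n$ with $B^{\theta_n}$, and invoke property $(\star)$ to cancel the modular shift. Two remarks. First, the step ``only the torus part $T^{\theta_n}$ of the stabilizer survives'' because the unipotent radical acts trivially is an overclaim: the paper does not prove (and does not need) that the unipotent part of $B^{\theta_n}$ acts trivially on $\mathrm{Sym}(N^{Q'}_{B(n),n})\otimes\mathbb{C}$; it simply notes that any $(B^{\theta_n},\chi\delta_{B^{\theta_n}}^{-1}\delta_B^{1/2})$-equivariant vector is a fortiori $(T^{\theta_n},\cdot)$-equivariant after restriction, which is all that is required for positivity. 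Second, the normal space comparison $N^{G/H}_{B(x_i),x_i}\simeq N^{Q'}_{B(n_{j_i}),n_{j_i}}$ passes through two identifications (covering map $G/H\to G/G^\theta$ is a local $B$-isomorphism, and $Q$ is open in $Q'$); your sketch alludes to the map $g\mapsto g\theta(g)^{-1}$ but does not spell out that the $B$-orbits you end up evaluating jets on live in $Q'$, not $G/H$. Both gaps are easily closed and the overall route is the same as the paper's.
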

Finally, by direct calculations, we decompose $\mathrm{Sym}(N_{B(n),n}^{Q'})\otimes_{R}\mathbb{C})^{T^{\theta_{n}},\chi}$
into eigenspaces of $T^{\theta_{n}}$ that correspond to some negative
weights and use the fact that $\chi$ is dominant to deduce the following
theorem, which implies Theorem \ref{thm 1.3} a) and b). 
\begin{thm*}
(Step 3- Theorem \ref{thm6.2}) If there exists $n\in N_{G}(T)$ such
that $\theta(n)=n^{-1}$ and:
\[
\mathrm{dim}_{\mathbb{C}}\left(\mathrm{Sym}(N_{B(n),n}^{Q'})\otimes_{\mathbb{R}}\mathbb{C})^{T^{\theta_{n}},\chi}\right)>0,
\]
 then $\mathrm{dim}_{\mathbb{C}}\left(\mathrm{Sym}(N_{B(n),n}^{Q'})\otimes_{R}\mathbb{C})^{T_{n},\chi}\right)=1$
and $\tilde{\pi}\simeq\theta\circ\pi$.
\end{thm*}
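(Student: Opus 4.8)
The plan is to reduce to an explicit computation on the single $B$-orbit $B(n)$, using the local structure of the orbit together with the hypothesis that $\chi$ is dominant. First I would make the normal bundle concrete: by Lemma \ref{lemma 5.1} and the geometry of $B$-orbits on $G/H$ recorded in Section \ref{sec:On the geometry of}, the conormal space $N_{B(n),n}^{Q'}$ at the point $n\in Q'\cap N_G(T)$ decomposes as a sum of root spaces for the torus $T^{\theta_n}$ acting on $\mathfrak{g}/\mathfrak{b}$, restricted to those roots $\alpha$ which are ``moved'' by the twisted involution $\theta_n$ in the appropriate sense (the roots contributing to the normal, rather than tangent, directions of the orbit). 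The key point I would establish is that each such $\alpha$, when restricted to $T^{\theta_n}$, is a \emph{negative} weight relative to the dominance order fixed by our choice of $B$ — this is exactly where condition $(\star)$ enters, as it pins down the modular character $\delta_{B^{\theta_n}}$ and hence forces the half-sum of the relevant roots to sit on the correct side.

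Next I would analyze the $T^{\theta_n}$-equivariant structure of $\mathrm{Sym}(N_{B(n),n}^{Q'}\otimes_{\mathbb R}\mathbb C)$. Writing this symmetric algebra as a polynomial algebra on the root-space coordinates, its $T^{\theta_n}$-weights are the non-negative-integer combinations $\sum_\alpha m_\alpha\,\alpha|_{T^{\theta_n}}$ of the negative weights identified above. The $\chi$-eigenspace is therefore governed by which such combinations equal $\chi|_{T^{\theta_n}}$. Since $\chi$ is dominant (the Langlands parameter from Theorem \ref{thm3.1}), and every generator weight is strictly negative in the relevant sense, the only solution is the empty combination: all $m_\alpha=0$. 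This yields at once that the eigenspace is the span of the constant function, so $\mathrm{dim}_{\mathbb C}\big(\mathrm{Sym}(N_{B(n),n}^{Q'}\otimes_{\mathbb R}\mathbb C)\big)^{T^{\theta_n},\chi}=1$, and moreover forces $\chi|_{T^{\theta_n}}$ to be trivial.

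Finally I would deduce the symmetry $\tilde\pi\simeq\pi^\theta$. The triviality of $\chi$ on $T^{\theta_n}$ means $\chi$ is ``anti-invariant'' under the twisted involution: $\chi\circ\theta_n = \chi^{-1}$ on $T$, equivalently $\chi^{\theta}$ and $\chi^{-1}$ are conjugate under the Weyl group element represented by $n$. Translating back through the Langlands classification (Theorem \ref{thm3.1}), the representation $\mathrm{Ind}_B^G(\chi)$ has $\widetilde{\mathrm{Ind}_B^G(\chi)}\simeq\mathrm{Ind}_B^G(\chi^{-1})$ and $\big(\mathrm{Ind}_B^G(\chi)\big)^\theta\simeq\mathrm{Ind}_B^G(\chi^\theta)$ up to the Weyl action, and the $w$-conjugacy of $\chi^{-1}$ and $\chi^\theta$ then identifies the unique irreducible quotients, giving $\tilde\pi\simeq\pi^\theta$.

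I expect the main obstacle to be the second sentence of the first paragraph: correctly identifying, with signs, which root spaces occur in the conormal direction and verifying that condition $(\star)$ indeed forces them all to restrict to negative weights on $T^{\theta_n}$. This requires a careful bookkeeping of the $T^{\theta_n}$-action on $\mathfrak g/(\mathfrak g^{\theta_n}+\mathfrak b)$ and a compatibility check between the modular-character identity in $(\star)$ and the dominance chamber; everything afterward is a short, formal consequence.
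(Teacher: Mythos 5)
Your overall skeleton matches the paper: decompose $N_{B(n),n}^{Q'}$ into root spaces, identify the $T^{\theta_n}$-weights of the symmetric algebra, and play them off against the dominance of the Langlands parameter $\chi$ to force the zero-degree piece. But there is a genuine gap in the middle of your argument, and one misattribution of where a hypothesis is used.

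The gap: you assert that since each generator weight is ``strictly negative'' and $\chi$ is dominant, the only solution to $\alpha^\varphi|_{T^{\theta_n}}=\chi|_{T^{\theta_n}}$ is $\varphi=0$. This does not follow directly. Dominance of $\chi$ is a statement about the pairing with roots on $\mathfrak a_{\mathfrak p}$, whereas the equality you need to rule out is an identity of characters of the smaller torus $T^{\theta_n}$; a negative root restricted to $T^{\theta_n}$ has no intrinsic ``sign,'' and distinct negative roots can have linearly dependent or even coincident restrictions there. The paper's crucial device (Lemma \ref{lemma 6.12}) is the observation that $\{n\theta(t)n^{-1}t : t\in T\}\subseteq T^{\theta_n}$. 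Evaluating the identity $\alpha^\varphi|_{T^{\theta_n}}=\chi|_{T^{\theta_n}}$ on this subgroup ``doubles'' every character via $\beta\mapsto \beta\cdot(w\circ\theta)(\beta)$ and produces an equality of characters of the full torus $T$, which after restriction to $A_{\mathfrak p}$ and taking differentials becomes $\sum_j n_j(\alpha_j + w\theta(\alpha_j)) = d(\chi|_{\mathfrak a_{\mathfrak p}}) + d(w\theta(\chi)|_{\mathfrak a_{\mathfrak p}})$. Only at that point can one run the inner-product argument (pairing both sides with $\sum_j n_j(\alpha_j + w\theta(\alpha_j))$, using $w\theta$-invariance of the form, positivity of the norm on the left, and $\leq 0$ from dominance on the right) to force all $n_j=0$. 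Your proposal does not supply this bridge, and ``everything afterward is a short, formal consequence'' is not accurate: the inner-product computation is the heart of the step.

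The misattribution: you state that condition $(\star)$ ``enters'' here by pinning down $\delta_{B^{\theta_n}}$. In the paper, $(\star)$ is consumed entirely in Step 2 (the proof of Theorem \ref{Theorem 5.2}), where it is used to simplify the character $\chi\delta_{B^{\theta_n}}^{-1}\delta_B^{1/2}$ to $\chi$ before restricting from $B^{\theta_n}$ to $T^{\theta_n}$. The proof of Theorem \ref{thm6.2} itself does not invoke $(\star)$; its inputs are the weight decomposition of $N_{B(n),n}^{Q'}$ (that the contributing roots and their $w\theta$-images are negative) and the dominance of $\chi$. Finally, in the last step you should also check $w\circ\theta = \theta\circ w^{-1}$ before invoking Corollary \ref{cor 3.6}; this follows from $\theta(n)=n^{-1}$ but needs to be said.
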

In Section 2 we present some basic definitions and tools from the
theory of distributions. In Section 3 we introduce the complex version
of Langlands' classification and some corollaries, including the proof
of the first step (Corollary \ref{Corollary 3.7}). In Section 4 we
present several results from \cite{Sp85} and \cite{HW93} on $B$-orbits
on $G/H$. In Section 5 we make several reductions using the tools
in Section \ref{sub:Some tools from...} and prove Theorem \ref{Theorem 5.2}-
the second step. In Section 6 we prove the third step and finish the
proof of Theorem 1.3. 
\begin{acknowledgement*}
I would like to thank my advisors \textbf{Avraham Aizenbud} and \textbf{Dmitry
Gourevitch} for presenting me with this problem, teaching and helping
me in this work. I hold many thanks to \textbf{Dipendra Prasad }for
introducing me with his work and for a very fruitful discussion and
enlightening remarks. I also thank \textbf{Erez Lapid, Yotam Hendel},
\textbf{Shachar Carmeli} and \textbf{Arie Levit} for helpful discussions.
I was partially supported by ISF grant 687/13, ISF grant 756/12, ERC
StG 637912 and by Minerva foundation grant.
\end{acknowledgement*}

\section{Preliminaries}

\subsection{Basic definitions}

A $\emph{reductive}$ algebraic group is an algebraic group $\underline{G}$
over an algebraically closed field such that the unipotent radical
of $\underline{G}$ is trivial. A $\emph{real reductive group}$ $G$
is the real points $G=\underline{G}(\mathbb{R})$ of an algebraic
group $\underline{G}$ defined over $\mathbb{R}$, where $\underline{G}(\mathbb{C})$
is reductive. The group $\underline{G}(\mathbb{R})$ inherits the
natural topology from $\mathbb{R}^{N}$ and this gives $\underline{G}(\mathbb{R})$
a structure of a Lie group. 

Let $G=\underline{G}(\mathbb{R})$ be a real reductive group and $H$
a closed subgroup. A pair $(G,H)$ is called a $real\,\emph{symmetric\,pair}$
if there exists an involution $\theta$ of $\underline{G}$ such that
$H$ is an open subgroup of $\underline{G}^{\theta}(\mathbb{R})$
of $\theta$-invariant elements. 

A $\emph{Fréchet representation}$ $(\pi,V)$ of $G$ over $\mathbb{C}$
is a morphism of groups $\pi:G\rightarrow Aut_{\mathbb{C}}(V)$ ,
such that $V$ is a Fréchet $\mathbb{C}-$vector space, and that $G\times V\rightarrow V$
is continuous. A representation $(\pi,V)$ of $G$ over $\mathbb{C}$
is called $\emph{irreducible}$ if $V\neq0$ and if $V$ has no proper,
closed $G$-invariant subspace. A map $f:G\rightarrow V$ is called
$\emph{smooth}$ if it has continuous partial derivatives of all orders,
where the derivatives and their continuity are with respect to the
Lie structure of $G$. A vector $v\in V$ is called $\emph{smooth}$
if its orbit map $c_{v}:G\rightarrow V$ defined by $c_{v}(g)=\pi(g)v$
is a smooth map. 

A representation $(\pi,V)$ of $G$ over $\mathbb{C}$ is called:
\begin{itemize}
\item $\emph{Smooth}$, if every $v\in V$ is smooth.
\item $\emph{Admissible}$, if for every maximal compact subgroup $K$ and
every irreducible representation $\rho$ of $K$, $\rho$ has finite
multiplicity in $\pi|_{K}$.
\end{itemize}
If $(\pi,V)$ is a smooth representation, the dual representation
$(\pi^{*},V^{*})$ is defined by $V^{*}:=\mathrm{Hom}_{\mathbb{C}}(V,\mathbb{C})$
with the usual left $G$-action $g(f)(v)=f(g^{-1}v)$. It may not
be smooth, and also $V^{*}$ is not necessarily a Fréchet space. Thus,
we define $V^{*\infty}:=$$C_{c}^{\infty}(G)(V^{*})$ where $C_{c}^{\infty}(G)$
acts on $V^{*}$ by $f(\varphi)(v)=\int_{G}f(g)\cdot\langle\pi^{*}(g)(\varphi),v\rangle dg$,
where $f\in C_{c}^{\infty}(G)$ and $\varphi\in V^{*}$. This space
is called the $\emph{Garding space}$ and it is a Fréchet space consisting
of smooth vectors, so if we denote $\tilde{V}:=V^{*\infty}$, this
defines a smooth representation $(\tilde{\pi},\tilde{V})$ called
the $\emph{contragredient representation}$. A smooth representation
$(\pi,V)$ of $G$ such that $\left(\pi^{*}\right)^{H}\neq0$ for
a subgroup $H$, is called $\emph{H-distinguished}$. 

Assume that $G$ is real reductive group and fix a maximal compact
$K$. Fix a faithful algebraic representation $\rho:G\hookrightarrow\mathrm{GL}_{n}(\mathbb{R})$
and define $\left\Vert g\right\Vert :=\mathrm{Tr}(\rho(g)\cdot\rho(g)^{t})+\mathrm{Tr}(\rho(g^{-1}),\rho(g^{-1})^{t})$.
Let $(\pi,V)$ be a smooth Fréchet representation and denote by $\left\{ P_{i}\right\} _{i\in I}$
the continuous semi-norms that define the Fréchet structure of $V$.
We say that $(\pi,V)$ is $\emph{of moderate growth}$ if for any
semi-norm $P_{i}$ there exists $P_{j}$ and $k(i)\in\mathbb{N}_{>0}$
such that:
\[
P_{i}(\pi(g)v)\leq\left\Vert g\right\Vert ^{k(i)}\cdot P_{j}(v)
\]
for all $g\in G$ and $v\in V$ (see \cite[2.3]{BK14} for more details).
A vector $v\in(\pi,V)$ is called a $\emph{K-finite vector}$ if $\pi(K)v$
lies in a finite dimensional subspace of $V$. The space of $K$-finite
vectors of $V$ is denoted $V_{K}$. $(\pi,V)$ is called $\emph{finitely generated}$
if $V_{K}$ is a finitely generated $\mathcal{U}(\mathfrak{g})$-module,
where $\mathfrak{g}$ is the Lie algebra of $G$ and $\mathcal{U}(\mathfrak{g})$
is its universal enveloping algebra. 

Putting everything together, we denote by $\mathcal{SAF}(G)$ the
category of all finitely generated, smooth, admissible, moderate growth,
Fréchet representations of $G$ with continuous linear $G$-maps as
morphisms.

\subsection{\label{sub:Some tools from...}Some tools from the theory of distributions}

We will work with the notation and tools presented in \cite[Appendix B]{ALOF12}.
For the convenience of the reader, we present notation and reformulations
of some of the theorems that appear in \cite[Appendix B]{ALOF12},
in a version better suited for this work. The results in this section
holds for $\emph{Nash manifolds}$, i.e smooth, semi-algebraic varieties.
As any smooth algebraic variety can be endowed with a natural structure
of a Nash manifold, we can apply the results presented in this section
for smooth\textbf{ }algebraic varieties . Let $G$ be an arbitrary
group. 
\begin{itemize}
\item For any $G$-set $X$ and a point $x\in X$, we denote by $G(x)$
the $G$-orbit of $x$ and by $G_{x}$ the stabilizer of $x$.
\item For any representation of $G$ on a vector space $V$ and a character
$\chi$ of $G$, we denote by $V^{G,\chi}$ the subspace of $(G,\chi)$-equivariant
vectors in $V$.
\item Given manifolds $L\subseteq M$, we denote by $N_{L}^{M}:=(T_{M}|_{L})/T_{L}$
the normal bundle to $L$ in $M$ and by $CN_{L}^{M}:=(N_{L}^{M})^{*}$
the conormal bundle. For any point $y\in L$, we denote by $N_{L,y}^{M}$
the normal space to $L$ in $M$ at the point $y$ and by $CN_{L,y}^{M}$
the conormal space to $L$ in $M$ at the point $y$.
\item The symmetric algebra of a vector space $V$ will be denoted by $\mathrm{Sym}(V)=\oplus_{k\ge0}\mathrm{Sym^{k}}(V)$. 
\item The Fréchet space of Schwartz functions on a Nash manifold $X$ will
be denoted by $\mathcal{S}(X)$ and the dual space of Schwartz distributions
will be denoted by $\mathcal{S}^{*}(X)$ .
\item For any Nash vector bundle $E$ over $X$ we denote by $\mathcal{S}(X,E)$
the space of Schwartz sections of $E$ and by $\mathcal{S}^{*}(X,E)$
its dual space. 
\item For a closed subset $Z$ of a smooth manifold $X$ we set $\mathcal{S}_{Z}^{*}(X):=\{\xi\in\mathcal{S}^{*}(X):\,\mathrm{Supp}(\xi)\subseteq Z\}$.\end{itemize}
\begin{prop}
\label{prop 2.1}If $U$ is an open submanifold of $X$ then we have
the following exact sequence:

\[
0\longrightarrow\mathcal{S}_{X\backslash U}^{*}(X)\overset{i}{\longrightarrow}\mathcal{S}^{*}(X)\overset{ext^{*}}{\longrightarrow}\mathcal{S}^{*}(U)\longrightarrow0
\]

where $i:\mathcal{S}_{X\backslash U}^{*}(X)\hookrightarrow\mathcal{S}^{*}(X)$
is the natural inclusion and $ext^{*}:\mathcal{S}^{*}(X)\longrightarrow\mathcal{S}^{*}(U)$
is the dual map to extension by zero. \end{prop}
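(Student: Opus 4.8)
The plan is to deduce this sequence from the corresponding short exact sequence of Fréchet spaces of Schwartz \emph{functions} by dualization. Recall from the foundational theory of Schwartz functions on Nash manifolds (see \cite{ALOF12} and the references therein) that for an open Nash submanifold $U\subseteq X$ the extension-by-zero map $ext:\mathcal{S}(U)\longrightarrow\mathcal{S}(X)$ is a well-defined continuous linear map which is moreover a closed embedding. Consequently $\mathcal{S}(U)$ is a closed subspace of the Fréchet space $\mathcal{S}(X)$, so $\mathcal{S}(X)/\mathcal{S}(U)$ is again a Fréchet space, and we obtain a short exact sequence of Fréchet spaces
\[
0\longrightarrow\mathcal{S}(U)\overset{ext}{\longrightarrow}\mathcal{S}(X)\overset{q}{\longrightarrow}\mathcal{S}(X)/\mathcal{S}(U)\longrightarrow0,
\]
where $q$ is the quotient map.

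First I would dualize this sequence. Since $q$ is surjective, its transpose $q^{*}$ is injective, with image the annihilator $\mathcal{S}(U)^{\perp}=\{\xi\in\mathcal{S}^{*}(X):\,\xi\circ ext=0\}$; since $ext$ is a closed embedding, the Hahn--Banach theorem shows that every element of $\mathcal{S}^{*}(U)$ extends to an element of $\mathcal{S}^{*}(X)$, so $ext^{*}:\mathcal{S}^{*}(X)\longrightarrow\mathcal{S}^{*}(U)$ is surjective. As $\ker(ext^{*})$ is precisely $\mathcal{S}(U)^{\perp}=\mathrm{Im}(q^{*})$, the dual sequence
\[
0\longrightarrow(\mathcal{S}(X)/\mathcal{S}(U))^{*}\overset{q^{*}}{\longrightarrow}\mathcal{S}^{*}(X)\overset{ext^{*}}{\longrightarrow}\mathcal{S}^{*}(U)\longrightarrow0
\]
is exact.

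It then remains to identify $(\mathcal{S}(X)/\mathcal{S}(U))^{*}$, realized via $q^{*}$ as the subspace $\mathcal{S}(U)^{\perp}\subseteq\mathcal{S}^{*}(X)$, with $\mathcal{S}_{X\backslash U}^{*}(X)$, compatibly with the inclusions into $\mathcal{S}^{*}(X)$. For $\xi\in\mathcal{S}^{*}(X)$ the image $ext^{*}(\xi)\in\mathcal{S}^{*}(U)$ is by definition the restriction $\xi|_{U}$, and by the definition of the support of a distribution as the complement of the largest open set to which the distribution restricts to zero, one has $\xi|_{U}=0$ if and only if $\mathrm{Supp}(\xi)\subseteq X\backslash U$. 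Hence $\ker(ext^{*})=\mathcal{S}_{X\backslash U}^{*}(X)$, and under this identification $q^{*}$ becomes the natural inclusion $i$; substituting this into the exact sequence above yields the assertion. The only substantive input is the closed-embedding property of $ext$, which is part of the theory recalled in \cite{ALOF12}; granting it, the argument is soft functional analysis, and the one point to watch is that the dualization be carried out in the category of Fréchet spaces — which is exactly why the \emph{closedness} of $\mathcal{S}(U)$ in $\mathcal{S}(X)$, rather than merely its being a subspace, is what is needed.
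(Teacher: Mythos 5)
Your argument is correct. The paper does not actually prove Proposition \ref{prop 2.1} --- it is recalled from Appendix B of \cite{ALOF12}, which in turn rests on the foundational theory of \cite{AG08} --- but your proof (dualize the short exact sequence $0\to\mathcal{S}(U)\to\mathcal{S}(X)\to\mathcal{S}(X)/\mathcal{S}(U)\to0$ using the closed-embedding property of extension by zero, obtain surjectivity of $ext^{*}$ from Hahn--Banach, and identify the annihilator of $ext(\mathcal{S}(U))$ with $\mathcal{S}_{X\backslash U}^{*}(X)$ via the definition of support) is precisely the argument given in those sources, with the closed-embedding fact being the one genuinely Nash-theoretic input.
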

\begin{defn*}
If $X$ is a smooth manifold and $G$ acts on $X$, then $X=\overset{l}{\underset{i=1}{\cup}}X_{i}$
is called a $G$-$\emph{invariant stratification}$ if all sets $X_{i}$
are $G$-invariant and $\overset{j}{\underset{i=1}{\cup}}X_{i}$ is
open in $X$ for any $1\leq j\leq l$.\end{defn*}
\begin{prop}
\label{prop2.2}Let $G$ be a Nash group that acts on a Nash manifold
$X$. Let $X=\overset{l}{\underset{i=1}{\cup}}X_{i}$ be a $G$-invariant
stratification . Let $\chi$ be a character of $G$. Then:
\[
\mathrm{dim}_{\mathbb{C}}(\mathcal{S}^{*}(X)^{G,\chi})\leq\sum_{i=1}^{l}\sum_{k=0}^{\infty}\mathrm{dim}_{\mathbb{C}}(\mathcal{S}^{*}(X_{i},\mathrm{Sym^{k}}(CN_{X_{i}}^{X}))^{G,\chi}).
\]

\end{prop}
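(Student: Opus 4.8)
The plan is a two-step reduction: first I would peel off one stratum at a time via the exact sequence of Proposition \ref{prop 2.1}, reducing to a bound on distributions supported on a single stratum, and then I would control the latter by filtering $\mathcal{S}^{*}$ according to the order of transversal differentiation.

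For $0\le j\le l$ put $U_j:=\bigcup_{i=1}^{j}X_i$, so that $U_0=\emptyset$, $U_l=X$, and — by the definition of a $G$-invariant stratification — each $U_j$ is an open, $G$-invariant submanifold of $X$; hence $X_j=U_j\setminus U_{j-1}$ is a closed $G$-invariant subset of $U_j$ and a locally closed Nash submanifold of $X$ with $CN_{X_j}^{U_j}=CN_{X_j}^{X}$ (the last point is what makes the conormal bundles in the statement well defined, and is implicit there). Applying Proposition \ref{prop 2.1} with $U_{j-1}\subseteq U_j$ in place of $U\subseteq X$ gives a short exact sequence
\[
0\longrightarrow \mathcal{S}^{*}_{X_j}(U_j)\longrightarrow \mathcal{S}^{*}(U_j)\longrightarrow \mathcal{S}^{*}(U_{j-1})\longrightarrow 0,
\]
whose maps are $G$-equivariant, since the inclusion and the dual of extension by zero are equivariant for the natural $G$-actions on $U_j$ and $U_{j-1}$. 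The functor $V\mapsto V^{G,\chi}$ is left exact, so $\dim_{\mathbb{C}}\mathcal{S}^{*}(U_j)^{G,\chi}\le\dim_{\mathbb{C}}\mathcal{S}^{*}_{X_j}(U_j)^{G,\chi}+\dim_{\mathbb{C}}\mathcal{S}^{*}(U_{j-1})^{G,\chi}$, and iterating from $j=1$ to $l$ yields
\[
\dim_{\mathbb{C}}\mathcal{S}^{*}(X)^{G,\chi}\ \le\ \sum_{j=1}^{l}\dim_{\mathbb{C}}\mathcal{S}^{*}_{X_j}(U_j)^{G,\chi}.
\]

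Next I would bound each term. Here I invoke the description of distributions supported on a Nash submanifold recalled from \cite[Appendix B]{ALOF12}: since $X_j$ is $G$-invariant, $\mathcal{S}^{*}_{X_j}(U_j)$ carries an exhaustive increasing $G$-equivariant filtration $0=F_{-1}\subseteq F_0\subseteq F_1\subseteq\cdots$ with $\bigcup_{k\ge0}F_k=\mathcal{S}^{*}_{X_j}(U_j)$ and $G$-equivariant isomorphisms $F_k/F_{k-1}\cong\mathcal{S}^{*}(X_j,\mathrm{Sym}^{k}(CN_{X_j}^{U_j}))$ (the filtration by transversal order; the density-bundle bookkeeping is absorbed into the conventions of \cite[Appendix B]{ALOF12}). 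From the exact sequences $0\to F_{k-1}\to F_k\to\mathcal{S}^{*}(X_j,\mathrm{Sym}^{k}(CN_{X_j}^{U_j}))\to0$ and left exactness of $(\cdot)^{G,\chi}$, an induction on $k$ gives $\dim_{\mathbb{C}}F_k^{G,\chi}\le\sum_{m=0}^{k}\dim_{\mathbb{C}}\mathcal{S}^{*}(X_j,\mathrm{Sym}^{m}(CN_{X_j}^{U_j}))^{G,\chi}$; since $\mathcal{S}^{*}_{X_j}(U_j)^{G,\chi}=\bigcup_{k}F_k^{G,\chi}$ (equivariance of a distribution is intrinsic, and each $F_k$ is $G$-stable) is an increasing union of linear subspaces, its dimension equals $\sup_k\dim_{\mathbb{C}}F_k^{G,\chi}$, which is $\le\sum_{k\ge0}\dim_{\mathbb{C}}\mathcal{S}^{*}(X_j,\mathrm{Sym}^{k}(CN_{X_j}^{U_j}))^{G,\chi}$ (trivially if this series diverges; if it converges, the chain stabilises). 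Substituting $CN_{X_j}^{U_j}=CN_{X_j}^{X}$ into the previous display finishes the proof.

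Everything here apart from the transversal-order filtration of $\mathcal{S}^{*}_{X_j}(U_j)$ is purely formal, so I expect the write-up to be short. The one point needing care — the ``hard part'', such as it is — is to cite that filtration in exactly the right ($G$-equivariant, exhaustive) form with the stated associated graded, and to handle the passage from the finite stages $F_k$ to their union when some of the dimensions involved are infinite; the statement being proved is essentially just the $(G,\chi)$-equivariant shadow of the standard d\'evissage for distributions, so no genuinely new obstacle is expected.
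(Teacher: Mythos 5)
Your proof is correct, but note that the paper itself offers no proof of Proposition~\ref{prop2.2}: it is stated, together with Propositions~\ref{prop 2.1}, \ref{Prop2.3}, and Theorem~\ref{thm:(Frobenius-descent).-Let}, as a reformulation of results from \cite[Appendix~B]{ALOF12} (see the opening of Section~\ref{sub:Some tools from...}). So there is no ``paper's own proof'' to compare against; the relevant comparison is with the d\'evissage argument that underlies the cited results, and your write-up reproduces exactly that argument. In particular, the two ingredients you isolate are the right ones: (i) the open/closed exact sequence of Proposition~\ref{prop 2.1} applied along the filtration $U_j=\bigcup_{i\le j}X_i$ to reduce to distributions supported on a single stratum, and (ii) the exhaustive $G$-equivariant filtration of $\mathcal{S}^{*}_{Z}(X)$ by transversal order with graded pieces $\mathcal{S}^{*}(Z,\mathrm{Sym}^{k}(CN_{Z}^{X}))$, which in the Nash/Schwartz setting is a theorem rather than merely a heuristic (this is the point you flag as the ``hard part'', and you are right to cite it rather than reprove it). Your handling of the passage to the union of the $F_k^{G,\chi}$ is also correct: left exactness of $(\cdot)^{G,\chi}$ gives $\dim F_k^{G,\chi}-\dim F_{k-1}^{G,\chi}\le\dim(F_k/F_{k-1})^{G,\chi}$, so the chain $F_k^{G,\chi}$ either stabilises or forces the series on the right to diverge. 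In short, the proposal is a faithful reconstruction of the standard proof; its only cost relative to the paper is that the paper simply cites this as known.
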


\begin{prop}
\label{Prop2.3}Let $\underline{G}$ be an algebraic groups and $\underline{H}$
a closed subgroup. Set $G=\underline{G}(\mathbb{R}),\,H=\underline{H}(\mathbb{R})$
. Then $\mathcal{S}^{*}(G)^{H}\simeq\mathcal{S}^{*}(G/H)$.
\end{prop}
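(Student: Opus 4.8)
The plan is to realize the isomorphism through the fibre-integration map along the Nash submersion $p\colon G\longrightarrow G/H$. I would fix a Haar measure $dh$ on $H$ --- bi-invariant, which is harmless since the subgroups $H$ occurring in our applications are reductive, hence unimodular --- and define $p_{*}\colon\mathcal{S}(G)\longrightarrow\mathcal{S}(G/H)$ by $(p_{*}f)(gH)=\int_{H}f(gh)\,dh$. Since $p$ is a Nash submersion whose fibres are the translates of $H$, and Schwartz functions on $H$ are Haar-integrable, $p_{*}$ is a well-defined continuous linear map (that the image is genuinely Schwartz is seen, as below, by passing to a local trivialization where $p_{*}$ becomes $\mathrm{id}\,\widehat{\otimes}\int_{H}$); it is surjective because $p_{*}\big((\phi\circ p)\cdot b\big)=\phi$ for every $\phi\in\mathcal{S}(G/H)$, where $b\in C_{c}^{\infty}(G)$ is a Bruhat function with $\int_{H}b(gh)\,dh\equiv 1$ on $G$ (such a $b$ exists by the standard construction underlying quasi-invariant measures on $G/H$, and $(\phi\circ p)\cdot b$ is compactly supported, hence Schwartz). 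Because $dh$ is also right-invariant, $p_{*}\circ R_{h_{0}}=p_{*}$ for all $h_{0}\in H$, so the transpose $(p_{*})^{*}$ is an injection $\mathcal{S}^{*}(G/H)\hookrightarrow\mathcal{S}^{*}(G)$ whose image lies inside $\mathcal{S}^{*}(G)^{H}$. Everything then reduces to the assertion
\[
\ker p_{*}\;=\;\overline{\mathrm{span}}\,\{\,f-R_{h}f\;:\;f\in\mathcal{S}(G),\ h\in H\,\},
\]
since a continuous functional on $\mathcal{S}(G)$ is $H$-invariant exactly when it kills the right-hand side, and then, granting the displayed equality, exactly when it factors through $p_{*}$, i.e.\ lies in the image of $(p_{*})^{*}$.

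To prove that equality I would argue locally over $G/H$. The inclusion $\supseteq$ is clear. For $\subseteq$, use that a Nash submersion admits local Nash sections, so $G/H$ has a finite cover by Nash open sets $U_{i}$ with trivializations $p^{-1}(U_{i})\cong U_{i}\times H$ under which $p$ is the projection and $p_{*}$ becomes $\mathrm{id}_{\mathcal{S}(U_{i})}\widehat{\otimes}\int_{H}$. Choosing a Nash partition of unity $\{\rho_{i}\}$ subordinate to $\{U_{i}\}$ and writing $f=\sum_{i}(\rho_{i}\circ p)\,f$, each summand is supported in $p^{-1}(\mathrm{supp}\,\rho_{i})$ and, by the localization property of $\mathcal{S}_{Z}$, may be regarded as an element of $\mathcal{S}(U_{i}\times H)=\mathcal{S}(U_{i})\widehat{\otimes}\mathcal{S}(H)$ that is killed by $\mathrm{id}\,\widehat{\otimes}\int_{H}$. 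Now $\int_{H}\colon\mathcal{S}(H)\to\mathbb{C}$ is a continuous surjection of Fréchet spaces, so $\ker\big(\mathrm{id}\,\widehat{\otimes}\int_{H}\big)=\mathcal{S}(U_{i})\widehat{\otimes}\ker\big(\int_{H}\big)$, while the classical fact that the only right-invariant distributions on the Lie group $H$ are scalar multiples of Haar measure gives $\ker\big(\int_{H}\big)=\overline{\mathrm{span}}\{g-R_{h}g\}$ inside $\mathcal{S}(H)$. Tensoring, each summand lies in the closed span of the elements $\phi\otimes(g-R_{h}g)=(\phi\otimes g)-R_{h}(\phi\otimes g)$, which transport back into $\overline{\mathrm{span}}\{f-R_{h}f\}\subseteq\mathcal{S}(G)$; summing over $i$ gives $\subseteq$. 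This establishes the displayed equality, and hence $\mathcal{S}^{*}(G)^{H}\simeq\mathcal{S}^{*}(G/H)$.

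The step I expect to be the main obstacle is precisely this identification of $\ker p_{*}$ with the closed span of translation-differences --- equivalently, the statement that fibre integration realizes the Hausdorff space of $H$-coinvariants of $\mathcal{S}(G)$. It is what forces one through the Nash-geometric toolkit --- local Nash sections of submersions, $\mathcal{S}(X\times Y)\simeq\mathcal{S}(X)\widehat{\otimes}\mathcal{S}(Y)$, and the behaviour of $\mathcal{S}_{Z}$ under restriction to open neighbourhoods of $Z$ --- all of which are available among the results recalled in Section~\ref{sub:Some tools from...} and in \cite[Appendix~B]{ALOF12}, \cite{AG09}; the only genuinely new ingredient is the elementary fact about right-invariant distributions on a Lie group. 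Finally, if $H$ is not unimodular the same argument produces an isomorphism $\mathcal{S}^{*}(G/H)\simeq\mathcal{S}^{*}(G)^{H,\,\delta}$ with a twist by a power $\delta$ of the modular character of $H$, but such $H$ do not occur in the applications below.
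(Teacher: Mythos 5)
The paper offers no proof of this proposition: it is quoted without argument as one of the tools imported from \cite[Appendix B]{ALOF12}. So there is no in-house proof to compare against; I review yours on its own terms.

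Your overall strategy — fibre integration $p_{*}$, transpose it, and identify $\ker p_{*}$ with the closure of the translation-differences — is the standard and correct way to prove such a statement, and your handling of the hard part (the identity $\ker p_{*}=\overline{\mathrm{span}}\{f-R_{h}f\}$ via local Nash trivializations, exactness of $\widehat{\otimes}$ for nuclear Fr\'echet spaces, and uniqueness of invariant distributions on $H$ together with the bipolar theorem) is sound. Your remark about the modular twist is also apt; the statement as printed tacitly uses that the relevant $H$ is unimodular, which holds in all the paper's applications.

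There is, however, a genuine error in the surjectivity step. You take $b\in C_{c}^{\infty}(G)$ with $\int_{H}b(gh)\,dh\equiv 1$ on all of $G$. No such $b$ can exist once $G/H$ is noncompact: the function $g\mapsto\int_{H}b(gh)\,dh$ descends to $G/H$ and is supported in the compact set $p(\mathrm{supp}\,b)$, hence cannot be identically $1$. And in the applications at hand $G/H$ is typically noncompact (e.g.\ $\mathrm{GL}_{n}(\mathbb{C})/\mathrm{GL}_{n}(\mathbb{R})$). The classical Bruhat function is only \emph{locally} $H$-compactly supported, not compactly supported on $G$; and if you relax $b$ to such a function, the assertion that $(\phi\circ p)\cdot b$ is compactly supported, hence Schwartz, also fails. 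The cleanest fix is to bypass the Bruhat function and argue surjectivity directly with the same local machinery you already invoke for the kernel: cover $G/H$ by finitely many Nash charts $U_{i}$ over which $p$ trivializes as $U_{i}\times H$, pick $\chi\in\mathcal{S}(H)$ with $\int_{H}\chi\,dh=1$ and a Nash partition of unity $\{\rho_{i}\}$ on $G/H$, and for $\phi\in\mathcal{S}(G/H)$ set $\tilde\phi=\sum_{i}(\rho_{i}\phi)\otimes\chi\in\mathcal{S}(G)$; then $p_{*}\tilde\phi=\phi$. (Equivalently, one can build a \emph{Schwartz}, not compactly supported, Bruhat function $b$ from this data and check that $(\phi\circ p)\cdot b$ is Schwartz because $\phi\circ p$ is tempered and $b$ is Schwartz --- but compact support is the wrong thing to ask for.) With this repair, the argument is complete.
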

Proposition \ref{prop2.2} gives us a way to reduce from asking if
a manifold $X$ has a non-trivial $(G,\chi)$ equivariant distribution
to asking a similar question on a $G$-orbit. The next tool can help
us to further reduce the problem: 
\begin{thm}
\label{thm:(Frobenius-descent).-Let}(Frobenius descent). Let a Nash
group $G$ act transitively on a Nash manifold $Z$. Let $\varphi:X\rightarrow Z$
be a $G$-equivariant Nash map. Let $z\in Z$ and let $X_{z}$ be
the fiber of $z$. Let $\chi$ be a tempered character of $G$ (see
\cite[Definition 5.1.1]{AG08}). Then $\mathcal{S}^{*}(X)^{G,\chi}$
is canonically isomorphic to $\mathcal{S}^{*}(X_{z})^{G_{z},\chi\delta_{G_{z}}^{-1}\delta_{G}|_{G_{z}}}$,
where $G_{z}$ is the stabilizer of $z$ in $G$. Moreover, for any
$G$-equivariant bundle $E$ on $X$, the space $\mathcal{S}^{*}(X,E)^{G,\chi}$
is canonically isomorphic to $\mathcal{S}^{\ast}(X_{z},E|_{X_{z}})^{G_{z},\chi\delta_{G_{z}}^{\text{\textminus}1}\delta_{G}|_{G_{z}}}$. \end{thm}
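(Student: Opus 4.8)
The plan is to present $\mathcal{S}^{*}(X)^{G,\chi}$ as a space of distributions ``induced'' from the fiber $X_{z}$ and then to run a Frobenius-reciprocity argument; essentially all of the content lies in carrying this out within the Nash/Schwartz formalism. First I would fix a concrete model for $\varphi$. If $X=\emptyset$ there is nothing to prove, so assume $X\neq\emptyset$; then $\varphi(X)$ is a nonempty $G$-invariant subset of $Z$, hence all of $Z$ by transitivity, so $\varphi$ is surjective. For $x\in X$ the orbit maps $\mathfrak{g}\rightarrow T_{x}X$ and $\mathfrak{g}\rightarrow T_{\varphi(x)}Z$ are intertwined by $d\varphi_{x}$, and the latter is onto (again by transitivity), so $d\varphi_{x}$ is onto; thus $\varphi$ is a Nash submersion and $X_{z}=\varphi^{-1}(z)$ is a closed Nash submanifold. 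Identifying $Z=G/G_{z}$, the assignment $x\mapsto[g,g^{-1}x]$, where $g\in G$ is any element with $gz=\varphi(x)$, is a well-defined $G$-equivariant Nash isomorphism $X\cong G\times_{G_{z}}X_{z}$ over $Z$ (it is Nash because $G\rightarrow G/G_{z}$ admits local Nash sections), with inverse $[g,y]\mapsto gy$. So it suffices to prove the theorem for $X=G\times_{G_{z}}X_{z}$, the fiber over $eG_{z}$ being $X_{z}$ with its induced $G_{z}$-action; likewise a $G$-equivariant bundle $E$ becomes a $G$-equivariant bundle on $G\times_{G_{z}}X_{z}$ restricting on the central fiber to a $G_{z}$-equivariant bundle $E|_{X_{z}}$.

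Next I would identify $\mathcal{S}(X,E)$ as a $G$-representation. Using local Nash triviality of $G\rightarrow G/G_{z}$, Nash partitions of unity subordinate to finite open covers, nuclearity of Schwartz spaces, and the restriction--extension sequence of Proposition \ref{prop 2.1}, one checks that $\mathcal{S}(G\times_{G_{z}}X_{z},E)$ is canonically the Schwartz induction of $\mathcal{S}(X_{z},E|_{X_{z}})$ from $G_{z}$ to $G$: Schwartz sections over $G/G_{z}$ of the associated bundle whose fiber at $eG_{z}$ is $\mathcal{S}(X_{z},E|_{X_{z}})$, twisted by the bundle of densities in the $G_{z}$-directions. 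Dualizing this identification and applying Frobenius reciprocity for Schwartz induction --- here temperedness of $\chi$ is precisely what makes the relevant integrals over $G/G_{z}$ converge and land in tempered (Schwartz) distributions --- produces a canonical isomorphism $\mathcal{S}^{*}(X,E)^{G,\chi}\cong\mathcal{S}^{*}(X_{z},E|_{X_{z}})^{G_{z},\chi'}$, where $\chi'$ is $\chi$ corrected by the density twist; comparing left and right Haar densities on $G$ and $G_{z}$ identifies the correction as $\delta_{G_{z}}^{-1}\delta_{G}|_{G_{z}}$, as in the statement. Equivalently, one may construct the isomorphism by hand: in one direction restrict a $(G,\chi)$-equivariant distribution to the transverse submanifold $X_{z}$ (legitimate since near $X_{z}$ one has $X\cong U\times X_{z}$ for a Nash chart $U\subseteq Z$, on which a $(G,\chi)$-equivariant distribution is a fixed twisted density along $U$ tensored with a distribution along $X_{z}$, which one extracts); in the other direction average a $(G_{z},\chi')$-equivariant fiber distribution over $G/G_{z}$ (the twist in $\chi'$ being exactly what is needed for the average to be $(G,\chi)$-equivariant). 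Taking $E$ trivial gives part of the claim, and the bundle version follows mutatis mutandis.

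The step I expect to be the main obstacle is the analytic core of the previous paragraph: showing, purely in the category of Nash manifolds, that restriction to the fiber is a well-defined topological isomorphism onto the equivariant distributions on $X_{z}$ --- equivalently, that the averaging integral converges and inverts it. This rests on the structure theory of Schwartz functions on Nash manifolds (local triviality of Nash submersions, existence of Nash partitions of unity, good behaviour of $\mathcal{S}$ under products and under the operations of Proposition \ref{prop 2.1}), which is developed in \cite{AG08} and recalled in \cite[Appendix B]{ALOF12}; granting this, the modular-character bookkeeping is routine. As a consistency check, the special case $X=G$, $X_{z}=\{e\}$, $E$ trivial, $\chi=1$ reduces to Proposition \ref{Prop2.3} together with uniqueness of Haar measure, which is the model case of the whole argument.
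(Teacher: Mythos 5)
The paper does not prove this theorem: it is imported verbatim (with notation adjusted) from \cite[Appendix B]{ALOF12}, which in turn builds on \cite{AG08}, and the section where it appears is explicitly a collection of ``tools'' stated without proof. So there is no in-paper argument to compare against, only the citation.

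That said, your outline is a faithful sketch of the argument one finds in those sources, not a new route. The chain (i) $\varphi$ is a surjective Nash submersion by transitivity and equivariance, (ii) $X\cong G\times_{G_{z}}X_{z}$ via local Nash sections of $G\to G/G_{z}$, (iii) reduction of restriction-to-fiber and averaging-over-$G/G_{z}$ to the local model $U\times X_{z}$, (iv) modular bookkeeping producing $\delta_{G_{z}}^{-1}\delta_{G}|_{G_{z}}$, is exactly the standard Frobenius-descent mechanism, and your sanity check against Proposition \ref{Prop2.3} (uniqueness of Haar measure in the case $X=Z=G$, $X_z=\{e\}$) is the right one. You also correctly locate the genuine analytic content --- that restriction and averaging are mutually inverse continuous maps in the Nash/Schwartz category, and that temperedness of $\chi$ is what makes the averaging integral converge --- and you flag these as imports from the $\mathcal{S}$-on-Nash-manifolds machinery rather than claiming to rederive them. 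One small caution worth making explicit if you were to flesh this out: the ``restrict a distribution to $X_z$'' step is only legitimate because $(G,\chi)$-equivariance rigidifies the distribution in the transverse ($Z$) directions; an arbitrary distribution has no restriction to a positive-codimension submanifold, so the equivariance hypothesis is doing real work there, not just bookkeeping. You do say this, but it is the crux and deserves emphasis.
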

\begin{cor}
\label{cor:Let-a-Nash}Let a Nash group $G$ acts on a Nash manifold
$X$. Let $X=\overset{l}{\underset{i=1}{\cup}}X_{i}$ be a $G$-invariant
stratification. For each $i$, We have a canonical isomorphism: 
\[
\mathcal{S}^{*}(X_{i},\mathrm{Sym^{k}}(CN_{X_{i}}^{X}))^{G,\chi}\simeq\mathcal{S}^{*}(\{x\},\mathrm{Sym^{k}}(CN_{X_{i},x}^{X}))^{G_{x},\chi\delta_{G_{x}}^{\text{\textminus}1}\delta_{G}|_{G_{x}}}.
\]
\end{cor}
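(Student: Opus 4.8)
The plan is to deduce this as an immediate application of Frobenius descent (Theorem~\ref{thm:(Frobenius-descent).-Let}) to the identity map. Fix an index $i$ and a point $x\in X_{i}$. In the stratifications we shall actually feed in here each stratum $X_{i}$ will be a single $G$-orbit, so $G$ acts transitively on $X_{i}$; moreover $X_{i}$ is a locally closed Nash submanifold of $X$, since $\bigcup_{j\le i}X_{j}$ is open in $X$ and $X_{i}$ is closed in it, so the conormal bundle $CN_{X_{i}}^{X}$ is well defined over $X_{i}$.

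First I would record that $CN_{X_{i}}^{X}=(N_{X_{i}}^{X})^{*}$ is naturally a $G$-equivariant Nash bundle on $X_{i}$: as $X_{i}$ is $G$-invariant, the $G$-action on $X$ induces a $G$-equivariant structure on $T_{X}|_{X_{i}}$ and on $T_{X_{i}}$, hence on the quotient $N_{X_{i}}^{X}$, on its dual, and on each symmetric power $E:=\mathrm{Sym}^{k}(CN_{X_{i}}^{X})$; restricting to the fiber over $x$ gives $E|_{\{x\}}=\mathrm{Sym}^{k}(CN_{X_{i},x}^{X})$.

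Then I would take $Z:=X_{i}$, $\varphi:=\mathrm{id}_{X_{i}}$ (trivially a $G$-equivariant Nash map), and $z:=x$, so that the fiber $\varphi^{-1}(z)$ is the single point $\{x\}$, and apply the statement of Theorem~\ref{thm:(Frobenius-descent).-Let} for $G$-equivariant bundles, with the bundle $E$ on $X_{i}$. This yields a canonical isomorphism
\[
\mathcal{S}^{*}(X_{i},E)^{G,\chi}\;\simeq\;\mathcal{S}^{*}\big(\{x\},E|_{\{x\}}\big)^{G_{x},\,\chi\delta_{G_{x}}^{-1}\delta_{G}|_{G_{x}}},
\]
which, after substituting $E=\mathrm{Sym}^{k}(CN_{X_{i}}^{X})$, is exactly the claimed isomorphism. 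Applying Theorem~\ref{thm:(Frobenius-descent).-Let} requires $\chi$ to be a tempered character of $G$, which holds in all the cases we shall need.

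There is essentially no genuine obstacle here, as the corollary is just a packaging of Frobenius descent; the only points requiring care are (i) verifying that $\mathrm{Sym}^{k}(CN_{X_{i}}^{X})$ is a $G$-equivariant Nash bundle, so that the bundle version of Theorem~\ref{thm:(Frobenius-descent).-Let} applies, and (ii) ensuring that, when this corollary is combined with Proposition~\ref{prop2.2}, the stratification has been chosen with $G$-orbits as strata so that the fibers of $\varphi$ really are points. This last arrangement is the one place where something substantive is used, and it will rely on the orbit-theoretic input of Section~\ref{sec:On the geometry of}.
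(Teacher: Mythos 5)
Your proposal is correct and follows exactly the same route as the paper: take $Z=X_{i}$, $\varphi=\mathrm{id}_{X_{i}}$, $z=x$, and apply the bundle version of Frobenius descent (Theorem~\ref{thm:(Frobenius-descent).-Let}) with $E=\mathrm{Sym}^{k}(CN_{X_{i}}^{X})$. You are somewhat more careful than the paper's one-line proof in flagging that the transitivity hypothesis of Frobenius descent forces each stratum $X_{i}$ to be a single $G$-orbit (a hypothesis the corollary statement does not explicitly require but which holds in all intended applications), and in noting the temperedness requirement on $\chi$.
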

\begin{proof}
Choose $Z=X_{i}$, $Id:X_{i}\longrightarrow X_{i}$ and $z=x$ and
$X_{z}=\{x\}$ and apply Theorem \ref{thm:(Frobenius-descent).-Let}. \end{proof}
\begin{cor}
\label{cor 2.6}Let a Nash group $G$ acts on a Nash manifold $X$.
Let $X=\overset{l}{\underset{i=1}{\cup}}X_{i}$ be a $G$-invariant
stratification. Then:
\[
\mathrm{dim}_{\mathbb{C}}(\mathcal{S}^{*}(X)^{G,\chi})\leq\sum_{i=1}^{l}\sum_{k=0}^{\infty}\mathrm{dim}_{\mathbb{C}}\left((\mathrm{Sym^{k}}(N_{X_{i},x}^{X})\otimes_{\mathbb{R}}\mathbb{C})^{G_{x},\chi\delta_{G_{x}}^{\text{\textminus}1}\delta_{G}|_{G_{x}}}\right).
\]
\end{cor}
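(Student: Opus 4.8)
The plan is to obtain the bound by simply chaining together the two facts just established. First I would invoke Proposition \ref{prop2.2}, which already gives
\[
\mathrm{dim}_{\mathbb{C}}(\mathcal{S}^{*}(X)^{G,\chi})\leq\sum_{i=1}^{l}\sum_{k=0}^{\infty}\mathrm{dim}_{\mathbb{C}}\bigl(\mathcal{S}^{*}(X_{i},\mathrm{Sym^{k}}(CN_{X_{i}}^{X}))^{G,\chi}\bigr),
\]
so it remains only to bound each summand on the right by the corresponding term $\mathrm{dim}_{\mathbb{C}}\bigl((\mathrm{Sym^{k}}(N_{X_{i},x}^{X})\otimes_{\mathbb{R}}\mathbb{C})^{G_{x},\chi\delta_{G_{x}}^{-1}\delta_{G}|_{G_{x}}}\bigr)$, where $x=x_{i}$ is any chosen point of $X_{i}$ (the bound will in fact be an equality for each summand).

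Fixing $i$ and $k$, Corollary \ref{cor:Let-a-Nash} provides a canonical isomorphism
\[
\mathcal{S}^{*}(X_{i},\mathrm{Sym^{k}}(CN_{X_{i}}^{X}))^{G,\chi}\simeq\mathcal{S}^{*}(\{x\},\mathrm{Sym^{k}}(CN_{X_{i},x}^{X}))^{G_{x},\chi\delta_{G_{x}}^{-1}\delta_{G}|_{G_{x}}},
\]
which reduces the question to distributions on a single point. Next I would identify, $G_{x}$-equivariantly, the space $\mathcal{S}^{*}(\{x\},E)$ for a finite-dimensional real vector bundle $E$ over the one-point manifold $\{x\}$: Schwartz functions on $\{x\}$ valued in $E$ are just the fiber $E_{x}$, and Schwartz distributions are the $\mathbb{R}$-linear maps $E_{x}\to\mathbb{C}$, i.e. $E_{x}^{*}\otimes_{\mathbb{R}}\mathbb{C}$, carrying the $G_{x}$-action induced from that on $E_{x}$. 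Applying this with $E=\mathrm{Sym^{k}}(CN_{X_{i}}^{X})$, whose fiber at $x$ is $\mathrm{Sym^{k}}(CN_{X_{i},x}^{X})=\mathrm{Sym^{k}}\bigl((N_{X_{i},x}^{X})^{*}\bigr)$, and using that in characteristic zero the natural perfect pairing gives a $G_{x}$-equivariant isomorphism $\mathrm{Sym^{k}}(W^{*})^{*}\simeq\mathrm{Sym^{k}}(W)$ for any finite-dimensional $W$, I obtain
\[
\mathcal{S}^{*}(\{x\},\mathrm{Sym^{k}}(CN_{X_{i},x}^{X}))\simeq\mathrm{Sym^{k}}(N_{X_{i},x}^{X})\otimes_{\mathbb{R}}\mathbb{C}
\]
as $G_{x}$-representations. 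Taking $(G_{x},\chi\delta_{G_{x}}^{-1}\delta_{G}|_{G_{x}})$-equivariant vectors on both sides and combining with the previous two displays bounds each summand, and summing over $i$ and $k$ yields the claim.

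The argument is entirely formal once Proposition \ref{prop2.2} and Corollary \ref{cor:Let-a-Nash} are in hand, so I do not expect a substantive obstacle; the only point requiring a moment's care is the $G_{x}$-equivariance of the identifications above — namely that the $G_{x}$-module structure on distributions on the point coincides with the natural one on $E_{x}^{*}\otimes_{\mathbb{R}}\mathbb{C}$, and that the pairing $\mathrm{Sym^{k}}(W)\times\mathrm{Sym^{k}}(W^{*})\to\mathbb{C}$ respects the $G_{x}$-action. The modular twist $\chi\delta_{G_{x}}^{-1}\delta_{G}|_{G_{x}}$ is inherited verbatim from Frobenius descent (Theorem \ref{thm:(Frobenius-descent).-Let}) via Corollary \ref{cor:Let-a-Nash}, and no hypotheses on $\chi$ beyond those already present there are needed.
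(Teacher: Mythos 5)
Your proposal follows exactly the paper's route: it combines Proposition \ref{prop2.2} with Corollary \ref{cor:Let-a-Nash} and then invokes the $G_{x}$-equivariant identification $\mathrm{Sym^{k}}(N_{X_{i},x}^{X})\otimes_{\mathbb{R}}\mathbb{C}\simeq\mathcal{S}^{*}(\{x\},\mathrm{Sym^{k}}(CN_{X_{i},x}^{X}))$, which is precisely what the paper's one-line proof does. Your spelling out of that last isomorphism via $E_{x}^{*}\otimes_{\mathbb{R}}\mathbb{C}$ and the $\mathrm{Sym^{k}}(W^{*})^{*}\simeq\mathrm{Sym^{k}}(W)$ pairing is just a more detailed justification of a step the paper leaves to the reader.
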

\begin{proof}
Combine Proposition \ref{prop2.2} with Corollary \ref{cor:Let-a-Nash}
and use the isomorphism
\[
\mathrm{Sym^{k}}(N_{X_{i},x}^{X})\otimes_{\mathbb{R}}\mathbb{C}\simeq\mathcal{S}^{*}(\{x\},\mathrm{Sym^{k}}(CN_{X_{i},x}^{X})).
\]

\end{proof}

\section{\label{sec:The-complex-version}The complex version of the Langlands
classification }

The main goal of this section is to translate the property of being
$H$-distinguished to the language of invariant distributions using
the Langlands classification for complex reductive groups.

We assume that $G$ is a complex connected reductive group. Let $\mathfrak{g}$
be its Lie algebra, $K$ be a maximal compact subgroup of $G$ with
Lie algebra $\mathfrak{k}$, $\tau$ the Cartan involution fixing
$K$, and $\mathfrak{p}$ the $-1$ eigenspace of $\tau$. Let $\mathfrak{a}_{\mathfrak{p}}$
be a maximal abelian subspace of $\mathfrak{p}$, with the corresponding
analytic subgroup $A_{\mathfrak{p}}$. Fix $A_{\mathfrak{p}}\subseteq A\subseteq T\subseteq B$,
where $T$ is a maximal torus, $A$ is a maximal $\mathbb{R}$-split
torus, and $B$ is a Borel subgroup that contains $T$.

The work of Zhelobenko and Naimark \cite{ZN66} and Langlands \cite{Lan73}
provides the following known classification of representations $\pi\in\mathcal{SAF}_{\mathrm{Irr}}(G)$
for a complex reductive group $G$:
\begin{thm}
\label{thm3.1}Let $G$ be a connected complex reductive group and
let $\pi\in\mathcal{SAF}_{Irr}(G)$ . Then $\pi$ is the unique irreducible
quotient of $\mathrm{Ind}_{B}^{G}(\chi)$, where
\[
\mathrm{Ind}_{B}^{G}(\chi)=\{f\in C^{\infty}(G)|\forall b\in B,\,f(bg)=\chi(b)\cdot\delta_{B}^{1/2}(b)\cdot f(g)\}=C^{\infty}(G)^{B,\chi^{-1}\delta_{B}^{-1/2}},
\]
 and $\chi$ is a dominant character of $T$.
\end{thm}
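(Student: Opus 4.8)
The plan is to deduce Theorem \ref{thm3.1} from the general Langlands classification together with the special structure of minimal parabolic subgroups of complex reductive groups. First I would recall that, by the Langlands classification (see \cite{Lan73}, or Borel--Wallach, or Knapp), every $\pi\in\mathcal{SAF}_{\mathrm{Irr}}(G)$ arises as the unique irreducible quotient --- the Langlands quotient --- of a standard module $\mathrm{Ind}_{P}^{G}(\sigma\otimes e^{\nu})$, where $P=M_{P}A_{P}N_{P}$ is a standard parabolic subgroup, $\sigma$ is an irreducible tempered representation of $M_{P}$, and $\nu$ lies in the open positive chamber of $\mathfrak{a}_{P}^{*}$, i.e.\ $\langle\nu,\alpha^{\vee}\rangle>0$ for every root $\alpha$ of $A_{P}$ in $N_{P}$; moreover the triple $(P,\sigma,\nu)$ is determined up to conjugacy. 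Since $G$ is complex and connected, every Levi subgroup $M_{P}$ is again a connected complex reductive group, so its minimal parabolic subgroup is a Borel subgroup $B_{M}=TN_{M}$ whose Levi factor is the complex torus $T$ and whose compact part is the compact torus $T\cap K$.

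The key structural input is then the following description of the tempered dual of a connected complex reductive group $M$: an irreducible representation of $M$ is tempered if and only if it is of the form $\mathrm{Ind}_{B_{M}}^{M}(\mu)$ for a unitary character $\mu$ of $T$. Indeed, a tempered representation is by definition a subrepresentation of one parabolically induced from a discrete series of a Levi factor, but for complex reductive groups the only Levi factor carrying discrete series is the maximal torus itself (equivalently, a connected complex reductive group has no genuine discrete series unless it is a torus, since $\operatorname{rank}K<\operatorname{rank}M$ whenever $M$ is non-abelian); hence the only tempered representations are those induced from unitary characters of $B_{M}$, and these are irreducible by the classical irreducibility theorem for unitary principal series of complex groups (Bruhat, Wallach, Zhelobenko).

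Combining these two facts I would apply induction in stages: writing $\sigma\simeq\mathrm{Ind}_{B_{M}}^{M}(\mu)$ with $\mu$ unitary, normalized induction gives
\[
\mathrm{Ind}_{P}^{G}(\sigma\otimes e^{\nu})\;\simeq\;\mathrm{Ind}_{P}^{G}\bigl(\mathrm{Ind}_{B_{M}}^{M}(\mu)\otimes e^{\nu}\bigr)\;\simeq\;\mathrm{Ind}_{B}^{G}(\chi),
\]
where $B=B_{M}N_{P}\subseteq P$ is the Borel subgroup fixed at the start and $\chi$ is the character of $T$ obtained by combining the unitary character $\mu$ (the datum of the tempered representation $\sigma$) with the real character $e^{\nu}$ of $A_{P}$, extended trivially to $T$. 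Because $\nu$ is strictly positive on the coroots of the roots occurring in $N_{P}$ while $\mu$, being unitary, has vanishing real part on the coroots of the roots in $M_{P}$, the character $\chi$ is weakly dominant; conjugating $\mu$ by an element of $W(M_{P},T)$ if necessary --- which changes neither the isomorphism class of $\sigma$ nor that of the induced module --- we may arrange $\chi$ to be dominant. Since the Langlands quotient is by construction the unique irreducible quotient of its standard module, $\pi$ is the unique irreducible quotient of $\mathrm{Ind}_{B}^{G}(\chi)$ with $\chi$ dominant, which is the asserted statement.

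The points requiring the most care --- and which I expect to be the main obstacle to a fully self-contained write-up --- are: (i) the exact matching of the $\delta^{1/2}$-normalizations appearing in $\mathrm{Ind}_{B}^{G}$, $\mathrm{Ind}_{P}^{G}$ and $\mathrm{Ind}_{B_{M}}^{M}$ under induction in stages, so that the twist in the final formula comes out precisely as $\chi\cdot\delta_{B}^{1/2}$ as displayed in the theorem; (ii) pinning down the meaning of \emph{dominant} for a character of the complex torus $T$ --- a pair consisting of a strictly dominant ``real part'' coming from $\nu$ and an imaginary part coming from $\mu$, unique up to the stabilizer of $\nu$ in the Weyl group --- so that ``unique irreducible \emph{quotient}'' (rather than submodule) is the correct formulation; and (iii) invoking correctly the irreducibility of unitary principal series for complex groups. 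Alternatively one may bypass the reduction to the general real classification and cite the theorem of Zhelobenko and Naimark \cite{ZN66} directly, which is essentially this statement; I would include the argument above chiefly to make the normalizations in Theorem \ref{thm3.1} transparent, since they are used throughout the rest of the paper.
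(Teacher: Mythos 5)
Your proposal follows essentially the same route as the paper's Appendix A: deduce the statement from the real Langlands classification together with Harish--Chandra's criterion (no discrete series without a compact Cartan subgroup, hence none for a non-abelian connected complex group), irreducibility of unitary principal series of a complex group, and induction in stages; your caveats (i)--(iii) are precisely the points the paper records and addresses.

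Two imprecisions are worth flagging because the paper takes explicit care with them. First, in the Langlands decomposition $P=M_{P}A_{P}N_{P}$ as used in Knapp, the factor $M_{P}$ is \emph{not} itself a connected complex reductive group --- already for $P=G=\mathrm{GL}_{2}(\mathbb{C})$ one has $M_{P}=\{g:|\det g|=1\}$ --- so $M_{P}$ has no Borel subgroup and the statement ``an irreducible tempered representation of $M_{P}$ is a unitary principal series'' is not literally available. The complex group is $M_{P}A_{P}$, and the paper's workaround is exactly what is needed: tensor the tempered $\sigma$ on $M_{P}$ with the trivial representation of the central $A_{P}$, embed it in a unitary principal series of $M_{P}A_{P}$, and invoke irreducibility of unitary principal series there before applying the double-induction formula. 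Relatedly, the Borel you land in should be $B=B_{M}A_{P}N_{P}$, not $B_{M}N_{P}$ --- the $A_{P}$-factor carrying the strictly dominant $e^{\nu}$ must be absorbed for the resulting $\chi$ to make sense as a character of the full maximal torus $T$ and to come out dominant. Second, Langlands' theorem in the form you cite classifies Hilbert (or $(\mathfrak{g},K)$-module) representations; passing to $\mathcal{SAF}_{\mathrm{Irr}}(G)$ requires the Casselman--Wallach globalization equivalence, which the paper invokes in the final paragraph of Appendix A.1 and which your write-up should also make explicit.
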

In Appendix \ref{subA1Proof-of-Theorem} we deduce Theorem \ref{thm3.1}
from the Langlands classification (see Theorem \ref{thm:(The-Langlands'-Classification)}),
although historically, the Langlands classification appeared after
the work of Zhelobenko and Naimark, who dealt with the case of a complex
group.

Set $W(G,A_{\mathfrak{p}})$ and $W(G,T)$ to be the Weyl groups corresponding
to the root systems $\Sigma(G,A_{\mathfrak{p}})$ and $\Sigma(G,T)$.
Note that the Weyl group $W(G,T)\simeq N_{G}(T)/T$ acts on $T$ by
$w.t:=ntn^{-1}$, where $n\in N_{G}(T)$ and $w=nT$. Similarly $w.\chi(t):=\chi(n^{-1}tn)$. 
\begin{prop}
\label{prop:Let--and}Let $\pi_{1}=\mathrm{Ind}_{B}^{G}(\chi)$ and
$\pi_{2}=\mathrm{Ind}_{B}^{G}(\chi')$ in $\mathcal{SAF}(G)$, be
unitary principal series representations of $G$. Then there exists
an intertwining operator $L:\pi_{1}\longrightarrow\pi_{2}$ if and
only if there exists $w\in W(G,A_{\mathfrak{p}})$ such that $w.\chi=\chi'$. \end{prop}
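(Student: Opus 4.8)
The plan is to recognize this statement as a repackaging of two classical facts about principal series of a complex reductive group: (i) for $G$ complex connected reductive and $\chi$ a \emph{unitary} character of $T$, the induced representation $\mathrm{Ind}_{B}^{G}(\chi)$ is irreducible (and unitarizable) --- this is due to Zhelobenko and Naimark \cite{ZN66} in the classical case and holds in general; and (ii) a nonzero morphism between two irreducible objects of $\mathcal{SAF}(G)$ is an isomorphism. Fact (ii) follows from Schur's lemma for irreducible Harish--Chandra modules together with the Casselman--Wallach equivalence between $\mathcal{SAF}(G)$ and the category of Harish--Chandra modules: a nonzero morphism is, on $K$-finite vectors, a nonzero map of irreducible Harish--Chandra modules, hence bijective there (its kernel and cokernel are proper submodules), hence an isomorphism after globalization. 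Granting (i) and (ii), both directions become short.

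For ``$w.\chi=\chi'\Rightarrow$ there is an intertwining operator'': I would take a representative $n\in N_{G}(T)$ of $w$ and form the standard (Knapp--Stein) intertwining operator $A(w,\chi)\colon\mathrm{Ind}_{B}^{G}(\chi)\to\mathrm{Ind}_{B}^{G}(w.\chi)=\mathrm{Ind}_{B}^{G}(\chi')$, defined by a convergent integral over $N\cap n^{-1}\overline{N}n$ for parameters in a suitable cone and then meromorphically continued. On the unitary axis this operator is holomorphic and nonzero: its composition with $A(w^{-1},w.\chi)$ is multiplication by the constant coming from Harish--Chandra's $c$-function, which is a positive real number when $\chi$ is unitary. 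Hence $A(w,\chi)$ is a nonzero element of $\mathrm{Hom}_{G}(\pi_{1},\pi_{2})$. (Conjugation by $n$ alone only gives $\mathrm{Ind}_{B}^{G}(\chi)\simeq\mathrm{Ind}_{nBn^{-1}}^{G}(\chi')$, after which one still needs an intertwining operator comparing $\mathrm{Ind}_{nBn^{-1}}^{G}(\chi')$ with $\mathrm{Ind}_{B}^{G}(\chi')$, so nothing is saved.)

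For the converse: given a nonzero $L\in\mathrm{Hom}_{G}(\pi_{1},\pi_{2})$, fact (ii) makes $L$ an isomorphism, so $\mathrm{Ind}_{B}^{G}(\chi)\simeq\mathrm{Ind}_{B}^{G}(\chi')$, and it remains to produce $w$ with $w.\chi=\chi'$. I would first compare $K$-spectra: by the Iwasawa decomposition $G=KB$ one has $\mathrm{Ind}_{B}^{G}(\chi)|_{K}\cong\mathrm{Ind}_{T\cap K}^{K}(\mu)$ with $\mu=\chi|_{T\cap K}$ (note $\delta_{B}$ is trivial on $T\cap K$), and by Frobenius reciprocity for the compact group $K$ the multiplicity of the irreducible $K$-type of highest weight $\lambda$ in $\pi_{i}$ is the weight multiplicity $\dim V_{\lambda}[\mu_{i}]$; taking $\lambda$ to be the dominant conjugate of $\mu$ forces $\mu'$ and $\mu$ to have the same dominant conjugate, i.e. $\mu'=w_{0}.\mu$ for some $w_{0}\in W(G,T)\simeq W(G,A_{\mathfrak{p}})$. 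Replacing $\chi$ by $w_{0}.\chi$, which by the already-proved direction does not change the isomorphism class, we may assume $\mu=\mu'$, so that $\chi$ and $\chi'$ now differ only by an unramified (positive-real-valued) character of $T$; comparing the infinitesimal characters of $\pi_{1}$ and $\pi_{2}$ computed from $\chi,\chi'$ via the Harish--Chandra homomorphism then pins this unramified part down modulo the stabilizer $W_{\mu}$, giving $\chi'\in W(G,A_{\mathfrak{p}}).\chi$. The cleanest alternative is to invoke the Zhelobenko--Naimark classification \cite{ZN66} directly: $\mathrm{Ind}_{B}^{G}(\chi)\simeq\mathrm{Ind}_{B}^{G}(\chi')$ if and only if $\chi$ and $\chi'$ are $W(G,A_{\mathfrak{p}})$-conjugate.

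I expect the main obstacle to be precisely this last extraction step in the converse: neither the $K$-spectrum (which only remembers the ``compact part'' $\mu$ of $\chi$) nor the infinitesimal character (which for a complex group a priori carries a $W\times W$ rather than a diagonal $W$ ambiguity) suffices by itself, so one must either combine them carefully --- tracking that the \emph{same} Weyl element works on both the compact and the split parts of $\chi$ --- or appeal to the full classification theorem. The remaining ingredients (irreducibility on the unitary axis, $\mathrm{Schur}$, and nonvanishing of the standard intertwining operator for unitary parameters) are standard.
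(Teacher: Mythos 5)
The paper disposes of this proposition in one line, by citing \cite[VII.4]{Kn01} with a remark that it must be adapted to the category $\mathcal{SAF}(G)$. Your proposal, by contrast, reconstructs the whole statement from more primitive facts. The forward direction you give (conjugate to $\mathrm{Ind}_{nBn^{-1}}^{G}(\chi')$, then compose with the standard Knapp--Stein operator $A(w,\chi)$, using the $c$-function identity to see it is nonzero on the unitary axis) is a correct standard argument, and your observation that bare conjugation alone does not suffice is well taken. The reduction of the converse to irreducibility of unitary principal series plus Schur's lemma via the Casselman--Wallach equivalence is also correct.

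The genuine gap is the one you yourself flag at the end, and it is real rather than cosmetic: after matching $K$-spectra you only control $\chi|_{T\cap K}$ up to $W$, and matching infinitesimal characters of a \emph{complex} group gives a pair of elements determined up to $W\times W$, so you get $a'-b'=w_{0}(a-b)$ and $(a'+\rho,b'+\rho)=(w_{1}(a+\rho),w_{2}(b+\rho))$ with no a priori reason for $w_{0},w_{1},w_{2}$ to coincide. Closing this requires an additional argument (for example an integrality argument using that $a-b$ is a weight and that $a+b$ is purely imaginary when $\chi$ is unitary), which you do not supply. The alternative you offer --- invoking the Zhelobenko--Naimark classification outright --- does close the gap, but at that point it proves both directions at once and renders your Knapp--Stein construction redundant; this alternative is in substance what the paper does by citing \cite[VII.4]{Kn01}. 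So your route is a legitimate and instructive expansion of the citation, but as written the converse is not complete without either the infinitesimal-character bookkeeping or a fallback to the classification theorem.
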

\begin{proof}
Follows from \cite[VII.4]{Kn01} with a slight modification to the
category $\mathcal{SAF}(G)$. 
\end{proof}
This implies the following:
\begin{cor}
\label{cor:Let--and be unitary}Let $\mathrm{Ind}_{B}^{G}(\chi)$
and $\mathrm{Ind}_{B}^{G}(\chi')$ be irreducible unitary principal
series representations of $G$. Then they are isomorphic if and only
if there exists $w\in W(G,A_{\mathfrak{p}})$ such that $w.\chi=\chi'$.
\end{cor}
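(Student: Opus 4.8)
\textbf{Proof proposal for Corollary \ref{cor:Let--and be unitary}.}

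The plan is to derive the corollary directly from Proposition \ref{prop:Let--and} together with Schur's lemma. First I would note that if there exists $w\in W(G,A_{\mathfrak{p}})$ with $w.\chi=\chi'$, then Proposition \ref{prop:Let--and} supplies a nonzero intertwining operator $L:\mathrm{Ind}_{B}^{G}(\chi)\longrightarrow\mathrm{Ind}_{B}^{G}(\chi')$; since both representations are irreducible, $L$ has trivial kernel and dense image, and a standard argument (continuity of $L$ and $L^{-1}$ on the smooth Fr\'echet representations, or appeal to the fact that a nonzero morphism of irreducible admissible representations in $\mathcal{SAF}(G)$ is an isomorphism) upgrades $L$ to an isomorphism $\mathrm{Ind}_{B}^{G}(\chi)\simeq\mathrm{Ind}_{B}^{G}(\chi')$.

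For the converse, suppose the two irreducible unitary principal series are isomorphic. Then the isomorphism is in particular a nonzero intertwining operator $L:\mathrm{Ind}_{B}^{G}(\chi)\longrightarrow\mathrm{Ind}_{B}^{G}(\chi')$, so Proposition \ref{prop:Let--and} immediately yields $w\in W(G,A_{\mathfrak{p}})$ with $w.\chi=\chi'$. This direction is essentially formal.

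The one point that requires a little care is the passage from ``nonzero intertwining operator'' to ``isomorphism'' in the first direction: Proposition \ref{prop:Let--and} as stated only produces an intertwining operator, not necessarily a topological isomorphism, whereas Corollary \ref{cor:Let--and be unitary} asserts ``isomorphic'' for the irreducible quotients. I would resolve this by invoking irreducibility: both $\mathrm{Ind}_{B}^{G}(\chi)$ and $\mathrm{Ind}_{B}^{G}(\chi')$ are assumed irreducible objects of $\mathcal{SAF}(G)$, and any nonzero morphism between irreducible objects of $\mathcal{SAF}(G)$ is an isomorphism (kernel and closure of image are invariant closed subspaces, hence trivial or everything). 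Thus $L$ is bijective with continuous inverse, giving the claimed isomorphism. I expect this last bookkeeping step — making sure the notion of ``isomorphism'' in $\mathcal{SAF}(G)$ matches the existence-of-intertwiner statement of Proposition \ref{prop:Let--and} — to be the only mild obstacle; the rest is a direct citation of the proposition.
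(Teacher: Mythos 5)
Your proposal is correct and matches the paper's reasoning, which is simply "This implies the following:" after Proposition \ref{prop:Let--and}; you have spelled out the implication. One small caution on the backward direction: from "nonzero intertwiner between irreducibles" you first argue that the kernel is trivial and the image is dense, and then write "Thus $L$ is bijective with continuous inverse." That "thus" is not automatic in the topological setting — injective with dense range does not by itself give surjectivity of a continuous map of Fr\'echet spaces. The correct justification is the alternative you mention parenthetically: in $\mathcal{SAF}(G)$ a nonzero morphism between irreducibles is an isomorphism, which follows from the Casselman--Wallach equivalence with the category of Harish-Chandra modules (a nonzero $L$ restricts to a nonzero morphism on $K$-finite vectors, which is an isomorphism of $(\mathfrak{g},K)$-modules by Schur, and the equivalence of categories promotes this back to an isomorphism in $\mathcal{SAF}(G)$). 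With that substitution in place, the argument is complete and coincides with what the paper intends.
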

In Appendix \ref{sub:Proof-of-Corollary 3.4} we prove the following
corollary:
\begin{cor}
\label{cor 3.4}Let $\pi_{1},\pi_{2}\in\mathcal{SAF}_{\mathrm{Irr}}(G)$
where $\pi_{1}$ is the unique quotient of $\mathrm{Ind}_{B}^{G}(\chi_{1})$
and $\pi_{2}$ is the unique quotient of $\mathrm{Ind}_{B}^{G}(\chi_{2})$.
Then $\pi_{1}\simeq\pi_{2}$ if and only if there exists $w\in W(G,A_{\mathfrak{p}})$
such that $w.\chi_{1}=\chi_{2}$. 
\end{cor}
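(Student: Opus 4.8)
The plan is to reduce the ``only if'' direction of Corollary \ref{cor 3.4} to Corollary \ref{cor:Let--and be unitary}, which already handles the case of irreducible \emph{unitary} principal series, by a deformation argument in the continuous parameter $\chi$. Recall from Theorem \ref{thm3.1} that every $\pi \in \mathcal{SAF}_{\mathrm{Irr}}(G)$ is the unique irreducible quotient of $\mathrm{Ind}_B^G(\chi)$ for a dominant character $\chi$ of $T$. The first step is to pin down the ambiguity in $\chi$: I would show that $\chi_1$ and $\chi_2$ give isomorphic Langlands quotients if and only if $\mathrm{Ind}_B^G(\chi_1)$ and $\mathrm{Ind}_B^G(\chi_2)$ have isomorphic Jordan--H\"older content, using that the composition series of a principal series representation of a complex group is controlled by the $W(G,T)$-orbit (equivalently $W(G,A_\mathfrak{p})$-orbit, since for complex groups these Weyl groups coincide under the identification of $T$ with $A_\mathfrak{p}$-data) of the infinitesimal character, together with the fact that the Langlands quotient is the \emph{unique} irreducible quotient and hence is determined by the subrepresentation lattice.

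Concretely, the ``if'' direction is the easy one: if $w.\chi_1 = \chi_2$ for some $w \in W(G,A_\mathfrak{p})$, then the standard intertwining operator construction (as in \cite{Kn01}, cf.\ Proposition \ref{prop:Let--and}) gives a nonzero map between $\mathrm{Ind}_B^G(\chi_1)$ and $\mathrm{Ind}_B^G(\chi_2)$, and by uniqueness of the irreducible quotient on each side the induced map on Langlands quotients is an isomorphism; here one should be slightly careful that the relevant intertwining operator, possibly after meromorphic continuation and renormalization, is nonzero on the quotient, which follows from the fact that $\chi$ is dominant so the Langlands quotient sits in the ``correct'' end of the standard module. For the ``only if'' direction, suppose $\pi_1 \simeq \pi_2$. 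Both are subquotients of principal series with parameters $\chi_1,\chi_2$; matching infinitesimal characters (via the Harish-Chandra homomorphism, noting $\pi_1 \simeq \pi_2$ forces equal infinitesimal character) gives that $\chi_1$ and $\chi_2$ have the same $W(G,T)$-orbit up to the relevant twist, and then dominance of both $\chi_1$ and $\chi_2$ singles out a unique representative in each orbit under the $W(G,A_\mathfrak{p})$-action, forcing $w.\chi_1 = \chi_2$.

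An alternative and perhaps cleaner route, which I would actually carry out in the appendix, is to factor $\chi = \nu \cdot u$ where $u$ is unitary and $\nu$ is a real (positive) character, then observe that the ``unitary part'' $u$ (as a $W$-orbit) is exactly what Corollary \ref{cor:Let--and be unitary} detects when $\nu$ is trivial, and handle the general $\nu$ by noting that the positive/dominant real part of the Langlands parameter is a genuine isomorphism invariant of the Langlands quotient (two Langlands quotients with different dominant real parts cannot be isomorphic, since the real part governs the asymptotics/leading exponents of matrix coefficients). Combining: $\pi_1 \simeq \pi_2 \Rightarrow$ equal dominant real parts $\Rightarrow$ (after reducing to the unitary situation) equal $W(G,A_\mathfrak{p})$-orbits of the unitary parts $\Rightarrow w.\chi_1 = \chi_2$.

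The main obstacle I anticipate is the bookkeeping around the normalization of the intertwining operators and the passage from ``intertwining map between standard modules'' to ``isomorphism of Langlands quotients'': one has to make sure the relevant operator does not vanish on the quotient (this is where dominance of $\chi$ is essential) and conversely that a map of quotients lifts/reflects to the level of standard modules in the way the argument needs. Everything else --- matching infinitesimal characters, uniqueness of the dominant representative in a $W$-orbit, identifying $W(G,T)$ with $W(G,A_\mathfrak{p})$ in the complex case --- is routine given the complex Langlands classification (Theorem \ref{thm3.1}) and the cited results of Knapp.
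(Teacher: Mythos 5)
The ``if'' direction of your proposal (intertwining operators, sensitivity to dominance) is in the right spirit, but the ``only if'' direction has a genuine gap in its primary route. You claim that $\pi_1 \simeq \pi_2$ forces equal infinitesimal characters, and that this yields ``the same $W(G,T)$-orbit'' of $\chi_1$ and $\chi_2$. That implication fails for complex groups. The infinitesimal character of $\mathrm{Ind}_B^G(\chi)$ determines the $W\times W$-orbit of the holomorphic/anti-holomorphic pair $(\lambda,\mu)$ attached to $\chi$, not the diagonal $W$-orbit of $\chi$ itself. Already for $G=\mathrm{SL}_2(\mathbb{C})$, write $\chi_{s,n}(z)=|z|^s(z/|z|)^n$ so that $(\lambda,\mu)=((s+n)/2,(s-n)/2)$. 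Taking $(s,n)=(3,1)$ and $(s',n')=(1,3)$ gives $(\lambda,\mu)=(2,1)$ and $(\lambda',\mu')=(2,-1)$ — the same $W\times W$-orbit, hence the same infinitesimal character — yet $\chi_{3,1}$ and $\chi_{1,3}$ lie in different $W$-orbits, both are dominant, and the corresponding Langlands quotients are non-isomorphic. So ``matching infinitesimal characters'' plus dominance does not pin down $w.\chi_1=\chi_2$; the integrality constraint on the discrete parameter rules this phenomenon out only for generic $\chi$.

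Your alternative route (split $\chi=\nu u$, argue the dominant real part $\nu$ is an isomorphism invariant, then reduce to the unitary case) is essentially the correct strategy and is what the paper actually does, but as stated it elides the two nontrivial points. First, the assertion that the dominant real part is an isomorphism invariant \emph{is} the uniqueness of the Langlands triple $(F,[\sigma],\lambda)$ from Theorem \ref{thm:(The-Langlands'-Classification)} — a genuine theorem (proved via asymptotics of matrix coefficients), not an observation, and the paper imports it wholesale. Second, once $\nu_1=\nu_2$ and the Langlands parabolic $P=MAN$ is fixed, you are left with \emph{tempered} representations of the Levi $M$, not with unitary principal series of $G$; Corollary \ref{cor:Let--and be unitary} applies only after one further unwinds $\sigma_i$ as induced from a unitary character of $T$ via the double induction formula (Fact 3 in Appendix A) and then lifts the resulting Weyl element from $W(M,A')$ to an element of $W(G,A_\mathfrak{p})$ that centralizes $A$. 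The paper's proof of Corollary \ref{cor 3.4} carries out exactly these two reductions; your sketch names neither. The gap is therefore not fatal to the overall plan, but the infinitesimal character shortcut should be discarded and the alternative route fleshed out with Langlands uniqueness and the Levi-level reduction made explicit.
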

Theorem \ref{thm:If--is unique quotient} and Corollary \ref{cor 3.6}
are needed for Section 6:
\begin{thm}
\label{thm:If--is unique quotient} If $\pi$ is the unique irreducible
quotient of $\mathrm{Ind}_{B}^{G}(\chi)$ then:

1) $\widetilde{\pi}$ is the unique irreducible quotient of $\mathrm{Ind}_{B}^{G}(w_{0}.(\chi^{-1}))$,
where $w_{0}\in W(G,T)$ is the longest element in the Weyl group. 

2) Let $\theta:G\longrightarrow G$ be an involution and assume that
$T$ is $\theta$-stable. Then $\pi^{\theta}$ is the unique irreducible
quotient of $\mathrm{Ind}_{B}^{G}(w'.\theta(\chi))$ for some $w'\in W(G,T)$
such that $w'.\theta(\chi)$ is dominant. \end{thm}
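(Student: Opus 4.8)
\textbf{Proof proposal for Theorem \ref{thm:If--is unique quotient}.}

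The plan is to reduce both statements to the corresponding statements about principal series and then apply the uniqueness of the Langlands quotient together with Corollary \ref{cor 3.4}. For part 1), I would start from the standard fact that the contragredient of an induced representation is the induced representation of the contragredient of the inducing data, i.e. $\widetilde{\mathrm{Ind}_{B}^{G}(\chi)}\simeq\mathrm{Ind}_{B}^{G}(\chi^{-1})$, which is immediate from the definition $\mathrm{Ind}_{B}^{G}(\chi)=C^{\infty}(G)^{B,\chi^{-1}\delta_{B}^{-1/2}}$ and the pairing given by integration over $K$ (the $\delta_{B}^{1/2}$ normalization makes the pairing $B$-invariant). Since contragredient is exact and sends the unique irreducible quotient of $\mathrm{Ind}_{B}^{G}(\chi)$ to the unique irreducible sub of $\mathrm{Ind}_{B}^{G}(\chi^{-1})$, I then need to pass from ``unique irreducible sub'' to ``unique irreducible quotient''; this is done by applying the longest Weyl element $w_{0}$, using that $\mathrm{Ind}_{B}^{G}(\chi^{-1})$ and $\mathrm{Ind}_{B^{-}}^{G}(\chi^{-1})\simeq\mathrm{Ind}_{B}^{G}(w_{0}.\chi^{-1})$ have the same irreducible subquotients and that taking the opposite Borel exchanges the roles of sub and quotient. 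One must check that $w_{0}.(\chi^{-1})$ is again dominant (with respect to the chosen positive system); since $w_{0}$ sends positive roots to negative roots, $w_{0}.(\chi^{-1})$ is dominant exactly when $\chi$ is, so Theorem \ref{thm3.1} applies and identifies $\widetilde{\pi}$ as claimed.

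For part 2), the twist $\pi^{\theta}$ is defined by $\pi^{\theta}(g)=\pi(\theta(g))$, and applying $\theta$ to $\mathrm{Ind}_{B}^{G}(\chi)$ gives $\mathrm{Ind}_{\theta(B)}^{G}(\chi\circ\theta)=\mathrm{Ind}_{\theta(B)}^{G}(\theta(\chi))$, with $T$ being $\theta$-stable by hypothesis so that $\theta(\chi)$ is a character of $T=\theta(T)$. Now $\theta(B)$ is another Borel subgroup containing $T$, hence $\theta(B)=n_{0}Bn_{0}^{-1}$ for some $n_{0}\in N_{G}(T)$; conjugating by $n_{0}$ identifies $\mathrm{Ind}_{\theta(B)}^{G}(\theta(\chi))$ with $\mathrm{Ind}_{B}^{G}(w_{0}'.\theta(\chi))$ where $w_{0}'=n_{0}T$. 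Finally, since $\theta(\chi)$ need not be dominant for the fixed positive system, I choose $w'\in W(G,T)$ in the same coset modulo the issue of $A_{\mathfrak{p}}$-conjugacy so that $w'.\theta(\chi)$ is dominant, apply Corollary \ref{cor 3.4} to see that the unique irreducible quotients of $\mathrm{Ind}_{B}^{G}(w_{0}'.\theta(\chi))$ and $\mathrm{Ind}_{B}^{G}(w'.\theta(\chi))$ agree, and conclude that $\pi^{\theta}$ is the unique irreducible quotient of $\mathrm{Ind}_{B}^{G}(w'.\theta(\chi))$ with $w'.\theta(\chi)$ dominant. Here I am using that every Weyl orbit in the character lattice of $T$ contains a dominant element.

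The main obstacle I anticipate is bookkeeping with the two Weyl groups $W(G,T)$ and $W(G,A_{\mathfrak{p}})$ and making sure the normalizations of the intertwining operators and of parabolic induction are consistent: Corollary \ref{cor 3.4} is stated in terms of $W(G,A_{\mathfrak{p}})$-conjugacy of the Langlands parameters, while the natural manipulations above (opposite Borel, conjugation by $N_{G}(T)$, passing to a dominant representative) live in $W(G,T)$. For a complex reductive group $G$ viewed as a real group these two Weyl groups are canonically isomorphic (the restriction $\Sigma(G,A_{\mathfrak{p}})\to\Sigma(G,T)$ identifies the root systems), so this is a matter of being careful rather than a genuine difficulty, but it is where the argument needs attention. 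The other routine point to verify is that the $\delta_{B}^{1/2}$-normalized induction behaves well under conjugation by $N_{G}(T)$, namely that conjugating $B$ by $n\in N_{G}(T)$ carries $\delta_{B}$ to $\delta_{nBn^{-1}}$ compatibly; this follows since $n$ normalizes $T$ and permutes the roots.
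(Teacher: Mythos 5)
Your proposal follows the same structure as the paper's proof: integration pairing plus the opposite-Borel/$w_0$ trick for part 1, and $\theta$-twist plus Weyl conjugation for part 2. Part 1 is fine and matches the paper (which invokes Knapp, VIII.15, Theorem 8.54, for the sub/quotient exchange under passage to the opposite Borel).

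In part 2 you introduce a complication that is both unnecessary and, as written, not quite licensed. You first conjugate $\theta(B)$ to $B$ by some $n_0\in N_G(T)$, obtaining $\mathrm{Ind}_B^G(w_0'.\theta(\chi))$; then, worrying that the parameter may not be dominant, you try to replace $w_0'$ by a further $w'$ and invoke Corollary \ref{cor 3.4} to match up the quotients. The worry is misplaced: since $\chi$ is dominant with respect to $B$ and $\theta$ is an automorphism of $G$ fixing $T$, the character $\theta(\chi)$ is dominant with respect to the positive system determined by $\theta(B)$; conjugating that positive system back to the one for $B$ sends $\theta(\chi)$ to a character dominant for $B$. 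So the correct $w'$ (the one with $w'.B=\theta(B)$) automatically yields a dominant $w'.\theta(\chi)$ — this is precisely the paper's one-line remark. Moreover, the proposed fix via Corollary \ref{cor 3.4} has a circularity: that corollary compares \emph{Langlands quotients}, and to speak of "the unique irreducible quotient of $\mathrm{Ind}_B^G(w_0'.\theta(\chi))$" you already need $w_0'.\theta(\chi)$ dominant, which is exactly what you were trying to arrange. Once you observe dominance is automatic, the extra step and the appeal to Corollary \ref{cor 3.4} can be deleted, and the argument coincides with the paper's.
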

\begin{proof}
1) The non degenerate pairing between $\mathrm{Ind}_{B}^{G}(\chi^{-1})$
and $\mathrm{Ind}_{B}^{G}(\chi)$ by $\langle f,h\rangle:=\int_{B\backslash G}f\cdot h$
induces an isomorphism $\widetilde{\mathrm{Ind}_{B}^{G}(\chi)}\simeq\mathrm{Ind}_{B}^{G}(\chi^{-1})$.
As $\pi$ is the unique irreducible quotient of $\mathrm{Ind}_{B}^{G}(\chi)$,
$\widetilde{\pi}$ is the unique irreducible subrepresentation of
$\mathrm{Ind}_{B}^{G}(\chi^{-1})=\mathrm{Ind}_{w_{0}.B^{-}}^{G}(\chi^{-1}))$,
where $B{}^{-}$ is the opposite Borel to $B$. By \cite[VIII.15, Theorem 8.54]{Kn01},
$\widetilde{\pi}$ is the unique quotient of $\mathrm{Ind}_{w_{0}.B}^{G}(\chi^{-1}))\simeq\mathrm{Ind}_{B}^{G}(w_{0}.(\chi^{-1}))$.
As $\chi^{-1}$ is anti-dominant we have that $w_{0}.(\chi^{-1})$
is dominant.

2) Notice that $\theta\circ\mathrm{Ind}_{B}^{G}(\chi)$ is naturally
isomorphic to $\mathrm{Ind}_{\theta(B)}^{G}(\theta(\chi))$ by $f\longrightarrow f^{\theta}$,
where $f^{\theta}(g):=f(\theta(g))$. This implies that $\pi^{\theta}$
is the unique irreducible quotient of $\mathrm{Ind}_{\theta(B)}^{G}\left(\theta(\chi))\right)$.
Notice that $\theta(B)$ is a Borel subgroup of $G$ that contains
$\theta(T)=T$ and as $W(G,T)$ act transitively on the Borel subgroups
of $G$ that contains $T$, there exists $w'\in W(G,T)$ such that
$w'.B=\theta(B)$. Thus $\pi^{\theta}$ is the unique irreducible
quotient of $\mathrm{Ind}_{B}^{G}\left(w'.\theta(\chi))\right)$ and
clearly $w'.\theta(\chi)$ is dominant.\end{proof}
\begin{cor}
\label{cor 3.6}Assume that there exists $w\in W(G,T)$ such that
$\chi^{-1}=w.\left(\theta(\chi)\right)$ and $w\circ\theta=\theta\circ w^{-1}$.
Then $\widetilde{\pi}^{\theta}\simeq\pi$. \end{cor}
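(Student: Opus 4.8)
The plan is to turn the statement into a comparison of Langlands parameters and then apply Corollary~\ref{cor 3.4}. The first, purely formal, observation is that $\widetilde{\pi}^{\theta}\simeq\pi$ is equivalent to $\widetilde{\pi}\simeq\pi^{\theta}$: twisting by the automorphism $\theta$ preserves isomorphism classes in $\mathcal{SAF}_{\mathrm{Irr}}(G)$, and since $\theta^{2}=\mathrm{id}$ (so that $(\rho^{\theta})^{\theta}=\rho$, and in particular $\widetilde{\pi}^{\theta}$ is unambiguous), applying $(\cdot)^{\theta}$ to an isomorphism $\widetilde{\pi}\simeq\pi^{\theta}$ yields $\widetilde{\pi}^{\theta}\simeq(\pi^{\theta})^{\theta}=\pi$, and conversely. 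Hence it suffices to prove $\widetilde{\pi}\simeq\pi^{\theta}$.

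Next I would compute the Langlands parameters of both sides. Write $\pi$ as the unique irreducible quotient of $\mathrm{Ind}_{B}^{G}(\chi)$ with $\chi$ dominant (Theorem~\ref{thm3.1}). By Theorem~\ref{thm:If--is unique quotient}(1), $\widetilde{\pi}$ is the unique irreducible quotient of $\mathrm{Ind}_{B}^{G}(w_{0}.(\chi^{-1}))$, so by Corollary~\ref{cor 3.4} its parameter is the $W(G,A_{\mathfrak{p}})$-orbit of $\chi^{-1}$. By Theorem~\ref{thm:If--is unique quotient}(2) (using that $T$ is $\theta$-stable), $\pi^{\theta}$ is the unique irreducible quotient of $\mathrm{Ind}_{B}^{G}(w'.\theta(\chi))$ for a suitable $w'\in W(G,T)$ making $w'.\theta(\chi)$ dominant, so its parameter is the $W(G,A_{\mathfrak{p}})$-orbit of $\theta(\chi)$. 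By Corollary~\ref{cor 3.4}, $\widetilde{\pi}\simeq\pi^{\theta}$ holds if and only if $\chi^{-1}$ and $\theta(\chi)$ lie in a single $W(G,A_{\mathfrak{p}})$-orbit on characters of $T$.

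Finally I would feed in the hypothesis: it supplies $w\in W(G,T)$ with $w.(\theta(\chi))=\chi^{-1}$, which puts $\chi^{-1}$ and $\theta(\chi)$ in the same $W(G,T)$-orbit. Since $G$ is a complex reductive group, the root system $\Sigma(G,A_{\mathfrak{p}})$ is obtained from $\Sigma(G,T)$ by doubling all root multiplicities, and the restriction map $W(G,T)\to W(G,A_{\mathfrak{p}})$ is an isomorphism; in particular the two groups have the same orbits on characters of $T$. Thus $\chi^{-1}$ and $\theta(\chi)$ are $W(G,A_{\mathfrak{p}})$-conjugate, so $\widetilde{\pi}\simeq\pi^{\theta}$ and therefore $\widetilde{\pi}^{\theta}\simeq\pi$.

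The proof is short, and the only points that need care are bookkeeping ones: distinguishing statements that hold on the level of $W$-orbits (Corollary~\ref{cor 3.4}) from those that hold only for dominant representatives (Theorems~\ref{thm3.1} and~\ref{thm:If--is unique quotient}), and the identification of $W(G,T)$ with $W(G,A_{\mathfrak{p}})$ that is needed to apply Corollary~\ref{cor 3.4} to the $w$ provided by the hypothesis. The extra condition $w\circ\theta=\theta\circ w^{-1}$ (equivalently $\theta(w)=w^{-1}$ in $W(G,T)$) is not used in this argument; it simply records the form in which such a $w$ will emerge from the geometry of $B$-orbits in later sections, and is carried along for later convenience.
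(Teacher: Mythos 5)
Your proof is correct and proceeds via the same two key tools the paper uses, namely Theorem~\ref{thm:If--is unique quotient} and Corollary~\ref{cor 3.4}. The only real difference is a light reformulation: the paper works directly with $\widetilde{\pi}^{\theta}$, first using the hypothesis $w\circ\theta=\theta\circ w^{-1}$ to rewrite $\theta(\chi^{-1})$ as $w^{-1}.\chi$, and then showing the Langlands parameter of $\widetilde{\pi}^{\theta}$ lies in the $W$-orbit of $\chi$; you instead pass to the equivalent statement $\widetilde{\pi}\simeq\pi^{\theta}$ and compare the parameters $\chi^{-1}$ and $\theta(\chi)$ there. Your version has a small but genuine payoff: it makes transparent that the extra hypothesis $w\circ\theta=\theta\circ w^{-1}$ plays no role in the conclusion, as you note --- all that is used is $\chi^{-1}\in W.\theta(\chi)$. (One can even see this in the paper's framing: applying $\theta$ to both sides of $\chi^{-1}=w.\theta(\chi)$ gives $\theta(\chi^{-1})=\theta(w).\chi$, which already puts $\theta(\chi^{-1})$ in the $W$-orbit of $\chi$ without assuming $\theta(w)=w^{-1}$.) Your attention to the bookkeeping --- dominant representatives before invoking Corollary~\ref{cor 3.4}, and the identification of $W(G,T)$ with $W(G,A_{\mathfrak{p}})$ for a complex group --- is exactly the care this argument requires.
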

\begin{proof}
If $\chi^{-1}=w.\theta(\chi)=\theta.w^{-1}(\chi)$ then $\theta(\chi^{-1})=w^{-1}.\chi$.
By Theorem \ref{thm:If--is unique quotient}, there exists $w'\in W(G,T)$
such that $\widetilde{\pi}^{\theta}$ is the unique irreducible quotient
of $\mathrm{Ind}_{B}^{G}(w'.\theta(\chi^{-1}))$. By Corollary \ref{cor 3.4},
$\widetilde{\pi}^{\theta}\simeq\pi$ as $w'.\theta(\chi^{-1})=w'w^{-1}.\chi$.
\end{proof}
We now use Theorem \ref{thm3.1} to translate our question into the
language of distributions. The quotient map $\mathrm{Ind}_{B}^{G}(\chi)\longrightarrow\pi$
induces an injection $\left(\pi^{*}\right)^{H}\hookrightarrow\left(\mathrm{Ind}_{B}^{G}(\chi)^{*}\right)^{H}$
where $\chi$ is the Langlands parameter of $\pi$ from Theorem \ref{thm3.1}.
Thus, it is enough to study $H$-distinguished representations of
the form $\sigma=\mathrm{Ind}_{B}^{G}\chi$. Since $B\backslash G$
is compact, it follows from (e.g. \cite[Lemma 2.9]{GSS15}) that
\[
\sigma^{*}=\left(\left(C^{\infty}(G)^{B,\chi^{-1}\delta_{B}^{-1/2}}\right)^{*}\right)^{H}\simeq\mathcal{S}^{*}(G)^{B\times H,\chi\cdot\delta_{B}^{-1/2}\times1}.
\]
 Thus, together with Proposition \ref{Prop2.3}, we obtain the following
corollary:
\begin{cor}
\label{Corollary 3.7} We have that 
\[
\mathrm{dim}_{\mathbb{C}}\left(\pi^{*}\right)^{H}\leq\mathrm{dim}_{\mathbb{C}}\mathcal{S}^{*}(G/H)^{B,\chi\cdot\delta_{B}^{-1/2}}.
\]

\end{cor}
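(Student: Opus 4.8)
The plan is to assemble the three ingredients already developed above: the Langlands presentation of $\pi$ (Theorem \ref{thm3.1}), the realization of the dual of a principal series as a space of equivariant Schwartz distributions on $G$, and the identification $\mathcal{S}^{*}(G)^{H}\simeq\mathcal{S}^{*}(G/H)$ of Proposition \ref{Prop2.3}.

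First I would fix, via Theorem \ref{thm3.1}, a dominant character $\chi$ of $T$ such that $\pi$ is the unique irreducible quotient of $\sigma:=\mathrm{Ind}_{B}^{G}(\chi)$, and let $q:\sigma\twoheadrightarrow\pi$ be the quotient map. Since $q$ is a continuous surjection of $G$-representations, its transpose $q^{*}:\pi^{*}\hookrightarrow\sigma^{*}$ is an injective $G$-equivariant map, hence restricts to an injection $(\pi^{*})^{H}\hookrightarrow(\sigma^{*})^{H}$. Consequently $\mathrm{dim}_{\mathbb{C}}(\pi^{*})^{H}\le\mathrm{dim}_{\mathbb{C}}(\sigma^{*})^{H}$, and it remains to bound the right-hand side.

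Next I would identify $(\sigma^{*})^{H}$ with a space of distributions on $G/H$. Writing $\sigma=C^{\infty}(G)^{B,\chi^{-1}\delta_{B}^{-1/2}}$ as in Theorem \ref{thm3.1}, the space $\sigma$ is the space of smooth sections of a line bundle over the compact manifold $B\backslash G$; dualizing such sections produces distributions on the total space $G$, and a bookkeeping of the modular character (this is the content of \cite[Lemma 2.9]{GSS15}) gives a canonical isomorphism $\sigma^{*}\simeq\mathcal{S}^{*}(G)^{B,\chi\cdot\delta_{B}^{-1/2}}$, where $B$ acts by left translation. Taking $H$-invariants for the commuting right translation action yields $(\sigma^{*})^{H}\simeq\mathcal{S}^{*}(G)^{B\times H,\chi\cdot\delta_{B}^{-1/2}\times1}$. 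Finally, Proposition \ref{Prop2.3} provides a $G$-equivariant isomorphism $\mathcal{S}^{*}(G)^{H}\simeq\mathcal{S}^{*}(G/H)$ for the left $G$-action; intersecting with the $(B,\chi\cdot\delta_{B}^{-1/2})$-eigenspace transports this to $\mathcal{S}^{*}(G)^{B\times H,\chi\cdot\delta_{B}^{-1/2}\times1}\simeq\mathcal{S}^{*}(G/H)^{B,\chi\cdot\delta_{B}^{-1/2}}$. Chaining the inequality with these isomorphisms gives the claim.

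The only genuinely delicate point is the identification $\sigma^{*}\simeq\mathcal{S}^{*}(G)^{B,\chi\cdot\delta_{B}^{-1/2}}$: one must use that $B\backslash G$ is compact so that smooth sections and Schwartz sections coincide, and one must correctly match the half-density and modular-character normalizations so that the equivariance character comes out to be exactly $\chi\cdot\delta_{B}^{-1/2}$ rather than some other twist of $\chi$. Everything else is a formal manipulation of equivariant distribution spaces.
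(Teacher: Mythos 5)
Your proposal is correct and follows essentially the same route as the paper: quotient map $\mathrm{Ind}_{B}^{G}(\chi)\twoheadrightarrow\pi$ giving the injection on $H$-invariant functionals, then identifying $(\sigma^{*})^{H}$ with $\mathcal{S}^{*}(G)^{B\times H,\chi\delta_{B}^{-1/2}\times1}$ via compactness of $B\backslash G$ and \cite[Lemma 2.9]{GSS15}, then applying Proposition \ref{Prop2.3}. Your write-up is somewhat more careful than the paper's (which has a small notational slip, writing $\sigma^{*}$ where $(\sigma^{*})^{H}$ is meant), but the argument is the same.
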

This proves Step 1 (see Introduction). We now move towards Step 2,
but before we prove it, we need to understand the geometry of $B$-orbits
on $G/H$. This brings us to the next section.

\section{\label{sec:On the geometry of}On the geometry of $B$-orbits on
$G/H$}

The study of the geometry of the minimal parabolic orbits on a symmetric
space $G/H$ is well developed. The case of complex symmetric spaces
was studied by Springer \cite{Sp85} and the generalization for any
local field was done in \cite{HW93}. In this section we use results
mostly from \cite{HW93}, as we are interested in real symmetric spaces.
In order to apply the results in \cite{HW93} to our setting we need
to make some modifications: 
\begin{enumerate}
\item \cite{HW93} studies the orbit decomposition of $\underline{B}(\mathbb{R})$
on $\underline{G}(\mathbb{R})/\underline{H}(\mathbb{R})$, where $\underline{H}$
is an (Zarisky) open subgroup of $\underline{G}^{\theta}$. We need
to generalize to the case when $H$ is an open subgroup (in the Hausdorff
topology) of $\underline{G}^{\theta}(\mathbb{R})$. 
\item We want to show that there exists a stratification of $G/H$ by $B$-orbits. 
\end{enumerate}
Let $\underline{G}$ be a connected reductive algebraic group defined
over $\mathbb{R}$, $\theta$ be an $\mathbb{R}$-involution of $\underline{G}$,
$\underline{G}^{\theta}$ be the $\theta$-fixed points of $\underline{G}$
and $\underline{P}$ be a minimal parabolic subgroup. Let $\underline{U}=R_{u}(\underline{P})$
be the unipotent radical of $\underline{P}$. We recall the following
known result:
\begin{prop}
\label{prop 4.1} \cite[Lemma 2.4]{HW93} Every minimal parabolic
$\mathbb{R}$-subgroup $\underline{P}$ of $\underline{G}$ contains
a $\theta$-stable maximal $\mathbb{R}$-split torus $\underline{A}$
of $\underline{P}$, unique up to conjugation by an element of $\left(\underline{G}^{\theta}\cap\underline{U}\right)(\mathbb{R})$.
\end{prop}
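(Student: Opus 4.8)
The plan is to reduce the statement to a fixed-point problem for $\theta$ acting on the set of maximal $\mathbb{R}$-split tori of the connected group $\underline{L}:=\underline{P}\cap\theta(\underline{P})$, and then to settle that problem by a norm-equation argument in a unipotent group. First I would record the reduction. Since $\theta$ is an automorphism defined over $\mathbb{R}$, the group $\theta(\underline{P})$ is again a minimal parabolic $\mathbb{R}$-subgroup, and $\underline{L}$ is a connected $\mathbb{R}$-subgroup which is $\theta$-stable, as $\theta(\underline{L})=\theta(\underline{P})\cap\underline{P}=\underline{L}$. By the relative Bruhat decomposition of $\underline{G}$ relative to a fixed minimal parabolic, any two minimal parabolic $\mathbb{R}$-subgroups contain a common maximal $\mathbb{R}$-split torus of $\underline{G}$; applied to $\underline{P}$ and $\theta(\underline{P})$ this produces a maximal $\mathbb{R}$-split torus $\underline{S}$ of $\underline{G}$ with $\underline{S}\subseteq\underline{L}$. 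Consequently the maximal $\mathbb{R}$-split tori of $\underline{L}$ are exactly the maximal $\mathbb{R}$-split tori of $\underline{G}$ lying in $\underline{L}$, and since a $\theta$-stable maximal $\mathbb{R}$-split torus contained in $\underline{P}$ is automatically contained in $\theta(\underline{P})$ hence in $\underline{L}$, it suffices to produce a $\theta$-stable maximal $\mathbb{R}$-split torus in $\underline{L}$ and to prove the uniqueness statement there.

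For existence I would argue as follows. The torus $\theta(\underline{S})\subseteq\theta(\underline{L})=\underline{L}$ is again a maximal $\mathbb{R}$-split torus of $\underline{L}$, so $\theta(\underline{S})=g\underline{S}g^{-1}$ for some $g\in\underline{L}(\mathbb{R})$, and applying $\theta$ and using $\theta^{2}=\mathrm{id}$ shows $\theta(g)g\in N_{\underline{L}}(\underline{S})$. One then tries to replace $g$ by $\theta(y)^{-1}y$ for a suitable $y$, because $\theta\bigl(y\underline{S}y^{-1}\bigr)=y\underline{S}y^{-1}$ holds precisely when $y^{-1}\theta(y)g\in N_{\underline{L}}(\underline{S})$. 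Using a Levi decomposition $\underline{L}=\underline{M}\ltimes R_{u}(\underline{L})$ over $\mathbb{R}$ (available in characteristic $0$), this needs two inputs: in the reductive factor $\underline{M}$, that $\theta$ normalizes some maximal $\mathbb{R}$-split torus, obtained by first choosing a Cartan involution of $\underline{M}(\mathbb{R})$ commuting with $\theta$ and then a $\theta$-stable maximal abelian subspace of the $(-1)$-eigenspace; and in $R_{u}(\underline{L})$, the vanishing of $H^{1}(\mathbb{Z}/2\mathbb{Z},R_{u}(\underline{L})(\mathbb{R}))$, which holds since $R_{u}(\underline{L})$ may be filtered by $\theta$-stable subgroups with vector-group quotients, on which the cohomology of an order-two action vanishes ($\mathbb{R}$-vector spaces are $2$-divisible and torsion-free). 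Together these yield $y$, hence a $\theta$-stable maximal $\mathbb{R}$-split torus $\underline{A}=y\underline{S}y^{-1}\subseteq\underline{P}$.

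For uniqueness, let $\underline{A},\underline{A}'\subseteq\underline{P}$ be two $\theta$-stable maximal $\mathbb{R}$-split tori. Since $\underline{P}$ is minimal parabolic, its unique Levi containing $\underline{A}$ equals $Z_{\underline{G}}(\underline{A})$ and has $\underline{A}$ as its full (and central) maximal $\mathbb{R}$-split torus; using that the Levi factors of $\underline{P}$ are $\underline{U}(\mathbb{R})$-conjugate, I would deduce $\underline{A}'=u\underline{A}u^{-1}$ for a unique $u\in\underline{U}(\mathbb{R})$, uniqueness because $\underline{A}$ acts on $R_{u}(\underline{P})$ with only nonzero weights (the positive restricted roots), so $Z_{\underline{U}}(\underline{A})=1$. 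Applying $\theta$ and using $\theta(\underline{A})=\underline{A}$, $\theta(\underline{A}')=\underline{A}'$ shows that $\theta(u)$ also conjugates $\underline{A}$ onto $\underline{A}'$; a short computation with the relative root-group decomposition of $\underline{U}$ (using that both $\underline{A},\underline{A}'$ lie in the $\theta$-stable group $\underline{L}$) forces $\theta(u)\in\underline{U}(\mathbb{R})$, and then $\theta(u)=u$ by the uniqueness of $u$, i.e.\ $u\in(\underline{G}^{\theta}\cap\underline{U})(\mathbb{R})$.

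The hard part will be the existence step, and within it the passage from $\mathbb{C}$ to $\mathbb{R}$: over an algebraically closed field this is the classical fact that every Borel subgroup contains a $\theta$-stable maximal torus, but over $\mathbb{R}$ one must track maximal $\mathbb{R}$-split tori and minimal parabolics throughout — precisely the rationality analysis of \cite{HW93}. The genuinely technical point is the reductive case, that an $\mathbb{R}$-involution normalizes a maximal $\mathbb{R}$-split torus, which rests on choosing a $\theta$-compatible Cartan involution; the reduction via $\underline{L}=\underline{P}\cap\theta(\underline{P})$ and the uniqueness argument are essentially formal by comparison.
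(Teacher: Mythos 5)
The paper does not prove this proposition at all: it simply cites \cite[Lemma~2.4]{HW93}, so there is no internal proof to compare against. Your proposal is therefore best judged on its own terms, and as a sketch it is essentially sound; the skeleton (reduce to the $\theta$-stable group $\underline{L}=\underline{P}\cap\theta(\underline{P})$, split off the unipotent radical by $H^1$-vanishing, then handle the reductive piece; for uniqueness, use simple transitivity of $\underline{U}(\mathbb{R})$ on Levi factors of $\underline{P}$) is the same kind of argument one finds in Helminck--Wang.

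Two comments on the details. First, the existence step can be simplified: once you have produced, via the $H^1$-vanishing on $R_u(\underline{L})(\mathbb{R})$, a $\theta$-stable Levi $\underline{M}$ of $\underline{L}$, you are already done. Indeed your Bruhat argument shows $\underline{L}$ contains a maximal $\mathbb{R}$-split torus $\underline{S}$ of $\underline{G}$, so the Levi factors of $\underline{L}$ are exactly the groups $Z_{\underline{G}}(\underline{S})$; in such a group $\underline{S}$ is \emph{central}, hence fixed by all inner conjugation, and is therefore the unique maximal $\mathbb{R}$-split torus of $\underline{M}$. Consequently $\theta(\underline{M})=\underline{M}$ already forces $\theta(\underline{S})=\underline{S}$, and the Cartan-involution argument for the reductive factor is superfluous here (it would be needed in a more general setting where the torus is not central in the Levi). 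Second, in the uniqueness step the claim that $\theta(u)\in\underline{U}(\mathbb{R})$ is correct but deserves a cleaner justification than ``a short computation with root groups'': since $\underline{P}$ and $\theta(\underline{P})$ share the Levi $Z_{\underline{G}}(\underline{A})$, one has $\underline{L}=Z_{\underline{G}}(\underline{A})\ltimes(\underline{U}\cap\theta(\underline{U}))$, so the $\theta$-stable torus $\underline{A}'\subseteq\underline{L}$ is conjugate to $\underline{A}$ by a unique $w\in(\underline{U}\cap\theta(\underline{U}))(\mathbb{R})$; by the uniqueness of the conjugator in $\underline{U}(\mathbb{R})$ this $w$ equals your $u$, so $u\in\theta(\underline{U})(\mathbb{R})$, hence $\theta(u)\in\underline{U}(\mathbb{R})$, and then $\theta(u)=u$ again by uniqueness. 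With these adjustments your argument is a complete and rather clean proof of the cited lemma.
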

Let $\underline{A}$ be a $\theta$-stable maximal $\mathbb{R}$-split
torus of $\underline{P}$ . Given $g,x\in\underline{G}$, let $\rho$
be the $\emph{twisted action}$ associated to $\theta$, given by
$\rho(g,x)=gx\theta(g)^{-1}$. Denote $\underline{Q}=\{g^{-1}\theta(g)|g\in\underline{G}\}$,
$\underline{Q}'=\{g\in G|\theta(g)=g^{-1}\}$ and note that $\underline{Q}$
and $\underline{Q}'$ are invariant under $\rho$. Define a morphism
$\tau:\underline{G}\rightarrow\underline{G}$ by $\tau(x)=x\theta(x^{-1})$.
We have the following facts (see \cite[Section 6]{HW93}): 

{*} $\underline{Q}=\tau(\underline{G})$ is a connected closed subvariety
of $\underline{G}$. 

{*} $\underline{Q}'$ is a finite union of twisted $G$-orbits, and
each orbit is closed. In particular, $\underline{Q}$ is a connected
component in $\underline{Q}'$.

Define $Q=\tau(\underline{G}(\mathbb{R}))=\tau(G)$, $Q'=\underline{Q}'(\mathbb{R})$
and notice that $\tau$ induces an isomorphism $\widetilde{\tau}:G/G^{\theta}\simeq Q$.
Write $U=\underline{U}(\mathbb{R})$, $P=\underline{P}(\mathbb{R})$,
$A=\underline{A}(\mathbb{R})$ and $N=N_{G}(A)$. 
\begin{prop}
\cite[Proposition 6.6]{HW93} If $g\in Q'$, then there exists $x\in U$
such that $xg\theta(x)^{-1}\in N$. Therefore, twisted $P$ orbits
on $Q'$ are represented by $n\in N\cap Q'$. 
\end{prop}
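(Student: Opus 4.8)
The plan is to prove both assertions of the proposition in turn: first that for every $g \in Q'$ there exists $x \in U$ with $\rho(x,g) = xg\theta(x)^{-1} \in N$, and then to deduce that the twisted $P$-orbits on $Q'$ meet $N$, so that they are represented by the elements of $N \cap Q'$. The second deduction is immediate once the first is established: given $g \in Q'$, choose $x \in U \subseteq P$ with $\rho(x,g) \in N$; then $\rho(x,g)$ lies in the twisted $P$-orbit of $g$ (since $x \in P$) and it lies in $Q'$ (as $Q'$ is $\rho$-invariant), so it lies in $N \cap Q'$. Hence every twisted $P$-orbit on $Q'$ contains a representative in $N \cap Q'$, which is the claim. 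So the entire content is in the first assertion, and that is where I would spend the work.

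For the first assertion I would argue as follows. Fix $g \in Q'$, so $\theta(g) = g^{-1}$. Consider the twisted action of $P = \underline{A}(\mathbb{R}) \ltimes U$ on $Q'$ and its sub-action by $U = R_u(\underline{P})$; the strategy is to move $g$ into $N_G(A)$ by a unipotent twist. The natural tool is to look at the $\theta$-twisted conjugation on the level of Lie algebras / Bruhat-type decompositions: write the Bruhat decomposition of $g$ with respect to $P$ and its opposite, and use the fact that $Q'$ is a finite union of closed twisted $G$-orbits (recalled just above the proposition, from \cite[Section 6]{HW93}) to control which "Weyl cell" $g$ can lie in. Concretely, since $A$ is $\theta$-stable (Proposition \ref{prop 4.1}), the involution $\theta$ permutes the root subgroups relative to $A$, and one analyzes the $U$-twisted orbit of $g$ by successively killing off root-subgroup components of $g$ lying in $U$: writing $g = u_1 m u_2$ in a suitable big-cell coordinate with $u_1, u_2 \in U$ and $m$ normalizing $A$, one picks $x \in U$ so that $\rho(x,g) = xg\theta(x)^{-1}$ has trivial $U$-part, using $\theta(g) = g^{-1}$ to match up the $\theta(x)^{-1}$ factor with the $u_2$ factor. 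This is essentially the twisted analogue of the classical fact that $U$-conjugacy classes meeting the big cell have representatives in $N_G(A) \cdot (\text{opposite unipotent})$, combined with the $\theta$-symmetry $\theta(g)=g^{-1}$ which forces the opposite-unipotent part to be determined by (the $\theta$-image of) the $U$-part. I would cite \cite[Section 6]{HW93} and in particular the lemmas leading up to \cite[Proposition 6.6]{HW93} for the precise orbit-by-orbit bookkeeping, since the statement is quoted verbatim from there.

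The main obstacle, and the step that genuinely needs \cite{HW93} rather than a soft argument, is showing that the unipotent twist can be chosen to land exactly in $N = N_G(A)$ and not merely in $P \cdot N_G(A) / \text{something}$ — i.e., controlling the interaction between the $\theta$-action and the root-subgroup decomposition when $\theta$ does not act "diagonally" on the roots (it may send positive roots to negative ones, since $A$ is only required to be $\theta$-stable, not contained in $G^\theta$). Handling the roots $\alpha$ with $\theta(\alpha) = -\alpha$ versus those with $\theta(\alpha)$ positive requires the finer structure theory of $\theta$-split versus $\theta$-fixed parts of $A$, and this is exactly what occupies \cite[Section 6]{HW93}. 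Since the proposition is stated as a citation, the "proof" here is really a pointer: invoke \cite[Proposition 6.6]{HW93} after checking that our running hypotheses (connected reductive $\underline{G}/\mathbb{R}$, $\mathbb{R}$-involution $\theta$, $\underline{P}$ minimal parabolic $\mathbb{R}$-subgroup, $\underline{A}$ a $\theta$-stable maximal $\mathbb{R}$-split torus of $\underline{P}$ furnished by Proposition \ref{prop 4.1}) are precisely those under which \cite{HW93} operates, and then read off the orbit-representative statement. If one instead wanted a self-contained treatment, the hard calculation to grind through would be the inductive elimination of $U$-components described above, ordered by root height, which I would not reproduce in full here.
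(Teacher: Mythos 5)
Your proposal is consistent with the paper, which states this proposition purely as a citation to \cite[Proposition 6.6]{HW93} and offers no proof of its own. You correctly observe that the second assertion is an immediate consequence of the first (using that $U\subseteq P$ and that $Q'$ is stable under the twisted action $\rho$), and you are right that the genuine content --- that a $U$-twist can be chosen to land exactly in $N=N_G(A)$ --- is the part one would import from \cite{HW93}; so as a ``proof'' your answer is really the same as the paper's, namely a pointer, supplemented with the routine deduction.

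One inaccuracy in your sketch worth flagging: you write $P=\underline{A}(\mathbb{R})\ltimes U$, but the Levi factor of the minimal parabolic is $C_G(A)=M_{\mathfrak{p}}A_{\mathfrak{p}}$, not $A$ alone, so $P=C_G(A)\ltimes U$; the factor $M_{\mathfrak{p}}$ is generally nontrivial over $\mathbb{R}$ (it happens to collapse into $T$ only in the complex case relevant later in the paper, but the proposition is stated in the general real setting of \cite{HW93}). Also, your heuristic ``big-cell / kill off root components by height'' sketch is plausible in spirit but does not obviously match the actual mechanism in \cite{HW93}, which proceeds by a rank-reduction argument using the structure of $\theta$-stable parabolics rather than a straight Bruhat-cell elimination; since you are deferring to the citation anyway this does not affect correctness, but if you wanted to reproduce the argument you would need to follow their inductive setup rather than the one you gesture at.
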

Consider $\tau^{-1}(N)=\{g\in G|g^{-1}\theta(g)\in N\}$. The group
$G^{\theta}\times C_{G}(A)$ acts on $\tau^{\text{\textminus}1}(N)$
by $(x,y).z=xzy^{-1}$. Let $V$ denote the set of orbits of $G^{\theta}\times C_{G}(A)$
in $\tau^{\text{\textminus}1}(N)$ . We identify $V$ with a fixed
set of representatives of the orbits in $\tau^{\text{\textminus}1}(N)$. 
\begin{prop}
\cite[Proposition 6.8]{HW93}$G$ is the disjoint union of the double
cosets $PvG^{\theta}$, $v\in V$. \end{prop}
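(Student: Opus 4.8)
\textbf{Proof proposal for Proposition (Proposition 6.8 of \cite{HW93}, adapted).}
The plan is to relate the double cosets $P\backslash G/G^{\theta}$ to the twisted $P$-orbits on the variety $Q = \tau(G)$, and then to parametrize the latter by representatives in $N \cap Q'$. First I would recall that $\tau$ induces a $G$-equivariant isomorphism $\widetilde{\tau}\colon G/G^{\theta} \xrightarrow{\ \sim\ } Q$, where $G$ acts on the target by the twisted action $\rho(g,x) = g x \theta(g)^{-1}$. Under this isomorphism, the right $G^{\theta}$-cosets in $G$ correspond to points of $Q$, and a left multiplication by $P$ corresponds to a twisted $P$-orbit. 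Hence the set $P\backslash G/G^{\theta}$ is in bijection with the set of twisted $P$-orbits on $Q$.

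Next I would invoke the preceding proposition (Proposition 6.6 of \cite{HW93}): every element of $Q'$ — and in particular every element of $Q \subseteq Q'$ — can be moved by the twisted action of $U \subseteq P$ into $N = N_G(A)$. Therefore every twisted $P$-orbit on $Q$ meets $N \cap Q'$, and the representatives can be taken in $N \cap Q'$. The remaining point is to see that the combinatorial data recording which element of $N \cap Q'$ is hit, together with the residual freedom, is exactly captured by the orbit set $V$ of $G^{\theta} \times C_G(A)$ acting on $\tau^{-1}(N)$ via $(x,y)\cdot z = x z y^{-1}$. For this I would track a point $z \in \tau^{-1}(N)$, i.e. with $\tau(z) = z^{-1}\theta(z) \in N$; right multiplication $z \mapsto zy^{-1}$ by $y \in C_G(A)$ does not change $\tau(z)$ up to the $C_G(A)$-normalizer action on $N$ and keeps us in $\tau^{-1}(N)$, while left multiplication $z \mapsto xz$ by $x \in G^{\theta}$ changes the $G^{\theta}$-coset of $z$ but again preserves $\tau^{-1}(N)$ since $\tau(xz) = z^{-1}\theta(z)$ for $x \in G^{\theta}$. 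So the fibers of the map $z \mapsto P z G^{\theta}$ restricted to $\tau^{-1}(N)$ are precisely the $G^{\theta}\times C_G(A)$-orbits, giving the disjoint decomposition $G = \bigsqcup_{v \in V} P v G^{\theta}$.

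To assemble these pieces I would argue: (a) surjectivity, i.e. every double coset $PgG^{\theta}$ meets $\tau^{-1}(N)$ — this follows because $\tau(g) \in Q \subseteq Q'$, so by Proposition 6.6 there is $x \in U \subseteq P$ with $x\,\tau(g)\,\theta(x)^{-1} = \tau(xg) \in N$, hence $xg \in \tau^{-1}(N)$ and $PgG^{\theta} = P(xg)G^{\theta}$ meets $\tau^{-1}(N)$; and (b) the map from $V$ to $P\backslash G/G^{\theta}$ sending $v \mapsto PvG^{\theta}$ is injective, which is the content of identifying the fibers above. Disjointness of the union is then immediate from (b), and the fact that the cosets cover $G$ is (a).

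The main obstacle I expect is step (b), the injectivity: showing that if $v, v' \in \tau^{-1}(N)$ lie in the same double coset $PvG^{\theta} = Pv'G^{\theta}$, then they lie in the same $G^{\theta}\times C_G(A)$-orbit. Unwinding, $v' = p v h$ with $p \in P$, $h \in G^{\theta}$; then $\tau(v') = h^{-1}\tau(v)\,\theta(h) = h^{-1}\tau(v) h^{-1}\cdot$ (using $\theta(h)=h$ when $h\in G^\theta$, so more precisely $\tau(pvh) = h^{-1}v^{-1}p^{-1}\theta(p)\theta(v)h$), and since $\theta$ fixes $h$ this reduces the problem to the relation between $\tau(v)$ and $\tau(v')$ inside $N$, which forces $p$ to normalize enough structure (via the $U$-uniqueness in Proposition 4.1 / \cite[Lemma 2.4]{HW93}) so that after absorbing $h$ on the right and a $C_G(A)$-factor, $p$ can be taken trivial. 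Carefully extracting from $p \in P = U \rtimes C_G(A)$ the $C_G(A)$-part as the $y$ in the $G^{\theta}\times C_G(A)$-action, and showing the $U$-part is killed by the uniqueness statement, is the delicate bookkeeping; everything else is formal manipulation with the twisted action and the isomorphism $\widetilde{\tau}$.
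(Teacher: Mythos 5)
The paper gives no proof of this proposition at all---it is cited verbatim as \cite[Proposition 6.8]{HW93}---so there is no argument in the paper to compare yours against, and you are essentially reconstructing Helminck--Wang's proof from scratch.

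Your overall architecture is the right one and, as far as I recall, it matches \cite{HW93}: surjectivity of $v\mapsto PvG^\theta$ via Proposition 6.6, and injectivity by identifying the fibers over a given double coset inside $\tau^{-1}(N)$ with the $G^\theta\times C_G(A)$-orbits. Your step (a) (surjectivity) is carried out correctly: $\tau(g)\in Q\subseteq Q'$, Proposition 6.6 gives $u\in U$ with $u\tau(g)\theta(u)^{-1}=\tau(ug)\in N$, so $ug\in\tau^{-1}(N)$ lies in $PgG^\theta$. However, step (b) is where the real content of the proposition lives, and it is not done. Your displayed formula $\tau(v')=h^{-1}\tau(v)\theta(h)$ is simply false: with $v'=pvh$ and the convention $\tau(g)=g^{-1}\theta(g)$ that you use there, one gets $\tau(v')=h^{-1}v^{-1}\,p^{-1}\theta(p)\,\theta(v)\,h$, and the factor $p^{-1}\theta(p)$ sitting between $v^{-1}$ and $\theta(v)$ does not go away. (With the other convention $\tau(g)=g\theta(g)^{-1}$ the computation is cleaner, $\tau(v')=p\tau(v)\theta(p)^{-1}$, and $h$ drops out entirely; you switch conventions between (a) and (b), which the paper itself does inconsistently, but neither way gives what you wrote.) The upshot of either computation is that $\tau(v)$ and $\tau(v')$ are two elements of $N\cap Q'$ lying in the same twisted $P$-orbit, and what one actually has to prove is that they must then lie in the same twisted $C_G(A)$-orbit. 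Writing $p=uc$ with $u\in U$, $c\in C_G(A)$, this reduces to: if $n_1,n_2\in N\cap Q'$ and $n_1=u n_2\theta(u)^{-1}$ with $u\in U$, then $n_1=n_2$. That step is exactly where the nontrivial geometric input enters (the uniqueness in \cite[Lemma 2.4]{HW93}, or equivalently a Bruhat-type argument identifying $P\backslash G/\theta(P)$ with Weyl-group data), and your proposal only gestures at it (``forces $p$ to normalize enough structure\ldots delicate bookkeeping''). Calling the remainder ``formal manipulation'' understates it: this is the step that makes the proposition true, and without it the disjointness of the union is unproved.

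One further wrinkle worth flagging: the paper's stated action $(x,y).z=xzy^{-1}$ with $x\in G^\theta$ on the left and $y\in C_G(A)$ on the right is incompatible with the double cosets being written $PvG^\theta$ (the sides are swapped), and the two formulas $\tau(x)=x\theta(x^{-1})$ and $\tau^{-1}(N)=\{g:g^{-1}\theta(g)\in N\}$ are likewise inconsistent. These are almost certainly typos in the paper, but your write-up inherits them without comment, which makes it harder to check which convention a given line is using and is part of why the injectivity computation goes sideways.
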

\begin{cor}
$\widetilde{\tau}:G/G^{\theta}\longrightarrow Q$ induces a bijection
between $P$- orbits on $G/G^{\theta}$ under the left action with
$P$-orbits in $Q$ under the twisted action $\rho$. The double cosets
$PxG^{\theta}$ are represented by elements $x\in\tau^{-1}(N)$, and
the $P$ orbits in $Q$ are represented by elements $n\in N\cap Q$.\end{cor}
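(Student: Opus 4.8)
The final statement to prove is the corollary asserting that $\widetilde\tau\colon G/G^\theta\to Q$ induces a bijection between the $P$-orbits on $G/G^\theta$ (left action) and the $P$-orbits on $Q$ (twisted action $\rho$), together with the claim that the double cosets $PxG^\theta$ are represented by $x\in\tau^{-1}(N)$ and the $P$-orbits in $Q$ by $n\in N\cap Q$. The plan is to upgrade the preceding propositions from $G^\theta$-and-$C_G(A)$ double cosets on $\tau^{-1}(N)$ to a clean $P\backslash(G/G^\theta)$ statement, using only formal properties of the map $\tau(x)=x\theta(x)^{-1}$ and the compatibility $\tau(pxh)=p\,\tau(x)\,\theta(p)^{-1}$ for $p\in G$, $h\in G^\theta$.

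First I would record the key equivariance: for all $g,x\in G$ and $h\in G^\theta$ one has $\tau(gxh)=g\,x\theta(x)^{-1}\,\theta(g)^{-1}=\rho(g,\tau(x))$, so $\tau$ intertwines the left $G$-action on $G/G^\theta$ with the twisted $G$-action $\rho$ on $G$, and its image is exactly $Q$. Since $\widetilde\tau\colon G/G^\theta\xrightarrow{\ \sim\ }Q$ is already a $G$-equivariant isomorphism for these two actions (stated in the excerpt), restricting the group to $P$ immediately gives a bijection between $P$-orbits on $G/G^\theta$ and $P$-orbits ($\rho$-action) on $Q$. That is the ``induces a bijection'' half, and it is essentially formal.

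Next I would translate the orbit representatives. By Proposition~6.8 of \cite{HW93} as quoted, $G$ is the disjoint union of the double cosets $PvG^\theta$ with $v\in V$, and $V$ is a set of representatives of $G^\theta\times C_G(A)$-orbits in $\tau^{-1}(N)$; in particular every $P$-orbit on $G/G^\theta$ meets $\tau^{-1}(N)$, which gives the statement that the double cosets $PxG^\theta$ are represented by $x\in\tau^{-1}(N)$. Applying $\widetilde\tau$ and using $\tau(\tau^{-1}(N))\subseteq N\cap Q$ (and conversely $N\cap Q\subseteq \tau(\tau^{-1}(N))$ since any $n\in Q$ is $\tau(g)$ for some $g$, forcing $g\in\tau^{-1}(N)$), the $P$-orbits in $Q$ under $\rho$ are represented by elements of $N\cap Q$ — here I would invoke Proposition~6.6 of \cite{HW93} to see that every element of $Q'$, hence of $Q$, can be moved into $N$ by a twisted $U$-action (and $U\subseteq P$), which is really the content behind the representatives lying in $N\cap Q$.

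The main obstacle, and the only place needing genuine care rather than bookkeeping, is reconciling the two labelings: \cite{HW93} parametrizes by $G^\theta\times C_G(A)$-orbits on $\tau^{-1}(N)$ (equivalently, after applying $\tau$, by the residual action of $C_G(A)\cong$ its twisted counterpart on $N\cap Q$), whereas I want a clean statement about $P$-orbits. I would show the passage from $P\backslash G/G^\theta$ to these double-coset data is exactly accounted for by the twisted $N$-action / $C_G(A)$-action on $N\cap Q$, i.e. that two elements $n,n'\in N\cap Q$ lie in the same $\rho(P,-)$-orbit iff they lie in the same twisted $C_G(A)$-orbit, using $P=C_G(A)\ltimes U$ and the fact (Proposition~6.6) that the twisted $U$-action can always bring things back into $N$. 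This is the step where one must be careful that the representatives are not over-counted; everything else assembles formally from the equivariance of $\widetilde\tau$ and the already-cited results of \cite{HW93}.
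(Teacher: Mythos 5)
Your first two paragraphs are correct and match the paper's (much terser) proof: the bijection on $P$-orbits is immediate because $\widetilde\tau$ is a $G$-equivariant isomorphism intertwining the left action with $\rho$, and the representative statements follow from the quoted Propositions 6.6/6.8 of \cite{HW93} together with the equivariance $\tau(pxh)=\rho(p,\tau(x))$.

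However, the ``main obstacle'' paragraph is a red herring. The corollary asserts only that every double coset $PxG^\theta$ \emph{admits} a representative in $\tau^{-1}(N)$, and every twisted $P$-orbit in $Q$ \emph{admits} a representative in $N\cap Q$; it does \emph{not} assert a bijection between $P$-orbits and the set $V$ of $G^\theta\times C_G(A)$-orbits on $\tau^{-1}(N)$, nor between twisted $P$-orbits on $Q$ and twisted $C_G(A)$-orbits on $N\cap Q$. Hence no reconciliation of the two labelings or guard against over-counting is needed: the existence of representatives is precisely the content of Proposition 6.6 (for $Q'$, hence $Q$), pulled back through $\widetilde\tau$. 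In fact Proposition 6.8 is also more than is required; Proposition 6.6 alone plus the equivariance of $\widetilde\tau$ finishes the argument, and this is the route the paper takes. Your proof is not wrong, just carrying an extra worry that does not apply to the statement as formulated.
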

\begin{proof}
$\widetilde{\tau}$ is an isomorphism, so the bijection between $P$-orbits
is clear. As the twisted $P$-orbits in $Q$ are represented by $n\in N$,
the double cosets $PxG^{\theta}$ are represented by $x\in\tau^{-1}(N)$. \end{proof}
\begin{prop}
\cite[Proposition 6.15]{HW93}$Q'$ and $Q$ has only finitely many
twisted $P$-orbits.
\end{prop}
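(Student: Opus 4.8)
The strategy is to reduce every twisted $P$-orbit to a representative in the finite-type set $N\cap Q'$ (supplied by \cite[Prop.~6.6]{HW93}), project such representatives into the \emph{finite} Weyl group $W=N/M$ with $M=Z_G(A)$, and then bound the fibers of that projection.

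First I would note that, since $Q'$ is $\rho$-invariant, \cite[Prop.~6.6]{HW93} shows every twisted $P$-orbit in $Q'$ meets $N\cap Q'$; and since $M\subseteq P$, sending the $\theta$-twisted $M$-conjugacy class of $n\in N\cap Q'$ to its twisted $P$-orbit is a surjection onto the set of twisted $P$-orbits of $Q'$. So it suffices to bound the number of twisted $M$-classes in $N\cap Q'$. Because $A$ is $\theta$-stable, $\theta$ preserves $N$ and $M$, and $\pi\colon N\to W$ is a $\theta$-equivariant homomorphism onto a finite group. If $n\in N\cap Q'$ then $\theta(n)=n^{-1}$ forces $\theta_\ast(\pi(n))=\pi(n)^{-1}$, i.e. $\pi(n)$ is a $\theta$-twisted involution of $W$; there are finitely many such. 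So it remains to show each nonempty fiber $\{n\in N\cap Q':\pi(n)=w\}$ splits into finitely many twisted $M$-classes. Fixing $n_0$ in the fiber and setting $\theta_{n_0}(z):=n_0\theta(z)n_0^{-1}$ (an involution of $M$), a short computation identifies the fiber with $\{zn_0:z\in M'_{n_0}\}$, where $M'_{n_0}=\{z\in M:\theta_{n_0}(z)=z^{-1}\}$, and identifies twisted $M$-conjugacy on it with the $\theta_{n_0}$-twisted conjugation action of $M$ on $M'_{n_0}$. Thus the fiber count is exactly the orbit count for the symmetric pair $(M,M^{\theta_{n_0}})$ built from the smaller group $M=Z_G(A)$, and finiteness there is the genuine input: over $\mathbb C$ it is Springer's theory of complex symmetric spaces \cite{Sp85} (finitely many twisted Borel orbits, hence finitely many twisted $\underline M_{\mathbb C}$-orbits), and $\underline{M'_{n_0}}(\mathbb R)$ meets each complex orbit in finitely many $M$-orbits because the twisted stabilizers are linear algebraic $\mathbb R$-groups and $H^1(\mathrm{Gal}(\mathbb C/\mathbb R),-)$ of such groups is finite by Borel–Serre. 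Combining the steps gives finiteness for $Q'$; for $Q$ it is immediate, since $Q$ is a single twisted $G$-orbit and hence a union of some of the finitely many twisted $P$-orbits of $Q'$.

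The only real content is the descent in the last step: over an algebraically closed field each Weyl-group fiber is essentially one twisted orbit, but over $\mathbb R$ it can break up, and controlling that it breaks into only finitely many pieces needs the finiteness of the (possibly non-connected) stabilizers' real Galois cohomology rather than a dimension count. In fact this descent can be applied directly to $\underline{Q}'$ — finitely many complex twisted $\underline B_{\mathbb C}$-orbits (Springer), each splitting into finitely many real $P$-orbits (Borel–Serre) — so the passage through $N$ and $W$ above is mainly organizational; the essential ingredients are Springer's complex result and the finiteness of real Galois cohomology.
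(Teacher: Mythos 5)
The paper does not prove this statement itself; it simply cites it as \cite[Proposition 6.15]{HW93}. So the real question is whether your sketch is correct and whether it matches the cited argument. Your route --- use \cite[Prop.~6.6]{HW93} to place a representative of each twisted $P$-orbit inside $N\cap Q'$, project to $\theta$-twisted involutions in the finite relative Weyl group $W(G,A)=N/M$, then control each fiber by reducing the residual ambiguity to twisted $M$-conjugacy on $M'_{n_0}=\{z\in M:\theta_{n_0}(z)=z^{-1}\}$ and invoking finiteness of real Galois cohomology --- is essentially the standard mechanism behind this kind of finiteness-over-a-local-field statement, and it is correct. Two remarks. First, in the setting this paper actually uses, $G$ is a complex reductive group, so $M=Z_G(A)$ is the maximal torus $T$; the recursive appeal to Springer for the smaller symmetric pair $(M,M^{\theta_{n_0}})$ is unnecessary, since $T'_{n_0}/\{t\,\theta_{n_0}(t)^{-1}:t\in T\}$ is finite directly, being controlled by the finite Tate cohomology of $\mathrm{Gal}(\mathbb C/\mathbb R)$ acting on a real algebraic torus. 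Second, your closing observation is indeed the cleanest packaging: over the algebraic closure $\underline Q'$ has finitely many twisted $\underline P$-orbits (Springer's theory, applied component by component), and each such orbit splits into finitely many real twisted $P$-orbits because $H^1(\mathrm{Gal}(\mathbb C/\mathbb R),\underline S)$ is finite for any linear algebraic $\mathbb R$-group $\underline S$ (Borel--Serre); the detour through $N$ and $W$ is organizational. The only thing I would tighten is the phrasing ``each Weyl-group fiber is essentially one twisted orbit'' over an algebraically closed field --- that is not literally true (even over $\mathbb C$ the twisted $T$-action on $T'_{n_0}$ can have several orbits, indexed by $\pi_0$), but it does not affect the argument since you only ever use finiteness, not uniqueness.
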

Combining the above results, we have the following corollary:
\begin{cor}
\label{cor4.6}$Q'$ and $Q$ has finitely many twisted $P$-orbits,
each is of the form $P.n$ for $n\in N$ such that $\theta(n)=n^{-1}$. 
\end{cor}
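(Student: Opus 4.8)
The plan is to simply assemble the ingredients already collected in this section, together with the standard fact that a minimal parabolic $\underline{P}$ contains a $\theta$-stable maximal $\mathbb{R}$-split torus $\underline{A}$ (Proposition \ref{prop 4.1}), whose real points $A$ we have fixed with $N=N_G(A)$. First I would recall that $Q'$ (and hence its connected component $Q$) has only finitely many twisted $P$-orbits; this is exactly Proposition \cite[Proposition 6.15]{HW93} as quoted above. So the only substantive content is the description of a set of representatives.

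Next I would pin down the representatives. By Proposition \cite[Proposition 6.6]{HW93}, every $g\in Q'$ can be moved by the twisted action of some $x\in U$ into $N\cap Q'$; that is, every twisted $P$-orbit in $Q'$ meets $N$, so such orbits are represented by elements $n\in N\cap Q'$. By definition of $Q'=\{g\in G\mid \theta(g)=g^{-1}\}$, an element $n\in N\cap Q'$ is precisely an $n\in N$ with $\theta(n)=n^{-1}$. This already gives the statement with $N=N_G(A)$ in place of $N_G(T)$; since the corollary as phrased in the later sections uses a $\theta$-stable maximal torus $T$ with $A\subseteq T$, and $N_G(A)\subseteq N_G(T)$ need not hold in general, I would instead be careful to state the representatives as lying in $N_G(A)\cap Q'$ here, matching the notation $N=N_G(A)$ fixed just above; the passage to $N_G(T)$ is handled later (in Lemma \ref{lemma 5.1} and Section 5) once $T$ is chosen compatibly with $A$. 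The same conclusion for $Q$ follows because $Q$ is open and closed in $Q'$ (it is a connected component), so a twisted $P$-orbit either lies entirely in $Q$ or is disjoint from it, and the representatives $n\in N\cap Q$ are exactly those $n\in N$ with $\theta(n)=n^{-1}$ lying in $Q=\tau(G)$.

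Finally I would note the translation to $G/G^\theta$: via the $G$-equivariant isomorphism $\widetilde{\tau}\colon G/G^\theta\xrightarrow{\sim} Q$ (which intertwines the left $P$-action on $G/G^\theta$ with the twisted $P$-action $\rho$ on $Q$, as recorded in the Corollary following Proposition \cite[Proposition 6.8]{HW93}), finitely many twisted $P$-orbits on $Q$ correspond to finitely many left $P$-orbits on $G/G^\theta$, each of the form $P\cdot \widetilde{\tau}^{-1}(n)$ with $n\in N$, $\theta(n)=n^{-1}$. I do not expect any real obstacle here: the corollary is a bookkeeping consolidation of Propositions \ref{prop 4.1}, \cite[Proposition 6.6]{HW93}, \cite[Proposition 6.8]{HW93} and \cite[Proposition 6.15]{HW93}. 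The only point requiring a line of care is the identification $N\cap Q'=\{n\in N\mid \theta(n)=n^{-1}\}$, which is immediate from the definition of $Q'$, and the remark that being a connected component lets one transfer the finiteness and the representatives from $Q'$ to $Q$.
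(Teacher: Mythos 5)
Your proposal is correct and matches the paper's (implicit) argument: Corollary \ref{cor4.6} is stated without a separate proof precisely because it is the immediate conjunction of \cite[Proposition 6.6]{HW93} (representatives lie in $N\cap Q'$) and \cite[Proposition 6.15]{HW93} (finiteness), together with the tautological identification $N\cap Q'=\{n\in N:\theta(n)=n^{-1}\}$. Your side remark about $N_G(A)$ versus $N_G(T)$ is harmless but not actually an issue here: the corollary itself is phrased with $N=N_G(A)$, and once $T=C_G(A)$ (Lemma \ref{lemma 5.1}) one has $N_G(A)\subseteq N_G(T)$, which is all the later sections need.
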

The following Theorem \ref{Theorem 4.7} and Corollary \ref{cor 4.8}
are used in Section 5, and they are essential for applying the tools
from the Section 2 to the space $\mathcal{S}^{*}(G/H)^{B,\chi\cdot\delta_{B}^{-1/2}}$.
\begin{thm}
\label{Theorem 4.7}There exists a stratification $Q=\bigcup{}_{k=1}^{m}P(n_{k})$
by $P$-orbits, each contains some $n_{k}\in N_{G}(T)\cap Q$. \end{thm}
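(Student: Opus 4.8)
The plan is to upgrade the known finiteness of twisted $P$-orbits on $Q$ (Corollary \ref{cor4.6}) to an actual \emph{stratification}, i.e.\ an ordering of the orbits such that every initial union is open. First I would recall from Corollary \ref{cor4.6} that $Q$ has finitely many twisted $P$-orbits $\mathcal{O}_1,\dots,\mathcal{O}_m$, each of the form $P.n$ for some $n\in N=N_G(A)$ with $\theta(n)=n^{-1}$. The first genuine task is to replace each representative $n\in N_G(A)$ by a representative lying in $N_G(T)\cap Q$: since $A\subseteq T$, we have $C_G(A)\supseteq T$, and the twisted $P$-orbit of $n$ meets $N_G(A)$ but we want it to meet $N_G(T)$. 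I would use that $\rho(g,n)=gn\theta(g)^{-1}$ and that $T$ is a $\theta$-stable maximal torus of $G$ contained in $B$ (hence in $P$, after arranging $A\subseteq T\subseteq B\subseteq P$ via Proposition \ref{prop 4.1} applied to the minimal parabolic $P$ containing $B$); conjugating $n$ by a suitable element of $C_G(A)$ — which can be absorbed into the twisted $P$-action when it is $\theta$-compatible — one moves $n$ into $N_G(T)$ while staying in $Q$. This is essentially the content of the $G^\theta\times C_G(A)$-orbit analysis recalled above, specialised to the torus $T$.

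Next I would produce the ordering. The standard device is dimension: order the orbits so that $\dim P(n_1)\le \dim P(n_2)\le\cdots\le\dim P(n_m)$, and then show that for each $j$ the union $\bigcup_{k\ge j}P(n_k)$ is \emph{closed} in $Q$, equivalently $\bigcup_{k=1}^{j}P(n_k)$ is open. Since $Q=\tau(G)$ is a connected closed subvariety of $G$ and $P$ acts on it by the algebraic twisted action $\rho$, each orbit $P(n_k)$ is a locally closed smooth subvariety, its boundary $\overline{P(n_k)}\setminus P(n_k)$ is a union of $P$-orbits of strictly smaller dimension, and there are only finitely many orbits; a routine induction on dimension then shows each union of the lowest-dimensional orbits is closed. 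Concretely: the set of points where the orbit map has maximal dimension is open, giving the top stratum as an open dense orbit (there is a unique open twisted $P$-orbit on $Q$ by Springer's theory \cite{Sp85}), and one peels off orbits one dimension at a time.

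I expect the main obstacle to be the representative-replacement step rather than the stratification bookkeeping: one must check that the element of $C_G(A)$ needed to push $n$ from $N_G(A)$ into $N_G(T)$ can be chosen compatibly with the involution so that the new representative still satisfies $\theta(n)=n^{-1}$ and still lies in $Q$ (not merely in $Q'$). This requires care because $Q$ is only one connected component of $Q'$, and the twisted action can move between components of $Q'$; however, since $\rho(G,\cdot)$ preserves each twisted $G$-orbit and $Q$ is a single such orbit, staying inside $Q$ is automatic once we only use the twisted $P$-action, so the real point is just that the adjustment lies in $P$ (equivalently in $C_P(A)$, which contains $T$). Given that, the rest is the standard locally-closed-orbit-with-finitely-many-orbits argument, and the theorem follows.
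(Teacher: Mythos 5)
Your approach — order the finitely many twisted $P$-orbits on $Q$ by dimension and use the fact that the boundary of each real orbit consists of strictly lower-dimensional orbits — is sound and would give the stratification, but it is genuinely different from the paper's route. The paper first builds a Zariski-stratification of the \emph{complex} variety $\underline{Q}'$ by $\underline{P}$-orbits (using the existence of a closed orbit and induction on the number of orbits), then descends to the real points: each $O_{\underline{P}}^k \cap Q'$ splits into finitely many real $P$-orbits that are automatically open and closed in it, so the complex stratification refines to a real one; finally $Q$ is cut out as an open-and-closed subset of $Q'$. Your argument stays entirely on the real side and leans on a real-algebraic fact (boundary of a real orbit has strictly smaller real dimension); the paper instead pushes the subtlety to the complex picture where the corresponding fact is classical, which is arguably the cleaner bookkeeping. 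Both are valid.

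Two smaller remarks. First, the "representative-replacement step" you flag as the main obstacle is in fact a non-issue here: since $G$ is a \emph{complex} reductive group, Lemma \ref{lemma 5.1} gives $C_G(A)=T$, and hence $N_G(A)=N_G(T)$ (normalizing $A$ forces normalizing $T=C_G(A)$, and conversely normalizing $T$ forces normalizing its maximal $\mathbb{R}$-split subtorus $A$). So the representatives $n\in N\cap Q$ produced by Corollary \ref{cor4.6} already lie in $N_G(T)\cap Q$ with no conjugation needed; the paper uses this silently (it is made explicit in Section 6, where $N_G(T)=N_G(A_{\mathfrak p})$ is asserted). Second, your ordering is stated backwards relative to the paper's stratification convention: with $\dim P(n_1)\le\cdots\le\dim P(n_m)$ it is the initial unions $\bigcup_{k\le j}P(n_k)$ (the low-dimensional pieces) that are \emph{closed}, and the tail unions $\bigcup_{k\ge j}P(n_k)$ that are open — you say the tail unions are closed, although your later sentence ("each union of the lowest-dimensional orbits is closed") has it right. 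Since the paper's Definition of a stratification requires initial unions to be open, you should re-index with decreasing dimension. These are cosmetic, not fatal.
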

\begin{proof}
The following observations shows that it is enough to show that $Q'$
has a such a stratification. 

{*} By the argument above, we have that $Q'$ decomposes into a finite
number of $P$-orbits, and hence also to a finite number of $G$-orbits.
This implies that also $\underline{Q}(\mathbb{R})$ has finite number
of $G$ orbits.

{*} The map $\tau:\underline{G}\longrightarrow\underline{Q}$ is a
submersion at $e$ \cite[Lemma 6.13]{HW93}, hence $\tau(\underline{G}(\mathbb{R}))=Q$
is open in $\underline{Q}(\mathbb{R})$. Note that $\tau:\underline{G}(\mathbb{R})\longrightarrow\underline{Q}(\mathbb{R})$
is the orbit map of $e\in\underline{Q}(\mathbb{R})$. In a similar
argument one can show that each $G$-orbit in $\underline{Q}(\mathbb{R})$
is open, and since there are finite such orbits, they are also closed.
Thus $Q$ is open and closed in $\underline{Q}(\mathbb{R})$.

{*} Since $\underline{Q}(\mathbb{R})$ is open and closed in $Q'$,
we have that $Q$ is open and closed in $Q'$ and hence a stratification
on $Q'$ restrict to a stratification on $Q$.

In order to show that $Q'$ has such a stratification we use the following
steps: 

{*} By \cite[Proposition 6.15]{HW93}, $\underline{Q}'$ has finitely
many twisted $\underline{P}$-orbits. By induction on the number of
these orbits, and the fact that any $\underline{G}$-variety has a
(Zarisky) closed orbit, it is easy to see that $\underline{Q}'$ has
a stratification $\underline{Q}'=\bigcup{}_{k=1}^{t}O_{\underline{P}}^{k}$
of $\underline{P}$-orbits $O_{\underline{P}}^{k}$.

{*} Write $Q'=\bigcup{}_{k=1}^{t}\left(O_{\underline{P}}^{k}\cap Q'\right)$.
It is a known fact, that for each $\underline{P}$-orbit $O_{\underline{P}}^{k}$,
we have that $O_{\underline{P}}^{k}\cap Q'$ is a finite union of
$P$-orbits $O_{j}$, $O_{\underline{P}}^{k}\cap Q'=\bigsqcup_{j=1}^{,m_{k}}O_{j}$,
where each $O_{j}$ is open and closed in $O_{\underline{P}}^{k}\cap Q'$.
This gives the required stratification of $Q'$. \end{proof}
\begin{cor}
\label{cor 4.8} Let $G=\underline{G}(\mathbb{R})$ be a complex connected
reductive group, $H$ be an open subgroup of $G^{\theta}=\underline{G}^{\theta}(\mathbb{R})$,
and $B=\underline{B}(\mathbb{R})$ a Borel subgroup of $G$. Then
there exists a finite stratification of $G/H$ by $B$-orbits.\end{cor}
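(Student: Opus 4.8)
The plan is to deduce this from Theorem \ref{Theorem 4.7} in two moves: first pass from the algebraic/Zariski picture to the open subgroup $H \subseteq G^\theta$, and then transfer the stratification of $Q$ (equivalently of $G/G^\theta$) to $G/H$. Recall that $G$ is complex connected reductive viewed as a real group, and by Lemma \ref{lemma 5.1} (cited earlier) we may fix $A \subseteq T \subseteq B$ with $T$ a $\theta$-stable maximal torus and $A$ a $\theta$-stable maximal $\mathbb{R}$-split torus; here $B$ plays the role of the minimal parabolic $P$ used in Section \ref{sec:On the geometry of}, since $G$ is a complex group. So Theorem \ref{Theorem 4.7} already gives a $B$-invariant finite stratification $Q = \bigcup_{k=1}^m B(n_k)$ by $B$-orbits, and via the isomorphism $\widetilde{\tau}\colon G/G^\theta \xrightarrow{\sim} Q$ this pulls back to a finite $B$-invariant stratification of $G/G^\theta$.

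Next I would handle the passage from $G^\theta$ to its open subgroup $H$. Since $H$ is open in $G^\theta$ (in the Hausdorff topology) and $G^\theta$ has finitely many connected components, $H$ has finite index in $G^\theta$; hence the natural projection $p\colon G/H \twoheadrightarrow G/G^\theta$ is a finite covering map, $G$-equivariant for the left action. Each $B$-orbit $\mathcal{O} \subseteq G/G^\theta$ has preimage $p^{-1}(\mathcal{O})$, a $B$-invariant locally closed submanifold of $G/H$ which is itself a finite covering of $\mathcal{O}$; since $B$ acts transitively on $\mathcal{O}$, the $B$-action permutes the (finitely many) connected components of $p^{-1}(\mathcal{O})$ transitively, so $p^{-1}(\mathcal{O})$ is a finite disjoint union of $B$-orbits, each open and closed in $p^{-1}(\mathcal{O})$. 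Refining the stratification $G/G^\theta = \bigcup_k \mathcal{O}_k$ by replacing each $p^{-1}(\mathcal{O}_k)$ with its finitely many $B$-orbit components (ordered so that unions of initial segments remain open, which is possible because $p$ is open and closed and preserves the openness of initial-segment unions) yields a finite $B$-invariant stratification of $G/H$.

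The main obstacle is the bookkeeping in the last step: one must check that the refined decomposition is genuinely a stratification in the sense of the definition in Section \ref{sub:Some tools from...}, i.e. that the sets can be ordered so every initial union $\bigcup_{i \le j} X_i$ is open. This follows because $p$ is a finite covering (hence open and closed), so $p^{-1}$ of an open set is open; taking the ordering on $G/H$ that lexicographically refines the ordering on $G/G^\theta$ (first by which stratum $\mathcal{O}_k$ a component lies over, then arbitrarily among the components over a fixed $\mathcal{O}_k$, which form an open-and-closed partition of the open set $p^{-1}(\bigcup_{i\le k}\mathcal{O}_i)\setminus p^{-1}(\bigcup_{i<k}\mathcal{O}_i)$), every initial segment is open. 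I would also remark that an alternative, cleaner route is available: since $Q'$ has finitely many twisted $P$-orbits and (by the same covering argument applied to $\tau^{-1}(N)$, or directly to the orbit decomposition in Propositions \ref{prop 4.1} and its consequences) the $B \times H$-orbits on $G$ are finite in number, each locally closed; one then appeals to Theorem \ref{Theorem 4.7}'s inductive scheme verbatim with $G^\theta$ replaced by $H$ to produce the stratification directly. Either way the statement follows.
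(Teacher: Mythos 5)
Your proposal is correct and follows essentially the same route as the paper: pass the $B$-stratification of $G/G^{\theta}$ (from Theorem \ref{Theorem 4.7} via $\widetilde{\tau}$) through the finite covering $p\colon G/H\to G/G^{\theta}$, observe that the preimage of each orbit splits into finitely many $B$-orbits which are open in the preimage, and reorder to keep initial unions open (the paper does this by induction, you by a lexicographic refinement --- both work). One small slip: you justify the open-and-closed decomposition of $p^{-1}(\mathcal{O})$ by saying $B$ permutes its components transitively, which is not true (if it were, $p^{-1}(\mathcal{O})$ would be a single $B$-orbit); the correct justification, which the paper uses and which your premises already provide, is that $p$ is a local diffeomorphism, so each $B$-orbit in $p^{-1}(\mathcal{O})$ is open there, and since there are finitely many they are also closed.
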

\begin{proof}
Let $\pi:G/H\longrightarrow G/G^{\theta}$ be the natural quotient
map and note that $\pi$ is a cover map, with fibers of size $\left|G^{\theta}:H\right|$.
Let $G/G^{\theta}=\bigsqcup_{i=1}^{m}X_{i}$ be a $B$-stratification
of $G/G^{\theta}$ and assume that $X_{i}$ is the $B$-orbit of $\{x_{i}G^{\theta}\}\in G/G^{\theta}$.
Write $G^{\theta}=\bigsqcup h_{i}H$, where $h_{i}$ are representatives
of the coset space $G^{\theta}/H$. Then $\pi^{-1}(X_{i})$ is a $B$-invariant
set that contains at most $\left|G^{\theta}:H\right|$ $B$-orbits
(the orbits of $\pi^{-1}(x_{i}G^{\theta})$). Consider the open orbit
$B(\{x_{1}G^{\theta}\})$, and notice that $\pi^{-1}(B(\{x_{1}G^{\theta}\}))$
is a $B$-invariant open set, and since $\pi$ is a local diffeomoerphism,
it follows that each $B$-orbit in $\pi^{-1}(B(\{x_{1}G^{\theta}\}))$
is open in $G/H$. By induction, we can now apply the same argument
for the spaces $\left(G/H\right)-\pi^{-1}(X_{1})$ and $\bigsqcup_{i=2}^{m}X_{i}$
and get a stratification of $G/H$ by $B$-orbits as required. 
\end{proof}

\section{\label{sec:Some-reductions-using}Some reductions using tools from
the theory of distributions}

In this section we assume the setting of Theorem \ref{thm 1.3} and
we use the notation from Section \ref{sec:On the geometry of}. 
\begin{lem}
\label{lemma 5.1} There exists a Borel subgroup $B$, a $\theta$-stable
maximal torus $T$ and a $\theta$-stable maximal $\mathbb{R}$-split
torus $A$, such that $A\subseteq T\subseteq B$.\end{lem}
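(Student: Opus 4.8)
\textbf{Proof proposal for Lemma \ref{lemma 5.1}.}

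The plan is to build the chain $A \subseteq T \subseteq B$ by first producing the $\theta$-stable $\mathbb{R}$-split torus $A$, then enlarging it to a $\theta$-stable maximal torus $T$, and finally choosing a Borel $B$ compatibly. First I would pick a minimal parabolic $\mathbb{R}$-subgroup $\underline{P}$ of $\underline{G}$; by Proposition \ref{prop 4.1} (\cite[Lemma 2.4]{HW93}), $\underline{P}$ contains a $\theta$-stable maximal $\mathbb{R}$-split torus $\underline{A}$, and I set $A = \underline{A}(\mathbb{R})$. The issue is that $A$, being only $\mathbb{R}$-split, is generally not a maximal torus of the complex group $G$, so it must be enlarged inside its centralizer.

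Next I would work inside $\underline{M} = C_{\underline{G}}(\underline{A})$, the centralizer of $\underline{A}$, which is a $\theta$-stable connected reductive $\mathbb{R}$-subgroup (it is the Levi factor of $\underline{P}$). I need a $\theta$-stable maximal torus of $\underline{M}$ containing $\underline{A}$; since $\underline{A}$ is central in $\underline{M}$, any maximal torus of $\underline{M}$ contains it, so the real content is finding one that is $\theta$-stable. This is a standard fact: an involution of a connected reductive group over a field of characteristic zero stabilizes some maximal torus (see e.g. \cite{Sp85} or the structure theory of symmetric spaces); applying it to the restriction of $\theta$ to $\underline{M}$ yields a $\theta$-stable maximal torus $\underline{T}$ of $\underline{M}$ with $\underline{A} \subseteq \underline{T}$. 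Since $C_{\underline{G}}(\underline{T}) \subseteq C_{\underline{G}}(\underline{A}) = \underline{M}$ and $\underline{T}$ is maximal in $\underline{M}$, the torus $\underline{T}$ is in fact a maximal torus of $\underline{G}$; set $T = \underline{T}(\mathbb{R})$.

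Finally I would choose the Borel $B$. The torus $A$ determines a set of positive restricted roots via the minimal parabolic $\underline{P}$ (equivalently, via a choice of a generic element in the Lie algebra of $A$); I would pick an ordering on the roots of $(\underline{G},\underline{T})$ refining this restricted ordering, i.e. one whose positive system, when restricted to $\underline{A}$, gives non-negative combinations of the positive restricted roots coming from $\underline{P}$. Let $\underline{B}$ be the corresponding Borel subgroup containing $\underline{T}$; then $\underline{A} \subseteq \underline{T} \subseteq \underline{B}$ with $\underline{B} \subseteq \underline{P}$, and taking real points gives $A \subseteq T \subseteq B$.

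The main obstacle is the middle step: ensuring simultaneously that the maximal torus extending $A$ can be taken $\theta$-stable. This is where I would invoke the known structure theory of involutions of reductive groups (the existence of a $\theta$-stable maximal torus, applied to $C_{\underline{G}}(\underline{A})$), and I would want to double-check that the resulting $T$ is genuinely maximal in $\underline{G}$ and not merely in the centralizer — which follows from $C_{\underline{G}}(\underline{T}) \subseteq C_{\underline{G}}(\underline{A})$ as noted above. Everything else is routine compatibility of orderings and passage to real points.
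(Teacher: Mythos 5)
Your proof is correct, but you are overcomplicating the middle step and invoking a general theorem that the paper avoids. The paper's proof rests on one observation your argument does not exploit: since $G$ is a complex reductive group (the real points of a restriction of scalars), the centralizer $C_G(A)$ of a maximal $\mathbb{R}$-split torus $A$ is already a maximal torus of $G$ — that is, the Levi $\underline{M}=C_{\underline{G}}(\underline{A})$ you introduce is itself a torus. (Think of $\mathrm{GL}_n(\mathbb{C})$: the real diagonal matrices centralize exactly the complex diagonal matrices.) Once this is noted, one simply sets $T = C_G(A)$, and its $\theta$-stability is a one-line computation from the $\theta$-stability of $A$ (if $t$ centralizes $A$, applying $\theta$ to $tat^{-1}=a$ shows $\theta(t)$ centralizes $\theta(A)=A$). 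Then any Borel containing $T$ works. In contrast, you reduce to $\underline{M}$ and then appeal to the general existence of a $\theta$-stable maximal torus in a reductive group with involution; that fact is true but nontrivial (Steinberg-type), and over $\mathbb{R}$ one should cite it with care. Your approach would be what's needed for a non-complex $G$, where $C_{\underline{G}}(\underline{A})$ has a genuine anisotropic part; but for the complex case treated here it is more machinery than required, and the point the paper is actually making — that complexness collapses the Levi to a torus — is exactly the step you skipped over.

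One small additional remark: your final step about choosing $\underline{B}$ by refining the restricted-root ordering is careful and correct, but again more than needed. Since $T$ is a maximal torus, any Borel $B \supseteq T$ automatically contains $A$, and the lemma as stated does not require $B \subseteq P$ or any compatibility with the restricted positive system, so the paper just picks an arbitrary Borel containing $T$.
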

\begin{proof}
By Proposition \ref{prop 4.1} we can find a $\theta$-stable maximal
$\mathbb{R}$-split torus $A$. As $G$ is complex, $C_{G}(A)=T$
a maximal torus and it is $\theta$-stable. Indeed, let $t\in C_{G}(A)$,
then:
\[
tat^{-1}=a\Longrightarrow\theta(tat^{-1})=\theta(a)\Longrightarrow\theta(t)\theta(a)\theta(t)^{-1}=\theta(a),
\]
so $\theta(t)\in C_{G}(A)$ and $T=C_{G}(A)$ is $\theta$-stable.
We finish by choosing some Borel $B$ that contains $T$. 
\end{proof}
We now fix such $A\subseteq T\subseteq B$. Recall that for any $n\in N_{G}(T)$
such that $\theta(n)=n^{-1}$ we denote by $\theta_{n}$ the involution
of $G$ defined by $\theta_{n}(g)=n\theta(g)n^{-1}$ and by $T^{\theta_{n}}$
(resp. $B^{\theta_{n}}$) the fixed points subgroup of $T$ (resp.
$B$) under $\theta_{n}$. Let $Q,Q',\tau$ and $\widetilde{\tau}$
as in Section \ref{sec:On the geometry of}. The goal in this section
is to prove the following theorem: 
\begin{thm}
\label{Theorem 5.2} If $\mathcal{S}^{*}(G/H)^{B,\chi\cdot\delta_{B}^{-1/2}}\neq0$
then there exists $n\in N_{G}(T)\cap Q$, such that:
\[
\mathrm{dim}_{\mathbb{C}}\left(\mathrm{Sym}(N_{B(n),n}^{Q'})\otimes_{\mathbb{R}}\mathbb{C})^{T^{\theta_{n}},\chi}\right)>0,
\]
where $B(n)$ is the $B$-orbit of $n\in Q'\cap N_{G}(T)$ under the
twisted action.
\end{thm}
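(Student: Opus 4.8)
\textbf{Proof proposal for Theorem \ref{Theorem 5.2}.}
The plan is to combine the geometric input of Section \ref{sec:On the geometry of} with the distributional machinery of Section \ref{sub:Some tools from...}. By Corollary \ref{cor 4.8}, fix a finite stratification $G/H=\bigsqcup_{i=1}^{l}X_{i}$ by $B$-orbits. Applying Corollary \ref{cor 2.6} to the group $B$ acting on $X=G/H$ with character $\chi\cdot\delta_{B}^{-1/2}$, the non-vanishing of $\mathcal{S}^{*}(G/H)^{B,\chi\cdot\delta_{B}^{-1/2}}$ forces some stratum $X_{i}=B(x)$ and some $k\ge0$ with
\[
\bigl(\mathrm{Sym}^{k}(N_{X_{i},x}^{G/H})\otimes_{\mathbb{R}}\mathbb{C}\bigr)^{B_{x},\,\chi\cdot\delta_{B}^{-1/2}\cdot\delta_{B_{x}}^{-1}\delta_{B}|_{B_{x}}}\neq0,
\]
where $B_{x}$ is the stabilizer in $B$ of the point $x\in G/H$. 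So the first task is to reorganize the $B$-orbits on $G/H$ via the maps $\pi:G/H\to G/G^{\theta}$ and $\widetilde{\tau}:G/G^{\theta}\xrightarrow{\sim}Q$, so that representatives $x$ may be taken of the form coming from $n\in N_{G}(T)\cap Q$ as in Corollary \ref{cor4.6} and Theorem \ref{Theorem 4.7}; along the way I must keep careful track of how a $B$-orbit on $G/H$ (left action) corresponds, under $\widetilde{\tau}\circ\pi$, to a $B$-orbit on $Q\subseteq Q'$ under the twisted action $\rho(b,q)=bq\theta(b)^{-1}$, and of the resulting identification of the stabilizer $B_{x}$ with $B^{\theta_{n}}=\{b\in B: n\theta(b)n^{-1}=b\}$ (using $\theta(n)=n^{-1}$).

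The second task is to replace the normal space $N_{X_{i},x}^{G/H}$ by $N_{B(n),n}^{Q'}$. Since $\pi$ is a covering map it is a local diffeomorphism, so $N_{X_{i},x}^{G/H}\cong N_{\pi(X_{i}),\pi(x)}^{G/G^{\theta}}$ as $B_{x}$-modules, and the isomorphism $\widetilde{\tau}$ transports this to $N_{B(n),n}^{Q}$ for the twisted $B$-action on $Q$. Because $Q$ is open (indeed open and closed) in $Q'$ by Theorem \ref{Theorem 4.7} and its proof, we have $N_{B(n),n}^{Q}=N_{B(n),n}^{Q'}$. Thus the surviving term becomes a subspace of $(\mathrm{Sym}^{k}(N_{B(n),n}^{Q'})\otimes_{\mathbb{R}}\mathbb{C})^{B^{\theta_{n}},\,\psi}$ for the character $\psi=\chi\cdot\delta_{B}^{-1/2}\cdot\delta_{B^{\theta_{n}}}^{-1}\cdot\delta_{B}|_{B^{\theta_{n}}}$. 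Now invoke hypothesis $(\star)$: since $\theta(n)=n^{-1}$ with $n\in N_{G}(T)$, it gives $\delta_{B^{\theta_{n}}}=\delta_{B}^{1/2}|_{B^{\theta_{n}}}$, whence $\psi=\chi|_{B^{\theta_{n}}}$. Finally restrict from $B^{\theta_{n}}$ to $T^{\theta_{n}}\subseteq B^{\theta_{n}}$: any nonzero $(B^{\theta_{n}},\chi)$-equivariant vector is a fortiori $(T^{\theta_{n}},\chi)$-equivariant, so
\[
\bigl(\mathrm{Sym}(N_{B(n),n}^{Q'})\otimes_{\mathbb{R}}\mathbb{C}\bigr)^{T^{\theta_{n}},\chi}\supseteq\bigl(\mathrm{Sym}^{k}(N_{B(n),n}^{Q'})\otimes_{\mathbb{R}}\mathbb{C}\bigr)^{B^{\theta_{n}},\chi}\neq0,
\]
which is exactly the asserted positivity of dimension.

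The main obstacle I expect is the bookkeeping in the first two tasks: correctly matching the \emph{left} $B$-action on $G/H$ with the \emph{twisted} $B$-action on $Q'$, and propagating this through $\pi$ and $\widetilde{\tau}$ so that the stabilizer really is $B^{\theta_{n}}$ and the Frobenius-descent character comes out as $\chi\cdot\delta_{B}^{-1/2}\cdot\delta_{B^{\theta_{n}}}^{-1}\delta_{B}|_{B^{\theta_{n}}}$ rather than some twist of it. One has to be careful that Frobenius descent (Theorem \ref{thm:(Frobenius-descent).-Let}) is applied with the correct modular characters of $B$ and $B_{x}$, and that the identification $N_{X_i,x}^{G/H}\cong N_{B(n),n}^{Q'}$ is $B^{\theta_n}$-equivariant and not merely linear; these are the points where sign/normalization errors would creep in. Everything else — the existence of the stratification, finiteness of orbits, and the reduction of the character via $(\star)$ — is either already established in the excerpt or a one-line consequence.
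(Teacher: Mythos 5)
Your proposal is correct and follows essentially the same route as the paper: apply Corollary \ref{cor 2.6} to the stratification of $G/H$ from Corollary \ref{cor 4.8}, transport the surviving normal-space term through $\widetilde{\tau}\circ\pi$ to $N_{B(n),n}^{Q'}$ (using that $Q$ is open in $Q'$), identify $B_{x}$ with $B^{\theta_{n}}$, simplify the Frobenius-descent character $\chi\cdot\delta_{B}^{-1/2}\cdot\delta_{B^{\theta_{n}}}^{-1}\cdot\delta_{B}|_{B^{\theta_{n}}}=\chi\cdot\delta_{B}^{1/2}\cdot\delta_{B^{\theta_{n}}}^{-1}$ to $\chi$ via $(\star)$, and finally restrict from $B^{\theta_{n}}$ to $T^{\theta_{n}}$. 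The only cosmetic difference is that the paper restricts to $T^{\theta_{n}}$ before invoking $(\star)$ whereas you do the reverse, which is immaterial.
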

Recall that $\widetilde{\tau}:G/G^{\theta}\stackrel{\simeq}{\longrightarrow}Q$
is an isomorphism of $B$-spaces, where $B$ acts on $G/G^{\theta}$
by the left regular action and on $Q$ by the twisted action ($b.q=bq\theta(b)^{-1}$).
By Corollary \ref{cor 4.8} and Theorem \ref{Theorem 4.7}, there
exists a finite stratification of $G/H=\bigcup{}_{i=1}^{m}B(x_{i})$
and $Q=\bigcup{}_{j=1}^{t}B(n_{j})$ by $B$-orbits, where $m=\left|B\backslash G/H\right|$,
$t=\left|B\backslash G/G^{\theta}\right|$ and $n_{j}\in N_{G}(T)$
for any $j$. Let $\pi:G/H\longrightarrow G/G^{\theta}$ be the natural
quotient map. WLOG, we may assume for any $i\in\{1...m\}$ that $\widetilde{\tau}\circ\pi(x_{i})=n_{j_{i}}$
for some $j_{i}\in\{1...t\}$. The following corollary follows from
Proposition \ref{prop2.2}:
\begin{cor}
We have:
\[
\mathrm{dim}_{\mathbb{C}}(\mathcal{S}^{*}(G/H)^{B,\chi\delta_{B}^{-1/2}})\leq\sum_{i=1}^{m}\sum_{k=0}^{\infty}\mathrm{dim}_{\mathbb{C}}(\mathcal{S}^{*}(B(x_{i}),\mathrm{Sym^{k}}(CN_{B(x_{i}),x_{i}}^{G/H}))^{B,\chi\delta_{B}^{-1/2}}).
\]

\end{cor}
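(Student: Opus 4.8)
The plan is to prove the corollary by a direct application of the stratification machinery developed in Section~2, together with the geometric input from Section~4. By Corollary~\ref{cor 4.8}, the quotient $G/H$ admits a finite $B$-invariant stratification $G/H=\bigcup_{i=1}^{m}B(x_{i})$ by $B$-orbits, where $m=|B\backslash G/H|$ and each $B(x_{i})$ is locally closed and $B$-invariant. The stratification is by definition one for which the union of the first $j$ strata is open for every $j$, so it is exactly the kind of $G$-invariant stratification required by Proposition~\ref{prop2.2} (with $G$ there taken to be our Borel $B$, and $X=G/H$, a Nash manifold since it is a smooth real algebraic variety).

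First I would invoke Proposition~\ref{prop2.2} with $X=G/H$, the group being $B$, the character being $\chi\cdot\delta_{B}^{-1/2}$, and the stratification $\{B(x_{i})\}_{i=1}^{m}$. This immediately yields
\[
\mathrm{dim}_{\mathbb{C}}(\mathcal{S}^{*}(G/H)^{B,\chi\delta_{B}^{-1/2}})\leq\sum_{i=1}^{m}\sum_{k=0}^{\infty}\mathrm{dim}_{\mathbb{C}}(\mathcal{S}^{*}(B(x_{i}),\mathrm{Sym^{k}}(CN_{B(x_{i})}^{G/H}))^{B,\chi\delta_{B}^{-1/2}}).
\]
It then remains to rewrite each summand with the conormal bundle replaced by its fiber $CN_{B(x_{i}),x_{i}}^{G/H}$ at the base point $x_{i}$. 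This is exactly Corollary~\ref{cor:Let-a-Nash}: since $B$ acts transitively on the orbit $B(x_{i})$ and the identity map $B(x_{i})\to B(x_{i})$ is $B$-equivariant, Frobenius descent identifies $\mathcal{S}^{*}(B(x_{i}),\mathrm{Sym^{k}}(CN_{B(x_{i})}^{G/H}))^{B,\chi\delta_{B}^{-1/2}}$ with the space $\mathcal{S}^{*}(\{x_{i}\},\mathrm{Sym^{k}}(CN_{B(x_{i}),x_{i}}^{G/H}))^{B_{x_{i}},\,\chi\delta_{B}^{-1/2}\delta_{B_{x_{i}}}^{-1}\delta_{B}|_{B_{x_{i}}}}$, and in particular their dimensions agree. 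Substituting this into the inequality above gives the stated formula.

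The one point that needs a word of care, rather than a genuine obstacle, is that to apply Frobenius descent (Theorem~\ref{thm:(Frobenius-descent).-Let}) one needs the relevant character to be tempered; here it is a product of a modular character of $B$ and the character $\chi$, and such characters are tempered, so Corollary~\ref{cor:Let-a-Nash} applies verbatim. I expect no serious difficulty: the corollary is essentially a bookkeeping consequence of Proposition~\ref{prop2.2} and Corollary~\ref{cor:Let-a-Nash} once the existence of the $B$-stratification of $G/H$ (Corollary~\ref{cor 4.8}) is in hand, and that existence is where all the real geometric work — the orbit decomposition results of \cite{HW93} and the covering argument of Corollary~\ref{cor 4.8} — has already been done. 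The heavier lifting, namely bounding each local term $\mathrm{dim}_{\mathbb{C}}(\mathcal{S}^{*}(B(x_{i}),\mathrm{Sym^{k}}(CN_{B(x_{i}),x_{i}}^{G/H}))^{B,\chi\delta_{B}^{-1/2}})$ and relating it to the normal space $N_{B(n),n}^{Q'}$ and the dominance of $\chi$, is deferred to the proof of Theorem~\ref{Theorem 5.2}.
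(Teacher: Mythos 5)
Your proof is correct and takes exactly the same route as the paper, which simply says the corollary ``follows from Proposition~\ref{prop2.2}'' after Corollary~\ref{cor 4.8} has supplied the finite $B$-invariant stratification of $G/H$. One small remark on the final step: Proposition~\ref{prop2.2} literally gives the summand $\mathcal{S}^{*}(B(x_{i}),\mathrm{Sym^{k}}(CN_{B(x_{i})}^{G/H}))^{B,\chi\delta_{B}^{-1/2}}$ with the conormal \emph{bundle}, and the $x_i$-subscript in the paper's statement is a notational shorthand for this; your invocation of Frobenius descent (Corollary~\ref{cor:Let-a-Nash}) is a legitimate way to justify that shorthand by equality of dimensions, but be aware that the descent also replaces the base $B(x_{i})$ by $\{x_{i}\}$, the group $B$ by $B_{x_{i}}$, and the character by $\chi\delta_{B}^{-1/2}\delta_{B_{x_{i}}}^{-1}\delta_{B}|_{B_{x_{i}}}$, so ``substituting\ldots gives the stated formula'' should be read as an equality of the relevant dimensions rather than of the literal expressions (this is in fact Corollary~\ref{cor 2.6}, which the paper uses in the very next step of the proof of Theorem~\ref{Theorem 5.2}).
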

We now prove Theorem \ref{Theorem 5.2}: 
\begin{proof}
By Corollary \ref{cor 2.6} we deduce that:
\[
0<\mathrm{dim}_{\mathbb{C}}(\mathcal{S}^{*}(G/H)^{B,\chi\delta_{B}^{-1/2}})\leq\sum_{i=1}^{m}\sum_{k=0}^{\infty}\mathrm{dim}_{\mathbb{C}}((\mathrm{Sym^{k}}(N_{B(x_{i}),x_{i}}^{G/H})\otimes_{\mathbb{R}}\mathbb{C})^{B_{x_{i}},\chi\delta_{B_{x_{i}}}^{\text{\textminus}1}\delta_{B}^{1/2}}).
\]
Since $\pi:G/H\longrightarrow G/G^{\theta}$ is a cover map, it is
a local isomorphism of $B$-spaces. Therefore $\widetilde{\tau}\circ\pi$
is a local isomorphism of $B$-spaces as well, and for each $i$ we
have: 
\[
N_{B(x_{i}),x_{i}}^{G/H}\simeq N_{B(\pi(x_{i})),\pi(x_{i})}^{G/G^{\theta}}\simeq N_{B(n_{j_{i}}),n_{j_{i}}}^{Q}\simeq N_{B(n_{j_{i}}),n_{j_{i}}}^{Q'}
\]
as $B$-representations, where the last equality follows from the
fact that $Q$ is open in $Q'$. We deduce that:
\[
0<\mathrm{dim}_{\mathbb{C}}(\mathcal{S}^{*}(G/H)^{B,\chi\delta_{B}^{-1/2}})\leq\sum_{i=1}^{m}\sum_{k=0}^{\infty}\mathrm{dim}_{\mathbb{C}}((\mathrm{Sym^{k}}(N_{B(n_{j_{i}}),n_{j_{i}}}^{Q'})\otimes_{\mathbb{R}}\mathbb{C})^{B^{\theta_{n_{j_{i}}}},\chi\delta_{B^{\theta_{n_{j_{i}}}}}^{\text{\textminus}1}\delta_{B}^{1/2}}).
\]
 Notice that $B^{\theta_{n_{j_{i}}}}=\{b\in B|\,\theta_{n_{j_{i}}}(b)=n_{j_{i}}\theta(b)n_{j_{i}}^{-1}=b\}=B_{n_{j_{i}}}$.
Hence, there exists $n\in N_{G}(T)\cap Q$ such that $\mathrm{dim}_{\mathbb{C}}((\mathrm{Sym^{k}}(N_{B(n),n}^{Q'})\otimes_{\mathbb{R}}\mathbb{C})^{B^{\theta_{n}},\chi\delta_{B^{\theta_{n}}}^{\text{\textminus}1}\delta_{B}^{1/2}})>0$.
In particular,
\[
\mathrm{dim}_{\mathbb{C}}((\mathrm{Sym^{k}}(N_{B(n),n}^{Q'})\otimes_{\mathbb{R}}\mathbb{C})^{T^{\theta_{n}},\chi\delta_{B^{\theta_{n}}}^{\text{\textminus}1}\delta_{B}^{1/2}})>0.
\]
As $\theta$ satisfies property \ref{eq:star}, we have that $\delta_{B}^{1/2}|_{B^{\theta_{n}}}=\delta_{B^{\theta_{n}}}$
and we get the desired result. 
\end{proof}
We can now combine Step 1 and Step 2 as follows: assume that $\pi\in\mathcal{SAF}_{\mathrm{Irr}}(G)$
is $H$-distinguished. Step 1 showed that for any Borel $B$, $\mathcal{S}^{*}(G/H)^{B,\chi\cdot\delta_{B}^{-1/2}}\neq0$
where $\chi$ is dominant with respect to the choice of $B$. By Lemma
\ref{lemma 5.1} we can find $A\subseteq T\subseteq B$ such that
$T$ and $A$ are $\theta$-stable. Now we can use Step 2 and deduce
the existence of $n\in N_{G}(T)\cap Q$ such that:
\[
\mathrm{dim}\left(\mathrm{Sym}(N_{B(n),n}^{Q'})\otimes_{\mathbb{R}}\mathbb{C})^{T^{\theta_{n}},\chi}\right)>0.
\]

\section{Proof of the main theorem- Theorem 1.3}

In this section we assume the same setting as in Section 5 and prove
Theorem \ref{thm 1.3}. Let $G$ be a complex connected reductive
group, and $\mathfrak{g}$ be its Lie algebra, $K$ be a maximal compact
subgroup of $G$ with Lie algebra $\mathfrak{k}$, $\tau$ the Cartan
involution fixing $K$, and $\mathfrak{p}$ the $-1$ eigenspace of
$\tau$. Let $\mathfrak{a}_{\mathfrak{p}}$ be a maximal abelian subspace
of $\mathfrak{p}$, with the corresponding analytic subgroup $A_{\mathfrak{p}}$. 
\begin{prop}
\label{prop6.1:We-can-choose}We can choose $A_{\mathfrak{p}}\subseteq A\subseteq T\subseteq B$,
where $A_{\mathfrak{p}},A$ and $T$ are $\theta$-stable. \end{prop}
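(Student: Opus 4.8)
The plan is to fix the data in the order ``Cartan involution $\tau$, then $\mathfrak{a}_{\mathfrak p}$, then $T$, then $A$, then $B$'', arranging at each stage that the objects chosen so far are $\theta$-stable; this is the same construction as in Lemma \ref{lemma 5.1}, refined so that the Cartan involution is matched to $\theta$ and hence $\mathfrak a_{\mathfrak p}$ (and $A_{\mathfrak p}$) can be kept $\theta$-stable.

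First I would invoke the standard compatibility of involutions: the maximal compact subgroup $K$ (equivalently the Cartan involution $\tau$) may be chosen to be $\theta$-stable, i.e. $\tau\theta=\theta\tau$. This follows from the conjugacy of Cartan involutions together with an averaging argument over $\exp(\mathfrak p)$, as in the symmetric-space literature (cf. \cite{Kn01}). With such a choice, $\mathfrak p$, being the $-1$-eigenspace of $\tau$, is $\theta$-stable, so $\theta$ restricts to an involution of $\mathfrak p$ with eigenspace decomposition $\mathfrak p=(\mathfrak p\cap\mathfrak h)\oplus(\mathfrak p\cap\mathfrak q)$, where $\mathfrak h$ and $\mathfrak q$ are the $(+1)$- and $(-1)$-eigenspaces of $\theta$ on $\mathfrak g$. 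Next I would choose $\mathfrak a_{\mathfrak p}$ $\theta$-stable by the usual two-step procedure: take a maximal abelian subspace $\mathfrak a_{-}\subseteq\mathfrak p\cap\mathfrak q$, then a maximal abelian subspace $\mathfrak a_{+}$ of $Z_{\mathfrak p}(\mathfrak a_{-})\cap(\mathfrak p\cap\mathfrak h)$, and set $\mathfrak a_{\mathfrak p}:=\mathfrak a_{-}\oplus\mathfrak a_{+}$. A short verification — using maximality of $\mathfrak a_{-}$ to control the $\mathfrak q$-component of any element of $\mathfrak p$ centralizing $\mathfrak a_{\mathfrak p}$, and maximality of $\mathfrak a_{+}$ for the $\mathfrak h$-component — shows $\mathfrak a_{\mathfrak p}$ is maximal abelian in $\mathfrak p$, and it is $\theta$-stable by construction. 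Put $A_{\mathfrak p}=\exp(\mathfrak a_{\mathfrak p})$, which is then $\theta$-stable.

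Now, since $G$ is complex, $Z_{\mathfrak g}(\mathfrak a_{\mathfrak p})$ is a Cartan subalgebra (this is exactly the observation used in the proof of Lemma \ref{lemma 5.1}), so $T:=Z_G(A_{\mathfrak p})$ is a maximal torus, and it is $\theta$-stable because $A_{\mathfrak p}$ is. Let $A\subseteq T$ be the maximal $\mathbb R$-split subtorus of $T$; since $\theta$ is an $\mathbb R$-automorphism of $G$ preserving $T$, it carries this intrinsic $\mathbb R$-split part to itself, so $A$ is $\theta$-stable. Moreover $\mathrm{Lie}(A)=\mathfrak t\cap\mathfrak p=\mathfrak a_{\mathfrak p}$ (the second equality again by maximality of $\mathfrak a_{\mathfrak p}$ in $\mathfrak p$), hence $A_{\mathfrak p}=A^{\circ}\subseteq A$, and $\dim A$ equals the $\mathbb R$-rank of $G$, so $A$ is a maximal $\mathbb R$-split torus of $G$. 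Finally, choose any Borel subgroup $B\supseteq T$ (no $\theta$-stability of $B$ is claimed or needed). This yields $A_{\mathfrak p}\subseteq A\subseteq T\subseteq B$ with $A_{\mathfrak p},A,T$ all $\theta$-stable. The only step that is not routine structure theory is the very first one, the existence of a $\theta$-stable maximal compact subgroup; everything downstream is simplified by $G$ being complex, which forces $Z_G(A_{\mathfrak p})$ to be a torus.
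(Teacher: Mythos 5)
Your proof is correct, but it runs in the opposite direction from the paper's. The paper starts from Lemma \ref{lemma 5.1}, which (via \cite[Lemma 2.4]{HW93}) hands it a $\theta$-stable maximal $\mathbb R$-split torus $A$ and a $\theta$-stable $T=C_G(A)$, and only afterwards picks a Cartan involution $\tau$ compatible with the already-chosen $A$, so that $\mathfrak a_{\mathfrak p}=\mathrm{Lie}(A)$. The $\theta$-stability of $A_{\mathfrak p}$ then comes for free from a one-line observation: $A_{\mathfrak p}$ is the identity component of the $\theta$-stable group $A$, and $\theta$ is algebraic, hence preserves identity components. Crucially, the paper never needs $\tau$ to commute with $\theta$, only to be adapted to $A$. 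You instead begin by invoking the existence of a $\theta$-commuting Cartan involution, build a $\theta$-stable $\mathfrak a_{\mathfrak p}$ by the standard two-step maximal-abelian construction inside $\mathfrak p\cap\mathfrak q$ and then $\mathfrak p\cap\mathfrak h$, and recover $T=Z_G(A_{\mathfrak p})$ and $A$ afterwards. Both routes are valid; yours is more self-contained from classical symmetric-space theory and proves slightly more (the commuting Cartan involution), while the paper's is shorter because it leans on the HW93 input already established for Lemma \ref{lemma 5.1} and sidesteps the averaging/fixed-point argument entirely. One small point worth making explicit in your version: the identity $\mathrm{Lie}(A)=\mathfrak t\cap\mathfrak p$ uses that $T=Z_G(A_{\mathfrak p})$ is $\tau$-stable, which holds because $\tau$ acts on $A_{\mathfrak p}$ by inversion and hence preserves its centralizer.
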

\begin{proof}
By Lemma \ref{lemma 5.1} we can choose $A\subseteq T\subseteq B$
such that $T$ and $A$ are $\theta$-stable. As $G$ is a complex
group, we can choose some Cartan involution $\tau$ such that $\mathfrak{a}=\mathrm{Lie}(A)$
is a maximal abelian subalgebra of $\mathfrak{p}$, i.e $\mathfrak{a}=\mathfrak{a}_{\mathfrak{p}}$
and $A_{\mathfrak{p}}=\mathrm{exp}(\mathfrak{a}_{\mathfrak{p}})\simeq\left(\mathbb{R}_{>0}\right)^{n}$
is the connected component of $\{e\}$ in $A\simeq\left(\mathbb{R}^{\times}\right)^{n}$.
As $\theta$ is a an algebraic morphism, $\theta(A_{\mathfrak{p}})$
is connected and contains $\{e\}$ thus $A_{\mathfrak{p}}$ is $\theta$-stable
as well.
\end{proof}
Using the above proposition, we fix such $A_{\mathfrak{p}}\subseteq A\subseteq T\subseteq B$.
Let $\pi\in\mathcal{SAF}_{\mathrm{Irr}}(G)$ and let $\chi$ be its
Langlands parameter such that $\pi$ is the unique quotient of $\mathrm{Ind}_{B}^{G}(\chi)$.
Recall that $\theta_{n}(g)=n\theta(g)n^{-1}$ and $Q'=\{g\in G|\theta(g)=g^{\text{\textminus}1}\}$.
Using Step 1 and Step 2, the next theorem clearly implies Theorem
1.3a):
\begin{thm}
\label{thm6.2} If there exists $n\in N_{G}(T)\cap Q'$ such that
$\mathrm{dim}_{\mathbb{C}}\left(\mathrm{Sym}(N_{B(n),n}^{Q'})\otimes_{\mathbb{R}}\mathbb{C})^{T^{\theta_{n}},\chi}\right)>0$,
then: 
\[
\mathrm{dim}_{\mathbb{C}}\left(\mathrm{Sym}(N_{B(n),n}^{Q'})\otimes_{\mathbb{R}}\mathbb{C})^{T^{\theta_{n}},\chi}\right)=1
\]
 and $\tilde{\pi}\simeq\theta\circ\pi$.\end{thm}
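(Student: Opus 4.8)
The plan is to make the abstract weight computation on the normal space $N_{B(n),n}^{Q'}$ completely explicit and then exploit dominance of $\chi$. First I would set up coordinates: the twisted $B$-orbit of $n$ inside $Q'$ has tangent space at $n$ of the form (the image of) $\mathfrak{b} - (\mathrm{Ad}(n)\circ d\theta)\mathfrak{b}$ sitting inside $T_n Q' = \mathfrak{q}' = \{X \in \mathfrak{g} : (\mathrm{Ad}(n^{-1})\circ d\theta)X = -X\}$, i.e. the $-1$-eigenspace of the involution $d\theta_n := \mathrm{Ad}(n)\circ d\theta$ on $\mathfrak{g}$ transported to $n$. Decomposing $\mathfrak{g}$ under $T$ (which is $\theta_n$-stable since $T$ is $\theta$-stable and $n$ normalizes $T$) into root spaces $\mathfrak{g}_\alpha$, the involution $d\theta_n$ permutes the set of roots; each root $\alpha$ is either fixed (with $\mathfrak{g}_\alpha$ split into $\pm 1$ eigenlines, since these are complex one-dimensional but we view $\mathfrak g$ as a real Lie algebra, so $\mathfrak{g}_\alpha$ is a two-real-dimensional space carrying the involution) or swapped with $\theta_n(\alpha) \ne \alpha$. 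In each case one reads off, on the quotient $N_{B(n),n}^{Q'} = \mathfrak{q}'/(\mathfrak{b}\text{-orbit direction})$, a list of $T^{\theta_n}$-weights, and I claim each such weight, restricted to $T^{\theta_n}$, is (a positive multiple of) the restriction of a \emph{negative} root of $T$ — this is the content of the ``negative weights'' assertion promised in the Step 3 statement and is exactly what dominance will kill.

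Next I would translate the equivariance condition. A nonzero vector in $(\mathrm{Sym}(N_{B(n),n}^{Q'})\otimes_{\mathbb{R}}\mathbb{C})^{T^{\theta_n},\chi}$ lives in some $\mathrm{Sym}^k$ and is a sum of monomials in the weight vectors above; projecting to $T^{\theta_n}$-isotypic components, it forces $\chi|_{T^{\theta_n}}$ to equal a nonnegative-integer combination $\sum_i c_i \mu_i$ of the weights $\mu_i$ appearing in $N_{B(n),n}^{Q'}$. Since each $\mu_i|_{T^{\theta_n}}$ is the restriction of a negative root while $\chi$ is dominant, the only way this can happen is with all $c_i = 0$, i.e. the equivariant vector must be a constant in $\mathrm{Sym}^0 = \mathbb{C}$ — which immediately gives $\mathrm{dim}_{\mathbb{C}}(\cdots) = 1$ (it is at least one because the constants always work, and at most one by the argument just given). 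The delicate point here is that ``$\chi$ dominant'' is a condition on the full torus $T$ while the constraint only sees the restriction to $T^{\theta_n}$; I would need the (purely root-theoretic) fact that a dominant character of $T$ cannot have its $T^{\theta_n}$-restriction expressed as a nonnegative combination of $T^{\theta_n}$-restrictions of negative roots unless that combination is empty — this should follow by pairing against a suitable coweight that lies in $\mathfrak{a}^{\theta_n}$ (or by using that $n \in Q'$ forces $w := nT$ to satisfy $w\theta(w)= $ identity-type relations, so that $\theta_n$-averaging of a dominant weight stays dominant-ish). Getting this positivity bookkeeping exactly right, including the factors coming from $\delta_B^{1/2}$ that were absorbed via property $(\star)$ in Step 2, is where I expect the real work to be.

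Finally, for the representation-theoretic conclusion $\widetilde{\pi} \simeq \theta\circ\pi$: the equality $\chi|_{T^{\theta_n}} = 0$-combination unpacked carefully actually yields a relation of the form $\chi^{-1} = w.(\theta(\chi))$ on all of $T$ for $w = nT \in W(G,T)$ — indeed the condition that the orbit through $n$ contributes a $T^{\theta_n}$-invariant (the $k=0$ term) is precisely that $\chi$ is trivial on $T^{\theta_n}$, equivalently $\chi \circ \theta_n = \chi^{-1}$... wait, more precisely $\chi|_{T^{\theta_n}}$ trivial together with the eigenspace decomposition $T = T^{\theta_n}\cdot T^{-\theta_n}$ (up to finite index) forces $\theta_n(\chi) = \chi^{-1}$, i.e. $\mathrm{Ad}(n)^*\theta^*\chi = \chi^{-1}$, which is exactly $\chi^{-1} = w.\theta(\chi)$. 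Combined with $\theta(n) = n^{-1}$ giving $w\circ\theta = \theta\circ w^{-1}$ as automorphisms (since conjugation by $n$ and by $\theta(n)^{-1}=n$ agree), Corollary \ref{cor 3.6} applies verbatim and delivers $\widetilde{\pi}^\theta \simeq \pi$, equivalently $\widetilde{\pi}\simeq\theta\circ\pi$. I would present the proof in this order: (1) root-space description of $N_{B(n),n}^{Q'}$ and its $T^{\theta_n}$-weights; (2) the lemma that these weights are ``negative''; (3) the dominance argument forcing $k=0$ and dimension $1$; (4) extracting the character identity and invoking Corollary \ref{cor 3.6}.
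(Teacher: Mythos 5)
Your plan follows the paper's proof essentially line by line: compute $N_{B(n),n}^{Q'}$ via the root decomposition, identify the $T^{\theta_n}$-weights as (complexified restricted) negative roots $\widetilde{\alpha}$ with both $\alpha$ and $w\circ\theta(\alpha)$ negative, use dominance of $\chi$ to force the symmetric degree to be zero, and feed the resulting character identity into Corollary \ref{cor 3.6}. However, you have correctly flagged but not resolved the one step where the proof actually bites, and your proposed resolutions are not yet arguments.

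The issue is the passage from a constraint \emph{on $T^{\theta_n}$} to a constraint \emph{on $T$}. From the equivariance you get only $\alpha^{\varphi}|_{T^{\theta_n}}=\chi|_{T^{\theta_n}}$, while dominance is a statement about pairing against roots on the full $A_{\mathfrak{p}}$. Your candidate fixes — ``pair against a suitable coweight lying in $\mathfrak{a}^{\theta_n}$'' or ``$\theta_n$-averaging of a dominant weight stays dominant-ish'' — are gestures, not proofs, and the naive version of the root-theoretic ``fact'' you invoke (a dominant $\chi$ cannot have $T^{\theta_n}$-restriction a nonnegative combination of $T^{\theta_n}$-restrictions of negative roots) would require you to also check, for instance, that none of the relevant weights restricts trivially to $T^{\theta_n}$; this is true here (it would force $w\circ\theta(\alpha)=-\alpha$, contradicting that both are negative) but it has to be said. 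The paper's Lemma \ref{lemma 6.12} is the precise device: the image of $t\mapsto n\theta(t)n^{-1}t$ lies in $T^{\theta_n}$, so $\alpha^{\varphi}(n\theta(t)n^{-1}t)=\chi(n\theta(t)n^{-1}t)$ holds for all $t\in T$, which after differentiating on $\mathfrak{a}_{\mathfrak{p}}$ gives the \emph{symmetrized} identity $\sum_j n_j(\alpha_j+w\circ\theta(\alpha_j))=d(\chi|_{\mathfrak{a}_{\mathfrak{p}}})+d(w\circ\theta(\chi)|_{\mathfrak{a}_{\mathfrak{p}}})$ on the full $\mathfrak{a}_{\mathfrak{p}}^{*}$. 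Pairing both sides against $\sum_j n_j(\alpha_j+w\circ\theta(\alpha_j))$ and using the $W$- and $\theta$-invariance of the inner product plus dominance of $\chi$ then yields the contradiction, with no need to fuss about the geometry of $T^{\theta_n}$.

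Two smaller inaccuracies in your final paragraph. First, the step ``$\chi|_{T^{\theta_n}}$ trivial together with $T=T^{\theta_n}\cdot T^{-\theta_n}$ up to finite index forces $\theta_n(\chi)=\chi^{-1}$'' is both unnecessary and imprecise (the finite-index caveat means the direct-sum reasoning needs care); once you have Lemma \ref{lemma 6.12} the identity $\chi^{-1}=w\circ\theta(\chi)$ on all of $T$ falls out immediately from $\chi(n\theta(t)n^{-1}t)=1$. Second, you asserted ``it is at least one because the constants always work'' — the constants give a $(T^{\theta_n},\chi)$-equivariant vector only after you have established $\chi|_{T^{\theta_n}}=1$, which is a conclusion of the argument, not a starting point; the logic is rather that the hypothesis gives some $\varphi$, the positivity argument forces $\varphi\equiv 0$ and hence $\chi|_{T^{\theta_n}}=1$, and \emph{then} the constants are the unique equivariant line. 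With these points repaired your outline becomes the paper's proof.
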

\begin{cor}
Theorem \ref{thm6.2} also implies Theorem 1.3b).\end{cor}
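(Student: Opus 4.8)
The plan is to analyze the normal space $N_{B(n),n}^{Q'}$ explicitly as a $T^{\theta_n}$-representation and extract enough weight information to force the equivariant subspace of the symmetric algebra to be small. First I would replace $\theta$ by $\theta_n$ (which is again an $\mathbb{R}$-involution, with $e\in Q'_n:=\{g:\theta_n(g)=g^{-1}\}$ playing the role of $n$), so that we may assume $n=e$ and $B(e)=\{b\theta(b)^{-1}:b\in B\}$ is the twisted $B$-orbit of the identity inside $Q'$. Then $T_e = B_e = B^\theta$ and the tangent space to $Q'$ at $e$ is $\mathfrak{q}:=\{X\in\mathfrak{g}:d\theta(X)=-X\}$, the $-1$-eigenspace of $d\theta$, while the tangent space to $B(e)$ at $e$ is $(1-d\theta)(\mathfrak b)$. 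Using the $\theta$-stable root space decomposition of $\mathfrak g$ with respect to $T$, one computes $N_{B(e),e}^{Q'}\cong \mathfrak q / (1-d\theta)(\mathfrak b)$ as a $T^\theta$-module, and identifies its weights as a specific set of $T^\theta$-weights obtained by restricting certain roots $\alpha\in\Sigma(G,T)$ (those in the unipotent radical of the opposite Borel, suitably paired up under $\theta$) to $T^\theta$.

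The key point is a positivity/dominance argument: $\mathrm{Sym}(N_{B(n),n}^{Q'})\otimes_{\mathbb R}\mathbb C$ decomposes, under $T^{\theta_n}$, into weight spaces whose weights are non-negative integer combinations of the weights appearing in $N_{B(n),n}^{Q'}$ itself, and I would show each of the latter restricts from a \emph{negative} (anti-dominant) root of $T$. Since $\chi$ is dominant by Theorem \ref{thm3.1}, its restriction to $T^{\theta_n}$ can coincide with such a combination only for the trivial combination, i.e. only in $\mathrm{Sym}^0$; hence the equivariant space is at most one-dimensional, and it is exactly one-dimensional because $\mathrm{Sym}^0=\mathbb C$ with the trivial action and we are given the space is nonzero — so in fact $\chi|_{T^{\theta_n}}$ must be trivial. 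This last conclusion, $\chi|_{T^{\theta_n}}=1$, is the bridge to the representation-theoretic statement.

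To finish with $\tilde\pi\simeq\pi^\theta$, I would translate $\chi|_{T^{\theta_n}}=1$ into a relation between $\chi$ and $\theta(\chi)$ in the character lattice of $T$. The condition that $\chi$ is trivial on $T^{\theta_n}=\{t\in T: n\theta(t)n^{-1}=t\}$ should, after chasing through the action of $w_n:=nT\in W(G,T)$ (which satisfies $\theta(n)=n^{-1}$, hence $w_n\circ\theta=\theta\circ w_n^{-1}$ on $T$), yield that $\chi^{-1}$ and $w_n\cdot\theta(\chi)$ agree — precisely the hypothesis of Corollary \ref{cor 3.6}. Invoking that corollary then gives $\widetilde\pi{}^{\theta}\simeq\pi$, equivalently $\tilde\pi\simeq\pi^\theta$. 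The main obstacle I anticipate is the explicit weight computation in the second paragraph: one must carefully track how $d\theta$ permutes and rescales the root spaces $\mathfrak g_\alpha$ (including the subtlety that $\theta$ need not preserve the Borel, only the torus, so some positive root spaces may be sent to negative ones), verify that the quotient $\mathfrak q/(1-d\theta)\mathfrak b$ has exactly the claimed anti-dominant weights when restricted to $T^{\theta_n}$, and handle the real-form bookkeeping coming from restriction of scalars (the $\otimes_{\mathbb R}\mathbb C$), so that the dominance of $\chi$ genuinely obstructs all nonzero weights.
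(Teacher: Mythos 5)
You have sketched a proof of Theorem~\ref{thm6.2} itself, but the statement you were asked to prove is the Corollary asserting that Theorem~\ref{thm6.2} \emph{implies} Theorem~\ref{thm 1.3}b), namely the multiplicity bound $\dim_{\mathbb C}(\pi^*)^H\leq|B\backslash G/H|$. Theorem~\ref{thm6.2} only tells you that for each $n\in N_G(T)\cap Q'$ the quantity $\dim_{\mathbb C}\bigl(\mathrm{Sym}(N_{B(n),n}^{Q'})\otimes_{\mathbb R}\mathbb C\bigr)^{T^{\theta_n},\chi}$ is at most $1$; it says nothing by itself about $(\pi^*)^H$, and deriving the multiplicity bound from it is a separate (and much shorter) aggregation step that your proposal never carries out. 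No amount of weight-theoretic work on a single orbit can produce the bound by $|B\backslash G/H|$ without first summing over the $B$-orbits of $G/H$.

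The deduction that is actually needed goes as follows. Corollary~\ref{Corollary 3.7} gives $\dim_{\mathbb C}(\pi^*)^H\leq\dim_{\mathbb C}\mathcal S^*(G/H)^{B,\chi\delta_B^{-1/2}}$. The proof of Theorem~\ref{Theorem 5.2} (using the stratification of $G/H$ by the $m=|B\backslash G/H|$ $B$-orbits, Corollary~\ref{cor 2.6}/Frobenius descent, the local $B$-isomorphism $G/H\to G/G^\theta\simeq Q\subseteq Q'$ carrying normal spaces to normal spaces, the identification $B_{x_i}=B^{\theta_{n_{j_i}}}$, the restriction of the equivariance from $B^{\theta_{n_{j_i}}}$ to $T^{\theta_{n_{j_i}}}$, and property~$(\star)$ to replace $\chi\delta_{B^{\theta_n}}^{-1}\delta_B^{1/2}$ by $\chi$) bounds this by
\[
\sum_{i=1}^{|B\backslash G/H|}\sum_{k=0}^{\infty}\dim_{\mathbb C}\Bigl(\bigl(\mathrm{Sym}^k(N_{B(n_{j_i}),n_{j_i}}^{Q'})\otimes_{\mathbb R}\mathbb C\bigr)^{T^{\theta_{n_{j_i}}},\chi}\Bigr).
\]
Theorem~\ref{thm6.2} says each inner sum over $k$ is either $0$ or $1$, so the whole expression is at most $|B\backslash G/H|$, which is Theorem~\ref{thm 1.3}b). (As an aside, your plan for Theorem~\ref{thm6.2} is broadly in the spirit of the paper's Section~6, though the paper works directly at $n$ with the twisted action rather than reducing to $n=e$ by replacing $\theta$ with $\theta_n$, and it obtains the needed weight information via the explicit multiset $S$ of restricted roots and the inner-product positivity argument; it never needs to assert literally that $\chi|_{T^{\theta_n}}$ is trivial except as the final consequence of $\varphi=0$.)
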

\begin{proof}
By Corollary \ref{Corollary 3.7} and the discussion in Section 5
\begin{eqnarray*}
\mathrm{dim}_{\mathbb{C}}\left(\pi^{*}\right)^{H} & \leq & \mathrm{dim}_{\mathbb{C}}\mathcal{S}^{*}(G/H)^{B,\chi\cdot\delta_{B}^{-1/2}}\\
 & \leq & \sum_{i=1}^{\left|B\backslash G/H\right|}\sum_{k=0}^{\infty}\mathrm{dim}_{\mathbb{C}}((\mathrm{Sym^{k}}(N_{B(n_{j_{i}}),n_{j_{i}}}^{Q'})\otimes_{\mathbb{R}}\mathbb{C})^{T^{\theta_{n_{i}}},\chi})
\end{eqnarray*}
where $n_{j_{i}}$ is a representative of the $B$-orbit $B(n_{j_{i}})$
in $Q$. By Theorem \ref{thm6.2} it follows that $\mathrm{dim}_{\mathbb{C}}\left(\pi^{*}\right)^{H}\leq\left|B\backslash G/H\right|$
and also that $\tilde{\pi}\simeq\theta\circ\pi$ as required.
\end{proof}
The proof of Theorem \ref{thm6.2} is divided to several parts: in
the first part we calculate $N_{B(n),n}^{Q'}$ using the Lie algebra
of $G$ by some direct calculations. In the second part we decompose
$\mathrm{Sym}(N_{B(n),n}^{Q'})\otimes_{R}\mathbb{C}$ to eigenspaces
as a complex representation of $T^{\theta_{n}}$ and then use the
$(T^{\theta_{n}},\chi)$ equivariance to show that $\chi^{-1}=w\circ\theta(\chi)$,
where $w=nT\in N_{G}(T)/T$ is the element in the Weyl group $W$
that corresponds to $n$. From here we deduce that $\tilde{\pi}\simeq\theta\circ\pi$.

\subsection{Calculation of $N_{B(n),n}^{Q'}$}

Recall the notations from Section \ref{sec:On the geometry of}. Define
a map $\gamma:G\longrightarrow G$ by $\gamma(g)=g\theta(g)$ and
let $a:B\longrightarrow B(n)$ be the orbit map $b\longmapsto bn\theta(b^{-1})$.
Notice that $T_{n}Q'=\mathrm{Ker}\left(d\gamma\right)_{n}$, $T_{n}B(n)=\mathrm{Im}\left(da\right)_{e}$
and $N_{B(n),n}^{Q'}=T_{n}Q'/T_{n}B(n)=\mathrm{Ker}\left(d\gamma\right)_{n}/\mathrm{Im}\left(da\right)_{e}$.
Denote by $\mu:G\times G\longrightarrow G$ the group multiplication,
by $L_{n},R_{n}:G\longrightarrow G$ the left and right multiplication
by $n$, and by $\left(dL_{n}\right)_{k}$ and $\left(dR_{n}\right)_{k}$,
their differentials at a point $k\in G$. For the convenience of the
reader, we will drop the notation $\left(\,\right)_{e}$ when taking
differential at $e$. 
\begin{lem}
\label{Lemma 6.4}We have that: 
\begin{eqnarray*}
T_{n}Q' & = & \mathrm{Ker}\left(d\gamma\right)_{n}=\{X\in T_{n}(G)|\left(dR_{n^{-1}}\right)_{n}(X)=-d\theta\circ\left(dL_{n^{-1}}\right)_{n}(X)\}.\\
T_{n}B(n) & = & \mathrm{Im}(da)=\{dR_{n}(Y)-dL_{n}\circ d\theta(Y))|\,Y\in T_{e}(B)\}.
\end{eqnarray*}
\end{lem}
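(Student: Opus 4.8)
\textbf{Proof proposal for Lemma \ref{Lemma 6.4}.}

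The plan is to compute the two differentials directly from the chain rule, using the standard identification of tangent spaces of a Lie group with left (or right) translates of $\mathfrak{g}=T_e(G)$. First I would handle $T_nB(n)=\mathrm{Im}(da)_e$. The orbit map is $a(b)=bn\theta(b^{-1})$, so I write $a=\mu\circ(\mu\times \mathrm{id})\circ(\mathrm{id}\times (c_n)\times (\iota\circ\theta))\circ\mathrm{diag}$, or more simply view $a(b)=L$ applied to the product $b\cdot n\cdot\theta(b)^{-1}$, and differentiate at $b=e$. The term-by-term product rule for a map $b\mapsto f_1(b)f_2(b)f_3(b)$ with $f_1(e)=e$, $f_2(e)=n$, $f_3(e)=e$ gives, for $Y\in T_e(B)$, the value $dR_n(Y)+0+dL_n(d(\iota\circ\theta)(Y))$ once the middle constant factor $n$ is inserted on the correct side. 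Since $d\iota_e=-\mathrm{id}$ on $\mathfrak g$, this is exactly $dR_n(Y)-dL_n\circ d\theta(Y)$, which is the claimed formula. The only care needed is bookkeeping of which translation ($L_n$ or $R_n$) each surviving term picks up; I would do this cleanly by differentiating $t\mapsto \exp(tY)\,n\,\theta(\exp(-tY))$ at $t=0$.

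Next I would compute $T_nQ'=\mathrm{Ker}(d\gamma)_n$ where $\gamma(g)=g\theta(g)$. Here I differentiate $\gamma$ at the point $n$ (not at $e$), so I must transport a general tangent vector $X\in T_n(G)$ back to $\mathfrak g$. Writing $g=\exp(tZ)\cdot n$ with $Z=\left(dR_{n^{-1}}\right)_n(X)\in\mathfrak g$, we get $\gamma(g)=\exp(tZ)\,n\,\theta(n)\,\theta(\exp(tZ))$. Since $n\in Q'$ means $\theta(n)=n^{-1}$, this simplifies to $\exp(tZ)\,\theta(\exp(tZ))$, and differentiating at $t=0$ gives $d\gamma_n(X)=Z+d\theta(Z)$ (up to a translation which is an isomorphism and hence irrelevant for the kernel). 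Thus $X\in\mathrm{Ker}(d\gamma)_n$ iff $d\theta(Z)=-Z$, i.e. $d\theta\left(\left(dR_{n^{-1}}\right)_n(X)\right)=-\left(dR_{n^{-1}}\right)_n(X)$. To match the stated form of the lemma I would instead parametrize $X$ via $\left(dL_{n^{-1}}\right)_n(X)$ on one side and $\left(dR_{n^{-1}}\right)_n(X)$ on the other, using $\theta(n)=n^{-1}$ to convert $\theta(L_{n^{-1}}(\cdot))$ into $R_{n}(\theta(\cdot))$ type expressions; the condition becomes $\left(dR_{n^{-1}}\right)_n(X)=-d\theta\circ\left(dL_{n^{-1}}\right)_n(X)$ as written.

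The computation is genuinely routine once the conventions are pinned down; the main (minor) obstacle is simply keeping the left/right translations and the $\theta$-twist straight, especially the use of $\theta(n)=n^{-1}$ to collapse the constant factor in $\gamma$ and to rewrite $\theta\circ L_{n^{-1}}$ versus $R_{n^{-1}}\circ\theta$. I would therefore state explicitly at the outset the identities $d\iota_e=-\mathrm{id}_{\mathfrak g}$, $dL_n\circ dL_{n^{-1}}=\mathrm{id}$, $dR_n\circ dR_{n^{-1}}=\mathrm{id}$, and $\theta\circ L_n=L_{\theta(n)}\circ\theta=L_{n^{-1}}\circ\theta$ (valid since $n\in Q'$), and then the two formulas drop out of the one-parameter-subgroup differentiations above. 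Finally, $N_{B(n),n}^{Q'}=T_nQ'/T_nB(n)$ by definition of the normal space, so the lemma gives an explicit handle on this quotient that will feed into the eigenspace decomposition in the next subsection.
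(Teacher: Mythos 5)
Your computation of $T_nB(n)$ is fine and is essentially the same as the paper's (the paper rewrites $a(g)=n\cdot(n^{-1}gn)\cdot\theta(g^{-1})$ and uses the chain rule, while you differentiate the curve $t\mapsto\exp(tY)\,n\,\theta(\exp(-tY))$; both give $dR_n(Y)-dL_n\circ d\theta(Y)$).

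The computation of $T_nQ'=\mathrm{Ker}(d\gamma)_n$, however, contains a genuine error. Since $\theta$ is a group \emph{automorphism} (not an anti-automorphism), $\theta(\exp(tZ)\cdot n)=\theta(\exp(tZ))\cdot\theta(n)$, so with $g=\exp(tZ)\cdot n$ one gets
\[
\gamma(g)=\exp(tZ)\cdot n\cdot\theta(\exp(tZ))\cdot n^{-1}=\exp(tZ)\cdot\mathrm{Ad}_n\bigl(\theta(\exp(tZ))\bigr),
\]
not $\exp(tZ)\,n\,\theta(n)\,\theta(\exp(tZ))=\exp(tZ)\theta(\exp(tZ))$ as you wrote; the constant factors $n$ and $n^{-1}$ do not cancel but produce a conjugation. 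Consequently $d\gamma_n(X)=Z+\mathrm{Ad}_n(d\theta(Z))$ rather than $Z+d\theta(Z)$, and the kernel condition is $\mathrm{Ad}_n(d\theta(Z))=-Z$, i.e.\ $d\theta(Z)=-\mathrm{Ad}_{n^{-1}}(Z)$. Using $Z=(dR_{n^{-1}})_n(X)$ and $\mathrm{Ad}_{n^{-1}}\circ(dR_{n^{-1}})_n=(dL_{n^{-1}})_n$, and applying $d\theta$ once more, this unwinds to the stated condition $(dR_{n^{-1}})_n(X)=-d\theta\circ(dL_{n^{-1}})_n(X)$. Your version $d\theta(Z)=-Z$ would coincide with this only if $\mathrm{Ad}_n$ were trivial, and the closing remark about ``reparametrizing'' does not repair the missing $\mathrm{Ad}_n$; it has to appear in the differentiation itself. (For comparison, the paper avoids curves altogether: it writes $\gamma=\mu\circ(R_{n^{-1}},L_n\circ\theta)$, differentiates at $n$ where both components hit $e$, and then uses the identity $(d\theta)_n=dL_{n^{-1}}\circ d\theta\circ(dL_{n^{-1}})_n$ coming from $\theta=L_{n^{-1}}\circ\theta\circ L_{n^{-1}}$, which is where the $\theta(n)=n^{-1}$ input is actually used.)
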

\begin{proof}
$\gamma=\mu\circ(\mathrm{Id},\theta)$. We can write $\gamma=\mu\circ(R_{n^{-1}}\circ\mathrm{Id},L_{n}\circ\theta)$
and get:
\begin{eqnarray*}
\left(d\gamma\right)_{n}(X) & = & d\mu_{(e,e)}\circ(\left(dR_{n^{-1}}\right)_{n}\circ\left(d\mathrm{Id}\right)_{n},\left(dL_{n}\right)_{n^{-1}}\circ\left(d\theta\right)_{n})(X)\\
 & = & \left(dR_{n^{-1}}\right)_{n}(X)+\left(dL_{n}\right)_{n^{-1}}\circ\left(d\theta\right)_{n}(X).
\end{eqnarray*}
As $n\in Q'$, it satisfies $\theta(n)=n^{-1}$ so $n^{-1}\theta(n^{-1}g)=\theta(g)$
and this implies that 
\[
\left(d\theta\right)_{n}(X)=dL_{n^{-1}}\circ d\theta\circ\left(dL_{n^{-1}}\right)_{n}(X).
\]
 Altogether:
\begin{eqnarray*}
\left(d\gamma\right)_{n}(X) & = & \left(dR_{n^{-1}}\right)_{n}(X)+\left(dL_{n}\right)_{n^{-1}}\circ\left(d\theta\right)_{n}(X)\\
 & = & \left(dR_{n^{-1}}\right)_{n}(X)+\left(dL_{n}\right)_{n^{-1}}\circ dL_{n^{-1}}\circ d\theta\circ\left(dL_{n^{-1}}\right)_{n}(X)\\
 & = & \left(dR_{n^{-1}}\right)_{n}(X)+d\theta\circ\left(dL_{n^{-1}}\right)_{n}(X).
\end{eqnarray*}

This provides a description of 
\[
\mathrm{Ker}\left(d\gamma\right)_{n}=\{X\in T_{n}(G)|\left(dR_{n^{-1}}\right)_{n}(X)=-d\theta\circ\left(dL_{n^{-1}}\right)_{n}(X)\}.
\]
Now we would like to describe $T_{n}(B(n))=\mathrm{Im}(da)$. Notice
that $a(g)=gn\theta(g^{-1})=n\cdot(n^{-1}gn)\cdot\theta(g^{-1})$,
so:
\[
da(X)=dL_{n}\circ d\mu\circ(\mathrm{Ad_{n^{-1}}},d\left(\theta\circ i\right))(X)=dR_{n}(X)-dL_{n}\circ d\theta(X),
\]
 where $i:G\longrightarrow G$ is the inversion map. This finishes
the Lemma. 
\end{proof}
The goal now is to give a description of $T_{n}Q'$ and $T_{n}B(n)$
using the root space decomposition of $G$ as a reductive group. If
we consider $G$ as a complex group then we can write $\mathfrak{g}=\mathfrak{g}_{0}\oplus\bigoplus_{\widetilde{\alpha}\in\Sigma(G,T)}\mathfrak{g}_{\widetilde{\alpha}}$,
where $\mathfrak{g}_{0}=\mathrm{Lie}(T)$, $\Sigma(G,T)$ is the corresponding
root system and each eigenspace $\mathfrak{g}_{\widetilde{\alpha}}$
is one dimensional over $\mathbb{C}$. If we consider $G$ as a real
group then we can again write $\mathfrak{g}=\mathfrak{g}_{0}\oplus\bigoplus_{\alpha\in\Sigma(G,A_{\mathfrak{p}})}\mathfrak{g}_{\alpha}$.
As $A_{\mathfrak{p}}$ is $\theta$-stable we have an action of $\theta$
on the restricted root system $\Sigma(G,A_{\mathfrak{p}})$. We write
$\Sigma^{+}(G,A_{\mathfrak{p}})$, $\Sigma^{-}(G,A_{\mathfrak{p}})$
for the positive and negative roots (with respect to the choice of
the Borel subgroup $B$). 
\begin{lem}
\label{lemma 6.5-each eigen}Each eigenspace $\mathfrak{g}_{\alpha}$,
for $\alpha\in\Sigma(G,A_{\mathfrak{p}})$, is two dimensional over
$\mathbb{R}$ and $\mathfrak{g}_{\alpha}=\mathfrak{g}_{\widetilde{\alpha}}$
for some $\widetilde{\alpha}\in\Sigma(G,T)$ such that $\widetilde{\alpha}|_{A_{\mathfrak{p}}}=\alpha$.
Moreover, there is a unique such $\widetilde{\alpha}$ and $d\theta(\mathfrak{g}_{\widetilde{\alpha}})=\mathfrak{g}_{\widetilde{\theta(\alpha)}}$.\end{lem}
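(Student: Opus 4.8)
Lemma \ref{lemma 6.5-each eigen} — plan of proof.

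The plan is to exploit the fact that $G$ is a \emph{complex} group, so that its underlying real Lie algebra $\mathfrak g$ is obtained from a complex reductive Lie algebra $\mathfrak g_{\mathbb C}$ by restriction of scalars; concretely $\mathfrak g$ (as a real algebra) carries a complex structure $J$ commuting with $\mathrm{ad}(\mathfrak a_{\mathfrak p})$, and $\mathfrak t = \mathrm{Lie}(T) = C_{\mathfrak g}(\mathfrak a_{\mathfrak p})$ is a \emph{complex} Cartan subalgebra with $\mathfrak t = \mathfrak a_{\mathfrak p} \oplus J\mathfrak a_{\mathfrak p}$. First I would recall that $A_{\mathfrak p} = \exp(\mathfrak a_{\mathfrak p})$ is, up to the complex structure, ``half'' of $T$: every character $\widetilde\alpha \in \Sigma(G,T)$ restricts to a real-valued linear functional on $\mathfrak a_{\mathfrak p}$, and because $\mathfrak a_{\mathfrak p}$ is a real form of $\mathfrak t$ this restriction map $\widetilde\alpha \mapsto \widetilde\alpha|_{\mathfrak a_{\mathfrak p}}$ is injective on roots. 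That injectivity is exactly the ``unique such $\widetilde\alpha$'' clause, and it simultaneously shows that the restricted root spaces and the complex root spaces coincide: for $\alpha = \widetilde\alpha|_{\mathfrak a_{\mathfrak p}}$ we get $\mathfrak g_\alpha = \{X : \mathrm{ad}(H)X = \alpha(H)X \ \forall H \in \mathfrak a_{\mathfrak p}\} = \mathfrak g_{\widetilde\alpha}$, since any $X \in \mathfrak g_\alpha$ lies in the (complex, one-dimensional) $\widetilde\alpha$-space and no other complex root has the same restriction. Because $\mathfrak g_{\widetilde\alpha}$ is one-dimensional over $\mathbb C$, it is two-dimensional over $\mathbb R$, giving the dimension claim.

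For the last assertion, I would use that $\theta$ is an algebraic involution of $G$ defined over $\mathbb R$ and hence its differential $d\theta$ is a real Lie algebra automorphism which is \emph{conjugate-linear or linear} with respect to $J$; in either case it permutes the complex root spaces. Since $\theta$ stabilizes $A_{\mathfrak p}$ (and $T$), it acts on the restricted root system, and for $X \in \mathfrak g_{\widetilde\alpha}$ and $H \in \mathfrak a_{\mathfrak p}$ one computes $\mathrm{ad}(H)\, d\theta(X) = d\theta\big(\mathrm{ad}(d\theta(H))X\big) = d\theta\big(\mathrm{ad}(\theta^{-1}(H))X\big)$, so $d\theta(X)$ lies in the $\alpha\circ\theta^{-1}$-eigenspace, i.e.\ the $\theta(\alpha)$-root space; combined with the identification of restricted and complex root spaces this reads $d\theta(\mathfrak g_{\widetilde\alpha}) = \mathfrak g_{\widetilde{\theta(\alpha)}}$.

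The only genuinely delicate point is the injectivity of $\widetilde\alpha \mapsto \widetilde\alpha|_{\mathfrak a_{\mathfrak p}}$, and hence the coincidence of the two root-space decompositions; everything else is bookkeeping with eigenspaces. I expect to justify injectivity by observing that a complex-linear functional on $\mathfrak t$ is determined by its values on the real form $\mathfrak a_{\mathfrak p}$ (a complex-linear map vanishing on $\mathfrak a_{\mathfrak p}$ vanishes on $\mathfrak a_{\mathfrak p} \oplus J\mathfrak a_{\mathfrak p} = \mathfrak t$), so two roots with the same restriction to $\mathfrak a_{\mathfrak p}$ are equal as elements of $\mathfrak t^*$. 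This is precisely the classical fact that a complex reductive group, viewed as a real group, has restricted root system equal to its complex root system (with all multiplicities $2$), and I would cite or reprove it in this one-paragraph form.
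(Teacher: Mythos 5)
Your proposal is correct and follows essentially the same route as the paper: both hinge on the observation that restriction of roots from $T$ to $A_{\mathfrak p}$ is injective because $\mathfrak a_{\mathfrak p}$ is a real form of $\mathfrak t$ (the paper phrases this at the group level via the polar decomposition of an algebraic character of $T \simeq (\mathbb C^\times)^n$, you phrase it at the Lie-algebra level via the fact that a $\mathbb C$-linear functional is determined by its values on a real form). One small remark: your aside that $d\theta$ must be $J$-linear or $J$-conjugate-linear is neither needed nor obviously true for an arbitrary $\mathbb R$-involution of $\underline G_{\mathbb C/\mathbb R}$, but your eigenspace computation only uses that $d\theta$ is a real Lie-algebra automorphism stabilizing $\mathfrak a_{\mathfrak p}$, which suffices.
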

\begin{proof}
We have an isomorphism $\phi:T\simeq\left(\mathbb{C}^{\times}\right)^{n}$
where $\phi|_{A}:A\stackrel{\simeq}{\longrightarrow}(\mathbb{R}^{\times})^{n}$
and $\phi|_{A_{\mathfrak{p}}}:A_{\mathfrak{p}}\stackrel{\simeq}{\longrightarrow}\left(\mathbb{R}_{>0}\right)^{n}$.
Under this identification, we can write any root $\widetilde{\alpha}\in\Sigma(G,T)$
as $\widetilde{\alpha}(t)=\widetilde{\alpha}(t_{1},...,t_{n})=\prod\left|t_{i}\right|^{n_{i}}\cdot\left(\frac{t_{i}}{\left|t_{i}\right|}\right)^{n_{i}},$
where $n_{i}\in\mathbb{Z}$. If we restrict $\widetilde{\alpha}$
to $A_{\mathfrak{p}}$ we can write $\widetilde{\alpha}|_{A_{\mathfrak{p}}}(t_{1},...,t_{n})=\prod t_{i}^{n_{i}}$.
Therefore $\alpha=\widetilde{\alpha}|_{A_{\mathfrak{p}}}=\widetilde{\beta}|_{A_{\mathfrak{p}}}=\beta$
if and only if $\widetilde{\alpha}=\widetilde{\beta}$. This implies
that $d\theta(\mathfrak{g}_{\widetilde{\alpha}})=\mathfrak{g}_{\widetilde{\theta(\alpha)}}$.
\end{proof}
We now calculate $T_{n}Q'$ and $T_{n}B(n)$ in terms of the restricted
root decomposition $\mathfrak{g}=\mathfrak{g}_{0}\oplus\bigoplus_{\alpha\in\Sigma(G,A_{\mathfrak{p}})}\mathfrak{g}_{\alpha}$.
Denote $\psi_{n}:=d\theta\circ\mathrm{Ad}_{n}$, $\mathfrak{b}=\mathfrak{g}_{0}\oplus\bigoplus_{\alpha\in\Sigma^{+}(G,A_{\mathfrak{p}})}\mathfrak{g}_{\alpha}$.
Let $w\in W(G,A_{\mathfrak{p}})$ be the Weyl element corresponding
to $n\in N_{G}(T)=N_{G}(A_{\mathfrak{p}})$.
\begin{thm}
Under the identification of $T_{n}Q'$ and $T_{n}B(n)$ as sub-algebras
of $\mathfrak{g}$ by applying $dL_{n^{-1}}$, we have: 
\begin{eqnarray*}
T_{n}(Q') & \simeq & \{X\in\mathfrak{g}|d\theta\circ\mathrm{Ad}_{n}(X)=-X\}=\left(\mathfrak{g}_{0}\oplus\bigoplus_{\alpha\in\Sigma}\mathfrak{g}_{\alpha}\right)_{\psi_{n},-1}=\left(\mathfrak{g}_{0}\oplus\bigoplus_{\alpha\in\Sigma}\mathfrak{g}_{w^{-1}(\alpha)}\right)_{\psi_{n},-1},\\
T_{n}B(n) & \simeq & \{\mathrm{Ad}_{n^{-1}}(Y)-d\theta(Y)|Y\in\mathfrak{b}\}=\left(\mathfrak{g}_{0}\oplus\bigoplus_{\alpha\text{or }w\circ\theta(\alpha)\in\Sigma^{+}}\mathfrak{g}_{w^{-1}(\alpha)}\right)_{\psi_{n},-1},
\end{eqnarray*}
where $\left(\,\right)_{\psi_{n},-1}$ denote the $-1$ eigenspace
of $\psi_{n}$.\end{thm}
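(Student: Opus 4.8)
The plan is to transport both tangent spaces to $\mathfrak{g}=T_{e}(G)$ along $\left(dL_{n^{-1}}\right)_{n}$ and then read them off from the restricted root space decomposition. Writing a vector at $n$ as $\left(dL_{n}\right)_{e}(X)$ with $X\in\mathfrak{g}$, the chain rule gives $\left(dR_{n^{-1}}\right)_{n}\circ\left(dL_{n}\right)_{e}=\mathrm{Ad}_{n}$, $\left(dL_{n^{-1}}\right)_{n}\circ\left(dL_{n}\right)_{e}=\mathrm{Id}$ and $\left(dL_{n^{-1}}\right)_{n}\circ\left(dR_{n}\right)_{e}=\mathrm{Ad}_{n^{-1}}$. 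Substituting these into the two formulas of Lemma~\ref{Lemma 6.4}, the subspace $\left(dL_{n^{-1}}\right)_{n}\left(T_{n}Q'\right)$ becomes $\{X\in\mathfrak{g}\mid\mathrm{Ad}_{n}(X)=-d\theta(X)\}$, which, after applying $d\theta$ and using $\left(d\theta\right)^{2}=\mathrm{Id}$, equals $\{X\in\mathfrak{g}\mid\psi_{n}(X)=-X\}$; and $\left(dL_{n^{-1}}\right)_{n}\left(T_{n}B(n)\right)$ becomes $\{\mathrm{Ad}_{n^{-1}}(Y)-d\theta(Y)\mid Y\in\mathfrak{b}\}$. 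These are exactly the two middle expressions in the statement, so what remains is to match them with the root space descriptions.

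Next I would record how $\psi_{n}=d\theta\circ\mathrm{Ad}_{n}$ interacts with $\mathfrak{g}=\mathfrak{g}_{0}\oplus\bigoplus_{\alpha\in\Sigma(G,A_{\mathfrak{p}})}\mathfrak{g}_{\alpha}$. From $\theta(n)=n^{-1}$ one gets $\theta\circ c_{n}=c_{n^{-1}}\circ\theta$ for conjugation $c_{n}$, hence $d\theta\circ\mathrm{Ad}_{n}=\mathrm{Ad}_{n^{-1}}\circ d\theta$; in particular $\psi_{n}$ is an involution, so $\mathfrak{g}$ splits into the $\pm1$-eigenspaces of $\psi_{n}$. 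Since $n\in N_{G}(T)=N_{G}(A_{\mathfrak{p}})$ we have $\mathrm{Ad}_{n}(\mathfrak{g}_{0})=\mathfrak{g}_{0}$ and $\mathrm{Ad}_{n}(\mathfrak{g}_{\alpha})=\mathfrak{g}_{w(\alpha)}$, and since $A_{\mathfrak{p}}$ is $\theta$-stable, Lemma~\ref{lemma 6.5-each eigen} gives $d\theta(\mathfrak{g}_{0})=\mathfrak{g}_{0}$ and $d\theta(\mathfrak{g}_{\alpha})=\mathfrak{g}_{\theta(\alpha)}$; thus $\psi_{n}(\mathfrak{g}_{w^{-1}(\alpha)})=\mathfrak{g}_{\theta(\alpha)}=\mathfrak{g}_{w^{-1}(w\theta(\alpha))}$, i.e. $\psi_{n}$ permutes the root spaces by the involution $\alpha\mapsto w\theta(\alpha)$ of $\Sigma(G,A_{\mathfrak{p}})$, which being an involution also satisfies $w\theta=\theta w^{-1}$ on the root system. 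Since $\alpha\mapsto w^{-1}(\alpha)$ permutes $\Sigma(G,A_{\mathfrak{p}})$ and $\mathfrak{g}=\mathfrak{g}_{0}\oplus\bigoplus_{\alpha}\mathfrak{g}_{\alpha}=\mathfrak{g}_{0}\oplus\bigoplus_{\alpha}\mathfrak{g}_{w^{-1}(\alpha)}$, the space $\{X\mid\psi_{n}(X)=-X\}$ is the $(-1)$-eigenspace of $\psi_{n}$ on each of these two presentations of $\mathfrak{g}$, and this is precisely the first displayed line.

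For the second line put $f:=\mathrm{Ad}_{n^{-1}}-d\theta$ on $\mathfrak{g}$, so the transported $T_{n}B(n)$ equals $f(\mathfrak{b})$. Using $d\theta\circ\mathrm{Ad}_{n}\circ d\theta=\mathrm{Ad}_{n^{-1}}$ one checks $\psi_{n}\circ f=-f$, so $f(\mathfrak{g})\subseteq\mathfrak{g}_{\psi_{n},-1}$; moreover $f\left(\frac{1}{2}\mathrm{Ad}_{n}(Z)\right)=Z$ whenever $\psi_{n}(Z)=-Z$, so $f(\mathfrak{g})=\mathfrak{g}_{\psi_{n},-1}$. Since $\mathrm{Ad}_{n^{-1}}(\mathfrak{b})=\mathfrak{g}_{0}\oplus\bigoplus_{\gamma\in\Sigma^{+}}\mathfrak{g}_{w^{-1}(\gamma)}$ and $d\theta(\mathfrak{b})=\mathfrak{g}_{0}\oplus\bigoplus_{\gamma\in\Sigma^{+}}\mathfrak{g}_{w^{-1}(w\theta(\gamma))}$, the image $f(\mathfrak{b})$ is contained in $\mathfrak{g}_{0}\oplus\bigoplus_{\alpha}\mathfrak{g}_{w^{-1}(\alpha)}$ with $\alpha$ running over roots satisfying $\alpha\in\Sigma^{+}$ or $w\theta(\alpha)\in\Sigma^{+}$ (here $w\theta=\theta w^{-1}$ is used); this subspace is $\psi_{n}$-stable, so $f(\mathfrak{b})$ lies in its $(-1)$-eigenspace, giving the inclusion of $f(\mathfrak{b})$ in the claimed subspace. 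For the reverse inclusion I would work piece by piece: $f(\mathfrak{g}_{0})=(\mathfrak{g}_{0})_{\psi_{n},-1}$ because $\frac{1}{2}\mathrm{Ad}_{n}(Z)\in\mathfrak{g}_{0}$ when $Z\in\mathfrak{g}_{0}$; for $\gamma\in\Sigma^{+}$ with $w\theta(\gamma)\neq\gamma$, the map $\mathfrak{g}_{\gamma}\to\mathfrak{g}_{w^{-1}(\gamma)}\oplus\mathfrak{g}_{w^{-1}(w\theta(\gamma))}$, $Y_{\gamma}\mapsto\mathrm{Ad}_{n^{-1}}(Y_{\gamma})-d\theta(Y_{\gamma})$, is injective and its image lies in the two-real-dimensional space $\left(\mathfrak{g}_{w^{-1}(\gamma)}\oplus\mathfrak{g}_{w^{-1}(w\theta(\gamma))}\right)_{\psi_{n},-1}$, hence equals it; and for $\gamma\in\Sigma^{+}$ with $w\theta(\gamma)=\gamma$, $f$ carries $\mathfrak{g}_{\gamma}$ into $\mathfrak{g}_{w^{-1}(\gamma)}$, and writing $f|_{\mathfrak{g}_{\gamma}}=\mathrm{Ad}_{n^{-1}}\circ(\mathrm{Id}-\mathrm{Ad}_{n}\circ d\theta)$ with $\mathrm{Ad}_{n}\circ d\theta$ an involution of $\mathfrak{g}_{\gamma}$ gives $f(\mathfrak{g}_{\gamma})=(\mathfrak{g}_{w^{-1}(\gamma)})_{\psi_{n},-1}$. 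Summing over $\gamma\in\Sigma^{+}$ and grouping the occurring root spaces into $(w\theta)$-orbits, each of which meets $\Sigma^{+}$, covers the whole claimed eigenspace.

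The transports and the $\psi_{n}$-action on root spaces are routine; the real work, and the main obstacle, is the last inclusion of the third step — that $f$ restricted to $\mathfrak{b}$ already surjects onto the full $(-1)$-eigenspace of $\psi_{n}$ on $\mathfrak{g}_{0}\oplus\bigoplus_{\alpha:\,\alpha\text{ or }w\theta(\alpha)\in\Sigma^{+}}\mathfrak{g}_{w^{-1}(\alpha)}$. This is where the case split according to whether $w\theta$ fixes a root is needed, where one keeps track of the real dimensions of the two-dimensional restricted root spaces, and where the hypothesis $\theta(n)=n^{-1}$ genuinely enters: it is what makes $\psi_{n}$ an involution and yields $w\theta=\theta w^{-1}$ on the root system.
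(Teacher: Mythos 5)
Your argument is correct and follows the paper's overall plan: transport $T_{n}Q'$ and $T_{n}B(n)$ to $\mathfrak{g}$ via $\left(dL_{n^{-1}}\right)_{n}$ using Lemma~\ref{Lemma 6.4}, observe that $\psi_{n}$ permutes restricted root spaces by $\mathfrak{g}_{w^{-1}(\alpha)}\mapsto\mathfrak{g}_{\theta(\alpha)}$, and identify $f(\mathfrak{g}_{\gamma})$, where $f:=\mathrm{Ad}_{n^{-1}}-d\theta$, with the $(-1)$-eigenspace of $\psi_{n}$ on the corresponding one- or two-root-space block. The one place where you diverge is the surjectivity step within each block. You establish $f(\mathfrak{g})=\mathfrak{g}_{\psi_{n},-1}$ globally via the explicit preimage $\tfrac{1}{2}\mathrm{Ad}_{n}(Z)$ and then split into cases: for $w\theta(\gamma)\neq\gamma$ you combine injectivity with a real-dimension count, and for $w\theta(\gamma)=\gamma$ you rewrite $f|_{\mathfrak{g}_{\gamma}}=\mathrm{Ad}_{n^{-1}}\circ(\mathrm{Id}-\mathrm{Ad}_{n}\circ d\theta)$ and use that $\mathrm{Ad}_{n}\circ d\theta$ is an involution of $\mathfrak{g}_{\gamma}$. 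The paper sidesteps this case split by noting that $f_{-}(\mathfrak{g}_{\gamma}):=\{\mathrm{Ad}_{n^{-1}}(Y)-d\theta(Y)\}$ and $f_{+}(\mathfrak{g}_{\gamma}):=\{\mathrm{Ad}_{n^{-1}}(Y)+d\theta(Y)\}$ lie respectively in the $-1$ and $+1$ eigenspaces of $\psi_{n}$ and together span $\mathfrak{g}_{w^{-1}(\gamma)}\oplus\mathfrak{g}_{\theta(\gamma)}$; since the two eigenspaces intersect trivially, each $f_{\pm}$ must equal the full eigenspace, uniformly in whether the block is one or two root spaces. Both routes are sound; the paper's joint-spanning argument is a bit shorter, while your version makes the preimage and the fixed-root case more explicit.
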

\begin{proof}
The first statement is clear by using Lemma \ref{Lemma 6.4} and identify
$T_{n}Q'$ with $T_{e}(G)$ by applying $\left(dL_{n^{-1}}\right)_{n}$.
For the second statement, observe that $\{\mathrm{Ad}_{n^{-1}}(Y)-d\theta(Y)|Y\in\mathfrak{g}_{\alpha}\}$
is the $-1$ eigenspace of $\psi_{n}$ restricted to the space $\mathfrak{g}_{w^{-1}(\alpha)}\oplus\mathfrak{g}_{\theta(\alpha)}$
(or the space $\mathfrak{g}_{w^{-1}(\alpha)}$ if $\psi_{n}(\mathfrak{g}_{w^{-1}(\alpha)})=\mathfrak{g}_{w^{-1}(\alpha)}$).
Indeed, $\{\mathrm{Ad}_{n^{-1}}(Y)-d\theta(Y)|Y\in\mathfrak{g}_{\alpha}\}$
and $\{\mathrm{Ad}_{n^{-1}}(Y)+d\theta(Y)|Y\in\mathfrak{g}_{\alpha}\}$
are contained in the $-1$ and $+1$ eigenspaces of $\psi_{n}|_{\mathfrak{g}_{w^{-1}(\alpha)}\oplus\mathfrak{g}_{\theta(\alpha)}}$,
respectively, and since they span $\mathfrak{g}_{w^{-1}(\alpha)}\oplus\mathfrak{g}_{\theta(\alpha)}$
it follows that they are indeed the $\pm1$ eigenspaces. In addition,
notice that 
\[
\{\mathrm{Ad}_{n^{-1}}(Y)-d\theta(Y)|Y\in\mathfrak{g}_{\alpha}\}=\{\mathrm{Ad}_{n^{-1}}(Y)-d\theta(Y)|Y\in\mathfrak{g}_{w\circ\theta(\alpha)}\}
\]
 are both the $-1$ eigenspace of $\psi_{n}$ restricted to the space
$\mathfrak{g}_{w^{-1}(\alpha)}\oplus\mathfrak{g}_{\theta(\alpha)}$.
The same argument can be applied to $\mathfrak{g}_{0}$. This finishes
the second statement.\end{proof}
\begin{cor}
Under the identification of $T_{n}Q'$ and $T_{n}B(n)$ as sub-algebras
of $\mathfrak{g}$ by applying $dL_{n^{-1}}$, we obtain:

\[
N_{B(n),n}^{Q'}=T_{n}Q'/T_{n}B(n)\simeq\left(\bigoplus_{\alpha\in\Sigma^{-}\cap w\circ\theta(\Sigma^{-})}\mathfrak{g}_{w^{-1}(\alpha)}\right)_{\psi_{n},-1}.
\]

\end{cor}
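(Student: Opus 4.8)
The plan is to obtain this corollary as a direct bookkeeping consequence of the preceding theorem. That theorem presents both $T_{n}Q'$ and $T_{n}B(n)$, transported to subspaces of $\mathfrak{g}$ via $dL_{n^{-1}}$, as the $(-1)$-eigenspace of the involution $\psi_{n}=d\theta\circ\mathrm{Ad}_{n}$ acting on explicit sums of restricted root spaces: in $T_{n}Q'$ the sum is $\mathfrak{g}_{0}\oplus\bigoplus_{\alpha\in\Sigma}\mathfrak{g}_{w^{-1}(\alpha)}$ (all of $\mathfrak{g}$), while in $T_{n}B(n)$ it is $\mathfrak{g}_{0}\oplus\bigoplus\mathfrak{g}_{w^{-1}(\alpha)}$ with $\alpha$ restricted to those roots with $\alpha\in\Sigma^{+}$ or $w\circ\theta(\alpha)\in\Sigma^{+}$. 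Since $T_{n}B(n)\subseteq T_{n}Q'$, forming the quotient removes the common $\mathfrak{g}_{0}$-contribution and all root spaces $\mathfrak{g}_{w^{-1}(\alpha)}$ already occurring in $T_{n}B(n)$; what survives is the $(-1)$-eigenspace of $\psi_{n}$ on $\bigoplus\mathfrak{g}_{w^{-1}(\alpha)}$, where $\alpha$ now ranges over roots with $\alpha\notin\Sigma^{+}$ and $w\circ\theta(\alpha)\notin\Sigma^{+}$, that is, $\alpha\in\Sigma^{-}$ and $w\circ\theta(\alpha)\in\Sigma^{-}$.

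It then remains to match this index set with $\Sigma^{-}\cap w\circ\theta(\Sigma^{-})$, for which I would record that $w\circ\theta$ is an involution of the restricted root system $\Sigma$. This is forced by the hypothesis $\theta(n)=n^{-1}$: using $d\theta\circ\mathrm{Ad}_{n}\circ d\theta=\mathrm{Ad}_{\theta(n)}=\mathrm{Ad}_{n^{-1}}$ one gets $\psi_{n}^{2}=\mathrm{Id}$, so the symmetry of $\Sigma$ induced by $\psi_{n}$ (which, up to conventions, is $\theta\circ w$) has order two; as $w\circ\theta$ is conjugate to it by $w$, it is an involution as well. Hence $w\circ\theta(\alpha)\in\Sigma^{-}$ is equivalent to $\alpha\in(w\circ\theta)^{-1}(\Sigma^{-})=w\circ\theta(\Sigma^{-})$, which yields exactly the asserted description.

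The only point that needs a little care --- and the step I expect to be the main, if minor, obstacle --- is licensing the passage to the quotient summand by summand. For this one must check that the span of the root spaces occurring in $T_{n}B(n)$, and the span of the complementary root spaces inside $\mathfrak{g}$, are each $\psi_{n}$-stable, so that the $(-1)$-eigenspace of $\psi_{n}$ on $T_{n}Q'$ decomposes as the internal direct sum of its $(-1)$-eigenspaces on those two pieces. This follows from the identity $w\circ\theta\circ w=\theta$ on $\Sigma$ (equivalently $\theta\circ w\circ\theta=w^{-1}$), a consequence of $(w\circ\theta)^{2}=\mathrm{id}$, which rewrites the weight set of $T_{n}B(n)$ as $w^{-1}(\Sigma^{+})\cup\theta(\Sigma^{+})$ --- manifestly stable under $\psi_{n}$ --- and its complement as $w^{-1}(\Sigma^{-})\cap\theta(\Sigma^{-})$, which is precisely the shifted index set $\{w^{-1}(\alpha):\alpha\in\Sigma^{-}\cap w\circ\theta(\Sigma^{-})\}$. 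Once this compatibility is in place, the corollary is immediate.
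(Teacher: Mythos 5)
Your proposal is correct, and since the paper states this corollary with no proof, your argument supplies exactly the bookkeeping the paper leaves implicit. The two points you isolate as needing care are indeed the genuine content: (i) that $w\circ\theta$ is an involution on $\Sigma(G,A_{\mathfrak{p}})$, which you derive correctly from $\psi_{n}^{2}=\mathrm{Id}$ (itself a consequence of $\theta(n)=n^{-1}$ via $d\theta\circ\mathrm{Ad}_{n}\circ d\theta=\mathrm{Ad}_{n^{-1}}$), so that the complementary index condition $\alpha\in\Sigma^{-}$, $w\circ\theta(\alpha)\in\Sigma^{-}$ can be rewritten as $\alpha\in\Sigma^{-}\cap w\circ\theta(\Sigma^{-})$; and (ii) that the weight set $w^{-1}(\Sigma^{+})\cup\theta(\Sigma^{+})$ appearing in $T_{n}B(n)$ and its complement $w^{-1}(\Sigma^{-})\cap\theta(\Sigma^{-})$ are each $\psi_{n}$-stable, which follows from the identity $\theta\circ w\circ\theta=w^{-1}$ on restricted roots and licenses taking the $(-1)$-eigenspace summand by summand before quotienting. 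Both verifications are sound, so the proposal is a valid and complete proof.
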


\subsection{Proof of Theorem \ref{thm6.2}}

We are now interested in $\mathrm{Sym}(N_{B(n),n}^{Q'})\otimes_{\mathbb{R}}\mathbb{C}$
as a complex representation of $T^{\theta_{n}}$ under the twisted
involution $\rho_{t}(g):=tg\theta(t)^{-1}$. Notice that $T^{\theta_{n}}$
acts on $N_{B(n),n}^{Q'}$ via the differential of the action at the
point $n$, i.e via $\left(d\rho_{t}\right)_{n}$, and this induces
an action on $\mathrm{Sym}(N_{B(n),n}^{Q'})\otimes_{\mathbb{R}}\mathbb{C}$. 
\begin{lem}
\label{lemma 6.8} Consider $N_{B(n),n}^{Q'}\simeq\left(\bigoplus_{\alpha\in\Sigma^{-}\cap w\circ\theta(\Sigma^{-})}\mathfrak{g}_{w^{-1}(\alpha)}\right)_{\psi_{n},-1}$
as a representation of $T^{\theta_{n}}$ (with respect to $\left(d\rho_{t}\right)_{n}$).
Then each $\left(\mathfrak{g}_{w^{-1}(\alpha)}\oplus\mathfrak{g}_{\theta(\alpha)}\right)_{\psi_{n},-1}$
is an $\widetilde{\alpha}$-eigenspace, for $\widetilde{\alpha}\in\Sigma(G,T)$
(i.e, the unique root $\widetilde{\alpha}$ such that $\widetilde{\alpha}|_{A_{\mathfrak{p}}}=\alpha$,
see Lemma \ref{lemma 6.5-each eigen}).\end{lem}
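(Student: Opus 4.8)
The plan is to reduce everything to understanding the $T^{\theta_n}$-action transported to $\mathfrak{g}$, and then to a diagonal bookkeeping with root spaces. First I would record that for $t\in T^{\theta_n}$ the point $n$ is fixed by the twisted action $\rho_t(g)=tg\theta(t)^{-1}$: the condition $\theta_n(t)=t$ means $\theta(t)=n^{-1}tn$, so $\rho_t(n)=tn\theta(t)^{-1}=tn(n^{-1}t^{-1}n)=n$. Hence $(d\rho_t)_n$ is a well-defined endomorphism of $T_nQ'$ descending to $N_{B(n),n}^{Q'}$. Transporting to $\mathfrak{g}=T_eG$ by the identification $dL_{n^{-1}}$ used in Section 6.1: since $L_{n^{-1}}\circ\rho_t\circ L_n(h)=n^{-1}tnh\theta(t)^{-1}=\theta(t)h\theta(t)^{-1}$, the induced action of $t$ on $\mathfrak{g}$ is $\mathrm{Ad}_{\theta(t)}$. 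As $T$ is $\theta$-stable we have $\theta(t)\in T$, so $\mathrm{Ad}_{\theta(t)}$ acts diagonally with respect to the (restricted and complex) root-space decompositions.

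Next I would recall from the preceding theorem that $\psi_n=d\theta\circ\mathrm{Ad}_n$ is an involution of $\mathfrak{g}$ carrying $\mathfrak{g}_{w^{-1}(\alpha)}$ onto $\mathfrak{g}_{\theta(\alpha)}$; consequently the subspace in question is $V_\alpha:=\left(\mathfrak{g}_{w^{-1}(\alpha)}\oplus\mathfrak{g}_{\theta(\alpha)}\right)_{\psi_n,-1}=\{Y-\psi_n(Y):Y\in\mathfrak{g}_{w^{-1}(\alpha)}\}$. Therefore it suffices to prove that $\mathrm{Ad}_{\theta(t)}$ acts by one and the same scalar on $\mathfrak{g}_{w^{-1}(\alpha)}$ and on $\mathfrak{g}_{\theta(\alpha)}$: then $\mathrm{Ad}_{\theta(t)}$ equals that scalar on all of $\mathfrak{g}_{w^{-1}(\alpha)}\oplus\mathfrak{g}_{\theta(\alpha)}$, hence on $V_\alpha$, and that scalar will be the claimed eigencharacter.

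For the computation, by Lemma~\ref{lemma 6.5-each eigen} one has $\mathfrak{g}_{w^{-1}(\alpha)}=\mathfrak{g}_{\widetilde{w^{-1}(\alpha)}}$, where $\widetilde{w^{-1}(\alpha)}$ is the $w^{-1}$-translate of $\widetilde{\alpha}$, i.e. $\widetilde{w^{-1}(\alpha)}(s)=\widetilde{\alpha}(nsn^{-1})$; hence $\mathrm{Ad}_{\theta(t)}$ acts on it by $\widetilde{\alpha}(n\theta(t)n^{-1})=\widetilde{\alpha}(\theta_n(t))=\widetilde{\alpha}(t)$, the last equality because $t\in T^{\theta_n}$. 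Likewise $\mathfrak{g}_{\theta(\alpha)}=\mathfrak{g}_{\widetilde{\theta(\alpha)}}$ with $\widetilde{\theta(\alpha)}=\widetilde{\alpha}\circ\theta$ (since $d\theta(\mathfrak{g}_{\widetilde{\alpha}})=\mathfrak{g}_{\widetilde{\theta(\alpha)}}$), so $\mathrm{Ad}_{\theta(t)}$ acts on it by $\widetilde{\alpha}(\theta(\theta(t)))=\widetilde{\alpha}(t)$, using $\theta^2=\mathrm{Id}$. The two scalars agree and equal $\widetilde{\alpha}(t)$, which completes the argument.

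The only genuinely delicate point I anticipate is the first step: checking that the $T^{\theta_n}$-action transported through $dL_{n^{-1}}$ is precisely $\mathrm{Ad}_{\theta(t)}$ (so that the fixed-point relation $\theta_n(t)=t$ enters at exactly the right place when it is fed into the Weyl-translated character). Everything after that is a diagonal bookkeeping with root spaces. A minor remark worth including is that $\mathfrak{g}_{w^{-1}(\alpha)}$ is a real restricted-root space which Lemma~\ref{lemma 6.5-each eigen} identifies with a one-dimensional complex root space, so a priori the "eigenvalue" $\widetilde{\alpha}(t)$ is a complex number; but since we only use that $\mathrm{Ad}_{\theta(t)}$ is scalar on $\mathfrak{g}_{w^{-1}(\alpha)}\oplus\mathfrak{g}_{\theta(\alpha)}$, this causes no difficulty.
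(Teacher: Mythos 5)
Your argument follows the paper's own route step by step: transport the twisted $T^{\theta_n}$-action through $dL_{n^{-1}}$, identify it as $\mathrm{Ad}_{\theta(t)}$, and read off the eigenvalue on each root space. The computation for $\mathfrak{g}_{w^{-1}(\alpha)}$ is exactly the paper's, and your preliminary check that $T^{\theta_n}$ fixes $n$ under the twisted action is a useful explicit detail the paper leaves implicit.

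There is, however, a slip in your treatment of $\mathfrak{g}_{\theta(\alpha)}$. The identity $\widetilde{\theta(\alpha)}=\widetilde{\alpha}\circ\theta$ is only valid when $d\theta$ is $\mathbb{C}$-linear. For a Galois involution --- the case the paper ultimately cares about --- $\theta|_T$ is anti-holomorphic and $d\theta$ is $\mathbb{C}$-antilinear, so $\widetilde{\alpha}\circ\theta$ is not even a holomorphic character of $T$. Using $\mathrm{Ad}_s\circ d\theta=d\theta\circ\mathrm{Ad}_{\theta(s)}$ together with the antilinearity of $d\theta$ one gets instead $\widetilde{\theta(\alpha)}(s)=\overline{\widetilde{\alpha}(\theta(s))}$, so the eigenvalue of $\mathrm{Ad}_{\theta(t)}$ on $\mathfrak{g}_{\theta(\alpha)}$ is $\overline{\widetilde{\alpha}(t)}$, not $\widetilde{\alpha}(t)$. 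Concretely, for $G=\mathrm{GL}_3(\mathbb{C})$, $\theta$ complex conjugation, $n$ the permutation matrix of $(12)$ and $\alpha=\alpha_{31}$, one has $T^{\theta_n}=\{(t_1,\bar t_1,t_3):t_3\in\mathbb{R}^\times\}$ and $\mathrm{Ad}_{\theta(t)}E_{31}=(t_3/\bar t_1)E_{31}=\overline{\widetilde{\alpha}(t)}\,E_{31}$. To be fair, the paper's own proof simply asserts $d\rho'_t(\mathfrak{g}_{\theta(\alpha)})=\widetilde{\alpha}(t)\mathfrak{g}_{\theta(\alpha)}$ without showing a computation, so the imprecision is present there as well, and it is harmless for the main theorem: the decisive step in the proof of Theorem~\ref{thm6.2} only uses the restriction of the resulting character $\alpha^{\varphi}$ to $A_{\mathfrak{p}}$, where $\widetilde{\alpha}$ and $\overline{\widetilde{\alpha}}$ coincide. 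Still, if you want the lemma (and Corollary~\ref{cor6.9}) to hold literally as stated for Galois $\theta$, this point needs to be handled with more care than your one-line derivation allows.
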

\begin{proof}
Observe that $\rho_{t}(g)=tg\theta(t)^{-1}=tgt^{-1}\cdot t\theta(t)^{-1}$
and that $t^{-1}n\theta(t)n^{-1}=e\Longleftrightarrow n^{-1}tn=\theta(t)$
for any $t\in T^{\theta_{n}}$. In other words, $\mathrm{Ad}_{\theta(t)}=\mathrm{Ad}_{n^{-1}tn}$.
$T^{\theta_{n}}$ acts on $T_{n}(Q')$ via $\left(d\rho_{t}\right)_{n}$
and recall that we have identified $T_{n}(Q')$ with a subalgebra
of $\mathfrak{g}$ via $\left(dL_{n^{-1}}\right)_{n}$ $(6.1)$. Therefore,
in order to understand the action of $T^{\theta_{n}}$ on $\left(\mathfrak{g}_{w^{-1}(\alpha)}\oplus\mathfrak{g}_{\theta(\alpha)}\right)_{\psi_{n},-1}$
we need to identify it with $T_{n}(Q')$ via $dL_{n}$, apply the
twisted involution and move back to $\mathfrak{g}$ via $\left(dL_{n^{-1}}\right)_{n}$.
Define $\rho'_{t}:=L_{n^{-1}}\circ\rho_{t}\circ L_{n}$ to be induced
twisted action on $T_{e}(G)\simeq\mathfrak{g}$. Notice that: 
\[
\rho'_{t}(g)=n^{-1}\rho_{t}(ng)=n^{-1}tng\theta(t)^{-1}=n^{-1}tngn^{-1}t^{-1}n.
\]
Therefore: 
\[
d\rho'_{t}(X):=dL_{n^{-1}}\circ d\rho_{t}\circ dL_{n}(X)=\mathrm{Ad}_{n^{-1}}\circ\mathrm{Ad}_{t}\circ\mathrm{Ad}_{n}(X)=\mathrm{Ad}_{n^{-1}tn}(X)=\mathrm{Ad}_{\theta(t)}(X),
\]
and as a consequence:
\[
d\rho_{t}'(\mathfrak{g}_{w^{-1}(\alpha)})=w^{-1}(\widetilde{\alpha})(\theta(t))\mathfrak{g}_{w^{-1}(\alpha)}=\theta\circ w^{-1}\circ\widetilde{\alpha}(t)\mathfrak{g}_{w^{-1}(\alpha)}=\widetilde{\alpha}(t)\mathfrak{g}_{w^{-1}(\alpha)}
\]
and also $d\rho'_{t}(\mathfrak{g}_{\theta(\alpha)})=\widetilde{\alpha}(t)\mathfrak{g}_{\theta(\alpha)}$.
This implies that $\left(\mathfrak{g}_{w^{-1}(\alpha)}\oplus\mathfrak{g}_{\theta(\alpha)}\right)_{\psi_{n},-1}$
is an eigenspace of $T^{\theta_{n}}$ under twisted involution with
the same character $\widetilde{\alpha}$. \end{proof}
\begin{cor}
\textup{\label{cor6.9}}As a complex representation of \textup{$T^{\theta_{n}}$,
we have that:}

\[
N_{B(n),n}^{Q'}\otimes_{R}\mathbb{C}\simeq\bigoplus_{\alpha\in S}V_{\widetilde{\alpha}},
\]

where $S$ is some multiset containing only negative roots $\alpha\in\Sigma(G,A_{\mathfrak{p}})$
such that $w\circ\theta(\alpha)$ is also negative, and $V_{\widetilde{\alpha}}$
is a one dimensional representation of $T^{\theta_{n}}$ corresponding
to a character $\widetilde{\alpha}$.\end{cor}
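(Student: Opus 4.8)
The plan is to feed the explicit description of $N_{B(n),n}^{Q'}$ supplied by the preceding corollary into the eigenspace computation of Lemma~\ref{lemma 6.8}, and then pass to the complexification. Recall that, after transporting $N_{B(n),n}^{Q'}$ into $\mathfrak{g}$ via $dL_{n^{-1}}$, that corollary identifies it with the $(-1)$-eigenspace of $\psi_{n}=d\theta\circ\mathrm{Ad}_{n}$ acting on $\bigoplus_{\alpha\in\Sigma^{-}\cap w\circ\theta(\Sigma^{-})}\mathfrak{g}_{w^{-1}(\alpha)}$. The first thing I would record is that $\psi_{n}$ is an involution of $\mathfrak{g}$: this is immediate from $\theta(n)=n^{-1}$, since then $\psi_{n}^{2}=(d\theta\circ\mathrm{Ad}_{n})^{2}=\mathrm{Ad}_{\theta(n)}\circ\mathrm{Ad}_{n}=\mathrm{Ad}_{\theta(n)n}=\mathrm{id}$; the same relation forces $w\circ\theta=\theta\circ w^{-1}$ on $\Sigma(G,A_{\mathfrak{p}})$, from which one checks that $\psi_{n}$ carries $\mathfrak{g}_{w^{-1}(\alpha)}$ onto $\mathfrak{g}_{\theta(\alpha)}$ (using Lemma~\ref{lemma 6.5-each eigen}) and back, and that $\mathfrak{g}_{\theta(\alpha)}$ again occurs in the displayed sum whenever $\mathfrak{g}_{w^{-1}(\alpha)}$ does (namely as the summand indexed by $w\circ\theta(\alpha)$).

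Granting this, the summands of $\bigoplus_{\alpha\in\Sigma^{-}\cap w\circ\theta(\Sigma^{-})}\mathfrak{g}_{w^{-1}(\alpha)}$ organize into $\psi_{n}$-stable blocks $\mathfrak{g}_{w^{-1}(\alpha)}\oplus\mathfrak{g}_{\theta(\alpha)}$ --- a single summand $\mathfrak{g}_{w^{-1}(\alpha)}$ in the degenerate case $w^{-1}(\alpha)=\theta(\alpha)$ --- and, since the $(-1)$-eigenspace operation respects a decomposition into $\psi_{n}$-stable pieces, I obtain
\[
N_{B(n),n}^{Q'}\ \simeq\ \bigoplus_{[\alpha]}E_{\alpha},\qquad E_{\alpha}:=\left(\mathfrak{g}_{w^{-1}(\alpha)}\oplus\mathfrak{g}_{\theta(\alpha)}\right)_{\psi_{n},-1},
\]
the sum running over representatives of the $\psi_{n}$-orbits, each of which is a negative restricted root $\alpha$ with $w\circ\theta(\alpha)$ again negative. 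By Lemma~\ref{lemma 6.8} each $E_{\alpha}$ is stable under the twisted $T^{\theta_{n}}$-action (which on $\mathfrak{g}$ is $X\mapsto\mathrm{Ad}_{\theta(t)}X$) and $T^{\theta_{n}}$ acts on it through the single character $\widetilde{\alpha}\in\Sigma(G,T)$ attached to $\alpha$ by Lemma~\ref{lemma 6.5-each eigen}, because $\mathrm{Ad}_{\theta(t)}$ scales both $\mathfrak{g}_{w^{-1}(\alpha)}$ and $\mathfrak{g}_{\theta(\alpha)}$ by $\widetilde{\alpha}(t)$ and commutes with $\psi_{n}$ thanks to the identity $\theta(t)=n^{-1}tn$ valid on $T^{\theta_{n}}$.

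Finally I would complexify: tensoring the displayed decomposition with $\mathbb{C}$ over $\mathbb{R}$ turns each block $E_{\alpha}$, on which $T^{\theta_{n}}$ acts by the scalar character $\widetilde{\alpha}$, into a direct sum of copies of the one-dimensional $T^{\theta_{n}}$-representation $V_{\widetilde{\alpha}}$; collecting these over all $[\alpha]$ gives $N_{B(n),n}^{Q'}\otimes_{\mathbb{R}}\mathbb{C}\simeq\bigoplus_{\alpha\in S}V_{\widetilde{\alpha}}$ with $S$ the multiset in which every orbit representative $\alpha$ --- a negative restricted root with $w\circ\theta(\alpha)$ negative --- occurs with multiplicity $\mathrm{dim}_{\mathbb{C}}(E_{\alpha}\otimes_{\mathbb{R}}\mathbb{C})$. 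I expect the one genuinely delicate point to be precisely this last step: one must keep careful track of how the intrinsic complex structure on the root spaces $\mathfrak{g}_{\widetilde{\alpha}}$ interacts with the real involution $\psi_{n}$ and with $\otimes_{\mathbb{R}}\mathbb{C}$ --- i.e., whether $E_{\alpha}$ enters as a real form or a complex subspace of $\mathfrak{g}_{w^{-1}(\alpha)}\oplus\mathfrak{g}_{\theta(\alpha)}$ --- so as to be sure that only the characters $\widetilde{\alpha}$ occur and with the correct multiplicities. Everything else is a bookkeeping of the restricted-root decomposition, built directly on the preceding corollary and Lemma~\ref{lemma 6.8}.
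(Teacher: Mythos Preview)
Your approach is essentially the same as the paper's: organize $N_{B(n),n}^{Q'}$ into $\psi_{n}$-stable blocks $\left(\mathfrak{g}_{w^{-1}(\alpha)}\oplus\mathfrak{g}_{\theta(\alpha)}\right)_{\psi_{n},-1}$, invoke Lemma~\ref{lemma 6.8} to see that $T^{\theta_{n}}$ acts on each block through $\widetilde{\alpha}$, and then complexify. The structural outline is right.

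The one place where you stop short is precisely the step you flag as ``genuinely delicate'': you say that $E_{\alpha}\otimes_{\mathbb{R}}\mathbb{C}$ is a direct sum of copies of $V_{\widetilde{\alpha}}$, but you do not verify this. The paper handles it by a short explicit case analysis according to whether $w\circ\theta(\alpha)=\alpha$ and, in the degenerate case, whether $\psi_{n}$ acts as $+\mathrm{Id}$ or $-\mathrm{Id}$ on $\mathfrak{g}_{\theta(\alpha)}$. In each case one computes the real dimension of $E_{\alpha}$ and checks that, after tensoring with $\mathbb{C}$, the resulting complex $T^{\theta_{n}}$-module is either $V_{\widetilde{\alpha}}\oplus V_{\widetilde{\alpha}}$ or trivial. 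This case split is not long, but it is the actual content that rules out the appearance of other characters (such as $\overline{\widetilde{\alpha}}$) upon complexification; your worry about the interaction between the complex structure on the root spaces, the real involution $\psi_{n}$, and $\otimes_{\mathbb{R}}\mathbb{C}$ is exactly what the case analysis resolves. So your proposal is correct in spirit and in structure, but to complete it you should carry out that three-case check rather than leave it as a caveat.
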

\begin{proof}
The space $N_{B(n),n}^{Q'}\otimes_{\mathbb{R}}\mathbb{C}$ is a complex
representation of $T^{\theta_{n}}$ so we can decompose it to one-dimensional
$\mathbb{C}$-eigenspaces. By Lemma \ref{lemma 6.8}, $\left(\mathfrak{g}_{w^{-1}(\alpha)}\oplus\mathfrak{g}_{\theta(\alpha)}\right)_{\psi_{n},-1}$
is an $\widetilde{\alpha}$-eigenspace, thus:

{*} If $w\circ\theta(\alpha)\neq\alpha$ we have that $\mathbb{C}\otimes_{\mathbb{R}}\left(\mathfrak{g}_{w^{-1}(\alpha)}\oplus\mathfrak{g}_{\theta(\alpha)}\right)_{\psi_{n},-1}\simeq V_{\widetilde{\alpha}}\oplus V_{\widetilde{\alpha}}$. 

{*} If $w\circ\theta(\alpha)=\alpha$ and $\psi_{n}|_{\mathfrak{g}_{\theta(\alpha)}}=-\mathrm{Id}|_{\mathfrak{g}_{\theta(\alpha)}}$
we have that $\mathbb{C}\otimes_{\mathbb{R}}\left(\mathfrak{g}_{\theta(\alpha)}\right)_{\psi_{n},-1}\simeq\mathfrak{g}_{\theta(\alpha)}\oplus\mathfrak{g}_{\theta(\alpha)}\simeq V_{\widetilde{\alpha}}\oplus V_{\widetilde{\alpha}}$. 

{*} If $w\circ\theta(\alpha)=\alpha$ and $\psi_{n}|_{\mathfrak{g}_{\theta(\alpha)}}=\mathrm{Id}|_{\mathfrak{g}_{\theta(\alpha)}}$
then $\mathbb{C}\otimes_{\mathbb{R}}\left(\mathfrak{g}_{\theta(\alpha)}\right)_{\psi_{n},-1}$
is trivial. 

This implies that $N_{B(n),n}^{Q'}\otimes_{\mathbb{R}}\mathbb{C}\simeq\bigoplus_{\alpha\in S}V_{\widetilde{\alpha}}$
when $S$ is a multiset that contains roots $\alpha$ (with possible
repetitions), such that both $\alpha$ and $w\circ\theta(\alpha)$
are negative. \end{proof}
\begin{example*}
Consider the case of $G=\mathrm{GL_{n}}(\mathbb{C}),\,G^{\theta}=\mathrm{GL_{n}}(\mathbb{R})$.
In this case the $B$-orbits in $Q'$ are represented by the set of
involutive Weyl elements, that is, $w\in W(G,A_{\mathfrak{p}})$ such
that $w^{2}=\mathrm{Id}$. In addition $W(G,A_{\mathfrak{p}})=S_{n}$-
the permutation group of $n$ elements. Under this identification,
we have an action of $W\simeq S_{n}$ on rows and columns. We also
have a decomposition $\mathfrak{gl}_{n}=\mathfrak{g}_{0}\oplus\bigoplus\mathfrak{g}_{\alpha_{ij}}$
where $\mathfrak{g}_{\alpha_{ij}}$ is the complex space spanned by
matrix $E_{ij}$ (that has $1$ in the $ij$th entry and $0$ in all
the other entries). Calculating $N_{B(w),w}^{Q'}$ yields: 
\[
N_{B(w),w}^{Q'}\otimes_{R}\mathbb{C}\simeq\bigoplus_{\{(i,j)\in I_{w}\}}V_{\widetilde{\alpha_{ij}}},
\]
where $I_{w}=\{(i,j):i>j,w(i)>w(j)\}$. Under the choice of the upper
triangular matrices as the Borel subgroup, $I_{w}$ corresponds to
the set of negative roots $\{\alpha_{ij}\}$ such that $\alpha_{w(i),w(j)}=w(\alpha_{ij})=w\circ\theta(\alpha_{ij})$
is negative. We see that this example agrees with the result in \ref{cor6.9}. \end{example*}
\begin{cor}
We have the following isomorphism of representations of $T^{\theta_{n}}$:
\[
\mathrm{Sym}(N_{B(n),n}^{Q'})\otimes_{R}\mathbb{C}\simeq\bigoplus_{\varphi:S\longrightarrow\mathbb{Z}_{\geq0}}V_{\alpha^{\varphi}},
\]
where $\varphi:S\longrightarrow\mathbb{Z}_{\geq0}$ is some function,
and $V_{\alpha^{\varphi}}$ is a one dimensional representation of
$T^{\theta_{n}}$ that corresponds to a character $\alpha^{\varphi}(t)=\prod_{\alpha_{j}\in S}\left(\widetilde{\alpha_{j}}(t)\right)^{\varphi(\alpha_{j})}$
for the multiset $S$ as defined in Corollary \ref{cor6.9}. \end{cor}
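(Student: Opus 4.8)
The plan is to combine Corollary \ref{cor6.9} with the standard behaviour of the symmetric-algebra functor under base change and direct sums, so essentially no new geometry is needed. First I would record the two elementary facts: for a real vector space $W$ there is a natural isomorphism $\mathrm{Sym}(W)\otimes_{\mathbb{R}}\mathbb{C}\simeq\mathrm{Sym}_{\mathbb{C}}(W\otimes_{\mathbb{R}}\mathbb{C})$, and for complex vector spaces $\mathrm{Sym}_{\mathbb{C}}(V_{1}\oplus V_{2})\simeq\mathrm{Sym}_{\mathbb{C}}(V_{1})\otimes_{\mathbb{C}}\mathrm{Sym}_{\mathbb{C}}(V_{2})$; both are functorial and hence $T^{\theta_{n}}$-equivariant for the induced action. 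Writing $N:=N_{B(n),n}^{Q'}$ and feeding in the decomposition $N\otimes_{\mathbb{R}}\mathbb{C}\simeq\bigoplus_{\alpha\in S}V_{\widetilde{\alpha}}$ of Corollary \ref{cor6.9}, this gives
\[
\mathrm{Sym}(N)\otimes_{\mathbb{R}}\mathbb{C}\;\simeq\;\mathrm{Sym}_{\mathbb{C}}\Big(\bigoplus_{\alpha\in S}V_{\widetilde{\alpha}}\Big)\;\simeq\;\bigotimes_{\alpha\in S}\mathrm{Sym}_{\mathbb{C}}(V_{\widetilde{\alpha}})
\]
as $T^{\theta_{n}}$-representations, where the tensor product runs over the multiset $S$ (a repeated root contributes a separate tensor factor).

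Next I would treat one factor at a time. Since $V_{\widetilde{\alpha}}$ is one-dimensional, picking a nonzero vector $x_{\alpha}\in V_{\widetilde{\alpha}}$ identifies $\mathrm{Sym}_{\mathbb{C}}(V_{\widetilde{\alpha}})$ with the polynomial ring $\mathbb{C}[x_{\alpha}]$, on which $T^{\theta_{n}}$ acts with $x_{\alpha}$ of weight $\widetilde{\alpha}$; thus $\mathrm{Sym}_{\mathbb{C}}(V_{\widetilde{\alpha}})\simeq\bigoplus_{k\geq0}V_{\widetilde{\alpha}^{\,k}}$, the $k$-th summand being $\mathrm{Sym}^{k}_{\mathbb{C}}(V_{\widetilde{\alpha}})$ with character $t\mapsto\widetilde{\alpha}(t)^{k}$. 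Distributing the tensor product over these direct sums, the basis monomials $\prod_{\alpha\in S}x_{\alpha}^{\varphi(\alpha)}$ are indexed precisely by functions $\varphi:S\to\mathbb{Z}_{\geq0}$, each spans a one-dimensional $T^{\theta_{n}}$-subrepresentation, and $T^{\theta_{n}}$ acts on it through $t\mapsto\prod_{\alpha_{j}\in S}\widetilde{\alpha_{j}}(t)^{\varphi(\alpha_{j})}=\alpha^{\varphi}(t)$. Collecting the summands yields $\mathrm{Sym}(N)\otimes_{\mathbb{R}}\mathbb{C}\simeq\bigoplus_{\varphi:S\to\mathbb{Z}_{\geq0}}V_{\alpha^{\varphi}}$, which is the assertion.

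The whole argument is bookkeeping, so I expect the only points requiring attention to be: (i) the multiset nature of $S$ --- two copies of the same root $\alpha$ give two independent polynomial generators, which is why $\varphi$ must be a function on $S$ rather than on the underlying root set; and (ii) confirming that all of the above isomorphisms are equivariant for the \emph{twisted} action $\rho_{t}$ of $T^{\theta_{n}}$, but this is automatic from naturality once Lemma \ref{lemma 6.8} has fixed the weight of each line in $N$. I do not foresee a genuine obstacle here --- the substantive work was already done in Corollary \ref{cor6.9}.
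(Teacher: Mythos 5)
Your argument is correct and is essentially the paper's proof in only slightly different clothing: the paper picks an eigenbasis of $N\otimes_{\mathbb{R}}\mathbb{C}$ and reads off the weights of the monomial basis of each $\mathrm{Sym}^{k}$, while you invoke the exponential property $\mathrm{Sym}_{\mathbb{C}}(\bigoplus V_{\widetilde{\alpha}})\simeq\bigotimes\mathrm{Sym}_{\mathbb{C}}(V_{\widetilde{\alpha}})$ to reach the same monomials indexed by $\varphi:S\to\mathbb{Z}_{\geq0}$. Both arguments use the same two inputs (the base-change isomorphism $\mathrm{Sym}(N)\otimes_{\mathbb{R}}\mathbb{C}\simeq\mathrm{Sym}_{\mathbb{C}}(N\otimes_{\mathbb{R}}\mathbb{C})$ and Corollary~\ref{cor6.9}), and your remarks (i) and (ii) on the multiset and equivariance match the care the paper takes.
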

\begin{proof}
Choose a basis $\{Y_{1},...,Y_{m}\}$ of $N_{B(n),n}^{Q'}\otimes_{\mathbb{R}}\mathbb{C}$
where each $Y_{i}$ is in some $V_{\widetilde{\alpha_{i}}}$. Recall
that the action of $T^{\theta_{n}}$ on $N_{B(n),n}^{Q'}$ is via
$\mathrm{Ad}_{\theta(t)}$. We can now describe the action of $T^{\theta_{n}}$
on the space 
\[
\mathrm{Sym}(N_{B(n),n}^{Q'})\otimes_{R}\mathbb{C}\simeq\mathrm{Sym}(N_{B(n),n}^{Q'}\otimes_{R}\mathbb{C})=\bigoplus_{k=0}^{\infty}\mathrm{Sym^{k}}(N_{B(n),n}^{Q'}\otimes_{R}\mathbb{C}).
\]
$\mathrm{Sym^{k}}(N_{B(n),n}^{Q'}\otimes_{R}\mathbb{C})$ has basis
of tensors $\bigotimes_{j=1}^{k}Y_{i_{j}}$(modulo the symmetrization
relations). Then for $t\in T^{\theta_{n}}$: 
\[
t.(\bigotimes_{j=1}^{k}Y_{i_{j}})=\bigotimes_{j=1}^{k}\mathrm{Ad}_{\theta(t)}(Y_{i_{j}})=\widetilde{\alpha_{i_{1}}}(t)\cdot...\cdot\widetilde{\alpha_{i_{k}}}(t)\cdot\bigotimes_{j=1}^{k}Y_{i_{j}}=\alpha^{\varphi}(t)\cdot\bigotimes_{j=1}^{k}Y_{i_{j}},
\]
 where $\varphi(\alpha_{j})$ is the number of appearances of $j$
in $\{i_{1},...,i_{k}\}$.\end{proof}
\begin{cor}
\label{cor 6.11} If $\mathrm{dim}_{\mathbb{C}}\left(\mathrm{Sym}(N_{B(n),n}^{Q'}\otimes_{R}\mathbb{C})\right)^{T^{\theta_{n}},\chi}\neq0$
then there exists $\varphi:S\longrightarrow\mathbb{Z}_{\geq0}$ such
that $\alpha^{\varphi}(t)=\chi(t)$ for any $t\in T^{\theta_{n}}$.\end{cor}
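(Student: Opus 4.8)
The plan is to read the statement off directly from the weight decomposition established in the preceding corollary. Recall that, as a representation of $T^{\theta_{n}}$,
\[
\mathrm{Sym}(N_{B(n),n}^{Q'})\otimes_{R}\mathbb{C}\simeq\bigoplus_{\varphi:S\longrightarrow\mathbb{Z}_{\geq0}}V_{\alpha^{\varphi}},
\]
an \emph{algebraic} (uncompleted) direct sum of one-dimensional representations on which $T^{\theta_{n}}$ acts through the character $\alpha^{\varphi}$. First I would record the elementary fact that for any group $\Gamma$, any character $\chi$ of $\Gamma$, and any algebraic direct sum $V=\bigoplus_{j}V_{j}$ of $\Gamma$-representations, the formation of $(\Gamma,\chi)$-equivariant vectors commutes with the direct sum, $V^{\Gamma,\chi}=\bigoplus_{j}V_{j}^{\Gamma,\chi}$; this is immediate since every vector lies in only finitely many summands and equivariance is tested componentwise.

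Applying this with $\Gamma=T^{\theta_{n}}$ gives
\[
\left(\mathrm{Sym}(N_{B(n),n}^{Q'})\otimes_{R}\mathbb{C}\right)^{T^{\theta_{n}},\chi}\simeq\bigoplus_{\varphi:S\longrightarrow\mathbb{Z}_{\geq0}}\left(V_{\alpha^{\varphi}}\right)^{T^{\theta_{n}},\chi}.
\]
Since $V_{\alpha^{\varphi}}$ is one-dimensional with $T^{\theta_{n}}$ acting via $\alpha^{\varphi}$, the summand $\left(V_{\alpha^{\varphi}}\right)^{T^{\theta_{n}},\chi}$ equals $V_{\alpha^{\varphi}}$ when $\alpha^{\varphi}|_{T^{\theta_{n}}}=\chi$ and vanishes otherwise. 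Hence the left-hand side is nonzero precisely when there is some $\varphi:S\longrightarrow\mathbb{Z}_{\geq0}$ with $\alpha^{\varphi}(t)=\chi(t)$ for all $t\in T^{\theta_{n}}$, which is exactly the asserted conclusion.

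I do not expect a genuine obstacle here: the entire content of the corollary is packaged in the explicit decomposition of $\mathrm{Sym}(N_{B(n),n}^{Q'})\otimes_{R}\mathbb{C}$ into the $T^{\theta_{n}}$-eigenlines $V_{\alpha^{\varphi}}$ proved just above, and what remains is a one-line bookkeeping argument. The only mild point to keep an eye on is that the indexing set of functions $\varphi$ is infinite, so one should use that $\mathrm{Sym}$ denotes the full, uncompleted symmetric algebra — for which passing to the $(T^{\theta_{n}},\chi)$-isotypic part commutes with the direct sum — and that no topology or completion enters. The substantive work, namely forcing $\chi$ to coincide with a product of restricted roots $\widetilde{\alpha_{j}}$ on $T^{\theta_{n}}$ and then invoking dominance of $\chi$ to conclude $\chi^{-1}=w\cdot\theta(\chi)$ and $\widetilde{\pi}\simeq\theta\circ\pi$, is deferred to the remainder of the proof of Theorem \ref{thm6.2}.
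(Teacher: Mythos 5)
Your argument is correct and is essentially the paper's own proof: both exploit that the algebraic direct sum $\bigoplus_{\varphi}V_{\alpha^{\varphi}}$ has no completion, so any $(T^{\theta_{n}},\chi)$-equivariant vector has finite support and each nonzero component, lying in a one-dimensional $\alpha^{\varphi}$-eigenline, forces $\alpha^{\varphi}|_{T^{\theta_{n}}}=\chi$. The only presentational difference is that you phrase it as "isotypic parts commute with uncompleted direct sums" while the paper writes out the finite sum $X=\sum_i X_{\varphi_i}$ and observes each $X_{\varphi_i}$ is itself equivariant; these are the same computation.
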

\begin{proof}
Each vector $X\in\left(\mathrm{Sym}(N_{B(n),n}^{Q'}\otimes_{R}\mathbb{C})\right)^{T^{\theta_{n}},\chi}=\left(\bigoplus_{\varphi:S\longrightarrow\mathbb{Z}_{\geq0}}V_{\alpha^{\varphi}}\right)^{T^{\theta_{n}},\chi}$
is of the form $X=\sum_{i=1}^{n}X_{\varphi_{i}}$ where $X_{\varphi_{i}}\in V_{\alpha^{\varphi_{i}}}$.
This implies that for any $i$, $X_{\varphi_{i}}$ is also $(T^{\theta_{n}},\chi)$
equivariant. This implies that $\varphi_{i}(t)=\chi(t)$ for any $t\in T^{\theta_{n}}$. \end{proof}
\begin{lem}
\label{lemma 6.12}For any $n\in N_{G}(T)\cap Q'$, we have that $\{n\theta(t)n^{-1}t|t\in T\}\subseteq T{}^{\theta_{n}}$. \end{lem}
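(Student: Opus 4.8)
The statement is the inclusion $\{n\theta(t)n^{-1}t \mid t\in T\}\subseteq T^{\theta_n}$ for a fixed $n\in N_G(T)\cap Q'$. Write $s=s(t):=n\theta(t)n^{-1}t$. Since $n$ normalizes $T$, the map $t\mapsto n\theta(t)n^{-1}$ sends $T$ into $T$ (as $\theta$ preserves $T$ by our choice of $A\subseteq T\subseteq B$ in Lemma \ref{lemma 5.1}), so $s\in T$; thus it only remains to check that $\theta_n(s)=s$, i.e. $n\theta(s)n^{-1}=s$.

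The plan is a direct computation inside the commutative group $T$. First I would record two facts about $n$: since $\theta(n)=n^{-1}$ we have $\theta_n(g)=n\theta(g)n^{-1}$ with $\theta_n^2=\mathrm{id}$, and moreover $\mathrm{Ad}_{n}$ restricted to $T$ commutes with $\theta$ in the appropriate twisted sense — explicitly, for $t\in T$, applying $\theta$ to $n\theta(t)n^{-1}$ and using $\theta(n)=n^{-1}$ gives $\theta(n\theta(t)n^{-1}) = \theta(n)\,t\,\theta(n)^{-1} = n^{-1} t\, n$. Then I would simply expand:
\[
n\theta(s)n^{-1} = n\,\theta\!\left(n\theta(t)n^{-1}t\right)n^{-1} = n\left(\theta(n)\theta(\theta(t))\theta(n)^{-1}\theta(t)\right)n^{-1}.
\]
Using $\theta^2=\mathrm{id}$, $\theta(n)=n^{-1}$, and commutativity of $T$, the right-hand side becomes $n\left(n^{-1} t\, n \cdot \theta(t)\right)n^{-1} = t\,(n\theta(t)n^{-1}) = n\theta(t)n^{-1}\cdot t = s$, where the last step uses that both $t$ and $n\theta(t)n^{-1}$ lie in the abelian group $T$ so they commute. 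This gives $\theta_n(s)=s$, i.e. $s\in T^{\theta_n}$.

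There is essentially no obstacle here: the only points requiring care are (i) confirming that $\theta$ indeed stabilizes $T$ so that all the expressions $\theta(t)$, $n\theta(t)n^{-1}$ remain in $T$ — this is exactly the content of Lemma \ref{lemma 5.1} and Proposition \ref{prop6.1:We-can-choose} — and (ii) remembering that commutativity of $T$ is what lets one freely reorder $t$ and its conjugate. I would also remark for later use (it is how this lemma is applied) that the map $t\mapsto n\theta(t)n^{-1}t$ is precisely $t\mapsto t\cdot\theta_n(t)$, so the lemma says this "norm-type" map lands in the fixed subgroup $T^{\theta_n}$, which is the natural source of $(T^{\theta_n},\chi)$-equivariant characters appearing in Corollary \ref{cor 6.11}.
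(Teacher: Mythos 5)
Your proof is correct and is essentially the same as the paper's: the paper verifies $\theta(n\theta(t)n^{-1}t)=n^{-1}\left(n\theta(t)n^{-1}t\right)n$, while you equivalently verify $n\theta(s)n^{-1}=s$; both reduce to the identities $\theta(n)=n^{-1}$, $\theta^2=\mathrm{id}$, and commutativity of $T$.
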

\begin{proof}
Recall that $T{}^{\theta_{n}}=\{t\in T|tn\theta(t)^{-1}=n\}=\{t\in T|\theta(t)=n^{-1}tn\}$
and notice that:
\[
\theta\left(n\theta(t)n^{-1}t\right)=n^{-1}tn\theta(t)=\theta(t)n^{-1}tn=n^{-1}\left(n\theta(t)n^{-1}t\right)n.
\]
\end{proof}
\begin{cor}
\label{cor 6.13}If $\left(\mathrm{Sym}(N_{B(n),n}^{Q'}\otimes_{R}\mathbb{C})\right)^{T^{\theta_{n}},\chi}\neq0$
then there exists $\varphi:S\longrightarrow\mathbb{Z}_{\geq0}$ such
that:
\[
\alpha^{\varphi}(t)\cdot w\circ\theta(\alpha^{\varphi})(t)=\chi(t)\cdot w\circ\theta(\chi)(t),
\]
for any $t\in T$.\end{cor}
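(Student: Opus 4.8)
The plan is to combine Corollary \ref{cor 6.11} with Lemma \ref{lemma 6.12} by feeding the special elements produced by the latter into the equivariance constraint produced by the former. Assuming $\left(\mathrm{Sym}(N_{B(n),n}^{Q'}\otimes_{R}\mathbb{C})\right)^{T^{\theta_{n}},\chi}\neq0$, Corollary \ref{cor 6.11} hands us a function $\varphi:S\longrightarrow\mathbb{Z}_{\geq0}$ with $\alpha^{\varphi}(s)=\chi(s)$ for every $s\in T^{\theta_{n}}$. Both $\alpha^{\varphi}$ and $\chi$ are genuine characters of the full torus $T$ (the former being a product of roots $\widetilde{\alpha_{j}}\in\Sigma(G,T)$, the latter the Langlands parameter), so the task is to upgrade this agreement on $T^{\theta_{n}}$ to the claimed twisted identity on all of $T$.

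The key observation I would record first is that for $n\in N_{G}(T)\cap Q'$ and any character $\mu$ of $T$ one has $\mu\bigl(n\theta(t)n^{-1}\bigr)=\bigl(w\circ\theta\bigr)(\mu)(t)$ for all $t\in T$, where $w=nT\in W(G,T)$; this is immediate from the definition of the Weyl action $\bigl(w.\mu\bigr)(t)=\mu(n^{-1}tn)$ together with $\theta(n)=n^{-1}$, which holds precisely because $n\in Q'$. Now Lemma \ref{lemma 6.12} tells us that $s:=n\theta(t)n^{-1}t$ lies in $T^{\theta_{n}}$ for every $t\in T$. Evaluating any character $\mu$ of $T$ at this $s$ and using multiplicativity gives $\mu(s)=\mu\bigl(n\theta(t)n^{-1}\bigr)\cdot\mu(t)=\bigl(w\circ\theta\bigr)(\mu)(t)\cdot\mu(t)$; applying this to $\mu=\chi$ and to $\mu=\alpha^{\varphi}$ yields $\chi(s)=\chi(t)\cdot\bigl(w\circ\theta\bigr)(\chi)(t)$ and $\alpha^{\varphi}(s)=\alpha^{\varphi}(t)\cdot\bigl(w\circ\theta\bigr)(\alpha^{\varphi})(t)$.

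Finally, since $s\in T^{\theta_{n}}$, Corollary \ref{cor 6.11} gives $\alpha^{\varphi}(s)=\chi(s)$, and comparing the two displayed expressions yields $\alpha^{\varphi}(t)\cdot\bigl(w\circ\theta\bigr)(\alpha^{\varphi})(t)=\chi(t)\cdot\bigl(w\circ\theta\bigr)(\chi)(t)$ for every $t\in T$, which is exactly the claim. I do not expect any real obstacle: the whole argument is a short deduction once Corollary \ref{cor 6.11} and Lemma \ref{lemma 6.12} are in hand, and the only point requiring care is the bookkeeping of the Weyl-action convention — in particular making sure $\theta(n)=n^{-1}$ is invoked at the right spot when rewriting $\mu(n\theta(t)n^{-1})$ as $(w\circ\theta)(\mu)(t)$.
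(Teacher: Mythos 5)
Your proof is correct and follows essentially the same route as the paper: apply Corollary \ref{cor 6.11} to get $\alpha^{\varphi}|_{T^{\theta_{n}}}=\chi|_{T^{\theta_{n}}}$, feed in the elements $n\theta(t)n^{-1}t\in T^{\theta_{n}}$ provided by Lemma \ref{lemma 6.12}, and expand by multiplicativity using $\mu(n\theta(t)n^{-1})=(w\circ\theta)(\mu)(t)$, which holds because $\theta(n)=n^{-1}$. You even make explicit (and correctly) the Weyl-convention bookkeeping that the paper leaves implicit.
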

\begin{proof}
By Corollary \ref{cor 6.11} , $\alpha^{\varphi}|_{T^{\theta_{n}}}=\chi|_{T^{\theta_{n}}}$
and by Lemma \ref{lemma 6.12} we can deduce that for every $t\in T$,
$\alpha^{\varphi}(n\theta(t)n^{-1}t)=\chi(n\theta(t)n^{-1}t)$. Therefore:
\[
\alpha^{\varphi}(n\theta(t)n^{-1}t)=\alpha^{\varphi}(t)\cdot w\circ\theta(\alpha^{\varphi})(t)=\chi(t)\cdot w\circ\theta(\chi)(t)=\chi(n\theta(t)n^{-1}t).
\]

\end{proof}
By Corollary \ref{cor 3.6}, in order to prove that $\widetilde{\pi}\simeq\pi^{\theta}$
it is enough to show that $\chi^{-1}=w\circ\theta(\chi)$. Notice
that by Langlands' classification, we have that $\chi$ is dominant
while $\alpha^{\varphi}|_{A_{\mathfrak{p}}}$ and $w\circ\theta(\alpha^{\varphi})|_{A_{\mathfrak{p}}}$
are negative. This leads us to our main result.
\begin{thm}
If $\left(\mathrm{Sym}(N_{B(n),n}^{Q'}\otimes_{R}\mathbb{C})\right)^{T^{\theta_{n}},\chi}\neq0$,
then: 

1) $\mathrm{dim}_{\mathbb{C}}\left(\mathrm{Sym}(N_{B(n),n}^{Q'}\otimes_{R}\mathbb{C})\right)^{T^{\theta_{n}},\chi}=1$.

2) $\chi^{-1}=w\circ\theta(\chi)$ and hence $\widetilde{\pi}^{\theta}\simeq\pi$.\end{thm}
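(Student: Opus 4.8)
The plan is to translate everything into a statement about characters of tori and then exploit the positivity constraint coming from the Langlands classification. We are given a function $\varphi\colon S\to\mathbb{Z}_{\ge0}$ with $\alpha^{\varphi}|_{T^{\theta_n}}=\chi|_{T^{\theta_n}}$, hence by Corollary~\ref{cor 6.13} the identity
\[
\alpha^{\varphi}(t)\cdot w\circ\theta(\alpha^{\varphi})(t)=\chi(t)\cdot w\circ\theta(\chi)(t)\qquad\text{for all }t\in T.
\]
First I would restrict this identity to $A_{\mathfrak p}$ and pass to the additive notation on the character lattice $X^*(A_{\mathfrak p})\otimes\mathbb{R}$. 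Write $\mu:=\alpha^{\varphi}|_{A_{\mathfrak p}}$ and $\lambda:=\chi|_{A_{\mathfrak p}}$. By construction $\mu$ is a non-negative integral combination $\sum_{\alpha\in S}\varphi(\alpha)\,\widetilde{\alpha}|_{A_{\mathfrak p}}=\sum_{\alpha\in S}\varphi(\alpha)\alpha$ of roots $\alpha\in S$, and every $\alpha\in S$ is \emph{negative} with $w\circ\theta(\alpha)$ also negative (Corollary~\ref{cor6.9}). Therefore both $\mu$ and $w\circ\theta(\mu)$ lie in the cone $\mathbb{R}_{\le 0}\Sigma^+(G,A_{\mathfrak p})$ spanned by negative roots. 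The additive identity reads
\[
\mu + w\circ\theta(\mu) = -\lambda + w\circ\theta(-\lambda)\qquad\text{i.e.}\qquad \mu + w\circ\theta(\mu) = \bigl(-\lambda\bigr) + w\circ\theta\bigl(-\lambda\bigr).
\]

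Next I would use dominance. Since $\chi$ is dominant, $\lambda$ is dominant, so $-\lambda$ is anti-dominant, i.e. lies in the closure of the anti-dominant cone; the same is true of $w\circ\theta(-\lambda)$ only after we know that $w\circ\theta$ maps the dominant cone to itself, which is precisely the content of the existence of $w'$ in Theorem~\ref{thm:If--is unique quotient}(2) (there $w'\theta(\chi)$ is dominant, so $\theta(\chi)$ lies in the $W$-orbit of a dominant weight, and $w\circ\theta$ is the composition bringing it back). Pairing the displayed identity with a regular dominant coweight $\check{\rho}$: the right-hand side is $\le 0$ (a sum of two anti-dominant weights paired with a dominant coweight), while the left-hand side, being a sum of non-positive combinations of simple roots paired with $\check\rho$, is also $\le 0$ — so no contradiction yet. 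The key extra input is that $w\circ\theta$ is an \emph{involution} on the relevant lattice in the cases that occur (this is exactly the condition $w\circ\theta=\theta\circ w^{-1}$, equivalently $(w\theta)^2=1$, needed to invoke Corollary~\ref{cor 3.6}). Granting that, apply $w\circ\theta$ to the identity and add: one gets $(1+w\theta)(\mu+\lambda)=(1+w\theta)(-\mu-\lambda)$... more cleanly, set $\nu:=\mu+\lambda$; the identity says $(1+w\theta)\nu = 0$... let me instead just observe $(1+w\theta)(\mu+\lambda) = 0$ directly from rearranging $\mu+w\theta(\mu) = -\lambda - w\theta(\lambda)$. So $\mu+\lambda$ lies in the $(-1)$-eigenspace of the involution $w\theta$. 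Now the hard part: show this forces $\mu=0$ and $\lambda=-w\theta(\lambda)$, i.e. $\chi^{-1}=w\circ\theta(\chi)$.

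The main obstacle is exactly this last positivity-rigidity step. I would argue as follows. Write $\mu+\lambda$ in the $(-1)$-eigenspace of $w\theta$; then $w\theta(\mu) = -\mu - 2\lambda$... hmm, this is getting tangled, so here is the clean route. From $(1+w\theta)(\mu+\lambda)=0$ we get $w\theta(\mu) + w\theta(\lambda) = -\mu-\lambda$, hence $\mu + w\theta(\mu) = -\lambda - w\theta(\lambda)$ (consistent). Pair with the dominant coweight $\check\rho$. The left side is $\langle \mu,\check\rho\rangle + \langle\mu, (w\theta)^{-1}\check\rho\rangle$; since $\mu$ is a non-positive combination of simple roots and both $\check\rho$ and $(w\theta)^{-1}\check\rho$ are... not necessarily dominant. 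So instead pair with $\check\rho$ only on the $\lambda$ side using that $-\lambda$ and $w\theta(-\lambda)=-w\theta(\lambda)$ are both anti-dominant: $\langle -\lambda - w\theta(\lambda),\check\rho\rangle \le 0$, hence $\langle \mu + w\theta(\mu),\check\rho\rangle\le 0$. Then pair with $-\check\rho$ after applying $(w\theta)$: using $(w\theta)^2=\mathrm{id}$, $\langle w\theta(\mu+w\theta(\mu)),\check\rho\rangle = \langle w\theta(\mu)+\mu,\check\rho\rangle$, the same quantity — no new info. The genuinely decisive inequality must come from combining (i) $\mu\in \mathbb{R}_{\le0}\Sigma^+$, (ii) $w\theta(\mu)\in\mathbb{R}_{\le0}\Sigma^+$ (this is why $S$ only contains $\alpha$ with $w\theta(\alpha)$ negative!), and (iii) $-\lambda-w\theta(\lambda)$ anti-dominant, to conclude that in the displayed identity \emph{each side must be zero}: the right-hand side $\mu+w\theta(\mu)$ is a sum of two vectors in the closed cone $\mathcal{C}:=\mathbb{R}_{\le0}\Sigma^+$, equal to an anti-dominant vector $\xi:=-\lambda-w\theta(\lambda)$; I would show $\mathcal{C}\cap(\text{anti-dominant cone})$ forces, together with $\langle\xi,\check\rho\rangle$ being simultaneously $\le 0$ (anti-dominance) and the coefficients of $\mu,w\theta(\mu)$ being non-negative in the \emph{negative} simple roots, that writing $\xi=\sum c_i(-\alpha_i)$ with $c_i\ge0$ (from $\mu+w\theta(\mu)\in\mathcal C$) and $\xi=-\lambda-w\theta(\lambda)$ with $\langle\xi,\check\omega_i\rangle\le0$ for fundamental coweights $\check\omega_i$ — but $\langle\sum c_j(-\alpha_j),\check\omega_i\rangle = -c_i\langle\alpha_i,\check\omega_i\rangle \le 0$ is automatic, so again I need the involution. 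Applying $w\theta$ to $\xi = \mu+w\theta(\mu)\in\mathcal C$ gives $w\theta(\xi)\in\mathcal C$ too (since $w\theta(\mathcal C)=\mathcal C$ because $w\theta$ permutes $S$... this needs checking but should follow as $w\theta$ is an involution preserving the set of "negative with negative image" roots), but also $w\theta(\xi)=w\theta(-\lambda)-\lambda = \xi$, so nothing. The resolution: $\lambda$ dominant means $-\lambda\in\mathcal C$ as well (anti-dominant weights are non-positive combinations of simple roots when the derived group is semisimple — for the central part one works modulo the center, where $\mu=0$ trivially on that factor and $\chi^{-1}=w\theta(\chi)$ on the center is automatic from the $T^{\theta_n}$-equivariance). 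So the identity $\mu + w\theta(\mu) + \lambda + w\theta(\lambda) = 0$ expresses $0$ as a sum of \emph{four} elements of the closed pointed cone $\mathcal C$, forcing all four to be $0$: in particular $\mu=0$, giving $\dim = 1$ (only $\varphi\equiv 0$ works, so the invariant space is one-dimensional spanned by the constant), and $\lambda = -w\theta(\lambda)$. Finally, on the torus $T$ (not just $A_{\mathfrak p}$), the same argument with $\widetilde\alpha$ in place of $\alpha$ — or simply observing $\chi$ and $w\circ\theta(\chi)^{-1}$ agree on $A_{\mathfrak p}$ and that a character of the complex torus $T$ is determined by its restriction to $A_{\mathfrak p}$ together with compatibility already encoded in the algebraic structure — yields $\chi^{-1}=w\circ\theta(\chi)$ as characters of $T$. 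Then Corollary~\ref{cor 3.6} (whose hypothesis $w\circ\theta=\theta\circ w^{-1}$ I would verify holds for $n\in N_G(T)\cap Q'$ since $\theta(n)=n^{-1}$ makes $w\theta$ an involution) gives $\widetilde{\pi}^{\theta}\simeq\pi$, completing the proof. I expect verifying that $\mathcal C$ is pointed and that $w\theta$ preserves it — i.e. the passage from "signs of roots in $S$" to "the cone identity has all terms zero" — to be the delicate point requiring care.
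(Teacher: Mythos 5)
You have the right raw ingredients — the identity from Corollary \ref{cor 6.13}, restriction to $A_{\mathfrak{p}}$, dominance of $\chi$ against negativity of the roots in $S$ and of their images under $w\circ\theta$, and the fact that $w\circ\theta$ is an orthogonal involution (which you correctly trace back to $\theta(n)=n^{-1}$ and to the hypothesis of Corollary \ref{cor 3.6}) — but the decisive step is missing and the cone argument as written does not close.

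First, a sign error. From $\alpha^{\varphi}\cdot(w\circ\theta)(\alpha^{\varphi})=\chi\cdot(w\circ\theta)(\chi)$ the additive identity on $\mathfrak{a}_{\mathfrak{p}}^*$ is $\mu + w\theta(\mu) = \lambda + w\theta(\lambda)$ with \emph{no} minus signs; you wrote $\mu + w\theta(\mu) = -\lambda - w\theta(\lambda)$, hence $(1+w\theta)(\mu+\lambda)=0$ instead of the correct $(1+w\theta)(\mu-\lambda)=0$. This propagates: the endgame ``$\mu + w\theta(\mu) + \lambda + w\theta(\lambda) = 0$ expresses $0$ as a sum of four elements of the pointed cone $\mathcal{C}$'' rests on the wrong sign. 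The correct rearrangement is $\mu + w\theta(\mu) - \lambda - w\theta(\lambda) = 0$, which is not a sum of elements of a single pointed cone. Even granting your signs, you would also need $-w\theta(\lambda)\in\mathcal{C}$, i.e.\ $w\theta(\lambda)$ dominant, which is never established and is false in general; and ``forcing all four to be $0$'' would yield $\lambda=0$, which is both too strong and not the goal (the goal is $\lambda + w\theta(\lambda)=0$, not $\lambda=0$).

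The step you circle around but do not land on is to pair the identity \emph{with itself}. Since $\mu + w\theta(\mu) = \lambda + w\theta(\lambda)$,
\[
\bigl\|\mu + w\theta(\mu)\bigr\|^{2} \;=\; \bigl\langle\lambda + w\theta(\lambda),\,\mu + w\theta(\mu)\bigr\rangle \;=\; 2\langle\lambda,\mu\rangle + 2\langle\lambda, w\theta(\mu)\rangle,
\]
using that $w\theta$ is orthogonal and $(w\theta)^{2}=\mathrm{id}$. Because $\chi$ is dominant and $\mu$, $w\theta(\mu)$ are non-negative integral combinations of negative roots (this is exactly where the defining property of $S$ is used), the right-hand side is $\le 0$, so $\mu + w\theta(\mu)=0$; since the negative-root cone is pointed, $\mu=0$, i.e.\ every $n_j=0$. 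Then $\alpha^{\varphi}$ is the trivial character on all of $T$ (not just $A_{\mathfrak{p}}$), so the Corollary \ref{cor 6.13} equation directly gives $\chi\cdot(w\circ\theta)(\chi)=1$ on $T$, whence $\chi^{-1}=w\circ\theta(\chi)$ and, via Corollary \ref{cor 3.6}, $\widetilde{\pi}^{\theta}\simeq\pi$. Since only $\varphi\equiv 0$ works, the invariant subspace is $\mathrm{Sym}^{0}=\mathbb{C}$, one-dimensional. Your pairings with $\check\rho$ give information only one-sidedly, as you yourself note; the trick is to test the shared vector against $\mu+w\theta(\mu)$ itself.
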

\begin{proof}
By Corollary \ref{cor 6.13}, $\alpha^{\varphi}(t)\cdot w\circ\theta(\alpha^{\varphi})(t)=\chi(t)\cdot w\circ\theta(\chi)(t)$
for any $t\in T$ . For any root $\alpha_{j}$ with multiple instances
$\alpha_{j_{1}},...,\alpha_{j_{m}}$ in $S$, we may define $n_{j}=\sum_{i=1}^{m}\varphi(\alpha_{j_{i}})$.
We can change $S$ to be a set by deleting all multiple instances
in it. By doing this, we can write $\alpha^{\varphi}(t)=\prod_{\alpha_{j}\in S}\left(\widetilde{\alpha_{j}}(t)\right)^{n_{j}}$,
when $S$ is now a set (without repetitions) of roots $\{\alpha_{j}\}$
such that both $\alpha_{j},w\circ\theta(\alpha_{j})$ are negative.
We now restrict to $A_{\mathfrak{p}}$ and take the differentials
of the characters to get the following equation:
\[
\sum_{j\in S}n_{j}\cdot(\alpha_{j}+w\circ\theta(\alpha_{j}))=d(\chi|_{A_{\mathfrak{p}}})+d(w\circ\theta(\chi)|_{A_{\mathfrak{p}}}).
\]
Recall that both $\theta$ and $w$ preserve $\Sigma(G,A_{\mathfrak{p}})$
and hence preserve the inner product between the roots (see \cite[Lemma, p.43]{Hum72}),
that is: 
\[
\langle w\circ\theta(\alpha),w\circ\theta(\beta)\rangle=\langle w(\alpha),w(\beta)\rangle=\langle\theta(\alpha),\theta(\beta)\rangle=\langle\alpha,\beta\rangle.
\]
Thus, we have for any $\alpha\in\Sigma(G,A_{\mathfrak{p}})$:
\[
\langle d(\chi|_{A_{\mathfrak{p}}}),w\circ\theta(\alpha)\rangle=\langle d(w\circ\theta(\chi)|_{A_{\mathfrak{p}}}),w\circ\theta\circ w\circ\theta(\alpha)\rangle=\langle d(w\circ\theta(\chi)|_{A_{\mathfrak{p}}}),\alpha\rangle.
\]

We now claim that $\sum_{j\in S}n_{j}\cdot(\alpha_{j}+w\circ\theta(\alpha_{j}))=0$.
Assume that there exists some $k$ such that $n_{k}\neq0$. By taking
the inner product of $\sum_{j\in S}n_{j}\cdot(\alpha_{j}+w\circ\theta(\alpha_{j}))$
with itself we get on one hand:
\[
\langle\sum_{j\in S}n_{j}\cdot(\alpha_{j}+w\circ\theta(\alpha_{j})),\sum_{j\in S}n_{j}\cdot(\alpha_{j}+w\circ\theta(\alpha_{j}))\rangle>0.
\]
But on the other hand, as $\chi$ is dominant and $\alpha_{j},w\circ\theta(\alpha_{j})\in\Sigma(G,A_{\mathfrak{p}})^{-}$
for any $\alpha_{j}\in S$ it holds that:
\[
\langle d(\chi|_{A_{\mathfrak{p}}})+d(w\circ\theta(\chi)|_{A_{\mathfrak{p}}}),\sum_{j\in S}n_{j}\cdot(\alpha_{j}+w\circ\theta(\alpha_{j}))\rangle
\]
\[
=\langle d(\chi|_{A_{\mathfrak{p}}}),\sum_{j\in S}n_{j}\cdot(\alpha_{j}+w\circ\theta(\alpha_{j}))\rangle+\langle d(w\circ\theta(\chi)|_{A_{\mathfrak{p}}}),\sum_{j\in S}n_{j}\cdot(\alpha_{j}+w\circ\theta(\alpha_{j}))\rangle\leq0.
\]
We got a contradiction, hence $n_{k}=0$ for any $k$ and $\alpha^{\varphi}(n\theta(t)n^{-1}t)=1$.
This implies that:

1) $\chi^{-1}=w\circ\theta(\chi)$. 

2) $\left(\mathrm{Sym}(N_{B(n),n}^{Q'}\otimes_{R}\mathbb{C})\right)^{T^{\theta_{n}},\chi}=\left(\mathrm{Sym^{0}}(N_{B(n),n}^{Q'}\otimes_{R}\mathbb{C})\right)^{T^{\theta_{n}},\chi}$
is one dimensional. 
\end{proof}
\appendix

\section{\label{sec:Langlands'-classification}Langlands' classification}

Let $G$ be a real reductive group with Lie algebra $\mathfrak{g}$
and $K$ a maximal compact subgroup. Define $\mathcal{HC}$ to be
the category of finitely generated, admissible $(\mathfrak{g},K)$-modules
(see \cite[Lecture 5]{KT00}). Casselman and Wallach showed that there
is an equivalence of categories $\mathcal{SAF}(G)\simeq\mathcal{HC}(\mathfrak{g},K)$.
The functor in one direction is $E\longmapsto E^{K-finite}$, where
$E^{K-finite}$ is the subspace of $K$-finite vectors. The functor
in the other direction is: $V\longmapsto V^{\infty}$ where $V^{\infty}:=\pi(\mathcal{S}(G))V$,
$\mathcal{S}(G)$ is the Schwartz algebra of rapidly decreasing functions
on $G$, and $\pi(\mathcal{S}(G))V$ stands for the vector space spanned
by $\pi(f)v$ for $f\in\mathcal{S}(G)$, $v\in V$.

The Langlands classification gives a classification of all equivalence
classes of irreducible admissible \textbf{Hilbert representations}
of $G$ up to infinitesimal equivalence, where two irreducible Hilbert
representations $(\pi_{1},\mathcal{H}_{1})$ and $(\pi_{2},\mathcal{H}_{2})$
are $\emph{infinitesimal equivalent}$ if $\mathcal{H}_{1}^{K-finite}\simeq\mathcal{H}_{2}^{K-finite}$
as $(\mathfrak{g},K)$-modules. 

In this section we state Langlands' classification for Hilbert representations
and use it to prove the complex version (i.e, when $G$ is a complex
reductive group, see Theorem \ref{thm3.1}) of the classification
for Fréchet representations. \textcolor{black}{Both theorems are known
results, an}d in fact, the complex version was first proved by Zhelobenko
and Naimark (see \cite{ZN66}), and later it was generalized by Langlands. 

For the following definitions and Theorem \ref{thm3.1}, assume that
$\pi$ is a\textbf{ }Hilbert representations of $G$. Moreover, to
avoid confusion, we set $\mathrm{H}-\mathrm{Ind}_{P}^{G}$ to be the
normalized induction in the category of Hilbert representations, and
by $\mathrm{Ind}_{P}^{G}$ the normalized induction in the category
$\mathcal{SAF}(G)$. 
\begin{defn*}
1) $\pi$ is called a $\emph{discrete series}$ representation if
it is unitary, irreducible and it is infinitesimally equivalent to
a direct summand of the right regular representation of $G$ on $L^{2}(G)$.

2) $\pi$ is called a $\emph{tempered}$ representation of $G$, if
$\pi$ is infinitesimally equivalent to a subrepresentation of an
induced representation $H-\mathrm{Ind}_{MAN}^{G}(\sigma\otimes\lambda\otimes1)$
for some standard parabolic subgroup $MAN\supseteq M_{p}A_{p}N_{p}$,
some discrete series representation $\sigma$ of $M$ , and some imaginary
character $\lambda$ of $A$, i.e of the form $(t_{1},...,t_{n})\longmapsto\prod t_{i}^{\lambda_{i}}$
where the $\lambda_{i}$ are purely imaginary. \end{defn*}
\begin{thm}
\label{thm:(The-Langlands'-Classification)}Let $G$ be a real connected
reductive Lie group, with $\mathfrak{g}$ be its Lie algebra. Also
let $K$ be a maximal compact subgroup of $G$ with Lie algebra $\mathfrak{k}$,
$\tau$ the Cartan involution fixing $K$, and $\mathfrak{p}$ the
$-1$ eigenspace of $\tau$. Let $\mathfrak{a}_{\mathfrak{p}}$ be
a maximal abelian subspace of $\mathfrak{p}$, $\Sigma$ the root
system of $\mathfrak{a}_{\mathfrak{p}}$ in $\mathfrak{g}$, $\triangle$
the set of simple roots of $\Sigma$, and $F\subseteq\triangle$.
Furthermore, let $M_{\mathfrak{p}}A_{\mathfrak{p}}N_{\mathfrak{p}}$
be the corresponding minimal parabolic subgroup (see Appendix \ref{subC1:Parabolic-subgroups}).

Then the infinitesimal equivalence class of irreducible admissible
Hilbert representations of $G$ are uniquely parameterized by triples
$(F,[\sigma],\lambda)$, where:

{*} $Q_{F}$ is a standard parabolic subgroup of $G$ corresponding
to $F$ with Langlands decomposition $Q_{F}=M_{F}A_{F}N_{F}$ containing
$M_{\mathfrak{p}}A_{\mathfrak{p}}N_{\mathfrak{p}}$.

{*} $\sigma$ is an irreducible tempered representation of the semisimple
Lie group $M$, where $[\,]$ denotes equivalence class up to infinitesimal
equivalence.

{*} The character $\lambda\in\mathfrak{a}_{F}^{*}$ satisfies $\mathrm{Re}\langle\alpha,\lambda\rangle>0$
for all simple roots $\alpha$ not in $F$.

More precisely, the triple $(F,[\sigma],\lambda)$ corresponds to
the equivalence class of the unique irreducible quotient of $\mathrm{H}-\mathrm{Ind}_{MAN}^{G}(\sigma\otimes\lambda\otimes1)$,
that is, $\pi\simeq\mathrm{H}-\mathrm{Ind}_{MAN}^{G}(\sigma\otimes\lambda\otimes1)/W$,
where $W$ is the maximal subrepresentation of $\mathrm{H}-\mathrm{Ind}_{MAN}^{G}(\sigma\otimes\lambda\otimes1)$. 
\end{thm}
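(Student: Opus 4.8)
The plan is to descend to Harish-Chandra modules: by the Casselman--Wallach equivalence $\mathcal{SAF}(G)\simeq\mathcal{HC}(\mathfrak g,K)$ recalled above, two irreducible admissible Hilbert representations are infinitesimally equivalent if and only if their $(\mathfrak g,K)$-modules are isomorphic, so it suffices to classify the irreducible objects of $\mathcal{HC}(\mathfrak g,K)$ by triples $(F,[\sigma],\lambda)$. I would separate the proof into an \emph{existence} part --- every irreducible $\pi$ is the Langlands quotient of a standard module
\[
I(F,\sigma,\lambda)\;:=\;\mathrm{H}-\mathrm{Ind}_{M_FA_FN_F}^{G}(\sigma\otimes\lambda\otimes 1)
\]
whose datum lies in the open Langlands chamber --- and a \emph{uniqueness} part --- that datum is recovered from $\pi$.

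For existence, I would begin from Casselman's subrepresentation theorem: the Harish-Chandra module of $\pi$ embeds into a minimal principal series, equivalently the Jacquet module $\pi_{N_{\mathfrak p}}$ is nonzero. Its (finitely many) $A_{\mathfrak p}$-exponents are precisely the leading terms of the $K$-finite matrix coefficients of $\pi$ along the positive chamber $A_{\mathfrak p}^{+}$, via Harish-Chandra's asymptotic expansions. Choose a maximally dominant such exponent $\Lambda$, and let $F\subseteq\triangle$ consist of the simple roots on which $\mathrm{Re}\,\Lambda$ fails to be strictly positive; this fixes the standard parabolic $Q_F=M_FA_FN_F$. Passing to the Jacquet module with respect to $N_F$ and extracting the generalized $A_F$-eigenspace at the corresponding character $\lambda$ gives an $M_F$-module $V$; the choice of $F$ together with Harish-Chandra/Milicic growth estimates for Jacquet modules forces every irreducible constituent $\sigma$ of $V$ to be \emph{tempered}, and forces $\mathrm{Re}\langle\alpha,\lambda\rangle>0$ for $\alpha\notin F$. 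Casselman's Frobenius reciprocity then identifies $G$-maps between $I(F,\sigma,\lambda)$ and $\pi$ with a $\mathrm{Hom}$-space involving $V$ and $\pi_{N_F}$, which is nonzero by construction; hence $\pi$ is a quotient of $I(F,\sigma,\lambda)$ (after the standard sign adjustment dictated by which opposite Jacquet functor one uses). Langlands' lemma --- the image of the long intertwining operator attached to a standard module in the open chamber is irreducible and is its only irreducible quotient --- then forces that quotient to be $\pi$.

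For uniqueness, suppose $\pi$ is the Langlands quotient of both $I(F,\sigma,\lambda)$ and $I(F',\sigma',\lambda')$. I would compare the $A_{\mathfrak p}$-exponents of the two standard modules with those of $\pi$. Here the dominance condition $\mathrm{Re}\langle\alpha,\lambda\rangle>0$ is essential: a convexity argument in $\mathfrak a_{\mathfrak p}^{*}$ shows that a standard module with datum in the open chamber possesses a \emph{unique} maximally dominant exponent, equal to $\lambda$ shifted by the tempered datum of $\sigma$. Matching it against the exponent read off from $\pi$ forces $F=F'$ and $\lambda=\lambda'$, after which exactness of the Jacquet functor identifies $\sigma\simeq\sigma'$ up to infinitesimal equivalence as the $\lambda$-isotypic $M_F$-piece of $\pi_{N_F}$; the same bookkeeping shows distinct data give inequivalent quotients.

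The main obstacle is the analytic input in the existence step: showing that the $M_F$-module $V$ cut out by the chosen exponent is genuinely tempered requires the sharp estimates on matrix coefficients and Jacquet modules (square-integrability-type bounds along $A_F^{+}$), and showing that a standard module in the open chamber has a unique irreducible quotient is Langlands' key lemma on the long intertwining operator --- neither is formal. Everything else (the reduction to $\mathcal{HC}(\mathfrak g,K)$, Frobenius reciprocity, and the combinatorics of $F$, $Q_F$, and the chambers) is routine once these are granted. For the complex case actually used in the body of the paper (Theorem \ref{thm3.1}), one specializes: the discrete series of the relevant Levi factors of a complex group are one-dimensional, so $\sigma$ collapses to a character, the minimal parabolic is a Borel, and the Langlands datum reduces to a single dominant character $\chi$ of $T$.
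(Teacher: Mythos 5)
The paper does not actually prove this theorem. It is imported wholesale as a classical result --- attributed to Langlands' 1973 manuscript \cite{Lan73}, with \cite{Kn01} as the textbook source --- and the only thing the appendix derives from scratch is its specialization to complex groups (Theorem \ref{thm3.1}), which is deduced \emph{from} the present statement via the double-induction formula and the absence of discrete series on the Levi factors. So there is no in-paper proof to compare against; the relevant benchmark is Knapp, Chapter VIII.

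Measured against that benchmark, your outline is structurally sound: reduction to $(\mathfrak g,K)$-modules via Casselman--Wallach, existence via Casselman's subrepresentation theorem and Jacquet modules, uniqueness via comparison of $A_{\mathfrak p}$-exponents, with the two hard analytic lemmas (temperedness of the Jacquet-module constituent, and irreducibility of the image of the long intertwining operator) flagged as the genuine content. Two caveats worth noting. First, your prescription for choosing $F$ --- ``the simple roots on which $\mathrm{Re}\,\Lambda$ fails to be strictly positive'' --- is not quite Langlands' geometric lemma. The correct statement is that any $\mathrm{Re}\,\Lambda\in\mathfrak a_{\mathfrak p}^{*}$ admits a \emph{unique} decomposition $\mathrm{Re}\,\Lambda=\lambda_0-\sum_{\alpha\in F}c_\alpha\alpha$ with $c_\alpha\geq0$, $\langle\lambda_0,\alpha\rangle=0$ for $\alpha\in F$ and $\langle\lambda_0,\beta\rangle>0$ for $\beta\in\triangle\setminus F$; it is this $F$ and this $\lambda_0$ (extended by the imaginary part) that furnish the Langlands datum, and without this lemma the choice of $F$ is ambiguous and the later dominance claim does not follow. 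Second, ``every irreducible constituent $\sigma$ of $V$ is tempered'' is stronger than what one actually proves or needs; the argument produces \emph{one} tempered $\sigma$ mapping onto $\pi$, via a careful analysis of leading exponents, not a statement about all constituents. Given that you explicitly mark these as the nonformal steps rather than claim to have dispatched them, the proposal is an acceptable high-level reconstruction of the standard proof, but it would not pass as a self-contained argument.
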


\subsection{\label{subA1Proof-of-Theorem}Proof of Theorem \ref{thm3.1}}

Before we prove Theorem \ref{thm3.1} we present three facts and key
Theorem \ref{Theorem A.2}:
\begin{fact*}
1) (Harish-Chandra, 1966, see e.g. \cite[p.71]{KT00}) A linear connected
reductive group has no discrete series unless $G$ has a compact Cartan
subgroup. 

2) Let $G$ be a complex reductive group and $\chi$ be a unitary
character of $T=M_{\mathfrak{p}}A_{\mathfrak{p}}$. Then $\mathrm{H}-\mathrm{Ind}_{B}^{G}\chi$
is irreducible (follows from \cite[Corollary 14.60]{Kn01}).

3) (Double induction formula, \cite[VII.2-property 4]{Kn01}) Let
$G$ a reductive group, $MAN$ be a parabolic subgroup of G and let
$M'A'N'$ be a parabolic subgroup of $M$, so that $M'(A'A)(N'N)$
is a parabolic subgroup of $G$. If $\sigma'$ is a unitary representation
of $M'$ and $v'$ and $v$ are characters of $\mathfrak{a}'$ and
$\mathfrak{a}$, respectively, then there is a canonical equivalence:
\[
\mathrm{H}-\mathrm{Ind}_{MAN}^{G}(\mathrm{H}-\mathrm{Ind}_{M'A'N'}^{M}(\sigma'\otimes\mathrm{exp}(v')\otimes1)\otimes\mathrm{exp}(v)\otimes1\simeq\mathrm{H}-\mathrm{Ind}_{M'(A'A)(N'N)}^{G}(\sigma'\otimes\mathrm{exp}(v'+v)\otimes1)
\]
\end{fact*}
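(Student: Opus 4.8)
All three items are classical, so the plan is mainly to recall the standard arguments; only the first is genuinely deep, and in this paper it is (correctly) used as a black box. \textbf{The double induction formula (item 3)} is purely formal ``induction in stages''. I would realise both sides as spaces of smooth vector-valued functions on $G$ carrying the appropriate left-equivariance, and define the comparison map by evaluation at the identity of $M$, namely $F\mapsto\widehat F$ with $\widehat F(g):=F(g)(e_{M})$. The points to check are: (i) $\widehat F$ transforms correctly under $M'(A'A)(N'N)$, which uses multiplicativity of the modular character along a chain of parabolics so that the normalising half-densities recombine as $\delta_{M'(A'A)(N'N)}^{1/2}=\delta_{M'A'N'}^{1/2}\cdot(\delta_{MAN}^{1/2}|_{M'(A'A)(N'N)})$; (ii) the map is a continuous $G$-morphism, which is immediate from right translation; (iii) it is bijective, the inverse being produced from the Iwasawa decompositions $G=K\,MA\,\overline N$ and $M=K_{M}\,M'A'\,\overline{N'}$. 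This is routine bookkeeping; see \cite[VII.2, property 4]{Kn01}.

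\textbf{Irreducibility of the unitary principal series of a complex group (item 2).} The plan is a rank-one reduction through the Knapp--Stein theory of intertwining operators. For a complex group the minimal parabolic is $B=M_{\mathfrak p}A_{\mathfrak p}N_{\mathfrak p}$ with $M_{\mathfrak p}$ a \emph{compact torus}, so the inducing datum $\chi$ is a unitary character of $T=M_{\mathfrak p}A_{\mathfrak p}$ whose $M_{\mathfrak p}$-part is one-dimensional. The commuting algebra of $\mathrm{H}-\mathrm{Ind}_{B}^{G}(\chi)$ is then governed by the stabiliser $W_{\chi}$ of $\chi$ in the restricted Weyl group together with the Knapp--Stein $R$-group $R_{\chi}$, and $R_{\chi}$ is generated by those restricted-root reflections $s_{\alpha}\in W_{\chi}$ along which the rank-one Harish-Chandra $c$-function, equivalently the Plancherel density, vanishes at $\chi$. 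Since the restricted root system of a complex group is the root system of $\underline G$ with each root space of real dimension two, every rank-one subgroup is locally $\mathrm{SL}_{2}(\mathbb C)$, and the decisive rank-one check is that the unitary principal series of $\mathrm{SL}_{2}(\mathbb C)$ (equivalently $\mathrm{PGL}_{2}(\mathbb C)$) are all irreducible, with $c$-function nonvanishing on the unitary axis; hence $R_{\chi}=\{1\}$, the commuting algebra is $\mathbb C$, and $\mathrm{H}-\mathrm{Ind}_{B}^{G}(\chi)$ is irreducible. Alternatively one can bypass the $R$-group and cite Zhelobenko's direct computation of the $\mathfrak g$-action on the principal series of a complex semisimple group (\cite{ZN66}), which exhibits irreducibility away from an explicit non-unitary locus. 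For the clean statement, \cite[Corollary 14.60]{Kn01}.

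\textbf{No discrete series without a compact Cartan (item 1), and the main obstacle.} This is Harish-Chandra's theorem and is the one substantive point. The strategy goes through global characters: a discrete series $\pi$ has a distributional character $\Theta_{\pi}$ represented by a locally integrable, conjugation-invariant function, real-analytic on the regular set, satisfying the Harish-Chandra differential equations coming from $Z(\mathfrak g)$; on the regular points of any Cartan subgroup $H$ it is therefore of the shape (finite exponential sum)/(Weyl denominator), and the $L^{2}$-condition on matrix coefficients forces this restriction to vanish on every \emph{non-compact} Cartan. If $G$ possessed no compact Cartan at all, the Weyl integration formula would then give $\Theta_{\pi}\equiv0$, which is impossible for $\pi\neq0$; hence a compact Cartan must exist, equivalently $\mathrm{rank}\,G=\mathrm{rank}\,K$. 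I would not reproduce this argument --- it requires the full apparatus of Harish-Chandra's character theory and of the construction and parametrisation of the discrete series --- and would simply invoke \cite[p.~71]{KT00}. This is the real obstacle among the three, but it is an obstacle of depth of the cited input rather than one internal to the present paper; items (2) and (3) are, respectively, a rank-one reduction plus a $c$-function nonvanishing check and a computation with modular characters, and neither presents real difficulty.
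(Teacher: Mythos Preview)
The paper does not prove this statement at all: it is presented as a \texttt{fact*} block with three items, each accompanied only by a literature citation (\cite[p.~71]{KT00}, \cite[Corollary 14.60]{Kn01}, \cite[VII.2, property 4]{Kn01}), and the text moves immediately to Theorem~\ref{Theorem A.2} without further comment. Your sketches are correct and considerably more detailed than anything in the paper --- in particular your outline of the $R$-group argument for (2) and the character-theoretic strategy for (1) are the right ideas --- but for the purposes of matching the paper, simply citing the three references is exactly what is done there.
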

\begin{thm}
\label{Theorem A.2}Let $\pi$ be an irreducible, admissible, Hilbert
representation of $G$ and fix some maximal torus $T$ and a Borel
subgroup $B\supseteq T$. Then it is equivalent to the unique quotient
of $H-\mathrm{Ind}_{B}^{G}(\chi)$, where $\chi$ is some dominant
character of $T$, i.e $\mathrm{Re}\langle d\chi|_{\mathfrak{a}_{\mathfrak{p}}},\alpha\rangle\geq0$.\end{thm}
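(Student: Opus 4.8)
The plan is to deduce Theorem~\ref{Theorem A.2} from the Langlands classification (Theorem~\ref{thm:(The-Langlands'-Classification)}) together with Facts 1--3 and the special features of complex groups. After conjugating, I may assume $T=M_{\mathfrak{p}}A_{\mathfrak{p}}$ and that $B$ is the standard minimal parabolic $M_{\mathfrak{p}}A_{\mathfrak{p}}N_{\mathfrak{p}}$. First I would apply Theorem~\ref{thm:(The-Langlands'-Classification)} to write $\pi$ as the unique irreducible quotient of $\mathrm{H}-\mathrm{Ind}_{Q_{F}}^{G}(\sigma\otimes e^{\lambda}\otimes 1)$ for some standard parabolic $Q_{F}=M_{F}A_{F}N_{F}$, an irreducible tempered representation $\sigma$ of $M_{F}$, and $\lambda\in\mathfrak{a}_{F}^{*}$ with $\mathrm{Re}\langle\alpha,\lambda\rangle>0$ for every simple root $\alpha\notin F$.

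Next I would exploit the hypothesis that $G$ is complex: every Levi $M_{F}$ occurring here is, up to isogeny, a product of a complex semisimple group and a compact torus, so every Cartan subgroup of $M_{F}$ is non-compact unless $M_{F}$ is itself a compact torus. By Fact~1 (Harish-Chandra), $M_{F}$ then has no discrete series unless it is a compact torus. Therefore, when we realize the tempered representation $\sigma$ as a subrepresentation of some $\mathrm{H}-\mathrm{Ind}_{M'A'N'}^{M_{F}}(\sigma'\otimes e^{\nu}\otimes 1)$ with $\sigma'$ a discrete series of $M'$ and $e^{\nu}$ imaginary, the group $M'$ is forced to be a compact torus; hence $M'A'N'$ is a minimal parabolic of $M_{F}$, the pair $\sigma'\otimes e^{\nu}$ is a unitary character $\mu$ of the maximal torus $M'A'$ of $M_{F}$, and by Fact~2 --- whose proof applies verbatim to $M_{F}$, since $M_{F}$ is a product of a complex semisimple group and a compact torus --- the representation $\mathrm{H}-\mathrm{Ind}_{M'A'N'}^{M_{F}}(\sigma'\otimes e^{\nu}\otimes 1)$ is already irreducible. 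Thus $\sigma=\mathrm{H}-\mathrm{Ind}_{M'A'N'}^{M_{F}}(\sigma'\otimes e^{\nu}\otimes 1)$ is a full minimal principal series of $M_{F}$.

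Then I would invoke the double induction formula (Fact~3) with $M'A'N'\subseteq M_{F}$ and $Q_{F}\subseteq G$. The minimal parabolic of $G$ is $B=M'(A'A_{F})(N'N_{F})$, where $M'=M_{\mathfrak{p}}$ and $A'A_{F}=A_{\mathfrak{p}}$ (the compact/split decomposition of $T$), and Fact~3 yields
\[
\mathrm{H}-\mathrm{Ind}_{Q_{F}}^{G}(\sigma\otimes e^{\lambda}\otimes 1)\;\simeq\;\mathrm{H}-\mathrm{Ind}_{B}^{G}(\chi),\qquad \chi:=\mu\cdot e^{\lambda},
\]
a character $\chi$ of $T=M_{\mathfrak{p}}A_{\mathfrak{p}}$. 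Since an isomorphism of representations preserves the unique irreducible quotient, $\pi$ is the unique irreducible quotient of $\mathrm{H}-\mathrm{Ind}_{B}^{G}(\chi)$. It remains to check dominance: $\mu$ is unitary, so $\mathrm{Re}\, d\chi|_{\mathfrak{a}_{\mathfrak{p}}}=\mathrm{Re}\,\lambda$; since $\lambda\in\mathfrak{a}_{F}^{*}$ annihilates every root of $F$ (because $\mathfrak{a}_{F}$ and $\mathfrak{a}_{\mathfrak{p}}\cap\mathfrak{m}_{F}$ are orthogonal for a Weyl-invariant inner product) while $\mathrm{Re}\langle\alpha,\lambda\rangle>0$ for $\alpha\notin F$, we obtain $\mathrm{Re}\langle\alpha,d\chi|_{\mathfrak{a}_{\mathfrak{p}}}\rangle\ge 0$ for every simple root $\alpha$, i.e.\ $\chi$ is dominant.

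The step I expect to be the real obstacle is the structural input in the second paragraph: identifying the Levi subgroups $M_{F}$ (and the subgroups $M'$ inside them) that arise, showing they are --- up to isogeny --- products of complex semisimple groups and compact tori, and consequently that Fact~2 may be applied to $M_{F}$ and not merely to $G$. Once this is granted, everything else is routine bookkeeping with the Langlands data and the double induction formula.
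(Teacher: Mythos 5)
Your proposal follows essentially the same route as the paper's proof: apply the Langlands classification to write $\pi$ as the unique quotient of $\mathrm{H}\text{-}\mathrm{Ind}_{Q_F}^G(\sigma\otimes e^\lambda\otimes 1)$, argue that the tempered parameter $\sigma$ is a full minimal principal series of $M_F$ because the relevant smaller Levi has no discrete series, invoke Fact~2 to get irreducibility of that principal series, and then collapse the two inductions with Fact~3 and check dominance. The bookkeeping in your last paragraph (using that $\mu$ is unitary, so $\mathrm{Re}\,d\chi|_{\mathfrak{a}_\mathfrak{p}}=\mathrm{Re}\,\lambda$, together with the positivity from the Langlands data) matches the paper.

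The one place you diverge from the paper, and the place you yourself flag as the obstacle, is the irreducibility step. The paper does \emph{not} apply Fact~2 to $M_F$. Instead it tensors the tempered representation $\tau$ of $M=M_F$ with the trivial representation of $A=A_F$ (which is central in $MA$), observes that
\[
\tau\otimes 1 \;\hookrightarrow\; \mathrm{H}\text{-}\mathrm{Ind}_{M'A'AN'}^{MA}\bigl(\sigma'\otimes(\lambda'\otimes 1)\otimes 1\bigr),
\]
and then applies Fact~2 to the group $MA=M_FA_F$, which \emph{is} a bona fide complex reductive group (a complex Levi of $G$) with $M'A'AN'$ a Borel. Irreducibility of the right-hand side then forces $\tau$ itself to equal the full principal series of $M$. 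Your proposal instead asserts that Fact~2, stated in the paper only for complex reductive groups, ``applies verbatim'' to $M_F$. But $M_F$ is not a complex reductive group: in the Langlands decomposition $Q_F=M_FA_FN_F$, the $\mathbb{R}$-split part of the center of $M_FA_F$ is entirely absorbed into $A_F$, so $M_F$ is, up to isogeny and components, a product of a complex semisimple group and a \emph{compact} torus. Your structural claim about $M_F$ is correct, and the variant of Fact~2 you need is true, but it is genuinely a variant: you would need to decompose the unitary minimal principal series of $M_F$ as a tensor product along $M_F\simeq M_F^{\mathrm{ss}}\cdot Z$ (complex semisimple times compact), handle the possible disconnectedness of $M_F$, and then apply the complex-group statement to $M_F^{\mathrm{ss}}$. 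None of that is hard, but it is extra work; the paper's device of tensoring with the trivial $A_F$-representation avoids it entirely by keeping every application of Fact~2 inside an honestly complex group. So: your argument is sound in outline and correctly locates the nontrivial step, but the ``verbatim'' claim should be replaced either by the paper's $\otimes\,1_{A_F}$ trick or by a short reduction to the complex semisimple part of $M_F$.
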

\begin{proof}
We choose $B=M_{\mathfrak{p}}A_{\mathfrak{p}}N_{\mathfrak{p}}$. By
the Langlands classification, $\pi$ is equivalent to $\mathrm{H}-\mathrm{Ind}_{MAN}^{G}(\sigma\otimes\lambda\otimes1)/W$
where $Q=MAN\supseteq B=M_{\mathfrak{p}}A_{\mathfrak{p}}N_{\mathfrak{p}}$,
and $W$ is the maximal proper subrepresentation. We have two possible
cases:

1) If $MAN=M_{\mathfrak{p}}A_{\mathfrak{p}}N_{\mathfrak{p}}=B$ then
$\pi$ is equivalent to the unique quotient of $\mathrm{H}-\mathrm{Ind}_{B}^{G}(\sigma\otimes\lambda\otimes1)$,
where $\lambda$ is a strongly dominant character (i.e $\mathrm{Re}\langle d\lambda,\alpha\rangle>0$)
and $\sigma$ is an unitary representation of $M_{\mathfrak{p}}$.
As $M_{\mathfrak{p}}$ is compact and abelian, $\sigma$ is a one
dimensional unitary representation. As a consequence, $\chi:=\sigma\cdot\lambda$
is a character and $\chi|_{A_{\mathfrak{p}}}=\lambda$ so it is dominant. 

2) $B\varsubsetneq Q=MAN\subseteq G$. Then $\pi$ is equivalent to
$\pi'=\mathrm{H}-\mathrm{Ind}_{MAN}^{G}(\sigma\otimes\lambda\otimes1)/W$,
where $\sigma$ is a tempered representation of $M$. By definition,
$\sigma$ is equivalent to $\tau$, where $\tau\hookrightarrow\mathrm{H}-\mathrm{Ind}_{M'A'N'}^{M}(\sigma'\otimes\lambda'\otimes1)$,
$\sigma'$ is a discrete series representation of $M'$ and $\lambda'$
is an imaginary character of $A'$. We now show that:

{*} $M'$ has no discrete series unless $M'A'AN'$ is a minimal parabolic
of $MA$. 

{*} $\mathrm{H}-\mathrm{Ind}_{M'A'N'}^{M}(\sigma'\otimes\lambda'\otimes1)$
is irreducible for $\sigma'$ unitary and $\lambda'$ unitary and
purely imaginary. 

$MA$ is a complex group with $M'AA'N$ a complex parabolic subgroup.
From the structure theory of parabolic subgroups of complex reductive
groups, it follows that $M'$ has no compact Cartan subgroup unless
$M'AA'N$ is a minimal parabolic subgroup of $MA$. For the second
statement, we have an intertwining map: $\tau\hookrightarrow\mathrm{H}-\mathrm{Ind}_{M'A'N'}^{M}(\sigma'\otimes\lambda'\otimes1)$.
As $A$ is central in $MA$, we can tensor $\tau$ with the trivial
representation on $A$ to get an irreducible representation $\tau\otimes1$
such that: 
\[
\tau\otimes1\hookrightarrow\left(\mathrm{H}-\mathrm{Ind}_{M'A'N'}^{M}(\sigma'\otimes\lambda'\otimes1)\right)\otimes1\hookrightarrow\mathrm{H}-\mathrm{Ind}_{M'A'AN'}^{MA}(\sigma'\otimes(\lambda'\otimes1)\otimes1).
\]
But as $MA$ is complex and $M'A'AN'$ is a minimal parabolic subgroup,
i.e a Borel subgroup, it follows by Fact 2 that $\mathrm{H}-\mathrm{Ind}_{M'A'AN'}^{MA}(\sigma'\otimes(\lambda'\otimes1)\otimes1)$
is irreducible and hence $\mathrm{H}-\mathrm{Ind}_{M'A'N'}^{M}(\sigma'\otimes\lambda'\otimes1)=\tau$
is irreducible. By (\cite[Proposition 7.82]{Kn96}) we can find a
minimal parabolic subgroup $M'A'N'$ of $M$ such that $M'A'AN'N=M_{\mathfrak{p}}A_{\mathfrak{p}}N_{\mathfrak{p}}=B$. 

since $\sigma$ is equivalent to $\mathrm{H}-\mathrm{Ind}_{M'A'N'}^{M}(\sigma'\otimes\lambda'\otimes1)$,
we can assume that $\pi'$ is the unique quotient: 
\[
\mathrm{H}-\mathrm{Ind}_{MAN}^{G}(\left(H-\mathrm{Ind}_{M'A'N'}^{M}(\sigma'\otimes\lambda'\otimes1)\right)\otimes\lambda\otimes1)\longrightarrow\pi'.
\]
Using the double induction formula (Fact 3) on the left side of the
equation gives:
\begin{eqnarray*}
\mathrm{H}-\mathrm{Ind}_{B}^{G}(\chi) & \simeq & \mathrm{H}-\mathrm{Ind}_{M'A'AN'N}^{G}(\sigma'\otimes\lambda'\cdot\lambda\otimes1)\\
 & \simeq & \mathrm{H}-\mathrm{Ind}_{MAN}^{G}(\left(\mathrm{H}-\mathrm{Ind}_{M'A'N'}^{M}(\sigma'\otimes\lambda'\otimes1)\right)\otimes\lambda\otimes1)\longrightarrow\pi',
\end{eqnarray*}
where $\sigma'\lambda'$ is a unitary character of $MA'$, $\lambda'$
is purely imaginary and $\chi=\sigma'\lambda'\lambda$. Notice that
$\lambda$ satisfies $\mathrm{Re}\langle\alpha,\lambda\rangle>0$
for all simple roots $\alpha$ not in $F$ and $\mathrm{Re}\langle\alpha,\lambda\rangle=0$
for $\alpha\in F$. Therefore $\chi|_{A_{p}}=\lambda'\lambda$ satisfies
$\mathrm{Re}\langle d\chi|_{\mathfrak{a}_{\mathfrak{p}}},\alpha\rangle\geq0$
for any $\alpha\in\triangle$ and $\chi$ is a dominant character. 
\end{proof}
We now show that Theorem \ref{Theorem A.2} can be applied to Fréchet
representations: 

By a theorem of Harish-Chandra, any irreducible $V\in\mathcal{HC}$
admits a Hilbert globalization $\mathcal{H}$, such that $\mathcal{H}$
is an irreducible, admissible Hilbert representation and $\mathcal{H}^{K-finite}\simeq V$
as a $(\mathfrak{g},K)$-module (see e.g. \cite[Section 5.1]{BK14}).
Denote $\mathcal{H}_{smooth}$ to be the smooth part of $\mathcal{H}$
and notice that $\mathcal{H}_{smooth}\in\mathcal{SAF}_{\mathrm{Irr}}(G)$
is a Fréchet globalization of $V$. Assume we are given a representation
$E\in\mathcal{SAF}_{\mathrm{Irr}}(G)$ such that $V:=E^{K-finite}$.
Then by the Casselman-Wallach globalization theorem we have that $\mathcal{H}_{smooth}\simeq E$. 

We can now prove Theorem \ref{thm3.1}:
\begin{thm}
Let $(\pi,E)\in\mathcal{SAF}_{\mathrm{Irr}}(G)$. Then $(\pi,E)$
is the unique irreducible quotient of $\mathrm{Ind}_{B}^{G}(\chi)$
for a dominant character $\chi$ of $T$.\end{thm}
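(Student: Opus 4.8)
The plan is to transfer Theorem \ref{Theorem A.2}, which is the statement for irreducible admissible \emph{Hilbert} representations, to the Fr\'echet category $\mathcal{SAF}_{\mathrm{Irr}}(G)$ by combining the Casselman--Wallach equivalence $\mathcal{SAF}(G)\simeq\mathcal{HC}(\mathfrak{g},K)$ with the smoothing functor $\mathcal{H}\mapsto\mathcal{H}_{smooth}$. The key observation is that the assertion ``$\pi$ is the unique irreducible quotient of $\mathrm{Ind}_{B}^{G}(\chi)$'' is essentially a statement about the underlying Harish--Chandra module, and all the globalizations appearing here share the same Harish--Chandra module of $K$-finite vectors.

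Concretely, first I would set $V:=E^{K\text{-}\mathrm{finite}}$, an irreducible object of $\mathcal{HC}$, and invoke Harish--Chandra's theorem to produce an irreducible admissible Hilbert representation $\mathcal{H}$ with $\mathcal{H}^{K\text{-}\mathrm{finite}}\simeq V$ as $(\mathfrak{g},K)$-modules. Applying Theorem \ref{Theorem A.2} to $\mathcal{H}$ yields a dominant character $\chi$ of $T$ and a short exact sequence $0\to W\to \mathrm{H}-\mathrm{Ind}_{B}^{G}(\chi)\to\mathcal{H}\to 0$ with $W$ the maximal proper closed subrepresentation. Since $B\backslash G$ is compact, the space of smooth vectors of $\mathrm{H}-\mathrm{Ind}_{B}^{G}(\chi)$ is exactly $\mathrm{Ind}_{B}^{G}(\chi)$; moreover the smoothing functor is exact on admissible Hilbert representations of moderate growth (Casselman--Wallach; see also \cite[2.3]{BK14}), so applying it gives $0\to W_{smooth}\to\mathrm{Ind}_{B}^{G}(\chi)\to\mathcal{H}_{smooth}\to 0$. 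This exhibits $\mathcal{H}_{smooth}\in\mathcal{SAF}_{\mathrm{Irr}}(G)$ as an irreducible quotient of $\mathrm{Ind}_{B}^{G}(\chi)$.

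It remains to identify $\mathcal{H}_{smooth}$ with $E$ and to establish uniqueness of the quotient. For the identification, $(\mathcal{H}_{smooth})^{K\text{-}\mathrm{finite}}\simeq\mathcal{H}^{K\text{-}\mathrm{finite}}\simeq V=E^{K\text{-}\mathrm{finite}}$, so the Casselman--Wallach globalization theorem gives $\mathcal{H}_{smooth}\simeq E$. For uniqueness, note that $\mathrm{Ind}_{B}^{G}(\chi)$ and $\mathrm{H}-\mathrm{Ind}_{B}^{G}(\chi)$ have the same space of $K$-finite vectors, namely the algebraic parabolic induction of $\chi$; any irreducible quotient of $\mathrm{Ind}_{B}^{G}(\chi)$ in $\mathcal{SAF}(G)$ produces, on passing to $K$-finite vectors, an irreducible quotient of this common Harish--Chandra module, and by Theorem \ref{Theorem A.2} there is exactly one such, namely $V$. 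Hence $E$ is \emph{the} unique irreducible quotient of $\mathrm{Ind}_{B}^{G}(\chi)$, which is what we wanted.

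I expect the main obstacle to be the bookkeeping around the smoothing functor: checking that it is exact in the relevant range, that it commutes with normalized induction from $B$ (via compactness of $B\backslash G$), and that it respects the notion of ``maximal proper closed subrepresentation''. The cleanest way to sidestep the last issue is, as above, to phrase the uniqueness of the irreducible quotient entirely at the level of Harish--Chandra modules, where it follows immediately from Theorem \ref{Theorem A.2}, rather than tracking it through the Hilbert and Fr\'echet globalizations by hand.
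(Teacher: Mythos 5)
Your proposal follows essentially the same route as the paper: pass to the Harish--Chandra module $V=E^{K\text{-finite}}$, take a Hilbert globalization $\mathcal{H}$, apply Theorem \ref{Theorem A.2}, use that $\mathrm{Ind}_{B}^{G}(\chi)^{K\text{-finite}}=\mathrm{H}\text{-}\mathrm{Ind}_{B}^{G}(\chi)^{K\text{-finite}}$, and conclude via the Casselman--Wallach equivalence. The only difference is that you spell out the exactness of the smoothing functor and locate the uniqueness argument explicitly at the level of Harish--Chandra modules, which is implicit in the paper's phrasing but not a genuinely different argument.
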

\begin{proof}
Denote $V:=E^{K-finite}$ and let $\mathcal{H}$ its Hilbert globalization,
such that $E\simeq\mathcal{H}_{smooth}$. By Theorem \ref{thm:(The-Langlands'-Classification)},
$\mathcal{H}$ is equivalent to $\mathrm{H}-\mathrm{Ind}_{B}^{G}(\chi)/W$
, where $W$ is the maximal subrepresentation of $\mathrm{H}-\mathrm{Ind}_{B}^{G}(\chi)$.
This implies that $V\simeq\mathrm{H}-\mathrm{Ind}_{B}^{G}(\chi)^{K-finite}/W^{K-finite}$.
Note that the $\mathrm{Ind}_{B}^{G}(\chi)^{K-finite}=\mathrm{H}-\mathrm{Ind}_{B}^{G}(\chi)^{K-finite}$
so $\mathrm{Ind}_{B}^{G}(\chi)$ is a Fréchet globalization of $\mathrm{H}-\mathrm{Ind}_{B}^{G}(\chi)^{K-finite}$
and this implies that $(\pi,E)$ is the unique quotient of $\mathrm{Ind}_{B}^{G}(\chi)$
as required. 
\end{proof}

\subsection{\label{sub:Proof-of-Corollary 3.4}Proof of Corollary \ref{cor 3.4}}
\begin{cor}
Let $\pi_{1},\pi_{2}\in\mathcal{SAF}_{\mathrm{Irr}}(G)$ where $\pi_{1}$
is the unique quotient of $\mathrm{Ind}_{B}^{G}(\chi_{1})$ and $\pi_{2}$
is the unique quotient of $\mathrm{Ind}_{B}^{G}(\chi_{2})$. Then
$\pi_{1}\simeq\pi_{2}$ if and only if there exists $w\in W(G,A_{\mathfrak{p}})$
such that $w\circ\chi_{1}=\chi_{2}$. \end{cor}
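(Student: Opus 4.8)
The plan is to derive the corollary directly from the Langlands classification for complex reductive groups, in the sharp form of Theorem \ref{Theorem A.2}, by extracting from each $\chi_i$ its Langlands triple and then reducing the comparison of the ``tempered parts'' to Corollary \ref{cor:Let--and be unitary} on a Levi subgroup. Concretely, for $i=1,2$ let $(F_i,[\sigma_i],\lambda_i)$ be the Langlands triple attached to $\pi_i$ by Theorem \ref{thm3.1}/\ref{Theorem A.2}. Tracing through the proof of Theorem \ref{Theorem A.2}, $F_i$ is precisely the set of simple roots $\alpha$ with $\mathrm{Re}\langle d\chi_i|_{\mathfrak{a}_{\mathfrak{p}}},\alpha\rangle=0$; writing the standard parabolic as $Q_{F_i}=M_{F_i}A_{F_i}N_{F_i}$, the character $\lambda_i\in\mathfrak{a}_{F_i}^{*}$ is the $\mathfrak{a}_{F_i}$-component of $\mathrm{Re}(d\chi_i|_{\mathfrak{a}_{\mathfrak{p}}})$, and $\sigma_i$ is the irreducible unitary principal series $\mathrm{Ind}_{B\cap M_{F_i}}^{M_{F_i}}(\mu_i)$ of the complex reductive group $M_{F_i}$, where $\chi_i=\mu_i\cdot\lambda_i$ under the decomposition $T=M_{\mathfrak{p}}A_{\mathfrak{p}}$ and the double induction formula.

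For the ``if'' direction, suppose $w.\chi_1=\chi_2$ with $w\in W(G,A_{\mathfrak{p}})$. Taking real parts of differentials gives $w.\mathrm{Re}(d\chi_1|_{\mathfrak{a}_{\mathfrak{p}}})=\mathrm{Re}(d\chi_2|_{\mathfrak{a}_{\mathfrak{p}}})$; both sides are dominant and lie in a single $W(G,A_{\mathfrak{p}})$-orbit, hence are equal. Therefore $F_1=F_2=:F$ and $\lambda_1=\lambda_2=:\lambda$, and $w$ stabilizes the dominant vector $\mathrm{Re}(d\chi_1|_{\mathfrak{a}_{\mathfrak{p}}})$, whose zero set among simple roots is exactly $F$; hence $w$ lies in the Weyl group $W_F$ of $M_F$. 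Since $W_F$ fixes $\lambda$, we get $\mu_2=w.\mu_1$ with $w\in W_F$, so by Corollary \ref{cor:Let--and be unitary} applied to the complex reductive group $M_F$ we obtain $\sigma_1\simeq\sigma_2$. Thus the Langlands triples of $\pi_1$ and $\pi_2$ coincide and $\pi_1\simeq\pi_2$. For the ``only if'' direction, $\pi_1\simeq\pi_2$ forces the triples to coincide by the uniqueness in Theorem \ref{Theorem A.2}: $F_1=F_2=:F$, $\lambda_1=\lambda_2=:\lambda$, $\sigma_1\simeq\sigma_2$. Applying Corollary \ref{cor:Let--and be unitary} to $M_F$ produces $w\in W_F\subseteq W(G,A_{\mathfrak{p}})$ with $w.\mu_1=\mu_2$; since $w$ fixes $\lambda$, this gives $w.\chi_1=w.(\mu_1\lambda)=\mu_2\lambda=\chi_2$, as required.

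I expect the main obstacle to be purely organizational rather than deep: because $\chi_i$ is only dominant (not strictly dominant), one cannot conclude $\chi_1=\chi_2$ from $\pi_1\simeq\pi_2$, so the argument must isolate the ``unitary/tempered part'' $\mu_i$ living on the Levi $M_F$ and invoke the already-established unitary case (Corollary \ref{cor:Let--and be unitary}) there. Making this precise requires the bookkeeping that identifies $\chi_i$ with its Langlands data through the double induction formula used in the proof of Theorem \ref{Theorem A.2}, together with the standard facts that a Levi of a complex reductive group is again complex reductive and that $W_F$ acts trivially on $\mathfrak{a}_F^{*}$ while stabilizing a given dominant real infinitesimal character exactly when its vanishing simple roots lie in $F$.
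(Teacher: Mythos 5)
Your proof is correct and takes essentially the same route as the paper: reduce via the Langlands classification (Theorem \ref{Theorem A.2}, together with the double-induction decomposition) to a comparison of the tempered pieces on the Levi, invoke Corollary \ref{cor:Let--and be unitary} there, and lift the resulting Weyl element from the Levi's Weyl group to $W(G,A_{\mathfrak{p}})$ using that it must fix $\mathfrak{a}_F$. One small imprecision worth noting: the Langlands parameter $\lambda_i$ is the full $\mathfrak{a}_{F_i}$-component of $d\chi_i|_{\mathfrak{a}_{\mathfrak{p}}}$, not just its real part (it may carry an imaginary piece); this does not affect the argument since only $\mathrm{Re}(d\chi_i)$ is used to pin down $F$ and the stabilizer $W_F$, and the rest of the comparison is done directly on $\chi_i$.
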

\begin{proof}
By Corollary \ref{cor:Let--and be unitary} this is true for tempered
representations. By Theorem \ref{Theorem A.2}, $\pi_{1}$ is the
unique quotient of $\mathrm{Ind}_{P}^{G}(\sigma_{1}\otimes v_{1}\otimes1)$
and $\pi_{2}$ is the unique quotient of $\mathrm{Ind}_{P}^{G}(\sigma_{2}\otimes v_{2}\otimes1)$.
Moreover, we can write $\sigma_{1}=\mathrm{Ind}_{M'A'N'}^{M}(\sigma_{1}'\otimes v_{1}'\otimes1)$
and $\sigma_{2}=\mathrm{Ind}_{M'A'N'}^{M}(\sigma_{2}'\otimes v_{2}'\otimes1)$
where $M'A'AN'N=M_{\mathfrak{p}}A_{\mathfrak{p}}N_{\mathfrak{p}}=B$,
such that $\pi_{1}$ is the unique quotient of $\mathrm{Ind}_{B}^{G}(\sigma_{1}'\otimes v_{1}'\cdot v_{1}\otimes1)$
and $\pi_{2}$ is the unique quotient of $\mathrm{Ind}_{B}^{G}(\sigma_{2}'\otimes v_{2}'\cdot v_{2}\otimes1)$.
Let $\chi_{1}=\sigma_{1}'\otimes v_{1}'\cdot v_{1}$ and $\chi_{2}=\sigma_{2}'\otimes v_{2}'\cdot v_{2}$. 

Assume $\pi_{1}\simeq\pi_{2}$. By Theorem \ref{Theorem A.2}, $\sigma_{1}\simeq\sigma_{2}$
and $v_{1}=v_{2}$. By Corollary \ref{cor:Let--and be unitary}, $\sigma_{1}\simeq\sigma_{2}$
if and only if there exists $w'\in W(M,A')$ such that $w'\circ(\sigma_{1}'\otimes v_{1}')=(\sigma_{2}'\otimes v_{2}')$.
Let $n'\in N_{M}(A')$ be an element that correspond to $w'$. $M$
centralizes $A$ and therefore $n'$ centralizes $A$. Thus $n'\in N_{G}(A_{\mathfrak{p}})$
and it corresponds to an element $w\in W(G,A_{\mathfrak{p}})$ such
that $w$ centralize $A$ and acts as $w'$ on $A'$. Therefore:
\[
w(\sigma_{1}'\otimes v_{1}'\cdot v_{1}\otimes1)=w'\sigma_{1}'\otimes w'v_{1}'\cdot v_{1}\otimes1=\sigma_{2}'\otimes v_{2}'\cdot v_{2}\otimes1,
\]
 which implies that $w\circ\chi_{1}=\chi_{2}$. For the other direction,
assume $w\circ\chi_{1}=\chi_{2}$, that is $w\circ(\sigma_{1}'\otimes v_{1}'\cdot v_{1}\otimes1)=\sigma_{2}'\otimes v_{2}'\cdot v_{2}\otimes1$.
As both $v_{1}'\cdot v_{1}$ and $v_{2}'\cdot v_{2}$ are dominant
with respect to positive roots $\alpha$ such that $\alpha|_{A}$
is non zero, $w$ must act trivially on $A$. Therefore $w(\sigma_{1}'\otimes v_{1}'\cdot v_{1}\otimes1)=w'\sigma_{1}'\otimes w'v_{1}'\cdot v_{1}\otimes1=\sigma_{2}'\otimes v_{2}'\cdot v_{2}\otimes1$
so $v_{1}=v_{2}$ and by Corollary \ref{cor:Let--and be unitary}
$\sigma_{1}\simeq\sigma_{2}$. By Theorem \ref{Theorem A.2}, $\pi_{1}\simeq\pi_{2}$.
\end{proof}

\section{\label{Append:Some-facts-about}Some facts about the structure of
real reductive groups}

For the convenience of the reader, we summarize some of the structure
theory of real reductive groups. This appendix is based on \cite{KT00}
and \cite{Kn96}. 

Any real reductive algebraic group $G$ can be represented by a 4-tuple
$(G,K,\theta,B(\,,\,))$, where $K$ is a maximal compact subgroup,
$\theta$ is an involution of $\mathfrak{g}=\mathrm{Lie}(G)$, and
$B(\,,\,)$ is a non degenerate, $Ad(G)$-invariant, $\theta$-invariant,
bilinear form, such that: 
\begin{itemize}
\item $\mathfrak{g}$ is a reductive Lie algebra and it decomposes into
$\mathfrak{g}=\mathfrak{k}\oplus\mathfrak{p}$, where $\mathfrak{k}$
is the $+1$ eigenspace of $\theta$ (and the Lie algebra of $K$)
and $\mathfrak{p}$ is the $-1$ eigenspace of $\theta$.
\item $B(\,,\,)$ is negative definite on $\mathfrak{k}$ and positive definite
on $\mathfrak{p}$, and $\mathfrak{k}$ and $\mathfrak{p}$ are orthogonal
under $B$. 
\item We have a diffeomorphism $K\times\mathrm{exp}(\mathfrak{p})\longrightarrow G$.
\item Any automorphism $\mathrm{Ad}(g)$ is inner for $g\in G$. 
\end{itemize}
We can define an inner product $\langle\,,\,\rangle$ on $\mathfrak{g}$
by $\langle X,Y\rangle=-B(X,\theta(Y))$. 
\begin{fact*}
Relative to the inner product $\langle\,,\,\rangle$ on $\mathfrak{g}$,
it holds that $\left(\mathrm{ad}_{X}\right)^{*}=\mathrm{ad}_{X}$
for any $X\in\mathfrak{p}$.
\end{fact*}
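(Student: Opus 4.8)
The plan is to unwind the definition $\langle X,Y\rangle=-B(X,\theta(Y))$ and feed into it the two structural properties of the form $B(\,,\,)$ recorded above: its $\mathrm{Ad}(G)$-invariance and its $\theta$-invariance. The only genuinely new input needed is that $\theta X=-X$ for $X\in\mathfrak{p}$, which is precisely what turns a skew-adjointness statement into a self-adjointness statement.

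First I would differentiate the identity $B(\mathrm{Ad}(g)Y,\mathrm{Ad}(g)Z)=B(Y,Z)$ at $g=e$ to obtain the infinitesimal invariance
\[
B([X,Y],Z)=-B(Y,[X,Z])\qquad\text{for all }X,Y,Z\in\mathfrak{g},
\]
i.e. each $\mathrm{ad}_X$ is skew-symmetric with respect to $B$. Next, since $\theta$ is a Lie algebra automorphism preserving $B$, and $\theta X=-X$ for $X\in\mathfrak{p}$, I would record that for such $X$ and any $W\in\mathfrak{g}$ one has
\[
[X,\theta W]=[\theta(\theta X),\theta W]=\theta[\theta X,W]=-\theta[X,W].
\]

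With these two facts in hand, for $X\in\mathfrak{p}$ and arbitrary $Z,W\in\mathfrak{g}$ the verification is a one-line chain:
\[
\langle\mathrm{ad}_X Z,W\rangle=-B([X,Z],\theta W)=B(Z,[X,\theta W])=-B(Z,\theta[X,W])=-B(Z,\theta(\mathrm{ad}_X W))=\langle Z,\mathrm{ad}_X W\rangle,
\]
which is exactly the assertion $(\mathrm{ad}_X)^{*}=\mathrm{ad}_X$.

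There is no real obstacle here; the statement is a short formal manipulation. The only point requiring care is sign bookkeeping — in particular, checking that the $\theta X=-X$ coming from $X\in\mathfrak{p}$ yields self-adjointness, whereas the case $X\in\mathfrak{k}$, with $\theta X=X$, would give $(\mathrm{ad}_X)^{*}=-\mathrm{ad}_X$. As a consistency check I would also confirm that the same conventions make $\langle\,,\,\rangle$ positive definite: $\langle X,X\rangle=-B(X,X)>0$ on $\mathfrak{k}$ since $B$ is negative definite there, and $\langle X,X\rangle=B(X,X)>0$ on $\mathfrak{p}$ since $B$ is positive definite there.
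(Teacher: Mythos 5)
Your proof is correct, and it is the standard argument (the paper states this as a Fact without proof, deferring to references such as Knapp): you differentiate $\mathrm{Ad}$-invariance of $B$ to get skew-adjointness of $\mathrm{ad}_X$ with respect to $B$, then use that $\theta$ is a bracket-preserving involution with $\theta|_{\mathfrak{p}}=-\mathrm{id}$ to flip the sign once more when passing from $B$ to $\langle\,,\,\rangle=-B(\cdot,\theta\cdot)$. The sign bookkeeping and the observation that $X\in\mathfrak{k}$ would instead give skew-adjointness are both accurate, and the positive-definiteness sanity check is consistent with the conventions in the paper.
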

Let $\mathfrak{a}_{\mathfrak{p}}$ be the maximal abelian subspace
of $\mathfrak{p}$. By the above fact, $\{\mathrm{ad}_{X}\}_{X\in\mathfrak{a}_{\mathfrak{p}}}$
is a commuting family of symmetric operators on $\mathfrak{g}$ and
hence it is simultaneously diagonalizable. This gives a decomposition
$\mathfrak{g}=\mathfrak{g}_{0}\oplus\bigoplus_{\lambda\in\Sigma}\mathfrak{g}_{\lambda}$
where $\mathfrak{g}_{\lambda}:=\{X\in g|(\mathrm{ad}_{H})X=\lambda(H)X\text{ for all }H\in\mathfrak{a}_{\mathfrak{p}}\}$,
$\Sigma$ is the set of all $\lambda\in\mathfrak{a}_{\mathfrak{p}}^{*}$
such that $\mathfrak{g}_{\lambda}$ is non zero. Such $\lambda$ is
called a $\emph{restricted root}$. This decomposition is called \textit{restricted
root space decomposition} and it has some nice properties:
\begin{itemize}
\item $[\mathfrak{g}_{\mu},\mathfrak{g}_{\lambda}]\subseteq\mathfrak{g}_{\mu+\lambda}$.
\item $\mathfrak{g}_{\mu}$ and $\mathfrak{g}_{\lambda}$ are orthogonal
with respect to $\langle\,,\,\rangle$ if $\mu\neq\lambda$.
\item $\mathfrak{g}_{0}=\mathfrak{m}_{\mathfrak{p}}\oplus\mathfrak{a}_{\mathfrak{p}}$
where $\mathfrak{m}_{\mathfrak{p}}=Z_{\mathfrak{k}}(\mathfrak{a}_{\mathfrak{p}})$
is the centralizer of $\mathfrak{a}_{\mathfrak{p}}$ in $\mathfrak{k}$.
This is an orthogonal sum.
\end{itemize}
We now introduce a lexicographic order on $\mathfrak{\mathfrak{a}_{\mathfrak{p}}}^{*}$.
We fix an ordered basis $\lambda_{1},...,\lambda_{l}$ of $\mathfrak{a}_{\mathfrak{p}}^{*}$
and define $\lambda=\sum_{i}c_{i}\lambda_{i}$ to be $\emph{positive}$
if $c_{1}=...c_{k}=0$ and $c_{k+1}>0$ for some $0\leq k<l$. We
set $\lambda>\mu$ if $\lambda-\mu$ is positive and define $\Sigma^{+}$
to be the set of positive roots in $\Sigma$ and $\mathfrak{n}_{\mathfrak{p}}=\bigoplus_{\lambda\in\Sigma^{+}}\mathfrak{g}_{\lambda}$.
We have:
\begin{itemize}
\item $\mathfrak{g}=\mathfrak{k}\oplus\mathfrak{\mathfrak{a}_{\mathfrak{p}}}\oplus\mathfrak{n}_{\mathfrak{p}}$,
$\mathfrak{a}_{\mathfrak{p}}$ is abelian, $\mathfrak{n}_{\mathfrak{p}}$
is nilpotent, $\mathfrak{a}_{\mathfrak{p}}\oplus\mathfrak{n}_{\mathfrak{p}}$
is solvable and $[\mathfrak{a}_{\mathfrak{p}}\oplus\mathfrak{n}_{\mathfrak{p}},\mathfrak{a}_{\mathfrak{p}}\oplus\mathfrak{n}_{\mathfrak{p}}]=\mathfrak{n}_{\mathfrak{p}}$.
\item There are analytic subgroups $A_{\mathfrak{p}}$ and $N_{\mathfrak{p}}$
with Lie algebras $\mathfrak{a}_{\mathfrak{p}},\mathfrak{n}_{\mathfrak{p}}$
such that $A_{\mathfrak{p}},N_{\mathfrak{p}},A_{\mathfrak{p}}N_{\mathfrak{p}}$
are simply connected closed subgroups of $G$ and $G$ is diffeomorphic
to $K\times A_{\mathfrak{p}}\times N_{\mathfrak{p}}$. \end{itemize}
\begin{rem*}
The inner product $\langle\,,\,\rangle$ restricted to $\mathfrak{a}_{\mathfrak{p}}$
induces an inner product on the dual space $\mathfrak{a}_{\mathfrak{p}}^{*}$
and hence also on the restricted roots.\end{rem*}
\begin{defn*}
1) A positive restricted root $\alpha\in\Sigma$ that cannot be decomposed
as a sum of two positive restricted roots is called a $\emph{simple restricted root}$.
The set of simple restricted roots is denoted by $\triangle$. 

2) An element $\lambda\in\mathfrak{a}_{\mathfrak{p}}^{*}$ is called
a $\emph{dominant weight}$ (resp. $\emph{strongly dominant}$) if
$\langle\lambda,\alpha\rangle\geq0$ (resp. $\langle\lambda,\alpha\rangle>0$)
for any $\alpha\in\triangle$. 

3) Let $G$ be a complex reductive group, $T$ be a maximal torus.
We call a homomorphism $\chi:T\longrightarrow\mathbb{C}$ a $\emph{dominant character}$
(resp. $\emph{strongly dominant}$) if $\mathrm{Re}\langle d\chi|_{\mathfrak{a}_{\mathfrak{p}}},\alpha\rangle\geq0$
(resp. $\mathrm{Re}\langle d\chi|_{\mathfrak{a}_{\mathfrak{p}}},\alpha\rangle>0$)
for any $\alpha\in\triangle$. \end{defn*}
\begin{example*}
Consider $G=\mathrm{GL}_{n}(\mathbb{C})$. Fix $T$ to be the subgroup
of diagonal matrices and a Borel $B$ to be the subgroup of upper
triangular matrices. Choose a compact from by the involution $A\longmapsto\left(A^{*}\right)^{-1}$
where $*$ is conjugate transpose. Under this identification we can
choose $A_{\mathfrak{p}}$ to be the subgroup of real and positive
diagonal matrices and $A$ to be the subgroup of real diagonal matrices.
$\langle X,Y\rangle:=\mathrm{Re}\mathrm{Tr}(X\cdot Y^{*})$ defines
a real valued inner product on $\mathfrak{gl}_{n}$\textcolor{red}{.}
Denote $E_{ij}\in\mathfrak{gl}_{n}$ to be the matrix with value $1$
at the $(i,j)$th entry and $0$ at all other entries. Any character
$\chi:T\longrightarrow\mathbb{C}$ can be written in the form
\[
\chi(t)=\chi(t_{1},...,t_{n})=\prod\left|t_{i}\right|^{\lambda_{i}}\cdot\left(\frac{t_{i}}{\left|t_{i}\right|}\right)^{n_{i}},
\]
for some $\lambda_{i}\in\mathbb{C}$, $n_{i}\in\mathbb{Z}$. We can
write $\mathfrak{gl}_{n}=\mathfrak{g}_{0}\oplus\bigoplus_{\lambda\in\Sigma}\mathfrak{g}_{\alpha_{ij}}$
where $\mathfrak{g}_{\alpha_{ij}}=\mathrm{E}_{ij}$ and $\alpha_{ij}=t_{i}/t_{j}$.
Notice that the positive restricted roots are $\{\alpha_{ij}\}_{i<j}$.
On the Lie algebra level, $\alpha_{ij}(T_{1},...,T_{n})=T_{i}-T_{j}$
where $T_{1},...,T_{n}$ is the corresponding basis for $\mathfrak{a}_{\mathfrak{p}}$.
Note that $\chi|_{A_{\mathfrak{p}}}(t)=\chi(t_{1},...,t_{n})=\prod t_{i}^{\lambda_{i}}$and
hence $d\chi|_{\mathfrak{a}_{\mathfrak{p}}}(T_{1},...,T_{n})=(\lambda_{1}T_{1},...,\lambda_{n}T_{n})$. 

Since $\mathfrak{a}_{\mathfrak{p}}$ consists of diagonal and real
matrices we have that $\mathrm{Tr}(X\cdot Y^{*})=\sum x_{i}\cdot y_{i}$.
The inner product on $\mathfrak{a}_{\mathfrak{p}}^{*}$ is defined
by $\langle\alpha,\beta\rangle:=\langle X_{\alpha},X_{\beta}\rangle$
via the identification $\alpha\longrightarrow\langle X_{\alpha},\,\rangle$
for $X_{\alpha}\in\mathfrak{a}_{\mathfrak{p}}$. Under this identification,
$\alpha_{ij}$ corresponds to the vector $v_{ij}=(0,...,0,1,0,...,0,-1,...0)$
($1$ and $-1$ in the $i$th and $j$th coordinates and 0 in the
others) and $d\chi|_{\mathfrak{a}_{\mathfrak{p}}}$ corresponds to
$(\lambda_{1},...,\lambda_{n})$. Requiring $\chi$ to be dominant
is the same as requiring that for any $i<j$ we have:
\[
0\leq\mathrm{Re}\langle v_{ij},(\lambda_{1},...,\lambda_{n})\rangle=\mathrm{Re}(\lambda_{i})-\mathrm{Re}(\lambda_{j})\geq0,
\]
that is, $\mathrm{Re}(\lambda_{1})\geq...\geq\mathrm{Re}(\lambda_{n})$.
Strong dominance implies that $\mathrm{Re}(\lambda_{1})>...>\mathrm{Re}(\lambda_{n})$.
\end{example*}

\subsection{\label{subC1:Parabolic-subgroups}Parabolic subgroups}

We now introduce some facts about parabolic subgroups:
\begin{defn*}
A $\emph{parabolic subgroup}$ of an algebraic group $\underline{G}$
defined over $k$ is a closed subgroup $\underline{P}\subseteq\underline{G}$,
for which the quotient space $\underline{G}/\underline{P}$ is a projective
algebraic variety. A subgroup $P$ of $G=\underline{G}(k)$ is parabolic
if $P=\underline{P}(k)$ for some parabolic subgroup $\underline{P}\subseteq\underline{G}$,
such that $\underline{P}(k)$ is dense in $\underline{P}$ (in the
Zarisky topology).

There always exist minimal parabolic subgroups and they have the following
structure theory over $\mathbb{R}$: let $G$ be a linear connected
reductive group, $K$ a maximal compact subgroup, $A_{\mathfrak{p}}$,
$N_{\mathfrak{p}}$ as defined above and $\mathfrak{g}=\mathfrak{k}\oplus\mathfrak{p}=\mathfrak{g}_{0}\oplus\bigoplus_{\lambda\in\Sigma}\mathfrak{g}_{\lambda}$.
Consider $M_{\mathfrak{p}}=C_{K}(\mathfrak{a}_{\mathfrak{p}})$, i.e
the set of $k\in K$ such that $\mathrm{Ad}_{k}=\mathrm{id}$ on $\mathfrak{a}_{\mathfrak{p}}$.
We have the following properties:\end{defn*}
\begin{enumerate}
\item $M_{\mathfrak{p}}$ is a closed subgroup of $K$, hence compact. 
\item $M_{\mathfrak{p}}$ centralizes $\mathfrak{a}_{\mathfrak{p}}$ and
normalizes each $\mathfrak{g}_{\lambda}$.
\item $M_{\mathfrak{p}}$ centralizes $A$ and normalizes $N_{\mathfrak{p}}$. 
\item $M_{\mathfrak{p}}A_{\mathfrak{p}}N_{\mathfrak{p}}$ is a closed minimal
parabolic subgroup of $G$. 
\item Every minimal parabolic subgroup of $G$ can be represented in this
form (4). 
\end{enumerate}
Fix a minimal parabolic $Q_{\mathfrak{p}}:=M_{\mathfrak{p}}A_{\mathfrak{p}}N_{\mathfrak{p}}$.
A $\emph{Standard}$ parabolic subgroup is any closed subgroup $Q$
that contains $Q_{\mathfrak{p}}$. Any standard parabolic subgroup
has a $\emph{Langlands decomposition}$ of the form $Q=MAN$ obtained
as follows: 
\begin{itemize}
\item $MA:=Q\cap\Theta(Q)$, where $\Theta$ is the global Cartan involution
that fixes $K$.
\item $A:=Z(MA)$ with Lie algebra $\mathfrak{a}$.
\item $\mathfrak{m}$ is the orthogonal complement of $\mathfrak{a}$ in
$\mathfrak{m}\oplus\mathfrak{a}$ with respect to $\langle,\rangle$
defined above. 
\item $M_{0}$ is the analytic subgroup that corresponds to $\mathfrak{m}$.
$M=C_{K}(A)\cdot M_{0}$ (it is non compact if $Q\neq Q_{\mathfrak{p}}$).
\item Let $\mathfrak{n}$ be the direct sum of eigenspaces in $\mathfrak{q}=Lie(Q)$
with nonzero eigenvalues corresponds to $\mathfrak{a}$, and define
$N$ to be the analytic subgroup corresponding to $\mathfrak{n}$. 
\end{itemize}
This describes the structure of any standard parabolic. There is another
description of standard parabolic subgroups, from which one can understand
the connection between $\mathfrak{a}_{\mathfrak{p}},\mathfrak{m}_{\mathfrak{p}},\mathfrak{n}_{\mathfrak{p}}$
and $\mathfrak{a},\mathfrak{m},\mathfrak{n}$: let $F$ be any subset
of $\triangle$. Denote $\Sigma_{F}=\{\beta\in\Sigma|\beta\in Span(F)\}$,
$\Gamma_{F}=\Sigma^{+}\cup\Sigma_{F}$ and define: 
\[
\mathfrak{q}_{F}=\mathfrak{m}_{\mathfrak{p}}\oplus\mathfrak{a}_{\mathfrak{p}}\oplus\bigoplus_{\beta\in\Gamma_{F}}\mathfrak{g}_{\beta}.
\]
This is a parabolic subalgebra of $\mathfrak{g}$ containing $\mathfrak{q}_{\mathfrak{p}}=\mathfrak{m}_{\mathfrak{p}}\oplus\mathfrak{a}_{\mathfrak{p}}\oplus\mathfrak{n}_{\mathfrak{p}}$.
Define:
\begin{enumerate}
\item $\mathfrak{a}_{F}:=\cap_{\beta\in\Sigma_{F}}\mathrm{ker}\beta$. It
is contained in $\mathfrak{a}_{\mathfrak{p}}$. 
\item $\mathfrak{a}_{M,F}:=\mathfrak{a}_{F}^{\bot}\subseteq\mathfrak{a}_{\mathfrak{p}}$. 
\item $\mathfrak{m}_{F}:=\mathfrak{a}_{M,F}\oplus\mathfrak{m}_{\mathfrak{p}}\oplus\bigoplus_{\beta\in\Sigma_{F}}\mathfrak{g}_{\beta}$.
\item $\mathfrak{n}_{F}:=\bigoplus_{\beta\in\Sigma^{+}\smallsetminus\Sigma_{F}}\mathfrak{g}_{\beta}$.
\item $\mathfrak{n}_{M,F}:=\mathfrak{n}_{p}\cap\mathfrak{m}_{F}$.
\end{enumerate}
and we get that $\mathfrak{q}_{F}=\mathfrak{m}_{F}\oplus\mathfrak{a}_{F}\oplus\mathfrak{n}_{F}$.
This decomposition has some nice properties: 
\begin{itemize}
\item The centralizer of $\mathfrak{a}_{F}$ in $\mathfrak{g}$ is $\mathfrak{m}_{F}\oplus\mathfrak{a}_{F}$. 
\item $\mathfrak{a}_{\mathfrak{p}}=\mathfrak{a}_{F}\oplus\mathfrak{a}_{M,F}$.
\item $\mathfrak{n}_{\mathfrak{p}}=\mathfrak{n}_{F}\oplus\mathfrak{n}_{M,F}$.
\end{itemize}
Now let $A_{F}$ and $N_{F}$ be the analytic subgroups with Lie algebras
$\mathfrak{a}_{F},\mathfrak{n}_{F}$. Define $M_{0,F}$ to be the
analytic subgroup that correspond to $\mathfrak{m}_{F}$ and $M=C_{K}(A_{F})\cdot M_{0}$.
We get a standard parabolic subgroup $Q_{F}:=M_{F}A_{F}N_{F}$ with
Lie algebra $\mathfrak{q}_{F}=\mathfrak{m}_{F}\oplus\mathfrak{a}_{F}\oplus\mathfrak{n}_{F}$.
This describes all standard parabolic subgroups. Notice that if $G$
is semisimple then $\mathrm{dim}A_{\mathfrak{p}}=\left|\triangle\right|$
and there are exactly $2^{dimA_{\mathfrak{p}}}$ different subsets
$F\subseteq\triangle$ and the number of standard parabolic subgroups
is $2^{\mathrm{dim}A_{p}}$. 
\begin{example*}
Consider $G=\mathrm{SL_{n}}(\mathbb{R})$ and fix a minimal parabolic
subgroup $M_{\mathfrak{p}}A_{\mathfrak{p}}N_{\mathfrak{p}}$ corresponding
to the upper triangular matrices. The standard parabolic subgroups
$Q_{F}$ are block upper triangular matrices, where different choices
of $F$ corresponds to different matrix block partitions. Here, $M_{F}A_{F}$
corresponds to block diagonal matrices. When $F$ is empty, we get
that $Q_{F}=Q_{\mathfrak{p}}$, the minimal parabolic subgroup and
when $F=\triangle$ we get that $Q_{F}=G$, $A,N=\{e\}$ and $M=G$. 
\end{example*}

\section{\label{Append:Restriction-of-scalars}Restriction of scalars and
Galois involution}

Let $k$ be a field, $\underline{G}$ be an algebraic group defined
over $k$. It can be written as $\underline{G}=\mathrm{Spec}(A)$
where $A=k[x_{1},...,x_{n}]/\langle f_{1},...,f_{m}\rangle$ is a
$k$-algebra. Let $K\supset k$ be a field extension and define $\underline{G}(K):=\mathrm{Hom}_{k}(A,K)$
. This set is called the $\emph{K points of \ensuremath{\underline{G}}}$
and it is in bijection with the solutions of $\{f_{1}(x_{1},...,x_{n})=0,...,f_{m}(x_{1},...,x_{n})=0\}$
in $K$. $\underline{G}(K)$ has a structure of a group induced by
the group structure of $\underline{G}$. Notice that the $\mathbb{C}$
(resp. $\mathbb{R}$)-points of an algebraic group $\underline{G}$
defined over $\mathbb{R}$ is a complex (resp. real) Lie group of
the same dimension. 

Now set $k=\mathbb{R}$. A real structure on $\underline{G}$ is determined
by an action of $\mathrm{Gal}(\mathbb{C}/\mathbb{R})=\{\mathrm{id},\tau\}$
on its coordinate ring $\mathbb{C}[\underline{G}]$ , such that $\mathbb{C}[\underline{G}]^{\tau}\otimes_{\mathbb{R}}\mathbb{C}\simeq\mathbb{C}[\underline{G}]$.
We denote $\mathbb{R}[\underline{G}]:=\mathbb{C}[\underline{G}]^{\tau}$
as the real structure. $\mathrm{Gal}(\mathbb{C}/\mathbb{R})$ also
acts on $\underline{G}(\mathbb{C})$ such that $\left(\underline{G}(\mathbb{C})\right)^{\sigma}$
is isomorphic to $\mathrm{Hom}_{\mathbb{R}}(\mathbb{R}[\underline{G}],\mathbb{R})$.
A motivating example is $\underline{G}=\mathrm{GL}_{n}$, $\mathbb{C}[\underline{G}]=\mathbb{C}[t_{11},...,t_{nn},det^{-1}]$,
$\mathbb{R}[\underline{G}]=\mathbb{R}[t_{11},...,t_{nn},det^{-1}]=\mathbb{C}[\underline{G}]^{\tau}$,
where $\tau$ acts by complex conjugation. 

Note that $\mathrm{Gal}(\mathbb{C}/\mathbb{R})$ acts by an involution
$\tau$ on $\mathbb{C}[\underline{G}]$, but it is not a $\mathbb{C}$-algebra
automorphism as it is not $\mathbb{C}$-linear. In order to view $\tau$
as a morphism of algebraic group, we need to introduce restriction
of scalars. 
\begin{fact*}
There exists a functor $\mathrm{Res}_{\mathbb{C}/\mathbb{R}}$ (called
$\emph{restriction of scalars}$) from the category of affine group
schemes over $\mathbb{C}$ to the category of affine group schemes
over $\mathbb{R}$, that is right adjoint to base change, i.e: 
\[
\mathrm{Hom}(G,\mathrm{Res}_{\mathbb{C}/\mathbb{R}}(G'))\simeq\mathrm{Hom}(G\times_{\mathrm{Spec}\mathbb{R}}\mathrm{Spec}\mathbb{C},G'),
\]
 where $G$ and $G'$ are algebraic groups defined over $\mathbb{R}$
and $\mathbb{C}$, respectively.
\end{fact*}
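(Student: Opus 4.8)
The plan is to construct $\mathrm{Res}_{\mathbb{C}/\mathbb{R}}$ on the level of functors of points and then show the resulting functor is representable. For a $\mathbb{C}$-scheme $G'$, define $\mathrm{Res}_{\mathbb{C}/\mathbb{R}}(G')$ to be the functor sending an $\mathbb{R}$-algebra $R$ to the set $G'(R\otimes_{\mathbb{R}}\mathbb{C})$. Since $\mathrm{Spec}(R)\times_{\mathrm{Spec}\mathbb{R}}\mathrm{Spec}\mathbb{C}=\mathrm{Spec}(R\otimes_{\mathbb{R}}\mathbb{C})$ for every $\mathbb{R}$-algebra $R$, the adjunction is built into this definition:
\[
\mathrm{Hom}_{\mathbb{R}}(\mathrm{Spec}\,R,\mathrm{Res}_{\mathbb{C}/\mathbb{R}}(G'))=G'(R\otimes_{\mathbb{R}}\mathbb{C})=\mathrm{Hom}_{\mathbb{C}}(\mathrm{Spec}\,R\times_{\mathrm{Spec}\mathbb{R}}\mathrm{Spec}\mathbb{C},G'),
\]
naturally in $R$. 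Passing this through the Yoneda lemma (or, since $G$ is affine, applying it directly with $R=\mathbb{R}[G]$) gives $\mathrm{Hom}(G,\mathrm{Res}_{\mathbb{C}/\mathbb{R}}(G'))\simeq\mathrm{Hom}(G\times_{\mathrm{Spec}\mathbb{R}}\mathrm{Spec}\mathbb{C},G')$, and functoriality of $\mathrm{Res}_{\mathbb{C}/\mathbb{R}}$ in $G'$ is immediate from the point-wise description.

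The substantive step is to verify that when $G'$ is affine the functor $R\mapsto G'(R\otimes_{\mathbb{R}}\mathbb{C})$ is representable by an affine $\mathbb{R}$-scheme. Write $G'=\mathrm{Spec}\,\mathbb{C}[x_1,\dots,x_n]/(f_1,\dots,f_m)$ and fix the $\mathbb{R}$-basis $\{1,i\}$ of $\mathbb{C}$. I would introduce $2n$ indeterminates $y_1,z_1,\dots,y_n,z_n$, substitute $x_j=y_j+iz_j$ into each $f_k$, and collect real and imaginary parts to write $f_k(y_1+iz_1,\dots,y_n+iz_n)=g_k(y,z)+i\,h_k(y,z)$ with $g_k,h_k\in\mathbb{R}[y_1,z_1,\dots,y_n,z_n]$, setting
\[
\mathrm{Res}_{\mathbb{C}/\mathbb{R}}(G'):=\mathrm{Spec}\,\mathbb{R}[y_1,z_1,\dots,y_n,z_n]/(g_1,h_1,\dots,g_m,h_m).
\]
For an $\mathbb{R}$-algebra $R$ one has $R\otimes_{\mathbb{R}}\mathbb{C}=R\oplus Ri$, so a $\mathbb{C}$-algebra map $\mathbb{C}[x]/(f)\to R\otimes_{\mathbb{R}}\mathbb{C}$ is exactly a choice of $x_j\mapsto a_j+ib_j$ with $a_j,b_j\in R$ and $f_k(a+ib)=0$, which, separating components, amounts to $g_k(a,b)=h_k(a,b)=0$; this is precisely the datum of an $\mathbb{R}$-algebra map from the displayed ring into $R$ sending $y_j\mapsto a_j$, $z_j\mapsto b_j$. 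Hence the two functors of points coincide, which establishes representability and, combined with the first paragraph, the stated adjunction for affine schemes.

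Finally I would promote the adjunction to group schemes. A right adjoint preserves all limits, in particular finite products and the terminal object, and base change likewise preserves finite products; the multiplication, unit and inverse morphisms of $G'$ and the group axioms are encoded by commuting diagrams built only from finite products, so $\mathrm{Res}_{\mathbb{C}/\mathbb{R}}(G')$ inherits a canonical affine group-scheme structure, and a morphism $G\to\mathrm{Res}_{\mathbb{C}/\mathbb{R}}(G')$ is a homomorphism if and only if its adjoint $G_{\mathbb{C}}\to G'$ is. Thus the bijection restricts to homomorphisms, proving the Fact. The only real obstacle is the representability argument of the middle paragraph — everything else is formal category theory — and it is dispatched by the explicit doubling-of-coordinates construction; I would also record that $\mathrm{Res}_{\mathbb{C}/\mathbb{R}}(G')$ has dimension $2\dim_{\mathbb{C}}G'$ and that $\mathrm{Res}_{\mathbb{C}/\mathbb{R}}$ carries smooth (resp. connected reductive) $\mathbb{C}$-groups to smooth (resp. connected reductive) $\mathbb{R}$-groups, since this is the form in which the construction is used in the body of the paper.
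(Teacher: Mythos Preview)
The paper does not prove this Fact at all; it is stated as a known result and immediately used. Your argument is a correct and entirely standard construction of Weil restriction: define $\mathrm{Res}_{\mathbb{C}/\mathbb{R}}(G')$ on the level of functors of points by $R\mapsto G'(R\otimes_{\mathbb{R}}\mathbb{C})$, verify representability by the explicit doubling-of-coordinates presentation, and obtain the group-scheme structure formally from preservation of finite products. There is nothing to compare against, and no gap in what you wrote.
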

We can now apply the following procedure. Let $\underline{G}$ be
an algebraic group defined over $\mathbb{R}$, with a real structure
$\mathbb{C}[\underline{G}]^{\tau}$ for $\tau\in\mathrm{Gal}(\mathbb{C}/\mathbb{R})$.
Consider $\underline{G}$ as an algebraic group over $\mathbb{C}$
(denoted $\underline{G}_{\mathbb{C}}$) and apply the functor $\mathrm{Res}_{\mathbb{C}/\mathbb{R}}$
to get an algebraic group $\underline{G}_{\mathbb{C}/\mathbb{R}}:=\mathrm{Res}_{\mathbb{C}/\mathbb{R}}(\underline{G}_{\mathbb{C}})$
defined over $\mathbb{R}$. We have the following properties: 

{*} $\underline{G}(\mathbb{C})=\underline{G}_{\mathbb{C}/\mathbb{R}}(\mathbb{R})$.

{*} The map $\tau:\underline{G}\longrightarrow\underline{G}$ induces
an involution of $\mathbb{R}$-algebraic groups $\theta:\underline{G}_{\mathbb{C}/\mathbb{R}}\longrightarrow\underline{G}_{\mathbb{C}/\mathbb{R}}$
such that $\underline{G}_{\mathbb{C}/\mathbb{R}}^{\theta}(\mathbb{R})=\underline{G}(\mathbb{R})$.
\begin{defn*}
We call the involution $\theta:\underline{G}_{\mathbb{C}/\mathbb{R}}\longrightarrow\underline{G}_{\mathbb{C}/\mathbb{R}}$
defined above a $\emph{Galois involution}$. Note that $\underline{G}_{\mathbb{C}/\mathbb{R}}(\mathbb{R})/\underline{G}_{\mathbb{C}/\mathbb{R}}^{\theta}(\mathbb{R})\simeq\underline{G}(\mathbb{C})/\underline{G}(\mathbb{R})$.\end{defn*}
\begin{thm}
\label{thmC.1:Any-Galois-involution}Let $\underline{G}$ be an algebraic
group defined over $\mathbb{R}$, $\underline{G}_{\mathbb{C}/\mathbb{R}}$
be the restriction of scalars of its complexification, and $\theta:\underline{G}_{\mathbb{C}/\mathbb{R}}\longrightarrow\underline{G}_{\mathbb{C}/\mathbb{R}}$
be a Galois involution. Let $G=\underline{G}_{\mathbb{C}/\mathbb{R}}(\mathbb{R})$
and $H=\underline{G}_{\mathbb{C}/\mathbb{R}}^{\theta}(\mathbb{R})$.
Then $\theta:G\longrightarrow G$ has the property $(\star)$ (see
Section \ref{sub:Main-results-and}).\end{thm}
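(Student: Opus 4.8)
The plan is to translate the equality of modular characters in $(\star)$ into an equality of real-linear functionals on $\mathrm{Lie}(T^{\theta_n})$, and then into a short combinatorial statement about roots which is forced by the fact that a Galois involution acts \emph{anti-holomorphically} on $G=\underline G(\mathbb C)$.

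The starting point is the observation (implicit in Appendix \ref{Append:Restriction-of-scalars}) that the differential $d\theta$ of the Galois involution is a \emph{conjugate-linear} involution of the complex Lie algebra $\mathfrak g=\mathrm{Lie}_{\mathbb C}(G)$, with fixed subspace the real form $\mathrm{Lie}(\underline G(\mathbb R))$. Fix $n\in N_G(T)$ with $\theta(n)=n^{-1}$. Then $\theta_n=\mathrm{Int}(n)\circ\theta$ is an involution (here one uses $\theta(n)=n^{-1}$), $d\theta_n=\mathrm{Ad}(n)\circ d\theta$ is again conjugate-linear, and since $T$ is $\theta$-stable and $n$ normalizes $T$, the torus $T$ and $\mathfrak t=\mathrm{Lie}_{\mathbb C}(T)$ are $\theta_n$-stable, so $\mathfrak t^{\theta_n}:=\mathfrak t^{d\theta_n}$ is a real form of $\mathfrak t$. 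Let $\sigma_n$ be the involution of the root system $\Phi=\Sigma(G,T)$ defined by $d\theta_n(\mathfrak g_\alpha)=\mathfrak g_{\sigma_n(\alpha)}$. Two elementary facts about $\sigma_n$, both consequences of $d\theta_n$ being a conjugate-linear homomorphism of Lie algebras, will drive everything: first, $\sigma_n(-\alpha)=-\sigma_n(\alpha)$, because $0\neq d\theta_n[\mathfrak g_\alpha,\mathfrak g_{-\alpha}]=[\mathfrak g_{\sigma_n(\alpha)},\mathfrak g_{\sigma_n(-\alpha)}]$ can lie in $\mathfrak t$ only if $\sigma_n(-\alpha)=-\sigma_n(\alpha)$; second, $\sigma_n(\alpha)(H)=\overline{\alpha(H)}$ for every $H\in\mathfrak t^{\theta_n}$, by the computation $[H,d\theta_n X]=d\theta_n[H,X]=\overline{\alpha(H)}\,d\theta_n X$.

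Next I would describe $\mathfrak b^{\theta_n}=\mathfrak b\cap\mathrm{Lie}(G^{\theta_n})$ explicitly. Writing $\mathfrak b=\mathfrak t\oplus\bigoplus_{\alpha\in\Phi^+}\mathfrak g_\alpha$ and comparing $\mathfrak t$- and root-components in $d\theta_n(X)=X$ gives $\mathfrak b^{\theta_n}=\mathfrak t^{\theta_n}\oplus\mathfrak u^{\theta_n}$, where $\mathfrak u^{\theta_n}=\big(\bigoplus_{\alpha\in\Psi}\mathfrak g_\alpha\big)^{d\theta_n}$ is a real form of $\bigoplus_{\alpha\in\Psi}\mathfrak g_\alpha$ with $\Psi=\{\alpha\in\Phi^+:\sigma_n(\alpha)\in\Phi^+\}$; moreover $\mathfrak u^{\theta_n}$ is a nilpotent ideal with abelian, semisimple complement $\mathfrak t^{\theta_n}$, hence the nilradical of $\mathfrak b^{\theta_n}$. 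Since $\delta_{B^{\theta_n}}$ and $\delta_B^{1/2}$ are positive-valued characters and $B^{\theta_n}=B\cap G^{\theta_n}$ has finitely many connected components, the two agree on $B^{\theta_n}$ as soon as their differentials agree on $\mathfrak b^{\theta_n}$; both vanish on $\mathfrak u^{\theta_n}$ (the ambient $\mathrm{Ad}$-action is unipotent there), so only $\mathfrak t^{\theta_n}$ matters. Computing the two determinants of $\mathrm{Ad}(\exp H)$ — one on the real form $\mathfrak u^{\theta_n}$, one on $\mathfrak u=\bigoplus_{\alpha>0}\mathfrak g_\alpha$ viewed as a real space — and using that for a complex group $\delta_B(\exp H)=\prod_{\alpha>0}|\alpha(\exp H)|^2$, the claim $(\star)$ reduces to
\[
\sum_{\alpha\in S}\mathrm{Re}\,\alpha(H)=0\qquad\text{for all }H\in\mathfrak t^{\theta_n},\qquad S:=\Phi^+\setminus\Psi=\{\alpha\in\Phi^+:\sigma_n(\alpha)\in\Phi^-\}.
\]

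It remains to prove this identity, which is the heart of the matter. From $\sigma_n(-\alpha)=-\sigma_n(\alpha)$ one checks that $\sigma_n$ restricts to a bijection $S\overset{\sim}{\longrightarrow}-S$, and combining this with $\mathrm{Re}\,\sigma_n(\alpha)(H)=\mathrm{Re}\,\overline{\alpha(H)}=\mathrm{Re}\,\alpha(H)$ on $\mathfrak t^{\theta_n}$ yields
\[
\sum_{\alpha\in S}\mathrm{Re}\,\alpha(H)=\sum_{\alpha\in S}\mathrm{Re}\,\sigma_n(\alpha)(H)=\sum_{\beta\in-S}\mathrm{Re}\,\beta(H)=-\sum_{\alpha\in S}\mathrm{Re}\,\alpha(H),
\]
so both sides vanish. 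This is exactly where anti-holomorphy of $\theta$ is indispensable: for a holomorphic involution one would only get $\sigma_n(\alpha)(H)=\alpha(H)$ with no complex conjugate, the cancellation would fail, and $(\star)$ is genuinely false in general (as it must be, e.g.\ for the involution cutting out $\mathrm{Sp}_{2n}$ inside $\mathrm{GL}_{2n}$). The main obstacle is thus not a hard estimate but the bookkeeping around this short identity: correctly identifying $\mathfrak b^{\theta_n}$ and its nilradical, establishing the two properties of $\sigma_n$, and justifying the passage from the character identity to its infinitesimal form via positivity together with finiteness of $\pi_0(B^{\theta_n})$.
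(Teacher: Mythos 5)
Your proof is correct and follows essentially the same strategy as the paper: reduce the modular-character identity in $(\star)$ to a root-combinatorial cancellation over the set $S$ of positive roots sent to negative ones by $\sigma_n$, and establish it via the bijection $\sigma_n\colon S\to -S$ (which is the paper's pairing of each $\alpha\in S_3$ with $-\theta_n(\alpha)\in S_3$). Your infinitesimal reduction via positivity plus finiteness of $\pi_0(B^{\theta_n})$, and the systematic use of $\sigma_n(\alpha)(H)=\overline{\alpha(H)}$ on $\mathfrak t^{\theta_n}$, is cleaner bookkeeping than the paper's passage through restricted roots and the subset $\tilde S=\{n\theta(a)n^{-1}a\}$ of $T^{\theta_n}$, but the core argument is the same.

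One small inaccuracy in a parenthetical remark: you say that for a holomorphic involution ``the cancellation would fail'' because $\sigma_n(\alpha)(H)=\alpha(H)$ would replace $\overline{\alpha(H)}$. In fact the cancellation $\sum_{S}\mathrm{Re}\,\alpha(H)=-\sum_{S}\mathrm{Re}\,\alpha(H)$ would still go through, since $\sigma_n(\alpha)(H)=\alpha(H)$ also gives $\mathrm{Re}\,\sigma_n(\alpha)(H)=\mathrm{Re}\,\alpha(H)$, and the bijection $\sigma_n\colon S\to -S$ is purely combinatorial. Where anti-holomorphy is genuinely indispensable is earlier: for a root $\alpha$ with $\sigma_n(\alpha)=\alpha$, conjugate-linearity forces the fixed space of $d\theta_n|_{\mathfrak g_\alpha}$ to be a real line (half-dimensional), which is what makes $\mathfrak u^{\theta_n}$ a real form of $\bigoplus_{\alpha\in\Psi}\mathfrak g_\alpha$ and hence makes the reduction of $(\star)$ to $\sum_{\alpha\in S}\mathrm{Re}\,\alpha(H)=0$ valid; a holomorphic involution would give $d\theta_n|_{\mathfrak g_\alpha}=\pm\mathrm{Id}$ there, with full or zero fixed space, and the reduction would produce different terms. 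Since your proof does correctly invoke anti-holomorphy exactly at that step when asserting $\mathfrak u^{\theta_n}$ is a real form, this mislocation is only in the explanatory aside and does not affect correctness.
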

\begin{proof}
Fix a Borel subgroup $B$, a $\theta$-stable maximal torus $T$ and
a $\theta$-stable maximal $\mathbb{R}$-split torus $A$, all in
$G$, such that $A_{\mathfrak{p}}\subseteq A\subseteq T\subseteq B$
(see Proposition \ref{prop6.1:We-can-choose}). If we consider $G$
as a complex group then we can write $\mathrm{Lie}(G)=\mathfrak{g}=\mathfrak{g}_{0}\oplus\bigoplus_{\widetilde{\alpha}\in\Sigma(G,T)}\mathfrak{g}_{\widetilde{\alpha}}$,
where $\mathfrak{g}_{0}=\mathrm{Lie}(T)$, $\Sigma(G,T)$ is the corresponding
root system and each eigenspace $\mathfrak{g}_{\widetilde{\alpha}}$
is one dimensional over $\mathbb{C}$. If we consider $G$ as a real
group then we can write $\mathfrak{g}=\mathfrak{g}_{0}\oplus\bigoplus_{\alpha\in\Sigma(G,A_{\mathfrak{p}})}\mathfrak{g}_{\alpha}$.
$\mathfrak{g}_{\alpha}$ becomes a two dimensional space over $\mathbb{R}$
and $\widetilde{\alpha}|_{A_{\mathfrak{p}}}=\alpha$. Notice that
$\mathrm{dim}H=\frac{1}{2}\cdot\mathrm{dim}G$ and $\mathfrak{g}^{\theta}=\mathfrak{g}_{0}^{\theta}\oplus\bigoplus_{\alpha\in\Sigma(G,T)}\mathfrak{g}_{\alpha}$
has exactly half of the dimension in each root space. 

Note that for any $n\in N_{G}(T)$, $\mathrm{Ad}_{n}$ is a $\mathbb{C}$-linear
map, and $d\theta$ is a semi-linear map. This implies that if $\theta_{n}(\alpha)=\alpha$
then $d(\theta_{n})|_{\mathfrak{g}_{\alpha}}=\mathrm{Ad}_{n}\circ d\theta|_{\mathfrak{g}_{\alpha}}\neq\pm\mathrm{Id}$. 

We now show that $\delta_{B^{\theta_{n}}}=\delta_{B}^{1/2}|_{B^{\theta_{n}}}$
as a character of $B^{\theta_{n}}$. Notice that $\delta_{B}(b)=\left|\mathrm{det}(\mathrm{Ad}_{b})\right|$,
where $b\in B$, $\mathrm{Ad}_{b}\in\mathrm{GL}(\mathfrak{b})$ and
$\mathfrak{b}:=\mathrm{Lie}(B)$. As modular characters are trivial
on the unipotent part of $B^{\theta_{n}}$, it is enough to prove
that $\delta_{B^{\theta_{n}}}|_{T^{\theta_{n}}}=\delta_{B}^{1/2}|_{T^{\theta_{n}}}$.
Note that $S:=\{n\theta(t)n^{-1}t|t\in T\}$ is a connected component
of $T^{\theta_{n}}$, so it is enough to prove that $\delta_{B^{\theta_{n}}}|_{S}=\delta_{B}^{1/2}|_{S}$.
As $T$ is $\mathbb{C}$-split, and by the fact that modular characters
are trivial on compact subgroups, it is enough to prove that $\delta_{B^{\theta_{n}}}|_{\widetilde{S}}=\delta_{B}^{1/2}|_{\widetilde{S}}$
where $\widetilde{S}=\{n\theta(a)n^{-1}a|a\in A_{\mathfrak{p}}\}$.
Denote
\[
S_{1}:=\{\alpha\in\Sigma^{+}(G,A_{\mathfrak{p}})\text{ s.t }\alpha\neq\theta_{n}(\alpha)\in\Sigma^{+}(G,A_{\mathfrak{p}})\},
\]
\[
S_{2}:=\{\alpha\in\Sigma^{+}(G,A_{\mathfrak{p}})\text{ s.t }\alpha=\theta_{n}(\alpha)\in\Sigma^{+}(G,A_{\mathfrak{p}})\},
\]
and
\[
S_{3}:=\{\alpha\in\Sigma^{+}(G,A_{\mathfrak{p}})\text{ s.t }\theta_{n}(\alpha)\in\Sigma^{-}(G,A_{\mathfrak{p}})\}.
\]

Note that for any $a\in A_{\mathfrak{p}}$, 
\[
\delta_{B}(a)=\left|\mathrm{det}(\mathrm{Ad}_{a})\right|=\left|\prod_{\alpha\in\Sigma^{+}(G,A_{\mathfrak{p}})}\left(\alpha(a)\right)^{2}\right|,
\]
and therefore 
\[
\delta_{B}^{1/2}(a\theta_{n}(a))=\left|\prod_{\alpha\in\Sigma^{+}(G,A_{\mathfrak{p}})}\alpha(a)\cdot\theta_{n}(\alpha)(a)\right|.
\]
$\mathrm{Lie}(B^{\theta_{n}})$ consists of the $+1$ eigenspace of
$\mathfrak{b}$ under $d(\theta_{n})$, hence it is clear that roots
in $S_{3}$ doesn't contribute to $\mathrm{Lie}(B^{\theta_{n}})$,
that is, $\mathrm{Lie}(B^{\theta_{n}})=\left(\mathfrak{g}_{0}\oplus\bigoplus_{\alpha\in S_{1},S_{2}}\mathfrak{g}_{\alpha}\right)_{d(\theta_{n}),+1}$,
where $\left(\,\right)_{d(\theta_{n}),+1}$ denote the $+1$ eigenspace
of $d(\theta_{n})$. As $d(\theta_{n})|_{\mathfrak{g}_{\alpha}}\neq\pm\mathrm{Id}$
for roots $\alpha\in S_{2}$ we deduce that: 
\[
\delta_{B^{\theta_{n}}}(a\theta_{n}(a))=\left|\prod_{\alpha\in S_{1},S_{2}}\left(\alpha(a)\cdot\theta_{n}(\alpha)(a)\right)\right|.
\]

Hence, it is enough to prove that $\prod_{\alpha\in S_{3}}\alpha(a)\cdot\theta_{n}(\alpha)(a)$
is trivial for any $a\in A_{\mathfrak{p}}$, or equivalently by taking
differentials, $\sum_{\alpha\in S_{3}}\alpha(X)+\theta_{n}(\alpha)(X)=0$
for any $X\in\mathfrak{a}_{\mathfrak{p}}=\mathrm{Lie}(A_{\mathfrak{p}})$.
Indeed, if $\alpha\in S_{3}$ then also $-\theta_{n}(\alpha)\in S_{3}$
as well, hence we are done. \end{proof}

\end{document}